\def\xyma{\xymatrix@M.7em}
\def\xyma{\xymatrix@M.7em}
\def\lotimes{\buildrel{L}\over\otimes}
\numberwithin{equation}{section}
\newtheorem{cor}{Corollary}[section]
\newtheorem{defi}{Definition}[section]
\newtheorem{prop}{Proposition}[section]
\newtheorem{theorem}{Theorem}[section]
\newtheorem{lemma}{Lemma}[section]
\newtheorem{remark}{Remark}[section]
\newtheorem{example}{Example}[section]
\def\Le{\EuScript L}
\def\Z{{\mathbb{Z}}}
\def\la{\longrightarrow}
\def\ot{\otimes}
\def\bee{\begin{equation}}
\def\ee{\end{equation}}
\def\Tor{\mathrm{Tor}}
\begin{document}

\title{Homotopy theory of Lie functors}
\author{Roman Mikhailov}
 \maketitle

\section{Introduction}
Let $\sf Ab$ be the category of abelian groups. By the classical
Dold-Kan theorem, the Moore normalization functor
$$
N: \sf SAb\to Ch
$$
defines an equivalence between the category $\sf SAb$ of
simplicial abelian groups and the category $\sf Ch$ of
non-negatively graded chain complexes. Denote by $K: \sf Ch\to \sf
SAb$ the Dold-Kan transform inverse to the Moore normalization
functor. Also denote by $\sf DAb$ the derived category, obtained
from $\sf Ch$ by inverting the weak equivalences.

 Given a covariant functor $F: \sf
Ab\to Ab$ with $F(0)=0$, one can ask about the structure of its
derived functors, or generally speaking, about "the homotopy
theory of $F$". For an element $C\in \sf Ch$, one can ask how to
describe the graded abelian groups
$$
H_*(LF(C))=\pi_*(F(K(C))))
$$
These homotopy groups depend only on the class of the element $C$
in $\sf DAb$, hence we can ask the following: for an element $C\in
\sf DAb$, describe the groups $ H_*(LF(C)) $ as functors from $\sf
DAb$ to the category of graded abelian groups.

In the present paper, we construct the bigraded functors
\begin{align*}
& \EuScript E^m(-,n): \sf DAb\to \sf DAb\\
& \widetilde{\EuScript E}^m(-,n): \sf DAb\to \sf DAb
\end{align*}
such that, for all $C\in \sf DAb$, there are absract isomorphisms
of graded abelian groups
\begin{align}
& L_*\EuScript L^m(C[n])\simeq H_*(\EuScript E^m(C,n))\label{qui1}\\
& L_*\EuScript L_s^m(C[n])\simeq H_*(\widetilde{\EuScript
E}^m(C,n))\label{qui2}
\end{align}
Here $\EuScript L$ and $\EuScript L_s$ are graded Lie and
super-Lie functors with squares respectively (for the definition
see the next section). The homotopy groups of $\EuScript E^m(-,n)$
and $\widetilde{\EuScript E}^m(-,n)$ can be described. In
particular, this will give a way how to compute (abstractly) the
derived functors of Lie functors and super-Lie functors in all
dimensions. Moreover, we show that, in the case when $C$ is a free
abelian group, the isomorphisms (\ref{qui1}), (\ref{qui2}) are
natural. That is, this gives a complete description of all derived
functors of $\EuScript L$, $\EuScript L_s$ for free abelian
groups.

For certain degrees we will give a general functorial description
of derived functors of Lie and super-Lie functors (not only for
free abelian groups). It is shown in \cite{Schlesinger:66} that if
$p$ is an odd prime then the groups $L_{n+k} \EuScript L^p(\mathbb
Z, n)$ are $p$-torsion for all $k$, and in particular
\begin{equation}
L_{n+k}\EuScript L^p(\mathbb Z,n)=\begin{cases} \mathbb Z/p,\
k=2i(p-1)-1,\ i=1,2,\dots, [n/2]\\
0,\ \text{otherwise}\end{cases}
\end{equation}
For example, in the simplest case, our theory gives the following
functorial generalization of the above description (for $n\geq
0$):
$$
L_i\EuScript L^p(A,2n)=\begin{cases} \EuScript L^p(A)\oplus
\Tor(A,\mathbb Z/p),\ i=2np\\ L_j\EuScript L^p(A),\ i=2np+j,
j=1,\dots, p-1\\ A\otimes \mathbb Z/p,\ i=2n+2j(p-1)-1,\ j=1,2,\dots,n\\
\Tor(A,\mathbb Z/p),\ i=2n+2j(p-1),\ j=1,2,\dots,n-1\end{cases}
$$
$$
L_i\EuScript L^p(A,2n+1)=\begin{cases} L_j\EuScript L_s^p(A),\
i=2np+j,
j=0,\dots, p-1\\ A\otimes \mathbb Z/p,\ i=2n+2j(p-1),\ j=1,2,\dots,n\\
\Tor(A,\mathbb Z/p),\ i=2n+2j(p-1)+1,\ j=1,2,\dots, n\end{cases}
$$

In some cases the description of derived functors is very simple.
For example, for a free abelian group $A$, the derived functors
$L_i\EuScript L^6(A,1)$ and $L_i\EuScript L_s^6(A,1)$ are the
following:
$$
L_i\EuScript L^6(A,1)=\begin{cases} \EuScript L_s^6(A),\ i=6\\
\Gamma_2(A)\otimes \mathbb Z/3,\ i=5\\ \EuScript L_s^3(A)\otimes
\mathbb Z/2,\ i=4\\ 0,\ i\neq 4,5,6\end{cases}\ \ \ \
L_i\EuScript L_s^6(A,1)=\begin{cases} \EuScript L^6(A),\ i=6\\
\Lambda^2(A)\otimes \mathbb Z/3\oplus \EuScript L^3(A)\otimes \mathbb Z/2,\ i=5\\
\EuScript L^3(A)\otimes \mathbb Z/2,\ i=4\\ 0,\ i\neq 3,
5,6\end{cases}
$$
In a more general case, for a free abelian $A$, if $m$ is a square
free number, i.e. $m$ is a product of different primes, then there
is the following description of the derived functors:
$$
\bigoplus_{i=1}^{2nm}L_i\EuScript L^m(A,2n)[i]\simeq \EuScript
L^m(A)[2nm]\oplus\\
\bigoplus_{\substack{p\ \text{prime}\\ p|m}}\
\bigoplus_{i=1}^{\frac{mn}{p}}\EuScript L^{\frac{m}{p}}(A)\otimes
\mathbb Z/p\ [\frac{2mn}{p}+(2p-2)i-1]
$$
To describe the derived functors in a general case, we introduce
the collection of special functors $\EuScript N^{k;p},\EuScript
N^{k;p}: \sf Ab\to Ab$ $(p\ \text{prime},\ k\geq 1)$. We call the
collection of these functors "hierarchies" by the following
reason. For any prime $p$ and $k\geq 1$, there are chains of
natural epimorphisms
$$
\EuScript N^{kp^i;p}\twoheadrightarrow \dots\EuScript
N^{kp;p}\twoheadrightarrow\EuScript N^{k;p}
$$
which split abstractly, but not naturally, moreover, every natural
transformation of the type $\EuScript N^{k;p}\to \EuScript
N^{kp;p}$ is the zero map. These functors play a crucial role in
the description of derived functors of Lie and super-Lie functors.
For example, the functor $L_9\EuScript L^9(A,2)$ is naturally
isomorphic to the functor $\EuScript N^{3;3}(A)$ and can be
presented in the short exact sequence
$$
0\to \EuScript L^3(A)\otimes \mathbb Z/3\to L_9\EuScript
L^9(A,2)\to A\otimes \mathbb Z/3\to 0
$$
which splits as a sequence of abelian groups and does not split as
a sequence of functors. Moreover, every natural transormation
$A\otimes \mathbb Z/3\to L_9\EuScript L^9(A,2)$ is the zero map.

\vspace{.5cm}\noindent{\bf Notation.}\\ \\
$\Lambda^n: \sf Ab\to \sf Ab$ the $n$th exterior power;\\ \\
$SP^n: \sf Ab\to \sf Ab$ the $n$th symmetric power;\\ \\
$\Gamma_n: \sf Ab\to \sf Ab$ the $n$th divided power;\\ \\
$\EuScript L^n: \sf Ab\to Ab$ the $n$th graded Lie functor;\\ \\
$\EuScript L_s^n: \sf Ab\to \sf Ab$ the $n$th super-Lie functor with squares;\\ \\
$\mathbb L: \sf Ch\to \sf Ch$ the universal differential graded
Lie algebra with
squares (see (\ref{universaldgls}));\\ \\
$\EuScript N^{n;p}: \sf Ab\to Ab$ the special functors (see
(\ref{specfu}), (\ref{specfu1}));\\ \\
$J^n,Y^n:\sf Ab\to Ab$ the special Schur functors (see
(\ref{schur}), (\ref{schur1})) which coincide with metabelian
$n$th
Lie and super-Lie functors;\\ \\
$\EuScript E^m(-,n),\ \widetilde{\EuScript E}^m(-,n): \sf DAb\to
\sf DAb$ the $\EuScript E$-functors (see \ref{ecom});\\ \\

\section{Graded Lie rings}
The tensor algebra $\otimes A$ is endowed with  a $\mathbb Z$-Lie
algebra structure, for which the  bracket operation is defined by
$$[a,\,b]=a\otimes b-b\otimes a,\quad  a,b\in \otimes (A).$$
One   defines  $n$-fold brackets inductively by setting
\begin{equation}
\label{def:lie-br}
 [a_1, \ldots, a_n] := [[a_1\ldots, a_{n-1}],a_n]
\end{equation}
We will  denote $\ot A$, viewed as a  $\mathbb Z$-Lie algebra,  by
$\otimes(A)^{Lie}.$  Let  $\EuScript L(A)=\bigoplus_{n\geq
1}\EuScript L^n(A)$ be the sub-Lie ring of $\otimes(A)^{Lie}$
generated by $A$. Its  degree 2 and  3 components are  generated
by the expressions
\begin{equation}
\label{lie-3}a\otimes b-b\otimes a \qquad \text{and} \qquad  a
\otimes b \otimes c - b \otimes a \otimes c + c \otimes a \otimes
b - c \otimes b \otimes a
\end{equation}
where $a,b,c \in A$. $\EuScript L(A) $ is called the {\it free Lie
ring generated by the abelian group $A$}. It is  universal for
homomorphisms from $A$ to $\Z$-Lie algebras. The grading of
$\otimes A$ determines  a grading   on $\EuScript L(A)$, so that
we obtain a family  of endofunctors
 on the category of abelian groups:
$$
\EuScript  L^i: {\sf Ab}\to {\sf Ab},\ i\geq 1.
$$
The universal property of the Lie functor implies that there is a
natural transformation of the graded functors:
$$
\EuScript L\EuScript L\to \EuScript L
$$
which assigns to an abelian group $A$, the unique map
$$
\EuScript L\EuScript L(A)\to \EuScript L(A)
$$
which is the identity on $\EuScript L_1\EuScript L(A)=\EuScript
L(A)$.
\begin{defi} \cite{Leibowitz}
A {\it graded Lie ring with squares}  (GLRS for short) is a graded
abelian group $B=\bigoplus_{i=0}^\infty B_i$ with homomorphisms
\begin{align} & \{\ ,\ \}: B_i\otimes B_j\to B_{i+j},\ \label{superbracket}\\ & ^{[2]}: B_n\to
B_{2n}\ \text{for}\ n\ \text{odd}
\end{align}
such that the following conditions are satisfied (for elements
$x\in B_i,\ y\in B_j,\ z\in B_k$):
\begin{align}
& 1)\ \{x,y\}+(-1)^{ij}\{y,x\}=0 \label{santi}\\
& 2)\ \{x,x\}=0\ \ \text{for}\ i\ \text{even} \notag\\
& 3)\
(-1)^{ik}\{\{x,y\},z\}+(-1)^{ji}\{\{y,z\},x\}+(-1)^{kj}\{\{z,x\},y\}=0\label{sjacob}\\
& 4)\ \{x,x,x\}=0 \notag \\
& 5)\ (ax)^{[2]}=a^2x^{[2]}\ \text{for}\ \ i\ \text{odd},\ a\in
\mathbb Z  \notag   \\
& 6)\ (x+y)^{[2]}=x^{[2]}+y^{[2]}+\{x,y\}\ \text{for}\ \ i=j\
\text{odd}  \notag  \\
& 7)\ \{y,x^{[2]}\}=\{y,x,x\}\ \ \text{for}\ i\
\text{odd}.\label{yxx}
\end{align}
\end{defi}
For an abelian group $A$, define $\EuScript L_s(A)$ to be the
graded Lie ring with squares freely generated by $A$ in degree 1.
It may be defined as a GLRS together with a homomorphism of
abelian groups $l: A\to \EuScript L_s(A)$ such that for every map
$f: A\to B$ with $B$ a GLRS, there is a unique morphism of GLRS
$d: \EuScript L_s(A)\to B$ such that $f=d\circ l$. The abelian
group $\EuScript L_s(A)$ is naturally graded by  $\EuScript
L_s(A)=\bigoplus_{n=1}^\infty \EuScript L_s^n(A)$ and for any $x
\in \Le_s(A)$, we set $|x|= n$ whenever $x \in \Le^n_s(A)$.

\begin{theorem}\label{schles} {\bf (Schlesinger)} Let $A_1,A_2,\dots, A_n$ be free abelian groups.
There is a natural isomorphism
$$
\EuScript L(\bigoplus_{i=1}^n A_i)=\bigoplus_{i=1}^n \EuScript
L(A_i)\oplus \bigoplus_{J}\EuScript L(A_J),
$$
where $J=(j_1,\dots, j_k),\ j_1>j_2\leq \dots \leq j_k,\ k\geq 2$
and $A_J=A_{j_1}\otimes A_{j_2}\otimes \dots \otimes A_{j_k}$.
\end{theorem}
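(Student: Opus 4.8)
The plan is to write down an explicit natural transformation $\Phi$ from the right--hand side into $\EuScript L(\bigoplus_{i} A_i)$ and then to prove it is an isomorphism by reducing to the finitely generated case and applying iterated Lazard elimination. To construct $\Phi$ I would use the inclusions $A_i \hookrightarrow \bigoplus_i A_i$, which induce Lie ring homomorphisms $\EuScript L(A_i) \to \EuScript L(\bigoplus_i A_i)$, together with, for each admissible sequence $J=(j_1,\dots,j_k)$, the left--normed bracket $a_1\otimes\cdots\otimes a_k \mapsto [a_1,\dots,a_k]$. The latter is multilinear, hence factors through a natural map $A_J \to \EuScript L(\bigoplus_i A_i)$ whose image lies in degree $k$; since the target is a Lie ring, the universal property of $\EuScript L$ extends it to $\EuScript L(A_J) \to \EuScript L(\bigoplus_i A_i)$, carrying $\EuScript L^d(A_J)$ into degree $dk$. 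Assembling these maps gives $\Phi$, and since both sides commute with filtered colimits it suffices to prove $\Phi$ is an isomorphism when every $A_i$ is free of finite rank.

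The engine of the proof is Lazard elimination: for free abelian $U,V$ the sub--Lie--ring of $\EuScript L(U\oplus V)$ generated by $U$ is $\EuScript L(U)$, while the ideal generated by $V$ is free on the module $T(U)\otimes V=\bigoplus_{m\geq 0} U^{\otimes m}\otimes V$, so that $\EuScript L(U\oplus V)=\EuScript L(U)\oplus \EuScript L\big(T(U)\otimes V\big)$ naturally in $U$ and $V$. I would apply this repeatedly, eliminating one block at a time and then recursively eliminating the summands of the tensor--product module $T(U)\otimes V$ that arise, continuing until no further block can be split off. Every step is functorial and outputs a free Lie ring on a tensor product of the blocks; hence, working multidegree by multidegree so that only finitely many steps contribute in each fixed multidegree, the process yields a natural direct--sum decomposition of $\EuScript L(\bigoplus_i A_i)$ into summands $\EuScript L(M)$ with each $M$ a tensor product of copies of the $A_i$. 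On free generators this decomposition is given by exactly the bracket maps above, so it coincides with $\Phi$.

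What remains, and what I expect to be the main obstacle, is a purely combinatorial identification of the generators produced by the iteration. Reordering tensor factors only changes a summand by a natural isomorphism (the monoidal symmetry of $\otimes$), so the task is to choose the elimination order and the induced ordering on iterated generators in such a way that the stabilized free generating set is in bijection with the sequences $J$ having a single initial descent followed by a weak ascent, $j_1>j_2\leq j_3\leq\cdots\leq j_k$, while the generators with all indices equal collect into the terms $\EuScript L(A_i)$. For two blocks this is transparent: applying Lazard with $U=A_1$, $V=A_2$ gives $\EuScript L(A_1)\oplus\EuScript L\big(\bigoplus_{m\geq 0}A_1^{\otimes m}\otimes A_2\big)$, and eliminating the $m=0$ summand $A_2$ inside the second factor splits off $\EuScript L(A_2)$ and leaves a free Lie ring on $\bigoplus_{b\geq 0,\,m\geq 1}A_2^{\otimes b}\otimes A_1^{\otimes m}\otimes A_2$; reordering factors, these are precisely the modules $A_2\otimes A_1^{\otimes(1+a)}\otimes A_2^{\otimes b}$, i.e. the $A_J$ with $J=(2,1^{1+a},2^{b})$. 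The $n$--block case is the same idea, but pinning down the normal form is delicate.

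Finally, as an independent check and an alternative endgame, I would compare ranks multidegree by multidegree. For rank one blocks the number of admissible sequences $J$ of a given multidegree equals the corresponding multigraded Witt dimension of $\EuScript L(\bigoplus_i A_i)$, and for higher rank blocks the within--block contributions are recorded by $\dim \EuScript L(A_J)$. Once the normal--form rewriting shows $\Phi$ is a surjection of finitely generated free abelian groups, equality of ranks forces it to be an isomorphism, and the finite rank case then propagates to arbitrary free $A_i$ by naturality and passage to colimits.
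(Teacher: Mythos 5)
Your engine---iterated Lazard elimination followed by a multigraded Witt--rank count---is the standard route to this theorem (the paper itself offers no proof: it quotes Schlesinger \cite{Schlesinger:66} and records only the bracket maps, which agree with your $\Phi$), and your reduction to finite rank, the elimination statement $\EuScript L(U\oplus V)=\EuScript L(U)\oplus\EuScript L(T(U)\otimes V)$, and the surjection-plus-equal-rank endgame are all sound in principle. The genuine gap sits exactly where you declare the two-block case ``transparent''. After your second elimination you hold $\EuScript L\left(\bigoplus_{b\geq 0,\,m\geq 1}A_2^{\otimes b}\otimes A_1^{\otimes m}\otimes A_2\right)$, the free Lie ring \emph{on the direct sum}, which is strictly larger than the direct sum $\bigoplus_J \EuScript L(A_J)$ asserted by the theorem: the elimination must be continued, and the later steps split off genuinely nested modules such as $(A_1\otimes A_2)\otimes(A_1\otimes A_1\otimes A_2)$, whose flattened index sequence $(1,2,1,1,2)$ does not satisfy $j_1>j_2\leq\cdots\leq j_k$. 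The terminal generating family is Schlesinger's \emph{recursively} defined set of basic products (entries $j_i$ ranging over previously constructed basic products, as in Curtis), and it agrees with your single-descent sequences only through weight $4$ for two blocks and through weight $3$ for three or more.

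Your own fallback check detects this rather than closing it. For two rank-one blocks, the multidegree-$(3,2)$ component of $\EuScript L(A_1\oplus A_2)$ has Witt dimension $M(3,2)=2$, but there is exactly one single-descent sequence with three $1$'s and two $2$'s, namely $(2,1,1,1,2)$; in total weight $5$ the single-descent count is $4$ while $M_2(5)=6$. So with the literal indexing your $\Phi$ cannot be surjective from weight $5$ on, and no choice of elimination order repairs this, since the deficiency is a count of index sequences per multidegree, invariant under reordering tensor factors. Note that the paper's statement is itself an abbreviation: its own assertion $|J_{m/d}|=M_2(m/d)$ and its displayed decomposition of $\EuScript L(A\oplus B)$ presuppose the recursive notion of basic tensor product. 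Once you replace the single-descent set by the full weight-ordered record of modules split off by your iteration (equivalently, the recursive basic products), your construction of $\Phi$, the multidegree-wise stabilization, and the surjectivity-plus-rank conclusion go through and reproduce the standard proof.
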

The map
$$
A_J\to \EuScript L(\bigoplus_{i=1}^nA_i),\ J=(j_1,\dots, j_k)
$$
is given by
$$
a_{j_1}\otimes \dots \otimes a_{j_k}\mapsto
[a_{j_1},a_{j_2},\dots,a_{j_k}],\ a_{j_i}\in A_{j_i}.
$$
For example, for $n=2$ and free abelian groups $A$, $B$, we have
the following decomposition:
\begin{multline*}
\EuScript L(A\oplus B)=\EuScript L(A)\oplus \EuScript L(B)\oplus
\EuScript L(B\otimes A)\oplus \EuScript L(B\otimes A\otimes
A)\oplus \EuScript L(B\otimes A\otimes B)\oplus\\ \EuScript
L(B\otimes A\otimes A\otimes A)\oplus \EuScript L(B\otimes
A\otimes A\otimes B)\oplus \EuScript L(B\otimes A\otimes B\otimes
B)\oplus \dots
\end{multline*}
This gives a way to compute all cross-effects of the graded
components of Lie functors:
\begin{equation}\label{crefe}
\EuScript L^m(A\oplus B)=\bigoplus_{d|m,\ 1\leq d\leq
m}\bigoplus_{C\in J_{m/d}}\EuScript L^d(C),
\end{equation}
where $J_{m/d}$ is the set of all basic tensor products of weight
$m/d$ in $A$ and $B$. For example, \begin{align*} & \EuScript
L^2(A\oplus
B)=\EuScript L^2(A)\oplus\EuScript L^2(B)\oplus A\otimes B\\
& \EuScript L^3(A\oplus B)=\EuScript L^3(A)\oplus \EuScript
L^3(B)\oplus (A\otimes B\otimes B)\oplus (A\otimes B\otimes A)\\
& \EuScript L^4(A\oplus B)=\EuScript L^4(A)\oplus \EuScript
L^4(B)\oplus \EuScript L^2(A\otimes B)\oplus (A\otimes B\otimes
B\otimes B)\oplus\\ & \ \ \ \ \ \ \ \ \ \ \ \ \ \ \ \ \ \ \ \ \ \
\ \ \ \ \ \ \ \ \ \ \ \ \ \ \ \ \ \ \ \ \ \ \ \ \ \ \ \ \ \
(A\otimes B\otimes B\otimes A)\oplus (A\otimes B\otimes A\otimes
A)
\end{align*}

The number of all basic products of $r$ modules $A_1,\dots, A_r$
of weight $m$ is given by the following formula:
$$
M_r(m)=\frac{1}{m}\sum_{d|m}\mu(d)r^{\frac{m}{d}}
$$
where $\mu(d)$ is the M\"obius function. The number of all basic
products of $r$ modules $A_1,\dots, A_r$ with $m_i$ entries in
$A_i$ (i.e. $m=m_1+\dots+m_r$) is given by the following formula:
$$
M(m_1,\dots,
m_r)=\frac{1}{m}\sum_{d|m_i}\mu(d)\frac{\left(\frac{m}{d}\right)!}{\left(\frac{m_1}{d}\right)!\dots
\left(\frac{m_r}{d}\right)!}
$$
By definition, we have
$$
|J_{m/d}|=M_2(m/d)
$$
in the formula (\ref{crefe}).

 The cross-effects of graded components of
super-Lie functors with squares are the following:
\begin{prop}
For free abelian $A$ and $B$, one has
\begin{equation}\label{screfe}\EuScript L_s^m(A\oplus
B)=\bigoplus_{\substack{d|m,\ 1\leq d\leq m\\ m/d\
\text{odd}}}\bigoplus_{C\in J_{m/d}}\EuScript L_s^d(C)\oplus
\bigoplus_{\substack{d|m,\ 1\leq d< m\\ m/d\
\text{even}}}\bigoplus_{C\in J_{m/d}}\EuScript L^d(C)
\end{equation} where $C$ runs over all basic tensor products of
weight $m/d$ in $A$ and $B$.
\end{prop}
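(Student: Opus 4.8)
The plan is to establish a super-analogue of the Schlesinger decomposition (Theorem \ref{schles}) for the free graded Lie ring with squares $\EuScript L_s$, and then to sort the resulting summands according to the parity of the weight of the basic products. First I would prove a super-Lazard elimination statement: for free abelian groups $A_1,\dots,A_n$, the free GLRS $\EuScript L_s(\bigoplus_i A_i)$ splits, as a graded abelian group, into a direct sum of free sub-objects indexed by the basic tensor products $C=A_{j_1}\otimes\dots\otimes A_{j_k}$, the splitting being induced by $a_{j_1}\otimes\dots\otimes a_{j_k}\mapsto \{a_{j_1},\dots,a_{j_k}\}$ exactly as in Theorem \ref{schles}. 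The argument mirrors the ordinary case: one eliminates one summand at a time, producing at each stage an ideal that is itself free on the collection of iterated brackets $\{x,y,\dots,y\}$. The only new ingredient is the square operation, which must be carried along the elimination; since $x^{[2]}$ satisfies $\{x,x\}=2x^{[2]}$ together with the axioms (\ref{yxx}) and conditions 4)--6), the square of a basic product is again expressible through the same basic-product generators, so the elimination stays compatible with the GLRS structure.

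The decisive point is then a parity dichotomy. A basic product $C$ of weight $w=m/d$ sits in degree $w$ of the graded object, so its homogeneous elements are odd when $w$ is odd and even when $w$ is even. Because the operation $^{[2]}$ is defined only on odd-degree elements, the free GLRS generated by $C$ coincides with the free super-Lie ring with squares $\EuScript L_s(C)$ when $w$ is odd, whereas for $w$ even the square operation is vacuous and the evenness axiom $\{c,c\}=0$ applies, so the free GLRS on $C$ reduces to the ordinary free Lie ring $\EuScript L(C)$. Taking the homogeneous component of total weight $m=d\cdot w$ in each free sub-object yields $\EuScript L_s^d(C)$ in the first case and $\EuScript L^d(C)$ in the second. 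Summing over all divisors $d\mid m$ and all basic products $C$ of weight $m/d$, and separating the odd from the even values of $m/d$ (the even ones necessarily forcing $d<m$, since weight $1$ is odd), produces exactly the two sums in (\ref{screfe}).

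As a consistency check this recovers the expected comparison with the non-super case: the decompositions of $\EuScript L^m(A\oplus B)$ in (\ref{crefe}) and of $\EuScript L_s^m(A\oplus B)$ agree term by term on every basic product of even weight, and differ only by replacing $\EuScript L^d$ with $\EuScript L_s^d$ on the basic products of odd weight. In particular the mixed even-weight contributions, such as $\EuScript L^2(A\otimes B)$ for $m=4$, are literally the same in both settings, which is precisely what the parity dichotomy predicts.

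The main obstacle I anticipate is the rigorous verification of the super-Lazard elimination in the presence of the square operation: one must check that, after eliminating a generator, the square of each newly created basic product decomposes correctly among the remaining free generators, and that no extra $2$-torsion or divided-square relation is introduced that would spoil the direct-sum splitting over $\Z$. The cleanest way to control this is to pass to the restricted enveloping algebra of $\EuScript L_s(\bigoplus_i A_i)$, identify it via a PBW-type basis with the tensor algebra $\otimes(\bigoplus_i A_i)$, and read off the claimed decomposition of the primitive (Lie-with-squares) part from the corresponding decomposition of the tensor algebra; the parity bookkeeping above then fixes which free functor, $\EuScript L_s^d$ or $\EuScript L^d$, occurs on each basic product $C$.
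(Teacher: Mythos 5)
Your proposal is correct and coincides with the route the paper implicitly takes: the proposition is stated there without proof as the evident super-analogue of Theorem \ref{schles} (with the super-Lie details deferred to \cite{BreenMikhailov}), namely Lazard elimination for the free graded Lie ring with squares followed by the parity dichotomy that the free GLRS on even-degree generators is an ordinary free Lie ring, while on odd-degree generators it is a regraded copy of $\EuScript L_s$. Your bookkeeping — the identity $\{x,x\}=2x^{[2]}$ carrying the squares through the elimination, the tensor-algebra/PBW control of the splitting over $\Z$, and the observation that $m/d$ even forces $d<m$ — matches the paper's formula (\ref{screfe}) and its low-degree examples.
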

For example, \begin{align*} & \EuScript L_s^2(A\oplus
B)=\EuScript L_s^2(A)\oplus\EuScript L_s^2(B)\oplus A\otimes B\\
& \EuScript L_s^3(A\oplus B)=\EuScript L_s^3(A)\oplus \EuScript
L_s^3(B)\oplus (A\otimes B\otimes B)\oplus (A\otimes B\otimes A)\\
& \EuScript L_s^4(A\oplus B)=\EuScript L_s^4(A)\oplus \EuScript
L_s^4(B)\oplus \EuScript L^2(A\otimes B)\oplus (A\otimes B\otimes
B\otimes B)\oplus\\ & \ \ \ \ \ \ \ \ \ \ \ \ \ \ \ \ \ \ \ \ \ \
\ \ \ \ \ \ \ \ \ \ \ \ \ \ \ \ \ \ \ \ \ \ \ \ \ \  (A\otimes
B\otimes B\otimes A)\oplus (A\otimes B\otimes A\otimes A)
\end{align*}

\subsection{Curtis decomposition}  Consider the Schur functors $$J^n, Y^n: {\sf Ab}\to {\sf Ab},\
n\geq 2$$ defined by \footnote{The functors $Y^n(A)$ are the
$\Z$-forms of the Schur functors $\mathbb{S}_{\lambda}(V)$
associated  to the partition $\lambda= (2, 1\ldots,1)$ of the set
$(n)$ (see \cite{ful-har} exercise 6.11). The functors $J^n(A)$
and the $\Z$- forms of the Schur functors $\mathbb{S}_\mu$
associated to the partition $\mu = (n-1,1)$ of $(n)$, which is the
conjugate
 partition of $\lambda$.}
\begin{align}
& J^n(A)=ker\{A\otimes SP^{n-1}(A)\to SP^n(A)\},\ n\geq 2,\label{schur}\\
& Y^n(A)=ker\{ A\otimes \Lambda^{n-1}(A)\to \Lambda^n(A)\},\ n\geq
2\label{schur1}
\end{align}

Curtis gave in  \cite{Curtis:63}  a decomposition of the functors
$\EuScript L^m(A)$ in terms of functors $SP^m$, $J^m$ and their
iterates. Analogous decomposition exists also in the super-Lie
case (see \cite{BreenMikhailov}). For a free abelian group $A$,
there are natural exact sequences
\begin{align}
& 0 \to \widetilde J^m(A)\to \EuScript L^m(A)\buildrel{p_m}\over\to J^m(A)\to 0 \label{cx1}\\
& 0 \to \widetilde Y^m(A)\to \EuScript L_s^m(A)\buildrel{\bar
p_m}\over\to Y^m(A)\to 0\label{cx2}
\end{align}
where \begin{align*} & p_m:\ [a_1,\dots, a_m]\mapsto a_1\otimes
a_2\dots a_m-a_2\otimes a_1a_3\dots a_m,\\
& \bar p_m: \{a_1,\dots,a_m\}\mapsto a_1\otimes a_2\wedge
a_3\wedge\dots \wedge a_m+a_2\otimes a_1\wedge a_3\wedge\dots
\wedge a_m\\
& \bar p_2: a_1^{[2]}\mapsto a_1\otimes a_1,\\
& \bar p_{2m}: \{a_1,\dots, a_m\}^{[2]}\mapsto 0,\ \text{if}\ m\
\text{is odd}
\end{align*}
for $a_i\in A$. Here $\widetilde J^m(A)$ and $\widetilde Y^m(A)$
are defined as kernels of natural projections $p_m$ and $\bar
p_m$. For low degrees the sequence (\ref{cx1}) is the following:
\begin{align*}
& \EuScript L^2(A)\buildrel{p_2}\over\simeq J^2(A)\\
& \EuScript L^3(A)\buildrel{p_3}\over\simeq J^3(A)\\ & 0\to
\Lambda^2\Lambda^2(A)\to
\EuScript L^4(A) \stackrel{p_4}{\to} J^4(A) \to 0\\
 & 0\to \Lambda^2(A)\otimes J^3(A)\to \EuScript L^5(A)    \stackrel{p_5}{\to}  J^5(A)
\to 0,
\end{align*}
where the left-hand arrows are respectively defined by
\begin{align*}
(a\wedge b) \wedge (c\wedge d) &\mapsto [[a,b],[c,d]] \\
(a\wedge b) \otimes (c,d,e) &\mapsto [[a,b],[c,d,e]]\,.
\end{align*}
The super-analogs of Curtis decomposition can be constructed
analogously (see \cite{BreenMikhailov}). In low degrees the
sequence (\ref{cx2}) is the following:
\begin{align*}
& \EuScript L_s^2(A)\buildrel{\bar p_2}\over\simeq Y^2(A)\\
& \EuScript L_s^3(A)\buildrel{\bar p_3}\over\simeq Y^3(A)\\
& 0\to \Lambda^2Y^2(A) \to \EuScript L_s^4(A) \buildrel{\bar
p_4}\over\to Y^4(A)\to 0\\
& 0\to Y^2(A)\otimes Y^3(A)\to \EuScript L_s^5(A)\buildrel{\bar
p_5}\over\to Y^5(A)\to 0
\end{align*}
\vspace{.5cm}
\section{Derived functors}
\vspace{.5cm}
Let $A$ be an abelian group, and $F$
an endofunctor on the category of abelian groups. Recall that for
every  $n\geq 0$ the derived functor of $F$
 in the sense
of Dold-Puppe  \cite{DoldPuppe} are defined by
$$
L_iF(A,n)=\pi_i(FKP_\ast[n]),\ i\geq 0
$$
where $P_\ast \to A$ is a projective resolution of $A$, and
 $K$ is  the Dold-Kan transform,  inverse to the Moore normalization  functor
\[
N:  \mathrm{Simpl}({\sf Ab}) \to C({\sf Ab})
\]
from simplicial abelian groups to chain complexes. We denote by
$LF(A,n)$ the object $FK(P_\ast[n])$ in the homotopy category of
simplicial abelian groups determined by $FK(P_\ast[n])$, so that
\[ L_iF(A,n) = \pi_i(LF(A,n))\,.\]
  We
set $LF(A) :=LF(A,0)$ and  $L_iF(A):= L_iF(A,0)$ for any $\ i\geq
0$.

As examples of these  constructions, observe that the simplicial
models $LF(L\la M)$  of $LFA$ and  $FK((L\la M) [1])$ of $LF(A,1)$
associated to the two-term flat resolution
\begin{equation}\label{res11} 0\to L \buildrel{f}\over\to M \to A\to 0\end{equation}
 of an abelian group $A$ are respectively of the following form in low
 degrees:
\bee \label{lowdeg0}
\begin{matrix}
F(s_0(L)\oplus s_1(L) \oplus s_1s_0(M))
\end{matrix}
\begin{matrix}\buildrel{\partial_0,\partial_1,\partial_2}\over\longrightarrow\\[-3.5mm]\longrightarrow\\[-3.5mm]\longrightarrow\\[-3.5mm]\longleftarrow\\[-3.5mm]
\longleftarrow
\end{matrix}\
F(L\oplus s_0(M))
\begin{matrix}\buildrel{\partial_0,\partial_1}\over\longrightarrow\\[-3mm]\longrightarrow\\[-3mm]\longleftarrow\end{matrix}\
 F(M)\ee
 where the component $F(M)$ is  in degree zero, and
\bee \label{lowdeg}
\begin{matrix}
F(s_0(L)\oplus s_1(L)\oplus s_2(L)\oplus\\
s_1s_0(M)\oplus s_2s_0(M)\oplus s_2s_1(M))
\end{matrix}
\begin{matrix}\buildrel{\partial_0,\dots,\partial_3}\over\longrightarrow\\[-3.5mm]\longrightarrow\\[-3.5mm]\longrightarrow\\[-3.5mm]\longrightarrow\\[-3.5mm]\longleftarrow\\[-3.5mm]
\longleftarrow\\[-3.5mm]\longleftarrow
\end{matrix}\
F(L\oplus s_1(M)\oplus s_0(M))
\begin{matrix}\buildrel{\partial_0,\partial_1,\partial_2}\over\longrightarrow\\[-3mm]\longrightarrow\\[-3mm]\longrightarrow\\[-3mm]\longleftarrow\\[-3mm]\longleftarrow\end{matrix}\
 F(M)\ee
where
 the component
 $F(M)$ is  in degree 1.

 \subsection{Homotopy operations} For a simplicial abelian group $X$, $i,k,l,q\geq 1$, the composition is given as a
 natural map
 \begin{equation}\label{cmaps}
L_i\EuScript L^k(\mathbb Z,q)\otimes \pi_q\EuScript L^l(X)\to
\pi_i\EuScript L^{kl}(X)
 \end{equation}
It is constructed as follows. Let $\alpha\in \pi_q\EuScript
L^l(X)$. Take a map $f: K(\mathbb Z,q)\to \EuScript L^l(X)$ which
represents $\alpha$ in the homotopy group $\pi_q$. This map
induces the composition map
$$
L\EuScript L^k(\mathbb Z,q)\to L(\EuScript L^k\circ\EuScript
L^l)(X)\to L\EuScript L^{kl}(X)
$$
Taking the $i$-th homotopy groups we obtain the map (\ref{cmaps}).
\vspace{.5cm}
\section{Allowable sets}
\vspace{.5cm}
The description of allowable sets is given in \cite{Kan}.

Let $n\geq 1$, $k\geq 1$. We call a
sequence $(i_1,\dots, i_k)$ allowable with respect to $2n$ if\\ \\
1) $i_1\leq 2n,\
i_{j+1}\leq 2i_j$ for all $j=1,\dots, k-1$\\ 2) $i_k$ is odd.\\

Denote the set of allowable sequences with respect to $2n$ of the
length $k$ by $\mathcal W_{2n,k}$ (or $\mathcal
W_{2n,k}^{(2)}:=\mathcal W_{2n,k}$). For $k\geq 2$, define the
filtration
$$
\mathcal W_{2n,k}^{(2)}=\mathcal W_{2n,k}^{(2)}(1)\supset \mathcal
W_{2n,k}^{(2)}(2)\supset\dots\supset \mathcal W_{2n,k}^{(2)}(k)
$$
as follows: the subset $ \mathcal W_{2n,k}^{(2)}(j)$ consists of
allowable sequences $(i_1,\dots, i_k)\in \mathcal W_{2n,k}^{(2)}$
with $i_1=2n, i_2=4n,\dots, i_{j-1}=2^{j-1}n$.

\noindent{\bf Example.} \begin{align*} \mathcal
W_{2,3}^{(2)}(1)=\{& (1,1,1),\ (2,1,1),\ (2,2,1),\ (1,2,3),\
(1,2,1)\\ & (2,3,1),\ (2,2,3),\ (2,3,3),\ (2,4,3),\\ & (2,3,5),\
(2,4,5),\ (2,4,7),\
(2,4,1)\}\\
\mathcal W_{2,3}^{(2)}(2)=\{& (2,1,1),\ (2,2,1),\ (2,3,1),\
(2,2,3),\ (2,3,3),\\ & (2,4,3),\ (2,3,5),\ (2,4,5),\ (2,4,7),\
(2,4,1)\}\\
\mathcal W_{2,3}^{(2)}(3)=\{& (2,4,1),\ (2,4,3),\ (2,4,5),\
(2,4,7)\}\\
\end{align*}

For $w=(i_1,\dots,i_k)\in \mathcal V_{2n,k}$, let $o(w)$ be the
number of odd elements in $w$ and $d(w)=i_1+\dots+i_k$.

Let $p$ be an odd prime. Define the set $\mathcal W_{2n,k}^{(p)}$
as follows. The set $\mathcal W_{2n,k}^{(p)}$ consists of
sequences $(\nu_{i_1}\dots \nu_{i_k})$ with $\nu_{i_j}$ equal
either $\lambda_{i_j}$ or $\nu_{i_j}=\mu_{i_j}$, and such that
$i_1\leq n,\ i_{j+1}\leq pi_j-1$ whenever
$\nu_{i_j}=\lambda_{i_j}$ and $i_{j+1}\leq pi_j,$ whenever
$\nu_{i_j}=\mu_{i_j}$ and $\nu_{i_k}=\lambda_{i_k}$. For $k\geq
2$, define the filtration
$$
\mathcal W_{2n,k}^{(p)}=\mathcal W_{2n,k}^{(p)}(1)\supset \mathcal
W_{2n,k}^{(p)}(2)\supset\dots\supset \mathcal W_{2n,k}^{(p)}(k)
$$
as follows: the subset $ \mathcal W_{2n,k}^{(p)}(j)$ consists of
allowable sequences $(\nu_{i_1},\dots, \nu_{i_k})\in \mathcal
W_{2n,k}^{(p)}$ with $\nu_{i_1}=\mu_{n}, \nu_{i_2}=\mu_{pn},\dots,
\nu_{i_{j-1}}=\mu_{p^{j-1}n}$.

\vspace{.35cm}\noindent{\bf Example.}
\begin{align*}
\mathcal W_{2,2}^{(3)}(1)=\{& (\lambda_1,\lambda_1),\
(\lambda_1,\lambda_2),\ (\mu_1,\lambda_1),\ (\mu_1,\lambda_2),\
(\mu_1,\lambda_3)\}\\
\mathcal W_{2,2}^{(3)}(2)=\{& (\mu_1,\lambda_1),\
(\mu_1,\lambda_2),\
(\mu_1,\lambda_3)\}\\
\mathcal W_{4,2}^{(3)}(1)=\{& (\lambda_1,\lambda_1),\
(\lambda_1,\lambda_2),\ (\mu_1,\lambda_1),\ (\mu_1,\lambda_2),\
(\mu_1,\lambda_3), (\lambda_2,\lambda_1),\\ & (\lambda_2,\lambda_2),\ (\lambda_2,\lambda_3),\ (\lambda_2,\lambda_4), (\lambda_2,\lambda_5),\ \\
& (\mu_2,\lambda_1),\ (\mu_2,\lambda_2),\ (\mu_2,\lambda_3),\ (\mu_2,\lambda_4),\ (\mu_2,\lambda_5),\ (\mu_2,\lambda_6)\}\\
\mathcal W_{4,2}^{(3)}(2)=\{& (\mu_2,\lambda_1),\ (\mu_2,\lambda_2),\ (\mu_2,\lambda_3),\ (\mu_2,\lambda_4),\ (\mu_2,\lambda_5),\ (\mu_2,\lambda_6)\}\\
\end{align*}

For $\nu\in \mathcal W_{2n,k}^{(p)},$ let $o(\nu)$ be the number
of entries of $\lambda_{i_j}$ in $\nu$ and
$$d(\nu)=(2p-2)(i_1+\dots+i_k)-o(\nu).$$

\subsection{} We will need also the following notation. Let $p$ be
an odd prime
$$
\widetilde {\mathcal W}_{2n,k}^{(p)}\subseteq \mathcal
W_{2n,k}^{(p)}
$$
consists of all sequences $(\nu_{i_1},\dots, \nu_{i_k})\in\mathcal
W_{2n,k}^{(p)}$ such that $i_k>\frac{p^{k-1}-1}{2}.$ For $p=2$, $
\widetilde {\mathcal W}_{2n,k}^{(2)}$ consists of all sequences
$(i_1, \dots, i_k)\in \mathcal W_{2n,k}^{(2)}$ such that $i_k\geq
2^{k-1}$. \vspace{.5cm}
\section{Derived functors $L_*\EuScript L(\mathbb
Z,m)$}\label{section4} \vspace{.5cm}
\begin{theorem}\label{bousfield}{\bf (Bousfield)} For $n>0$ and a pointed
simplicial set $Y$, the natural map
$$
\pi_*(\EuScript LK(\mathbb Z,2n)\otimes \mathbb Z[Y])\to
\pi_*\EuScript L(K(\mathbb Z,2n)\otimes \mathbb Z[Y])
$$
is a monomorphism onto a direct summand.
\end{theorem}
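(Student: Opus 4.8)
The plan is to identify the natural map with the inclusion of the linear part of a cross-effect decomposition, and then to prove that this inclusion, which is only a \emph{degreewise} split monomorphism, is in fact split on homotopy. The formal part is easy; the homotopical splitting is the whole point.

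Set $V:=K(\mathbb Z,2n)$. Both $V$ and $\mathbb Z[Y]$ are degreewise free abelian, so in each simplicial degree $V\otimes\mathbb Z[Y]\cong\bigoplus_{y}V$, the index $y$ running over the non-basepoint simplices of $Y$. Applying Theorem~\ref{schles} in the graded form (\ref{crefe}) degree by degree decomposes $\EuScript L^m(V\otimes\mathbb Z[Y])$ naturally into summands $\EuScript L^d(C)$ indexed by basic tensor products $C$ in the copies of $V$. The weight-one summands, with $C=V_y$ and $d=m$, assemble to $\bigoplus_y\EuScript L^m(V)=\EuScript L^m(V)\otimes\mathbb Z[Y]$, and the map of the statement is precisely the inclusion $\iota$ of this diagonal summand. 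Thus $\iota$ is a monomorphism of simplicial abelian groups, and everything reduces to showing that $\pi_*\iota$ is onto a direct summand.

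Here the difficulty is that the Schlesinger retraction is not simplicial: a face map of $\mathbb Z[Y]$ can identify two distinct basis elements $y\neq y'$, thereby sending a cross-effect contribution such as $[\,v\otimes y,\,v'\otimes y'\,]$ into the diagonal term $[\,v\otimes z,\,v'\otimes z\,]$, so the projection killing the cross-effects does not commute with the faces. Equivalently, the linearization idempotent that would extract $\EuScript L(V)\otimes\mathbb Z[Y]$ carries denominators and is not defined over $\mathbb Z$. A degreewise-split monomorphism of simplicial abelian groups need not be split on $\pi_*$, so a genuinely homotopical retraction is needed. To organize this I would use the natural increasing filtration $F_1\subseteq F_2\subseteq\cdots$ of $\EuScript L(V\otimes\mathbb Z[Y])$, where $F_j$ is generated by Lie words involving at most $j$ distinct basis elements. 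All faces and degeneracies preserve $F_\bullet$ (an identification only lowers the number of distinct elements), the bottom term is $F_1=\mathrm{im}(\iota)$, and the associated graded recovers the natural Schlesinger decomposition. The statement becomes the assertion that, in the homotopy spectral sequence of $F_\bullet$, the bottom line $\pi_*(F_1)=\pi_*(\EuScript L(V)\otimes\mathbb Z[Y])$ survives and splits off the abutment.

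The main obstacle is the vanishing of the spectral-sequence differentials that could target this bottom line, and this is exactly where the parity hypothesis is used. I would compute these differentials by reducing along the skeletal filtration of $Y$, so that the cross-effect pieces of the associated graded are expressed, through the cofiber sequences of successive cell attachments, in terms of the groups $\pi_*\EuScript L^d K(\mathbb Z,2n)$. The known concentration and connectivity of these groups for \emph{even} degree $2n$ (compare Section~\ref{section4}) leaves no room for an incoming differential onto the diagonal line, which forces $F_1$ to be a homotopy direct summand and yields the claim. Carrying out this degeneration analysis, rather than the essentially formal identification of the map, is where the real work lies; the evenness of $2n$ is what makes it go through.
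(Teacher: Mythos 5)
Your formal setup (identifying the map with the inclusion of the diagonal, weight-one-in-basis-elements summand via Theorem~\ref{schles}, and observing that the Schlesinger retraction fails to be simplicial because face maps identify basis elements) matches the paper, and your filtration by the number of distinct basis elements is essentially the organizing device implicit there. The gap is in the final step: the claim that ``the known concentration and connectivity of $\pi_*\EuScript L^d K(\mathbb Z,2n)$ leaves no room for an incoming differential onto the diagonal line'' is not true as a degree-counting statement and cannot be, because the cross-effect contributions and the diagonal classes coexist in the same total degrees. Indeed, the Corollary following the theorem asserts only that the suspension is a \emph{split monomorphism}, not an isomorphism: the target contains cross-effect classes surviving in exactly the degrees where the diagonal lives. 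Worse, in the odd case the analogous filtration has the same degree pattern yet the differentials are nonzero --- the paper's own Remark exhibits $L_2\Lambda^2(\mathbb Z,1)=\mathbb Z$ mapping onto $L_3\Lambda^2(\mathbb Z,2)=\mathbb Z/2$ non-injectively --- so no argument from the location of the groups alone can distinguish the two cases. The parity hypothesis does not enter through connectivity; it enters through the algebra of the Eilenberg--Zilber shuffle bracket $[[\ ,\ ]]$: for even-dimensional classes the shuffle bracket is genuinely alternating ($[[x,x]]=0$), while for odd-dimensional $x$ one has $\{x,x\}=2x^{[2]}$ and the squaring operation obstructs the construction. Separately, even if you could kill the differentials, you would still face the extension problem on the abutment; surviving to $E^\infty$ gives injectivity, not a direct summand.

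The paper resolves both issues at once by constructing an explicit \emph{complementary simplicial subgroup} rather than arguing inside a spectral sequence: for each sequence $(\sigma_1,\dots,\sigma_l)\in\mathcal D_k$ of basis simplices it takes the simplicial abelian subgroup $A_{(\sigma_1,\dots,\sigma_l)}\simeq K(\mathbb Z,2nl)$ generated by the shuffle bracket $[[i_{2n}\otimes\sigma_1,\dots,i_{2n}\otimes\sigma_l]]$, and shows that the free simplicial Lie subring $\EuScript L\bigl(\bigoplus A_{(\sigma_1,\dots,\sigma_l)}\bigr)$ generated by all such brackets of weight $>1$ is closed under the simplicial operations precisely because $2n$ is even (identifications of basis elements send such brackets to brackets of the same form, with no squares appearing). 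The resulting map
$$
(\EuScript LK(\mathbb Z,2n)\otimes\mathbb Z[Y]_k)\oplus \EuScript L\Bigl(\bigoplus_{(\sigma_1,\dots,\sigma_l)\in\mathcal D_k}A_{(\sigma_1,\dots,\sigma_l)}\Bigr)\to \EuScript L(K(\mathbb Z,2n)\otimes\mathbb Z[Y]_k)
$$
is a monomorphism inducing a homotopy equivalence, which yields the splitting on $\pi_*$ directly. To repair your proof you would need to replace the degeneration claim by this (or an equivalent) structural construction of a simplicial complement.
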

For the case of the $k$-dimensional simplicial sphere $Y=S^k,$ one
has the following:
\begin{cor}
For $n,k>0$, the $k$-fold suspension map $$L_*\EuScript L(\mathbb
Z,2n)\to L_{*+k}\EuScript L(\mathbb Z,2n+k)
$$
is a monomorphism onto a direct summand.
\end{cor}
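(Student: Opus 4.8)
The plan is to deduce the corollary from Bousfield's Theorem~\ref{bousfield} by specializing $Y=S^k$, the reduced simplicial $k$-sphere, and then identifying the natural map of that theorem with the $k$-fold Dold--Puppe suspension. First I would record the two homotopy-theoretic identifications that convert the abstract map into a suspension. By the Dold--Thom theorem the reduced free simplicial abelian group $\widetilde{\mathbb Z}[S^k]$ is weakly equivalent to $K(\mathbb Z,k)$, so its Moore normalization is quasi-isomorphic to $\mathbb Z[k]$. Since the tensor product of simplicial abelian groups corresponds, via Eilenberg--Zilber, to the derived tensor product of chain complexes, and $\mathbb Z[2n]\lot\mathbb Z[k]\simeq\mathbb Z[2n+k]$, I obtain a weak equivalence $K(\mathbb Z,2n)\ot\widetilde{\mathbb Z}[S^k]\simeq K(\mathbb Z,2n+k)$. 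Hence the right-hand term of the map in Theorem~\ref{bousfield} computes $\pi_*\EuScript L(K(\mathbb Z,2n+k))=L_*\EuScript L(\mathbb Z,2n+k)$. For the left-hand term, tensoring any simplicial abelian group $W$ with $\widetilde{\mathbb Z}[S^k]$ shifts homotopy by $k$, so $\pi_*\bigl(\EuScript L K(\mathbb Z,2n)\ot\widetilde{\mathbb Z}[S^k]\bigr)\cong\pi_{*-k}\bigl(\EuScript L K(\mathbb Z,2n)\bigr)=L_{*-k}\EuScript L(\mathbb Z,2n)$. Re-indexing $*\mapsto *+k$ thus turns the map of the theorem into a homomorphism $L_*\EuScript L(\mathbb Z,2n)\to L_{*+k}\EuScript L(\mathbb Z,2n+k)$.

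Next I would verify that this homomorphism is exactly the $k$-fold suspension. Recall that the Dold--Puppe suspension $L_iF(A,n)\to L_{i+1}F(A,n+1)$ is, by construction, induced by the natural assembly map $FK(P_\ast[n])\ot\widetilde{\mathbb Z}[S^1]\to F\bigl(K(P_\ast[n])\ot\widetilde{\mathbb Z}[S^1]\bigr)$ together with the equivalence $K(P_\ast[n])\ot\widetilde{\mathbb Z}[S^1]\simeq K(P_\ast[n+1])$. Iterating this $k$ times amounts to tensoring with $\widetilde{\mathbb Z}[S^k]$, and specializing $F=\EuScript L$, $A=\mathbb Z$, with source dimension $2n$, reproduces precisely the map extracted above. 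Consequently, once the two maps are identified, Theorem~\ref{bousfield} asserts exactly that the $k$-fold suspension is a monomorphism onto a direct summand, and the splitting in the corollary is inherited verbatim from the splitting in Theorem~\ref{bousfield}.

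The main obstacle is the second step: one must check that Bousfield's natural transformation $\EuScript L(X)\ot\widetilde{\mathbb Z}[Y]\to\EuScript L\bigl(X\ot\widetilde{\mathbb Z}[Y]\bigr)$, evaluated at $X=K(\mathbb Z,2n)$ and $Y=S^k$, agrees under the equivalences above with the standard iterated suspension, i.e. that the comparison square commutes up to homotopy and that the Dold--Thom and Eilenberg--Zilber identifications are natural and coherent enough to be transported through $\EuScript L$. One must also be careful to use the \emph{reduced} free abelian group on $S^k$ (equivalently, to project onto the reduced summand of $\mathbb Z[S^k]$), so that the homotopy shift is exactly $k$, and to keep the hypotheses $n>0$ and $k>0$ under which both Theorem~\ref{bousfield} and the sphere model are valid.
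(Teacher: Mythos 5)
Your proposal is correct and follows the paper's own route exactly: the paper deduces the corollary by specializing $Y=S^k$ in Theorem \ref{bousfield}, which is precisely your argument. The identifications you spell out (Dold--Thom for $\widetilde{\mathbb Z}[S^k]\simeq K(\mathbb Z,k)$, the Eilenberg--Zilber shift, and matching Bousfield's assembly map with the iterated Dold--Puppe suspension) are the details the paper leaves implicit, and your care about the reduced summand is the right precaution.
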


\noindent{\it Proof of theorem \ref{bousfield}.} For $m\geq 1,$
consider the natural map of bisimplicial groups
$$
(\EuScript LK(\mathbb Z,m)\otimes \mathbb Z[Y])\to \EuScript
L(K(\mathbb Z,m)\otimes \mathbb Z[Y])
$$
By the Eilenberg-MacLane-Cartier theorem it is enough, for the
analysis of the induced map on homotopy groups, to consider the
corresponding map of diagonals of these bisimplicial groups:
\begin{center}
\begin{tabular}{ccccccccccc}
$\dots$ & $\begin{matrix}\longrightarrow\\[-3.5mm]\ldots\\[-2.5mm]\longrightarrow\\[-3.5mm]\longleftarrow\\[-3.5mm]
\ldots\\[-2.5mm]\longleftarrow
\end{matrix}$ & $\EuScript L(\mathbb Z,m)_3\otimes \mathbb Z[Y]_3$ & $\begin{matrix}\longrightarrow\\[-3.5mm]\longrightarrow\\[-3.5mm]\longrightarrow\\[-3.5mm]\longrightarrow\\[-3.5mm]\longleftarrow\\[-3.5mm]
\longleftarrow\\[-3.5mm]\longleftarrow
\end{matrix}$ & $\EuScript L(\mathbb Z,m)_2\otimes \mathbb Z[Y]_2$ & $\begin{matrix}\longrightarrow\\[-3.5mm] \longrightarrow\\[-3.5mm]\longrightarrow\\[-3.5mm]
\longleftarrow\\[-3.5mm]\longleftarrow \end{matrix}$ & $\EuScript L(\mathbb Z,m)_1\otimes \mathbb Z[Y]_1$\\
 & & $\downarrow$ & & $\downarrow$ & & $\downarrow$\\
$\dots$ & $\begin{matrix}\longrightarrow\\[-3.5mm]\ldots\\[-2.5mm]\longrightarrow\\[-3.5mm]\longleftarrow\\[-3.5mm]
\ldots\\[-2.5mm]\longleftarrow
\end{matrix}$ & $\EuScript L(K(\mathbb Z,m)_3\otimes \mathbb Z[Y]_3)$ & $\begin{matrix}\longrightarrow\\[-3.5mm]\longrightarrow\\[-3.5mm]\longrightarrow\\[-3.5mm]\longrightarrow\\[-3.5mm]\longleftarrow\\[-3.5mm]
\longleftarrow\\[-3.5mm]\longleftarrow
\end{matrix}$ & $\EuScript L(K(\mathbb Z,m)_2\otimes \mathbb Z[Y]_2)$ & $\begin{matrix}\longrightarrow\\[-3.5mm] \longrightarrow\\[-3.5mm]\longrightarrow\\[-3.5mm]
\longleftarrow\\[-3.5mm]\longleftarrow \end{matrix}$ & $\EuScript L(K(\mathbb Z,m)_1\otimes \mathbb Z[Y]_1)$
\end{tabular}
\end{center}
\begin{equation}\label{simpldiag}
\end{equation}
Theorem \ref{schles} implies that the maps
$$
\EuScript L(\mathbb Z,m)_k\otimes \mathbb Z[Y]_k\to \EuScript
L(K(\mathbb Z,m)_k\otimes \mathbb Z[Y]_k)
$$
are monomorphisms onto direct summands, since $\mathbb Z[Y]_k$ is
a free abelian group with a basis $Y_k$.

For $k\geq 1$, fix an order of elements of $Y_k$ and denote by
$\mathcal D_k$ the set of all sequences $(\sigma_1,\dots,
\sigma_l),\ \sigma_i\in Y_k,\ l\geq 2$ such that
$\sigma_1>\sigma_2\leq \dots \leq \sigma_l$. Theorem \ref{schles}
implies that
$$
\EuScript L(K(\mathbb Z,m)\otimes \mathbb Z[Y]_k)\simeq \EuScript
LK(\mathbb Z,m)\otimes \mathbb Z[Y]_k\oplus \EuScript
L(\bigoplus_{(\sigma_1,\dots,\sigma_l)\in \mathcal D_k}K(\mathbb
Z,m)^{\otimes l})
$$
For every $(\sigma_1,\dots, \sigma_l)\in \mathcal D_k$, consider
the simplicial abelian subgroup $$A_{(\sigma_1,\dots,
\sigma_l)}(\simeq K(\mathbb Z,lm))\hookrightarrow\EuScript
L(K(\mathbb Z,m)\otimes \mathbb Z[Y]_k) $$ generated by the
element $[[i_m\otimes \sigma_1,\dots, i_m\otimes \sigma_l]].$
There is a monomorphism which is a homotopy equivalence
$$
\EuScript L(\bigoplus_{(\sigma_1,\dots, \sigma_l)\in \mathcal
D_k}A_{(\sigma_1,\dots, \sigma_l)})\hookrightarrow \EuScript
L(\bigoplus_{(\sigma_1,\dots, \sigma_l)\in \mathcal D_k}K(\mathbb
Z,m)^{\otimes l})
$$
Hence, there is the following monomorphism which induces a
homotopy equivalence
$$
(\EuScript LK(\mathbb Z,m)\otimes \mathbb Z[Y]_k)\oplus \EuScript
L(\bigoplus_{(\sigma_1,\dots, \sigma_l)\in \mathcal
D_k}A_{(\sigma_1,\dots, \sigma_l)})\to \EuScript L(K(\mathbb
Z,m)\otimes \mathbb Z[Y]_k)
$$
Let $m=2n$. Then the simplicial abelian group $\EuScript
L(\bigoplus_{(\sigma_1,\dots, \sigma_l)\in \mathcal
D_k}A_{(\sigma_1,\dots, \sigma_l)})$ may be viewed as a free
simplicial Lie subring in $\EuScript L(K(\mathbb Z,m)\otimes
\mathbb Z[Y]_k)$ generated by all products of the form
$[[i_m\otimes \sigma_1,\dots, i_m\otimes \sigma_l]]$ of weight
$>1$.

\begin{remark}
The suspension $L_*\EuScript L(\mathbb Z,m)\to L_{*+1}\EuScript
L(\mathbb Z,m+1)$ is not a monomorphism for an odd $m$. For
example, we have the following:
$$
\xyma{L_2\Lambda^2(\mathbb Z,1) \ar@{=}[d] \ar@{->}[r] &
L_3\Lambda^2(\mathbb Z,2)\ar@{=}[d]\\ \mathbb Z\ar@{->>}[r] &
\mathbb Z/2}
$$
\end{remark}

Consider the simplicial circle $S^1=\Delta[1]/\partial\Delta[1]$:
$$
S_0^1=\{*\},\ S_1^1=\{*,\sigma\},\ S_2^1=\{*, s_0\sigma,
s_1\sigma\},\dots, S_n^1=\{*, x_0,\dots, x_n\},
$$
where $x_i=s_n\dots \hat s_i\dots s_0\sigma$. Take the standard
abelian simplicial model of $K(\mathbb Z,1)$:
\begin{center}
\begin{tabular}{ccccccccccc}
$\dots$ & $\begin{matrix}\longrightarrow\\[-3.5mm]\ldots\\[-2.5mm]\longrightarrow\\[-3.5mm]\longleftarrow\\[-3.5mm]
\ldots\\[-2.5mm]\longleftarrow
\end{matrix}$ & $\mathbb Z\oplus \mathbb Z\oplus \mathbb Z$ & $\begin{matrix}\longrightarrow\\[-3.5mm]\longrightarrow\\[-3.5mm]\longrightarrow\\[-3.5mm]\longrightarrow\\[-3.5mm]\longleftarrow\\[-3.5mm]
\longleftarrow\\[-3.5mm]\longleftarrow
\end{matrix}$ & $\mathbb Z\oplus \mathbb Z$ & $\begin{matrix}\longrightarrow\\[-3.5mm] \longrightarrow\\[-3.5mm]\longrightarrow\\[-3.5mm]
\longleftarrow\\[-3.5mm]\longleftarrow \end{matrix}$ & $\mathbb Z$
\end{tabular}
\end{center}
with free generators $y_i=s_n\dots \hat s_i\dots s_0(i_1),\
i=0,\dots, n+1$ in degree $n+1$, for a generator $i_1\in K(\mathbb
Z,1)$. Consider the map (\ref{simpldiag}) between diagonals of
correspondent bisimplicial groups:
\begin{center}
\begin{tabular}{ccccccccccc}
$\dots$ & $\begin{matrix}\longrightarrow\\[-3.5mm]\ldots\\[-2.5mm]\longrightarrow\\[-3.5mm]\longleftarrow\\[-3.5mm]
\ldots\\[-2.5mm]\longleftarrow
\end{matrix}$ & $\Lambda^2(\mathbb Z^{\oplus 3})\otimes \mathbb Z[*,x_0,x_1,x_2]$ & $\begin{matrix}\longrightarrow\\[-3.5mm]\longrightarrow\\[-3.5mm]\longrightarrow\\[-3.5mm]\longrightarrow\\[-3.5mm]\longleftarrow\\[-3.5mm]
\longleftarrow\\[-3.5mm]\longleftarrow
\end{matrix}$ & $\Lambda^2(\mathbb Z\oplus \mathbb Z)\otimes \mathbb Z[*,x_0,x_1]$ & $\begin{matrix}\longrightarrow\\[-3.5mm] \longrightarrow\\[-3.5mm]\longrightarrow\\[-3.5mm]
\longleftarrow\\[-3.5mm]\longleftarrow \end{matrix}$ & $\Lambda^2(\mathbb Z)\otimes \mathbb Z[*,\sigma]$\\
 & & $\downarrow_{\ v_3}$ & & $\downarrow_{\ v_2}$ & & $\downarrow_{\ v_1}$\\
$\dots$ & $\begin{matrix}\longrightarrow\\[-3.5mm]\ldots\\[-2.5mm]\longrightarrow\\[-3.5mm]\longleftarrow\\[-3.5mm]
\ldots\\[-2.5mm]\longleftarrow
\end{matrix}$ & $\Lambda^2(\mathbb Z^{\oplus 3}\otimes \mathbb Z[*,x_0,x_1,x_2])$ & $\begin{matrix}\longrightarrow\\[-3.5mm]\longrightarrow\\[-3.5mm]\longrightarrow\\[-3.5mm]\longrightarrow\\[-3.5mm]\longleftarrow\\[-3.5mm]
\longleftarrow\\[-3.5mm]\longleftarrow
\end{matrix}$ & $\Lambda^2((\mathbb Z\oplus \mathbb Z)\otimes \mathbb Z[*,x_0,x_1])$ & $\begin{matrix}\longrightarrow\\[-3.5mm] \longrightarrow\\[-3.5mm]\longrightarrow\\[-3.5mm]
\longleftarrow\\[-3.5mm]\longleftarrow \end{matrix}$ & $\Lambda^2(\mathbb Z\otimes \mathbb Z[*,\sigma])$
\end{tabular}
\end{center}
The generator of $\pi_3(\Lambda^2K(\mathbb Z,1)\otimes \mathbb
Z[S^1])=L_2\Lambda^2(\mathbb Z,1)$ is given by the element
$$
\alpha=[s_1s_0(i_1),s_2s_0(i_1)]\otimes
s_2s_1(\sigma)-[s_1s_0(i_1),s_2s_1(i_1)]\otimes
s_2s_0(\sigma)+[s_2s_0(i_1),s_2s_1(i_1)]\otimes s_1s_0(\sigma)
$$
Then \begin{multline*} v_3(\alpha)=[s_1s_0(i_1)\otimes
s_2s_1(\sigma),s_2s_0(i_1)\otimes s_2s_1(\sigma)]-\\
[s_1s_0(i_1)\otimes s_2s_0(\sigma),s_2s_1(i_1)\otimes
s_2s_0(\sigma)]+ [s_2s_0(i_1)\otimes
s_1s_0(\sigma),s_2s_1(i_1)\otimes s_1s_0(\sigma)]
\end{multline*}
\begin{theorem}\label{kan1}{\bf (Kan)} Let $n\geq 1$. If $r$ is odd, then the suspension map
$$
L_*\EuScript L^r(\mathbb Z,2n)\to L_{*+1}\EuScript L^r(\mathbb
Z,2n+1)
$$
is an isomorphism. Let $\alpha_{2n+1}\in
\pi_{4n+2}\Lambda^2(\mathbb Z,2n+1)=\mathbb Z$ and $i_{2n+1}\in
K(\mathbb Z,2n+1)_{2n+1}$ be generators, then the suspension and
the composition homomorphism $\alpha_{2n+1}$ induce an isomorphism
$$
L_*\EuScript L^{2r}(\mathbb Z,2n)\oplus L_{*+1}\EuScript
L^r(\mathbb Z,4n+2)\simeq L_{*+1}\EuScript L^{2r}(\mathbb Z,2n+1).
$$
\end{theorem}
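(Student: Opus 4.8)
The plan is to run the bisimplicial analysis from the proof of Theorem~\ref{bousfield} with $Y=S^1$, and to read off the answer in each Lie degree from Schlesinger's decomposition. I would identify the reduced Eilenberg--MacLane object $K(\mathbb Z,2n+1)$ with the suspension $K(\mathbb Z,2n)\otimes\tilde{\mathbb Z}[S^1]$, where $\tilde{\mathbb Z}[S^1]\simeq K(\mathbb Z,1)$, so that Theorem~\ref{schles}, applied levelwise along the diagonal as in (\ref{simpldiag}), splits $\EuScript L(K(\mathbb Z,2n)\otimes\mathbb Z[S^1])$ into the single-$\sigma$ summand $\EuScript LK(\mathbb Z,2n)\otimes\mathbb Z[S^1]$ and the complementary summand spanned by the multi-$\sigma$ brackets $[[i_{2n}\otimes\sigma_1,\dots,i_{2n}\otimes\sigma_l]]$ with the $\sigma_j$ running over the allowable sequences in $\mathcal D_k$, $l\geq 2$. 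On homotopy the single-$\sigma$ summand is exactly the suspension $L_*\EuScript L^r(\mathbb Z,2n)\to L_{*+1}\EuScript L^r(\mathbb Z,2n+1)$, and Theorem~\ref{bousfield} shows it is a split monomorphism in every Lie degree; the theorem therefore reduces to computing the complementary summand gradewise.

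The structural input is that the fundamental class $\iota=i_{2n+1}$ lies in odd homotopical degree $2n+1$, so that, in the graded Lie ring $\pi_*\EuScript L(K(\mathbb Z,2n+1))$, its self-bracket $[\iota,\iota]$ need not vanish and represents the generator $\alpha_{2n+1}\in\pi_{4n+2}\Lambda^2(\mathbb Z,2n+1)=\mathbb Z$, whereas the graded Jacobi identity forces $[\iota,[\iota,\iota]]=0$. I would use this to show that, up to homotopy, every multi-$\sigma$ bracket collapses onto an iterated self-bracket of $\iota$: the distinct simplices $\sigma_j$ of $S^1$ all represent the single class of $\tilde{\mathbb Z}[S^1]\simeq K(\mathbb Z,1)$, so a bracket of length $l$ reduces to the $l$-fold self-bracket of $\iota$, which dies for $l\geq 3$. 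Consequently the complementary summand has no indecomposable generators beyond $\alpha_{2n+1}$, and it is identified with $\EuScript L(K(\mathbb Z,4n+2))$ attached through $\alpha_{2n+1}$, the generator $\alpha_{2n+1}$ carrying Lie degree $2$ so that its Lie-degree-$s$ iterates occupy original Lie degree $2s$.

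Granting this identification, both assertions follow by bookkeeping. Since $\alpha_{2n+1}$ has even Lie degree, the complementary summand lives entirely in even Lie degrees; hence for odd $r$ it contributes nothing, the single-$\sigma$ summand is all of $L_{*+1}\EuScript L^r(\mathbb Z,2n+1)$, and the suspension is an isomorphism. In Lie degree $2r$ the complementary summand is the Lie-degree-$r$ part of $\EuScript L(K(\mathbb Z,4n+2))$, namely $\EuScript L^r(K(\mathbb Z,4n+2))$, and it is realized on homotopy precisely by the composition homomorphism (\ref{cmaps}) with $k=r$, $l=2$, $q=4n+2$ and $\alpha=\alpha_{2n+1}$, which gives $L_*\EuScript L^r(\mathbb Z,4n+2)\to L_*\EuScript L^{2r}(\mathbb Z,2n+1)$. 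Together with the suspension summand $L_*\EuScript L^{2r}(\mathbb Z,2n)$ this exhausts $L_{*+1}\EuScript L^{2r}(\mathbb Z,2n+1)$, which is the second isomorphism; the splitting statement of Theorem~\ref{bousfield} guarantees that the two summands meet only in zero.

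The main obstacle is the middle step: proving rigorously that the complementary summand of the Schlesinger decomposition reassembles to exactly $\EuScript L(K(\mathbb Z,4n+2))$ via $\alpha_{2n+1}$, with no further contributions. Concretely one must show, at the level of the diagonal bisimplicial group, that the odd-length multi-$\sigma$ brackets die on homotopy and that the even-length ones organize, through a Lazard-type elimination applied to the face and degeneracy structure of $S^1$, into the free Lie ring on the single square class; this is the simplicial incarnation of $[\iota,[\iota,\iota]]=0$, and it is where the parity of the internal degree $2n$ enters. Once the complementary summand is pinned down as a subobject, Theorem~\ref{bousfield} supplies the splitting for free, so the real work is to determine its size rather than to separate it from the suspension.
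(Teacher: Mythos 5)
A preliminary point of comparison: the paper does not prove Theorem \ref{kan1} at all --- it is quoted from Kan's unpublished 1965 manuscript \cite{Kan} --- so your proposal cannot be matched against a proof in the text, only against the adjacent machinery (the proof of Theorem \ref{bousfield}, the $S^1$ model, and the $v_3(\alpha)$ computation), with which your framing is consistent. The first half of your argument is sound: taking $Y=S^1$, Theorem \ref{bousfield} (whose proof indeed requires the internal degree $m=2n$ to be even, exactly as you exploit) gives that the suspension is a split monomorphism in each Lie weight, and your bookkeeping --- the complement concentrated in doubled weights, realized through the composition pairing (\ref{cmaps}) with $k=r$, $l=2$, $q=4n+2$, $\alpha=\alpha_{2n+1}$ --- correctly reproduces the shape of both assertions of the theorem.

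However, the decisive middle step is a genuine gap, and you concede as much: everything rests on identifying the complementary summand, up to homotopy, with $\EuScript L(K(\mathbb Z,4n+2))$ in doubled weight, and that identification \emph{is} the content of Kan's theorem rather than a consequence of what you have established. Concretely, three things are missing. First, the odd-weight part of the complement consists of free simplicial Lie rings on genuinely many generators (the $A_{(\sigma_1,\dots,\sigma_l)}$ indexed by $\mathcal D_k$, together with mixed outer brackets), and you must show its homotopy vanishes entirely; the relation $\{\iota,\iota,\iota\}=0$ (axiom 4 of a GLRS, available via the shuffle DGLS structure on a simplicial Lie ring) kills one bottom class, not the derived-functor classes carried by these summands. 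Second, your assertion that ``every multi-$\sigma$ bracket collapses onto an iterated self-bracket of $\iota$'' is false at the levelwise simplicial stage --- the $\sigma_j$ are distinct basis elements of $\mathbb Z[S^1]_k$ and nothing collapses before passing to homotopy, which is precisely where the theorem lives, so this cannot serve even as the skeleton of the argument without the Lazard-elimination/filtration induction you defer. Third, joint surjectivity is never addressed: Theorem \ref{bousfield} splits off the suspension with a specific complement, but you still need that composition with $\alpha_{2n+1}$ carries $L_{*+1}\EuScript L^r(\mathbb Z,4n+2)$ isomorphically \emph{onto} that complement, which requires at minimum the bottom-class verification (the analog of the paper's $v_3(\alpha)$ computation showing $\alpha_{2n+1}$ hits the square class) and then an induction through the lower central (weight) filtration to propagate it. As it stands, your text is a correct reduction of the theorem to a precisely described open step, not a proof of it.
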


\subsection{}\label{derivedkan}

\begin{theorem}
$L_i\EuScript L^{2^k}(\mathbb Z,2n)$ is a $\mathbb Z/2$-vector
space indexed by all sequences $(i_1,\dots, i_k)\in \mathcal
W_{2n,k}$ with $i=2n+i_1+\dots+i_k$.
\end{theorem}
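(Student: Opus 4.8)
The plan is to argue by induction on $k$, using the composition (homotopy) operations $(\ref{cmaps})$ to construct classes and Kan's Theorem $\ref{kan1}$ together with Bousfield's Theorem $\ref{bousfield}$ to pin down that these classes form a basis. First I dispose of the base case $k=1$. Here $\EuScript L^2=\Lambda^2$, so the assertion is a computation of $L_*\Lambda^2(\mathbb Z,2n)=\pi_*\Lambda^2 K(\mathbb Z,2n)$; the set $\mathcal W_{2n,1}$ is just the odd integers $i_1$ with $1\le i_1\le 2n-1$, and the claim is that $L_i\Lambda^2(\mathbb Z,2n)=\mathbb Z/2$ exactly for $i=2n+i_1$ with such $i_1$. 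I would obtain this from the standard Dold--Puppe computation of the derived exterior square on an even Eilenberg--MacLane space. The essential point is that there is \emph{no} free summand: the only weight-$2$ class of $\pi_*\EuScript L K(\mathbb Z,2n)$ arising from the bracket would be $[\iota,\iota]$ for the fundamental class $\iota$ of even degree $2n$, and this vanishes by axiom $2$ of a GLRS ($\{x,x\}=0$ in even degree). Hence everything is $2$-torsion, matching the claim.

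The engine of the inductive step is the split short exact (EHP-type) sequence furnished by Theorem $\ref{kan1}$: for $r=2^{k-1}$ it reads
$$L_*\EuScript L^{2^k}(\mathbb Z,2n)\ \oplus\ L_{*+1}\EuScript L^{2^{k-1}}(\mathbb Z,4n+2)\ \cong\ L_{*+1}\EuScript L^{2^k}(\mathbb Z,2n+1),$$
in which the first summand is the suspension (a monomorphism onto a direct summand by Theorem $\ref{bousfield}$) and the second is the image of composition with the generator $\alpha_{2n+1}$. Since $4n+2$ is even, the inductive hypothesis identifies $L_*\EuScript L^{2^{k-1}}(\mathbb Z,4n+2)$ with the $\mathbb Z/2$-vector space on $\mathcal W_{4n+2,\,k-1}$; in particular it carries no free summand, so for $k\ge 2$ no free part can enter either side. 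The free classes that do occur for odd source dimension are confined to weight $2^0=1$, i.e. to $\EuScript L^1$, and hence appear only in the case $k=1$, where they do not affect the even-dimensional group we are computing.

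To solve for the even-dimensional group I use the composition operations to produce, for every allowable $w=(i_1,\dots,i_k)\in\mathcal W_{2n,k}$, a class $\beta_w\in L_{2n+d(w)}\EuScript L^{2^k}(\mathbb Z,2n)$, obtained by iterating $(\ref{cmaps})$: starting from the fundamental class $\iota\in\pi_{2n}\EuScript L^1 K(\mathbb Z,2n)$ one applies $k$ successive degree-$2$ Lie operations with degree increments $i_1,\dots,i_k$. The combinatorial identity $\mathcal W_{2n,k}=\bigsqcup_{i_1=1}^{2n}\{i_1\}\times\mathcal W_{2i_1,k-1}$ (peel off the first operation; the constraints $i_2\le 2i_1$, $i_{j+1}\le 2i_j$, and $i_k$ odd are exactly those defining $\mathcal W_{2i_1,k-1}$) shows that these classes are indexed precisely by $\mathcal W_{2n,k}$ and that their count obeys the same recursion as the split sequence above.

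The remaining and hardest point is to show that the $\beta_w$ form a basis. Spanning should follow because the Adem-type relations among the composition operations reduce every iterated composite to an allowable one, and this is exactly where the description of allowable sets in \cite{Kan} enters; linear independence is then forced by matching, degree by degree, the mod-$2$ dimension count coming from the split sequence of Theorem $\ref{kan1}$ against the cardinality $|\{w\in\mathcal W_{2n,k}:d(w)=i-2n\}|$, using the inductive value of $L_*\EuScript L^{2^{k-1}}(\mathbb Z,4n+2)$ and the base case. Establishing that the relations hold exactly as needed, so that allowability is equivalent to admissibility and the count is sharp, is the main obstacle; once it is in place the two sides of Theorem $\ref{kan1}$ match term for term, every group is seen to be a $\mathbb Z/2$-vector space indexed as claimed, and the induction closes.
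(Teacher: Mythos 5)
There are two concrete failures here. First, your construction of the classes $\beta_w$ by iterating the \emph{integral} composition maps (\ref{cmaps}) does not exist for most allowable sequences: allowability only constrains the \emph{last} entry $i_k$ to be odd, so intermediate entries may be even (e.g.\ $(2,4,3)\in\mathcal W_{2,3}^{(2)}$), and for even $i$ your own base case gives $L_{2i}\EuScript L^2(\mathbb Z,i)=0$ (the degree increment $i$ is even), so there is no integral class $\beta_i\in\pi_{2i}\Lambda^2K(\mathbb Z,i)$ to compose with at those stages. The classes $\beta_i$ exist for all $i$ only in mod-$2$ homotopy, $\beta_i\in\pi_{2i}(\Lambda^2K(\mathbb Z,i)\otimes\mathbb Z/2)=\mathbb Z/2$, and this is precisely how the paper (following Kan) proceeds: one first shows $\pi_*(\EuScript L^{2^k}K(\mathbb Z,2n)\otimes\mathbb Z/2)$ is a $\mathbb Z/2$-vector space with basis the $\beta_w$ for all $w$ with $i_1\le 2n$, $i_{j+1}\le 2i_j$ and \emph{no} parity restriction, and then recovers the integral group as the image of the monomorphism $L_i\EuScript L^{2^k}(\mathbb Z,2n)\to\pi_i(\EuScript L^{2^k}(K(\mathbb Z,2n)\otimes\mathbb Z/2))$ induced by $K(\mathbb Z,2n)\to K(\mathbb Z,2n)\otimes\mathbb Z/2$, that image being spanned by exactly the $\beta_w$ with $i_k$ odd. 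Without the mod-$2$ reduction your construction cannot even produce candidates for a basis.

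Second, your inductive engine points the wrong way. Theorem \ref{kan1} expresses the \emph{odd}-dimensional group $L_{*+1}\EuScript L^{2^k}(\mathbb Z,2n+1)$ as the direct sum of (the suspension of) $L_*\EuScript L^{2^k}(\mathbb Z,2n)$ and $L_{*+1}\EuScript L^{2^{k-1}}(\mathbb Z,4n+2)$; of these three terms your induction hypothesis controls only the $2^{k-1}$-term, so the splitting is one equation in two unknowns and cannot ``solve for'' the even-dimensional group, nor can it force the dimension match you invoke for linear independence. Relatedly, your spanning step presupposes both that $L_*\EuScript L^{2^k}(\mathbb Z,2n)$ is generated by composition operations and that Adem-type relations reduce every composite to an allowable one; you rightly flag this as the main obstacle, but that pair of assertions \emph{is} the content of the theorem --- the paper itself does not reprove it, attributing the computation to Kan's unpublished manuscript \cite{Kan} and only recalling the mod-$2$ basis construction sketched above. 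As written, then, the proposal contains no mechanism that actually computes the even-dimensional groups.
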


Recall the construction of basis elements of $L_i\EuScript
L^{2^k}(\mathbb Z,2n)$. Consider the simplicial group $$ \EuScript
L^{2^k}(K(\mathbb Z,2n)\otimes \mathbb Z/2)\simeq (\EuScript
L^{2^k}K(\mathbb Z,2n))\otimes\mathbb Z/2
$$
For $k,l,i,n\geq 1$, the composition  $$ \pi_i(\EuScript
L^k(\mathbb Z,q)\otimes \mathbb Z/2)\otimes \pi_q(\EuScript
L^l(\mathbb Z,n)\otimes \mathbb Z/2)\to \pi_i(\EuScript
L^{kl}(\mathbb Z,n)\otimes \mathbb Z/2)
$$
 is defined analogously with (\ref{cmaps}).

\begin{prop}
Let $w=(i_1,\dots, i_k)$ be the sequence with the following
properties: 1) $i_1\leq 2n$;\ 2) $2i_j\geq i_{j+1}$. For
$i=1,2,\dots $ let $\beta_i\in \pi_{2i}(\Lambda^2K(\mathbb
Z,i)\otimes\mathbb Z/2)=\mathbb Z/2$ be a non-zero element.
Consider the following element
$$
\beta_w=\sigma^{2n-i_1}\beta_{i_1}\sigma^{2i_1-i_2}\beta_{i_2}\dots
\sigma^{2i_{k-1}-i_k}\beta_{i_k}\in
\pi_{2n+i_1+\dots+i_k}(\EuScript L^{2^k}(\mathbb Z,2n)\otimes
\mathbb Z/2)
$$
The group $\pi_i(\EuScript L^{2^k}(\mathbb Z,2n)\otimes \mathbb
Z/2)$ is a $\mathbb Z/2$-vector space with a basis consisting of
elements $\beta_w$ and $i=2n+i_1+\dots+i_k$.
\end{prop}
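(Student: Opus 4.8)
The plan is to argue by induction on $k$, using the mod $2$ composition product and the reduction, recalled just before the statement, of $\EuScript L^{2^k}(K(\mathbb Z,2n)\otimes\mathbb Z/2)$ to the single object $\EuScript L^{2^k}K(\mathbb Z,2n)\otimes\mathbb Z/2$. First I would record the degree bookkeeping that makes $\beta_w$ well defined. Reading the word from the right, $\beta_{i_k}\in\pi_{2i_k}(\EuScript L^2(\mathbb Z,i_k)\otimes\mathbb Z/2)$ is an honest class; each factor $\sigma^{\,2i_{j-1}-i_j}$ raises both the homotopy degree and the Eilenberg--MacLane parameter by $2i_{j-1}-i_j$ (legitimate precisely because of hypothesis 2), $i_j\le 2i_{j-1}$), so that after suspension the partial class lies in $\pi_\ast(\EuScript L^{2^{k-j+1}}(\mathbb Z,2i_{j-1})\otimes\mathbb Z/2)$ with parameter $2i_{j-1}$, which is exactly the homotopy degree of $\beta_{i_{j-1}}$; the composition product then lowers the parameter to $i_{j-1}$ while doubling the Lie degree. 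A direct induction on $j$ shows that after processing the suffix $(i_j,\dots,i_k)$ one obtains a class in $\pi_{2i_j+i_{j+1}+\dots+i_k}(\EuScript L^{2^{k-j+1}}(\mathbb Z,i_j)\otimes\mathbb Z/2)$, whence $\sigma^{\,2n-i_1}$ (using hypothesis 1), $i_1\le 2n$) places $\beta_w$ in $\pi_{2n+i_1+\dots+i_k}(\EuScript L^{2^k}(\mathbb Z,2n)\otimes\mathbb Z/2)$, as asserted. The same computation exhibits the recursive identity
\[
\beta_{(i_1,\dots,i_k)}=\sigma^{\,2n-i_1}\bigl(\beta_{(i_2,\dots,i_k)}\circ\beta_{i_1}\bigr),
\]
where $\beta_{(i_2,\dots,i_k)}\in\pi_\ast(\EuScript L^{2^{k-1}}(\mathbb Z,2i_1)\otimes\mathbb Z/2)$ is the inductively constructed class over the even parameter $2i_1$, $\circ$ denotes the mod $2$ composition product (\ref{cmaps}) with $\beta_{i_1}$ placed as the inner factor, and $(i_2,\dots,i_k)$ is allowable with respect to $2i_1$.

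The base case $k=1$ is the computation of $\pi_\ast(\Lambda^2 K(\mathbb Z,2n)\otimes\mathbb Z/2)$. Here I would invoke the classical value of the Dold--Puppe derived functors $L_\ast\Lambda^2(\mathbb Z,2n)$ together with universal coefficients: since those groups are annihilated by $2$, the mod $2$ homotopy in degree $2n+i$ is one--dimensional for $1\le i\le 2n$ and vanishes otherwise, and its generator is $\sigma^{\,2n-i}\beta_i$ by naturality of suspension and the defining property of $\beta_i$. For the inductive step I would feed the hypothesis --- that $\{\beta_{(i_2,\dots,i_k)}\}$ is a basis of $\pi_\ast(\EuScript L^{2^{k-1}}(\mathbb Z,2m)\otimes\mathbb Z/2)$ for \emph{every} even parameter $2m$ --- into the recursive identity above, letting $i_1$ run over $1,\dots,2n$ and the suffix over sequences allowable with respect to $2i_1$. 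That the proposed family is then correctly indexed is the combinatorial bijection $\bigsqcup_{i_1=1}^{2n}\{i_1\}\times\mathcal V_{2i_1,k-1}\cong\mathcal V_{2n,k}$, where $\mathcal V_{2n,k}$ is the set of sequences satisfying conditions 1)--2).

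Two points remain, and they carry the content. For spanning I would show that the composition product is surjective onto $\pi_\ast(\EuScript L^{2^k}(\mathbb Z,2n)\otimes\mathbb Z/2)$, and that every composition product reduces, by the relations among the iterated operations $\beta_i$ and the suspension (the analogues of the Adem relations), to a sum of admissible $\beta_w$. It is precisely the splitting $\EuScript L^{2^k}(K(\mathbb Z,2n)\otimes\mathbb Z/2)\simeq\EuScript L^{2^k}K(\mathbb Z,2n)\otimes\mathbb Z/2$ and Theorem \ref{schles} --- which isolate the fully nested Lie brackets --- that allow me to discard the non--admissible monomials and the ``cross'' contributions coming from brackets of distinct generators, so that only the single admissible chains $\beta_w$ survive as indecomposables.

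For linear independence I would match dimensions. The theorem of \S\ref{derivedkan} gives $\dim_{\mathbb Z/2}L_i\EuScript L^{2^k}(\mathbb Z,2n)=\#\{w\in\mathcal W_{2n,k}:2n+\textstyle\sum i_j=i\}$, and since each $L_\ast$ here is a $\mathbb Z/2$-vector space, universal coefficients give $\dim\pi_i=\dim L_i+\dim L_{i-1}$; the explicit bijection that removes the parity constraint on $i_k$ (send a sequence with $i_k$ even to the same sequence with $i_k$ replaced by $i_k-1$, which lands in $\mathcal W_{2n,k}$ with degree dropped by one) identifies this total with $\#\{w\in\mathcal V_{2n,k}:2n+\sum i_j=i\}$. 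Hence spanning together with this count forces independence and completes the induction; alternatively, independence follows directly from the inductive hypothesis once one checks that, for each fixed $i_1$, the operation $\sigma^{\,2n-i_1}(-\circ\beta_{i_1})$ is injective and records the innermost parameter $i_1$. I expect the genuinely hard step to be the reduction to admissible monomials --- the relations controlling the non--admissible composition products --- since the degree bookkeeping, the base case, and the counting are comparatively mechanical.
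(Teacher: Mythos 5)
You should first know that the paper itself contains no proof of this proposition: the surrounding text says ``Recall the construction of basis elements\dots'', and both this statement and the theorem of \S\ref{derivedkan} are imported from Kan's unpublished manuscript \cite{Kan} (cf.\ also \cite{Leibowitz}), so your attempt has to be measured against the classical Curtis--Kan computation rather than an argument in the text. Within that frame, much of what you write is correct: the degree bookkeeping and the recursion $\beta_{(i_1,\dots,i_k)}=\sigma^{2n-i_1}\bigl(\beta_{(i_2,\dots,i_k)}\circ\beta_{i_1}\bigr)$ check out (granting the standard compatibility of suspension with the pairing (\ref{cmaps})), the indexing bijection $\bigsqcup_{i_1\le 2n}\{i_1\}\times\mathcal V_{2i_1,k-1}\cong\mathcal V_{2n,k}$ is right, and the count $\dim\pi_i=\dim L_i+\dim L_{i-1}$ together with the map $i_k\mapsto i_k-1$ correctly matches the theorem of \S\ref{derivedkan}. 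Since the paper quotes that theorem as known, using it for linear independence is legitimate here, though you should be aware that in a from-scratch account it would be circular: Kan obtains the theorem \emph{from} this basis description by identifying the image of the integral homotopy inside the mod $2$ homotopy.

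The genuine gap is exactly where you flag it, and it is not closable with the tools you invoke. Theorem \ref{schles} and the splitting $\EuScript L^{2^k}(K(\mathbb Z,2n)\otimes\mathbb Z/2)\simeq\EuScript L^{2^k}K(\mathbb Z,2n)\otimes\mathbb Z/2$ concern cross-effects of $\EuScript L$ on direct sums of abelian groups; they give neither surjectivity of the composition pairing onto $\pi_*(\EuScript L^{2^k}(\mathbb Z,2n)\otimes\mathbb Z/2)$ nor any Adem-type rewriting of inadmissible words. Worse, your induction cannot be fed in as stated: after one application of $\Lambda^2$, the object $\Lambda^2K(\mathbb Z,2n)\otimes\mathbb Z/2$ has homotopy a sum of one-dimensional $\mathbb Z/2$-vector spaces, i.e.\ it is equivalent to a sum of $K(\mathbb Z/2,d)$'s, whereas your inductive hypothesis describes $\pi_*(\EuScript L^{2^{k-1}}(\mathbb Z,2m)\otimes\mathbb Z/2)$, the mod-$2$ reduction of an \emph{integral} Eilenberg--MacLane input; derived functors evaluated at $\mathbb Z/2$ are strictly larger (compare theorem \ref{leib}), so knowing the composites with the classes $\beta_{i_1}$ does not exhaust $\pi_*\EuScript L^{2^{k-1}}$ of the intermediate object. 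The missing engine is the mod-$2$ quadratic-functor computation that actually drives Kan's argument: an explicit description of $\pi_*(\Lambda^2V)$ for a simplicial $\mathbb F_2$-vector space $V$ in terms of a chosen basis of $\pi_*V$ (operations $\beta_i$ on each generator plus products of distinct generators), combined with the vanishing of the non-prime-power and cross contributions on rank-one input (theorem \ref{kan2}, via the Curtis decomposition \cite{Curtis:65}). A smaller gap of the same kind sits in your base case: the nonvanishing of $\sigma^{2n-i}\beta_i$ is not ``naturality'' --- suspension can fail to be injective from odd parameters (see the remark in \S\ref{section4}), and Bousfield's theorem \ref{bousfield} only controls suspensions from even ones --- so the mod-$2$ suspension isomorphisms in the range $m<j\le 2m$ require their own verification; the same issue undermines your alternative independence argument via injectivity of $\sigma^{2n-i_1}(-\circ\beta_{i_1})$.
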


It follows that the natural projection $K(\mathbb Z,2n)\to
K(\mathbb Z,2n)\otimes \mathbb Z/2$ induces a monomorphism
$$
L_i\EuScript L^{2^k}(\mathbb Z,2n)\to \pi_i(\EuScript
L^{2^k}(K(\mathbb Z,2n)\otimes \mathbb Z/2))
$$
The image of this monomorphism is generated by elements $\beta_w$
for which $i_k$ is odd, i.e. for which $w\in \mathcal
W_{2n,k}^{(2)}$.

\vspace{.5cm} \begin{example}\label{examp1} \end{example} $$
L_i\EuScript L^8(\mathbb Z,2)=\begin{cases} \mathbb Z/2\oplus
\mathbb Z/2,\ i=6,8,9\\ \mathbb Z/2,\ i=5,7,10,11,12,13,15\\ 0\
\text{otherwise}
\end{cases}
$$

\subsection{}
\begin{theorem}\label{lieofz}
The group $L_i\EuScript L^{p^k}(\mathbb Z,2n)$ is a $\mathbb
Z/p$-vector space indexed by all sequences $(\nu_{i_1},\dots
,\nu_{i_k})\in \mathcal W_{2n,k}^{(p)}$ with
$i=2n+(2p-2)(i_1+\dots+i_j)-|\text{number of}\ \lambda_i\
\text{in}\ (\nu_{i_1}\dots \nu_{i_j})|$.
\end{theorem}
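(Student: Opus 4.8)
The plan is to imitate the $p=2$ computation carried out above, with Schlesinger's integral computation of $L_*\EuScript L^p(\mathbb Z,2m)$ playing the role of the base case and the composition homotopy operations (\ref{cmaps}) building the higher powers $\EuScript L^{p^k}$ out of $\EuScript L^p$. First I would pass to mod-$p$ coefficients and establish the stronger statement that $\pi_*(\EuScript L^{p^k}(K(\mathbb Z,2n)\otimes\mathbb Z/p))$ is a $\mathbb Z/p$-vector space with basis the composition monomials $\beta_\nu$ indexed by all admissible sequences of length $k$ \emph{in which the last entry is allowed to be either $\lambda_{i_k}$ or $\mu_{i_k}$}; the theorem then follows by locating the integral classes inside this mod-$p$ group. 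This parallels the Proposition preceding Example \ref{examp1} and the monomorphism stated thereafter.

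The two families of operations arise as follows. By the base case $k=1$, which is Schlesinger's computation recalled in the introduction, $L_*\EuScript L^p(\mathbb Z,2m)$ is $\mathbb Z/p$ concentrated in degrees $2m+(2p-2)i-1$, $i=1,\dots,m$; the universal coefficient theorem splits each such class, after mod-$p$ reduction, into the class itself, which I call $\lambda_i$ (degree shift $(2p-2)i-1$), and its Bockstein companion $\mu_i=\beta\lambda_i$ (degree shift $(2p-2)i$). Because $p^k$ is odd, the suspension isomorphism of Theorem \ref{kan1} identifies the operations on a class of odd internal degree with those on the adjacent even degree, so the parity of the intermediate classes is controlled: $\lambda_i$ lands in odd degree, $\mu_i$ in even degree. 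The inductive step composes a length-$(k-1)$ basis element $\beta_\nu$, sitting in degree $2n+d(\nu)$, with the operations $\lambda_{i_k},\mu_{i_k}$ drawn from the mod-$p$ analogue of (\ref{cmaps}) applied to $L_*\EuScript L^p(\mathbb Z,2n+d(\nu))$, producing $\beta_{(\nu,\lambda_{i_k})}$ and $\beta_{(\nu,\mu_{i_k})}$ of degrees $2n+d(\nu)+(2p-2)i_k-\epsilon$; the admissibility bounds $i_{j+1}\le pi_j-1$ after a $\lambda$ and $i_{j+1}\le pi_j$ after a $\mu$ are exactly the thresholds separating admissible compositions from those rewritable by Adem-type relations.

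That the admissible monomials span is the routine half: the weight-$p^k$ cross-effect decomposition (\ref{crefe}) expresses $\EuScript L^{p^k}$ of a sum through $\EuScript L^{p^j}$ of basic tensor products, feeding the recursion, while the non-admissible compositions are reduced to admissible ones by the Adem relations among $\lambda$ and $\mu$. The crux, which I expect to be the main obstacle, is linear independence, that is, the absence of relations beyond the Adem relations. I would establish this by detection, reusing the machinery of the proof of Theorem \ref{bousfield}: via the Eilenberg--MacLane--Cartier theorem and the Schlesinger splitting (Theorem \ref{schles}), each admissible $\beta_\nu$ is supported in a distinct free-Lie summand $\EuScript L^{p^j}(C)$ of a bisimplicial model obtained from a free resolution tensored with $\mathbb Z/p$, so that any nontrivial relation would localize to a single summand, where independence is already known from the case $k=1$. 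Matching the Poincaré series of the resulting spanning set against this separately computed rank confirms that it is a basis.

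To descend from the mod-$p$ statement to the integral one, I would run the Bockstein analysis. Since $\mu_i=\beta\lambda_i$, the Bockstein pairs each $\mu$-topped monomial with the corresponding $\lambda$-topped monomial one degree below; hence there is no higher Bockstein, $L_*\EuScript L^{p^k}(\mathbb Z,2n)$ is elementary abelian of exponent $p$, and the universal coefficient sequence identifies $L_i\EuScript L^{p^k}(\mathbb Z,2n)$ with the span of those admissible monomials whose last entry is $\lambda_{i_k}$, which are precisely the sequences lying in $\mathcal W_{2n,k}^{(p)}$ with $i=2n+d(\nu)$. This yields the asserted indexing and exponent, completing the argument.
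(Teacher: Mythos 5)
Your overall skeleton is the right one, and it matches what the paper itself only sketches: pass to $\pi_*(\EuScript L^{p^k}K(\mathbb Z,2n)\otimes\mathbb Z/p)$ with basis all admissible words whose last entry may be $\lambda_{i_k}$ or $\mu_{i_k}$, take Schlesinger's computation as the $k=1$ case, build longer words by the composition operations (\ref{cmaps}), and descend to the integral statement via the universal coefficient sequence and the Bockstein pairing (note, though, that in homotopy the Bockstein lowers degree, so the relation is $\beta\mu_i=\lambda_i$ rather than $\mu_i=\beta\lambda_i$; your subsequent pairing sentence has the correct direction, and the action of $\beta$ on composite monomials still needs the standard observation that the $\lambda$-topped monomials are reductions of integral classes). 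Be aware that the paper gives no proof of Theorem \ref{lieofz} at all: it states the theorem, identifies the elements $\mu_i\in\pi_{2pi}(\EuScript L^p(\mathbb Z,2i)\otimes\mathbb Z/p)$ and $\lambda_i\in\pi_{2pi-1}(\EuScript L^p(\mathbb Z,2i)\otimes\mathbb Z/p)$, and refers the allowable-set description to Kan's manuscript, with the parallel $p=2$ proposition also stated without proof.

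The genuine gap is precisely where you place the crux: linear independence, and your proposed detection mechanism fails. Theorem \ref{schles} splits $\EuScript L$ of a direct sum into free-Lie summands indexed by basic tensor products, i.e.\ by the multiweights in the chosen generators; but every admissible monomial $\beta_\nu$ of length $k$ has the same weight $p^k$ in the single generator of $K(\mathbb Z,2n)$ (and the same bidegree distribution is shared by many words in a two-term mod-$p$ model), so distinct monomials of equal degree lie in the \emph{same} summand of any Schlesinger-type splitting, and a relation cannot be ``localized to a single summand where independence is known from $k=1$.'' Concretely, for $p=2$ the words $(2,2,3)$ and $(2,4,1)$ both lie in $\mathcal W_{2,3}^{(2)}$ with the same total degree, producing $L_9\EuScript L^8(\mathbb Z,2)=\mathbb Z/2\oplus\mathbb Z/2$ in Example \ref{examp1}; no decomposition by basic products separates these two classes. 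The classical arguments behind the theorem instead use the finer filtration by the number of occurrences of the top-dimensional generator of the resolution --- the filtration underlying the spectral sequence (\ref{lss}) recalled from Leibowitz --- together with exact sequences relating $\EuScript L^{p^k}\otimes\mathbb Z/p$ to composites and restricted Lie functors, obtaining the lower bound on ranks by an Euler-characteristic count through those sequences. Your final appeal to ``matching the Poincar\'e series against a separately computed rank'' is circular as written, since that rank is exactly what the theorem asserts; until an independent lower bound is supplied by such a filtration argument, the proposal proves spanning but not the basis claim.
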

The elements from the sequences considered in the theorem
correspond to the following elements in the derived functors:
\begin{align*} & \mu_i\in \pi_{2pi}(\EuScript
L^p(\mathbb Z,2i)\otimes \mathbb Z/p)\simeq \mathbb Z/p,\\
& \lambda_i\in \pi_{2pi-1}(\EuScript L^p(\mathbb Z,2i)\otimes
\mathbb Z/p)\simeq \mathbb Z/p
\end{align*}

\begin{theorem}\label{kan2}{\bf (Kan)} $L_*\EuScript L^r(\mathbb Z,2n)=0$ if $r\neq p^j$ for any prime $p$ and $j\geq 1$.
\end{theorem}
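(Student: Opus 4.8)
The plan is to split the computation into its rational and $p$-primary pieces, and to show that the $p$-primary piece can be non-zero only when the Lie weight $r$ is a power of $p$.

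First I would dispose of the rational part. Over $\mathbb Q$ the free graded Lie ring on a single generator placed in the even degree $2n$ is abelian: antisymmetry (\ref{santi}) forces $2[x,x]=0$, hence $[x,x]=0$ rationally, and no bracket of weight $\geq 2$ survives. Since rationally $\EuScript L^r(K(\mathbb Z,2n))\otimes\mathbb Q$ computes the weight-$r$ part of this free graded rational Lie algebra on $\pi_*(K(\mathbb Z,2n)\otimes\mathbb Q)=\mathbb Q$ (concentrated in degree $2n$), it is acyclic for every $r\geq 2$. Consequently each group $L_*\EuScript L^r(\mathbb Z,2n)$ with $r\geq 2$ is a torsion group. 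To prove the theorem it therefore suffices to show that, for every prime $p$ and every $r$ that is not a power of $p$, the $p$-primary part of $L_*\EuScript L^r(\mathbb Z,2n)$ vanishes; a number which is not a prime power fails to be a power of any single prime, so running over all $p$ will finish the argument. By the universal coefficient sequence it is enough to prove the stronger assertion that the mod-$p$ homotopy $\pi_*(\EuScript L^r(K(\mathbb Z,2n))\otimes\mathbb Z/p)$ vanishes whenever $r$ is not a power of $p$, since the vanishing of $\pi_*(\EuScript L^r(K(\mathbb Z,2n))\otimes\mathbb Z/p)$ forces both $L_*\EuScript L^r(\mathbb Z,2n)\otimes\mathbb Z/p$ and $\Tor(L_*\EuScript L^r(\mathbb Z,2n),\mathbb Z/p)$ to be zero.

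The heart of the argument is the identification of this mod-$p$ homotopy. Grading by Lie weight, $\EuScript L=\bigoplus_r\EuScript L^r$, and I would analyse $\pi_*(\EuScript L(K(\mathbb Z,2n))\otimes\mathbb Z/p)$ as a whole. The composition product (\ref{cmaps}), reduced mod $p$, makes this graded object a module over the operations detected in the preceding computations: the class $\beta_i$ of the proposition before Theorem \ref{lieofz} when $p=2$ (which comes from $\Lambda^2=\EuScript L^2$), and the classes $\lambda_i,\mu_i$ of Theorem \ref{lieofz} when $p$ is odd (which come from $\EuScript L^p$). Each of these operations raises the Lie weight by the weight of the functor from which it is built, that is, every one of them multiplies the weight by $p$. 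I would then show that the whole group $\pi_*(\EuScript L(K(\mathbb Z,2n))\otimes\mathbb Z/p)$ is \emph{spanned} by the iterated composites of these operations applied to the fundamental class $i_{2n}$, which sits in weight $1$. An allowable word of length $k$ (an element of $\mathcal W_{2n,k}^{(p)}$, matching Theorem \ref{lieofz} and its $\mathbb Z/2$-analogue) then lands precisely in weight $p^k$. It follows at once that the weight-$r$ summand $\pi_*(\EuScript L^r(K(\mathbb Z,2n))\otimes\mathbb Z/p)$ vanishes unless $r=p^k$, and combining this over all primes with the rational vanishing yields $L_*\EuScript L^r(\mathbb Z,2n)=0$ for $r$ not a prime power.

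The main obstacle is exactly the spanning (completeness) claim of the previous paragraph: one must check that the admissible composites \emph{exhaust}, rather than merely inject into, the mod-$p$ homotopy of the full free Lie functor. To bound the total size from above I would compare $\EuScript L(V)$ with its universal enveloping algebra, the tensor algebra $T(V)=\bigoplus_r V^{\otimes r}$, for $V=K(\mathbb Z,2n)$: by the K\"{u}nneth theorem $\pi_*(V^{\otimes r})$ is a single copy of $\mathbb Z$ in degree $2nr$, torsion-free, so $\pi_*(V^{\otimes r}\otimes\mathbb Z/p)=\mathbb Z/p$ is concentrated in degree $2nr$. The Poincar\'{e}--Birkhoff--Witt filtration, natural and finite in each weight, then produces a spectral sequence built from the homotopy of symmetric powers of the summands $\EuScript L^d(V)$ and converging to this known answer; matching Poincar\'{e} series against the count of allowable words of weight $p^k$ forces the weight-$r$ contribution to be zero for $r\neq p^k$ and pins the admissible classes down as a basis. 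Bousfield's splitting (Theorem \ref{bousfield}) together with the additive decompositions (\ref{crefe}) of Theorem \ref{schles} and (\ref{screfe}) are the tools that make this bookkeeping rigorous and ensure that the weight-graded pieces do not interfere.
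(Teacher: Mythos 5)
The first thing to note is that the paper contains no proof of theorem \ref{kan2} at all: it is quoted as a result of Kan from an unpublished handwritten manuscript \cite{Kan}, so your attempt can only be measured against the classical arguments (Curtis \cite{Curtis:63,Curtis:65}, Schlesinger \cite{Schlesinger:66}). Your reductions are sound as far as they go: rationally the free graded Lie ring on a single class of even degree $2n$ has vanishing components in weight $\geq 2$ (the antisymmetry (\ref{santi}) argument is correct), so $L_*\EuScript L^r(\mathbb Z,2n)$ is torsion for $r\geq 2$; universal coefficients then correctly reduce the theorem to the vanishing of $\pi_*(\EuScript L^r(K(\mathbb Z,2n))\otimes \mathbb Z/p)$ for $r$ not a power of $p$; and the observation that the composition operations (\ref{cmaps}) multiply Lie weight by $p$ is right. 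As you yourself say, everything hinges on the completeness (spanning) claim.

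The genuine gap is in your proposed resolution of that claim. The PBW comparison does produce a finite natural filtration of $V^{\otimes r}$, $V=K(\mathbb Z,2n)$, with associated graded the weight-$r$ part of $SP(\EuScript L(V))$, and $\pi_*(V^{\otimes r}\otimes\mathbb Z/p)=\mathbb Z/p$ concentrated in degree $2nr$ is correct; but two things then go wrong. First, the $E^1$-page consists of derived functors of composites such as $SP^m\circ\EuScript L^d$ evaluated at the sphere, and these are not determined by the graded groups $\pi_*\EuScript L^d(V)$: one must first split $\EuScript L^d(V)$ (unnaturally) into Eilenberg--MacLane and Moore pieces and carry out full cross-effect bookkeeping before any Poincar\'e series can even be written down -- repairable, since $\mathbb Z$ has global dimension one, but only inside a carefully structured induction, and the unknown term $SP^1(\EuScript L^r(V))=\EuScript L^r(V)$ sits on the same page. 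Second, and fatally, ``matching Poincar\'e series'' cannot force termwise vanishing: differentials in a spectral sequence cancel pairs of classes in adjacent total degrees and preserve only the global Euler characteristic, not dimensions in individual degrees or filtration strata. Here the target is a single $\mathbb Z/p$ while the $E^1$-page is large -- for $r$ not a prime power it contains genuinely nonzero terms such as $SP^m$ applied to the weight-$p^k$ pieces arising from partitions of $r$ -- so almost everything must die by differentials, and a dimension count gives no way to single out the summand $\EuScript L^r(V)$ and conclude it contributes zero; one would have to identify the differentials themselves. This is precisely why the classical proofs do not proceed by PBW counting: they resolve $\EuScript L^r$ via the Curtis decomposition (\ref{cx1}) into functors built from symmetric and exterior powers, whose derived functors on spheres are known by Dold--Puppe \cite{DoldPuppe}, and reduce dimensions by the suspension theorems (theorem \ref{kan1}); the concentration in prime-power weights and the admissible-word basis of theorem \ref{lieofz} then come out of the computation rather than being fed into it. Theorem \ref{bousfield} and the decompositions of theorem \ref{schles}, which you invoke to make the bookkeeping rigorous, control cross-effects but say nothing about the differentials your counting argument would need.
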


\subsection{Generating function} Since there is a general
description of derived functors of Lie functors for $\mathbb Z$,
one can define a generating function with dimensions of values
derived functors as coefficients. The following result is due to
M. Tangora \cite{Tangora}. Let $H_n$ be the number of sequences of
positive integers $(i_1,\dots, i_l)$ with $i_1+\dots+i_l=n$
satisfying $i_{k+1}\leq di_k$ and $i_1\leq m$. The function
$$
H(q)=1+H_1q+H_2q^2+\dots+H_nq^n+\dots
$$
is
$$
H(q)=\frac{a(q)}{1-b(q)}
$$
where
\begin{align*}
& a(q)=\sum_{k\geq 0}\left(
\frac{q^{e(k)}(1-q^{me(k)})}{1-q^{e(k)}}\prod_{j=0}^{k-1}\frac{-q^{e(j)}}{1-q^{e(k)}}\right),\\
& b(q)=\sum_{k \geq
0}\left(\frac{q^{e(k)}}{1-q^{e(k)}}\prod_{j=0}^{k-1}\frac{-q^{e(k)}}{1-q^{e(k)}}\right),\\
& e(k)=\frac{d^{k+1}-1}{d-1}
\end{align*}

\vspace{.5cm}
\section{Derived functors $L_*\EuScript L_s(\mathbb Z,m)$}
\vspace{.5cm}

Analog of theorems \ref{kan1} and \ref{kan2} is the following

\begin{theorem}
1) $L_*\EuScript L_s^r(\mathbb Z,2n+1)=0$ if $r\neq p^j$ for some
prime $p$ and $j\geq 1$;\\ \\
2) if $r$ is odd, the suspension $L_*\EuScript L_s^r(\mathbb
Z,2n+1)\to L_{*+1}\EuScript L_s^r(\mathbb Z, 2n+2)$ is an
isomorphism;\\ \\
3) if $r$ is odd, there is an isomorphism
$$
L_{*+1}\EuScript L_s^{2r}(\mathbb Z,2n)\simeq L_{*+1}\EuScript
L_s^r(\mathbb Z,4n)\oplus L_*\EuScript L_s^{2r}(\mathbb Z,2n-1)
$$
\end{theorem}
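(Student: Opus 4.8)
The plan is to transport Kan's method for Theorems \ref{kan1} and \ref{kan2} to the setting of graded Lie rings with squares. Two features change. First, the Whitehead-type class $\alpha_{2n+1}\in\pi_{4n+2}\Lambda^2(\mathbb Z,2n+1)$ that governs the even-weight part in Theorem \ref{kan1} is replaced by the squaring operation ${}^{[2]}$ of a GLRS: the fundamental class $i_m$ of $K(\mathbb Z,m)$ has \emph{weight} one, which is odd, so its square $i_m^{[2]}$ is defined and represents, after doubling of the homological degree, a class in $\pi_{2m}$. Second, the parity is reversed: in the ordinary case the good suspensions issue from even spheres (Theorem \ref{bousfield} and its Corollary, which by the Remark fail for odd spheres), whereas the squaring operation forces the good suspensions in the super case to issue from odd spheres. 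The first task is therefore to prove the super-analogue of Theorem \ref{bousfield}.

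I would show that for $n\ge 1$ the suspension $$L_*\EuScript L_s^r(\mathbb Z,2n+1)\to L_{*+1}\EuScript L_s^r(\mathbb Z,2n+2)$$ is a split monomorphism, by copying the proof of Theorem \ref{bousfield} with Theorem \ref{schles} replaced by its super-version (\ref{screfe}). Concretely, one passes to the diagonal of the bisimplicial group $\EuScript L_s(K(\mathbb Z,2n+1)\otimes\mathbb Z[S^1])$ and uses (\ref{screfe}) to split off the suspended summand together with the free sub-GLRS generated by the brackets and squares of weight $>1$ of the classes $i_{2n+1}\otimes\sigma$. Tracking these generators through (\ref{screfe}) identifies the complement, for the suspension issuing from any odd sphere $K(\mathbb Z,2m+1)$, as the $\EuScript L_s$-tower on the square of the fundamental class of the target $K(\mathbb Z,2m+2)$; this square has weight $2$ and homological degree $2(2m+2)$, so in internal weight $w$ the complement is $L_{*+1}\EuScript L_s^{w/2}(\mathbb Z,2(2m+2))$ when $w$ is even and vanishes when $w$ is odd.

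Statements (2) and (3) are then read off. For (2) take $m=n$ and weight $w=r$ odd: the complement vanishes, the split monomorphism is onto, and the suspension is an isomorphism; the super-Curtis sequence (\ref{cx2}) makes this explicit in low weight and drives the induction on $r$. For (3) take $m=n-1$ and weight $w=2r$ with $r$ odd: the suspension $L_*\EuScript L_s^{2r}(\mathbb Z,2n-1)\to L_{*+1}\EuScript L_s^{2r}(\mathbb Z,2n)$ supplies one summand, while the square $i_{2n}^{[2]}\in\pi_{4n}$ of the fundamental class, through the composition homomorphism of type (\ref{cmaps}), supplies the complementary summand $$L_{*+1}\EuScript L_s^r(\mathbb Z,4n)\to L_{*+1}\EuScript L_s^{2r}(\mathbb Z,2n),$$ the generating degree being doubled and the weight halved exactly as $\alpha_{2n+1}$ acts in Theorem \ref{kan1}.

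Statement (1) is the super-analogue of the vanishing Theorem \ref{kan2}, and I would deduce it from \ref{kan2} through the parity correspondence between super-Lie functors on odd spheres and ordinary Lie functors on even spheres: already in weight two $\EuScript L_s^2\cong\Gamma_2$ on the odd sphere corresponds, via d\'ecalage, to $\EuScript L^2\cong\Lambda^2$ on the even sphere, and in general this carries $L_*\EuScript L_s^r(\mathbb Z,2n+1)$, up to a degree shift, to $L_*\EuScript L^r(\mathbb Z,2m)$, which vanishes for $r$ not a prime power by Theorem \ref{kan2}. The main obstacle is the super-Bousfield splitting of the second step, and within it the bookkeeping of the squaring operation: one must verify, using the GLRS relations, in particular (\ref{yxx}) and the square-additivity axiom, that the complement is a \emph{free} sub-GLRS whose lowest stratum is exactly the single weight-two square, so that no spurious cross-terms survive and the extra summand in (3) is precisely $L_{*+1}\EuScript L_s^r(\mathbb Z,4n)$.
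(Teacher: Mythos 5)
Your plan for parts (2) and (3) follows the only route the paper leaves open: the theorem is stated there without proof, as the super-analogue of Kan's theorems \ref{kan1} and \ref{kan2}, after the development of exactly the machinery you invoke (the proof of theorem \ref{bousfield} via Schlesinger's decomposition, with (\ref{screfe}) as the super replacement, and the composition maps (\ref{cmaps})). Your parity reversal is right, and your complementary class is correct in substance: the weight-two divided square of the fundamental class of the even target sphere generates $\pi_{4n}(L\Gamma_2(\mathbb Z,2n))\simeq\mathbb Z$, which is the d\'ecalage image of Kan's class $\alpha_{2n+1}$, so the extra summand $L_{*+1}\EuScript L_s^r(\mathbb Z,4n)$ in (3) arises as you describe. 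The identification of the complement as the free sub-GLRS on this single class is, as you yourself flag, the hard step; your sketch asserts it rather than proves it, which is the same status the ordinary-case statement has in the paper (it is Kan's unpublished computation), so for (2) and (3) the proposal is a reasonable, if incomplete, transposition.

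Part (1), however, rests on a step that fails. There is no ``parity correspondence carrying $L_*\EuScript L_s^r(\mathbb Z,2n+1)$, up to a degree shift, to $L_*\EuScript L^r(\mathbb Z,2m)$'' for general $r$: the d\'ecalage $L\EuScript L^p(C[1])\simeq L\EuScript L_s^p(C)[p]$ is established only for prime degrees, and the paper stresses that proposition \ref{primepro} already breaks down in degree $4$; theorem \ref{strange2} is an isomorphism with no shift at all and is special to degrees $2^k$; and the example following it refutes your claim outright, since $L_i\EuScript L^9(\mathbb Z,2)$ is nonzero for $i=8,9,12,13,17$ while $L_i\EuScript L_s^9(\mathbb Z,2)$ is nonzero only for $i=4,5,9$, so no degree shift can identify them --- structurally, theorem \ref{slieofz} indexes the super groups by the proper subset $\widetilde{\mathcal W}_{2n,k}^{(p)}\subset\mathcal W_{2n,k}^{(p)}$ appearing in theorem \ref{lieofz}. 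Deducing the vanishing in (1) from theorem \ref{kan2} through such a correspondence is therefore unfounded, and for composite $r$ it is circular: the comparison equivalence you would need is precisely what does not exist. Part (1) requires its own vanishing argument --- for instance, running your super-Bousfield induction on weight, with (2) and (3) reducing to low spheres, anchored by a direct check that non-prime-power weight components vanish there (on a single odd generator $y$ the GLRS axioms, in particular axiom 4 and (\ref{yxx}), give $\{y,y^{[2]}\}=\{y,y,y\}=0$, killing all weights above $2$), or else transposing Kan's weight-counting proof of \ref{kan2} wholesale rather than trying to quote its conclusion across the Lie/super-Lie divide.
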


Analog of theorem \ref{lieofz} is the following:
\begin{theorem}\label{slieofz} For an odd prime $p$, $k\geq 1$, the group $L_i\EuScript
L_s^{p^k}(\mathbb Z,2n)$ is a $\mathbb Z/p$-vector space indexed
by all sequences $(\nu_{i_1},\dots ,\nu_{i_k})\in \widetilde
{\mathcal W}_{2n,k}^{(p)}$ with
$i=2n+(2p-2)(i_1+\dots+i_j)-|\text{number of}\ \lambda_i\
\text{in}\ (\nu_{i_1}\dots \nu_{i_j})|$.
\end{theorem}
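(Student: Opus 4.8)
The plan is to prove Theorem~\ref{slieofz} by the same method that yields the ordinary Lie computation of Theorem~\ref{lieofz}, running an induction on $k$ in which the splitting of Theorem~\ref{schles} and the Curtis sequence (\ref{cx1}) are replaced throughout by their super counterparts, the decomposition (\ref{screfe}) and the super-Curtis sequence (\ref{cx2}). Since $\widetilde{\mathcal W}_{2n,1}^{(p)}=\mathcal W_{2n,1}^{(p)}$, the base case $k=1$ is formally identical to the Lie base case, and only the inductive step carries genuinely new content; the whole point is to locate the extra constraint $i_k>\tfrac{p^{k-1}-1}{2}$ that distinguishes $\widetilde{\mathcal W}_{2n,k}^{(p)}$ from $\mathcal W_{2n,k}^{(p)}$ for $k\geq 2$.

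First I would pass to mod-$p$ coefficients. Because $p^{k}$ is a prime power, the super-Schlesinger decomposition (\ref{screfe}) shows that every cross-effect summand of $\EuScript L_s^{p^k}(A\oplus B)$ of weight $>1$ is built out of proper tensor factors; inserting this into the simplicial model of $\EuScript L_s^{p^k}$ and reducing mod $p$ splits off a direct summand and gives a homotopy equivalence
$$
\EuScript L_s^{p^k}(K(\mathbb Z,2n)\otimes \mathbb Z/p)\simeq (\EuScript L_s^{p^k}K(\mathbb Z,2n))\otimes \mathbb Z/p,
$$
exactly as in the paragraph following Theorem~\ref{lieofz}. It therefore suffices to compute $\pi_*(\EuScript L_s^{p^k}(\mathbb Z,2n)\otimes \mathbb Z/p)$ and then to identify the image of the integral groups $L_i\EuScript L_s^{p^k}(\mathbb Z,2n)$ inside it.

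Next I would compute this mod-$p$ homotopy by iterated composition operations, as in the discussion after Theorem~\ref{lieofz}. The super-Curtis sequence (\ref{cx2}) supplies the two fundamental classes
$$
\mu_i\in \pi_{2pi}(\EuScript L_s^p(\mathbb Z,2i)\otimes \mathbb Z/p),\qquad \lambda_i\in \pi_{2pi-1}(\EuScript L_s^p(\mathbb Z,2i)\otimes \mathbb Z/p),
$$
and the composition map analogous to (\ref{cmaps}) lets one assemble all of their iterated composites. Admissibility of a composite is governed by the inequalities $i_{j+1}\leq p i_j$ (after a $\mu$) and $i_{j+1}\leq p i_j-1$ (after a $\lambda$) already used to define $\mathcal W_{2n,k}^{(p)}$, and the composite indexed by $(\nu_{i_1},\dots,\nu_{i_k})$ sits in degree $2n+(2p-2)(i_1+\dots+i_k)-o(\nu)$. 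This exhibits $\pi_*(\EuScript L_s^{p^k}(\mathbb Z,2n)\otimes \mathbb Z/p)$ as a $\mathbb Z/p$-vector space with basis the admissible $\mu/\lambda$-sequences, as yet with no restriction on the final entry.

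The crux is the passage back to the integral groups: one must decide which mod-$p$ classes lie in the image of the monomorphism $L_i\EuScript L_s^{p^k}(\mathbb Z,2n)\hookrightarrow \pi_i(\EuScript L_s^{p^k}(K(\mathbb Z,2n)\otimes \mathbb Z/p))$. As in the Lie case this forces the final entry to be of $\lambda$-type, but in the super setting a second condition appears, coming from the interaction of the squaring operation $x^{[2]}$ with the bracket recorded in (\ref{yxx}): the bottom operation of an admissible chain represents an integral super-Lie class only once its index is large enough. Tracking this through the inductive step — equivalently through the structural recursion of the theorem immediately preceding Theorem~\ref{slieofz} — I expect to find that a $\lambda$-terminating composite lifts integrally precisely when $i_k>\tfrac{p^{k-1}-1}{2}$, i.e. precisely for $(\nu_{i_1},\dots,\nu_{i_k})\in\widetilde{\mathcal W}_{2n,k}^{(p)}$. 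This integral-image analysis is the main obstacle: one has to show both that every admissible $\lambda$-terminating sequence violating the bound maps to zero integrally (a Bockstein/boundary computation) and that the surviving classes stay $\mathbb Z/p$-linearly independent. Granting this, counting survivors degree by degree gives the asserted $\mathbb Z/p$-vector-space structure indexed by $\widetilde{\mathcal W}_{2n,k}^{(p)}$ in degrees $i=2n+(2p-2)(i_1+\dots+i_k)-o(\nu)$.
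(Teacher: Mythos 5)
You defer precisely the step that carries the theorem's content, and your plan locates that step somewhere it cannot appear. You write ``I expect to find'' and ``Granting this'' at the crux: the lower bound $i_k>\frac{p^{k-1}-1}{2}$ is exactly what separates $\widetilde{\mathcal W}_{2n,k}^{(p)}$ from $\mathcal W_{2n,k}^{(p)}$, so positing it leaves everything beyond Theorem~\ref{lieofz} unproved. (For calibration: the paper itself prints no proof --- Theorem~\ref{slieofz} is stated as the analog of Theorem~\ref{lieofz}, whose computation goes back to Kan's manuscript \cite{Kan} --- so your argument must stand on its own.) The structural error is in your second step: you place the super fundamental classes at the Lie degrees, $\mu_i\in\pi_{2pi}$ and $\lambda_i\in\pi_{2pi-1}$ of $\EuScript L_s^p(\mathbb Z,2i)\otimes\mathbb Z/p$. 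By the paper's d\'ecalage theorem $L\EuScript L^p(C[1])\simeq L\EuScript L_s^p(C)[p]$ combined with Kan's suspension isomorphism (Theorem~\ref{kan1}), one has $L_j\EuScript L_s^p(\mathbb Z,2i)\cong L_{j+p-1}\EuScript L^p(\mathbb Z,2i)$, so these classes actually sit in $\pi_{2pi-p}$ and $\pi_{2pi-p+1}$, shifted down by $p-1$: for instance $L_3\EuScript L_s^3(\mathbb Z,2)=\mathbb Z/3$ whereas $L_5\EuScript L^3(\mathbb Z,2)=\mathbb Z/3$. In particular your base case $k=1$ is not ``formally identical to the Lie base case'' --- it already requires the d\'ecalage --- and with Lie degrees installed throughout, your composite calculus is verbatim that of Theorem~\ref{lieofz} and can only ever output $\mathcal W_{2n,k}^{(p)}$.

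The paper's own example makes the failure quantitative. It computes $L_i\EuScript L_s^9(\mathbb Z,2)=\mathbb Z/3$ exactly for $i\in\{4,5,9\}$ --- three classes, hence a six-dimensional mod-$3$ homotopy of $\EuScript L_s^9(\mathbb Z,2)\otimes\mathbb Z/3$ by universal coefficients. Your claimed mod-$p$ basis, ``all admissible $\mu/\lambda$-sequences with no restriction on the final entry,'' has ten elements for $(p,k,n)=(3,2,1)$, matching the Lie case where $L_i\EuScript L^9(\mathbb Z,2)$ has five classes, $i=8,9,12,13,17$. So the cut to $\widetilde{\mathcal W}_{2n,k}^{(p)}$ must already occur in the mod-$p$ homotopy: because the inner $(k-1)$-fold composite sits $p^{k-1}-1$ dimensions below (and in the opposite parity to) its Lie counterpart, outer operations with $i_k\leq\frac{p^{k-1}-1}{2}$ fall out of the admissible range, and this cannot be recovered afterwards by your Bockstein/integral-image selection, which merely pairs off $\mu$- and $\lambda$-endings and halves the count. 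Your other proposed mechanism --- the squaring operation $x^{[2]}$ via relation (\ref{yxx}) --- governs $2$-primary structure and is irrelevant to odd-$p$ torsion; indeed this is why Theorem~\ref{strange2} holds only for powers of $2$. (The same dimension shift also explains why the example degrees $\{4,5,9\}$ disagree with the displayed formula in the statement, which read literally gives $\{12,13,17\}$: the super degrees carry an extra overall shift of $p^k-1$, a discrepancy in the paper's formula that your sketch inherits by copying the Lie bookkeeping wholesale.)
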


Above results show that the structures of derived functors of Lie
and super-Lie functors have similar points but they are not the
same. The derived functors of $\EuScript L_s^2=\Gamma_2$ of
$\mathbb Z$ are well-known and follow from the decalage
$$
L\Gamma_2(\mathbb Z,n)[2]\simeq L\Lambda^2(\mathbb Z,n+1)
$$
In this connection the following result looks surprising:
\begin{theorem}\label{strange2}
For $k\geq 2$, $n\geq 1$, one has an isomorphism
$$
L_*\EuScript L^{2^k}(\mathbb Z,n)\simeq L_*\EuScript
L_s^{2^k}(\mathbb Z,n)
$$
\end{theorem}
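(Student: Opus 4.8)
The plan is to prove the isomorphism by reducing it to a comparison of $\mathbb Z/2$-dimensions in each homotopy degree. Both sides are $2$-torsion $\mathbb Z/2$-vector spaces: for the Lie functor this is the content of the theorem of Subsection~\ref{derivedkan}, which exhibits $L_i\EuScript L^{2^k}(\mathbb Z,2n)$ with the explicit basis $\{\beta_w\}$ indexed by $w=(i_1,\dots,i_k)\in\mathcal W_{2n,k}$ and $i=2n+i_1+\dots+i_k$, and the analogous $2$-torsion statement holds for $L_i\EuScript L_s^{2^k}(\mathbb Z,2n)$. Hence it suffices to match the two graded dimensions, equivalently the two Poincar\'e series; on the Lie side the latter is computed by the Tangora generating function $H(q)$ with $d=2$, which counts the allowable sequences $\mathcal W_{2n,k}$.

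First I would establish the even internal degree $n=2m$. The homotopy of $\EuScript L_s^{2^k}$ is generated, through the composition maps (\ref{cmaps}), by iterating the fundamental squaring of the GLRS structure, namely the counterpart class of $x\mapsto x^{[2]}$ living in $\pi_{2i}(\EuScript L_s^2K(\mathbb Z,i)\otimes\mathbb Z/2)=L_{2i}\Gamma_2(\mathbb Z,i)\otimes\mathbb Z/2$. Mod $2$ the d\'ecalage $L\Gamma_2(\mathbb Z,i)[2]\simeq L\Lambda^2(\mathbb Z,i+1)$ identifies this operation with the Lie class $\beta_i$ generating the Lie tower, and axiom (\ref{yxx}), $\{y,x^{[2]}\}=\{y,x,x\}$, ties squaring to the bracket so that iterated super squares obey exactly the admissibility relations $i_1\leq 2m$, $i_{j+1}\leq 2i_j$ cutting out $\mathcal W_{2m,k}$. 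Constructing the iterated super square $\widetilde\beta_w$ in parallel with $\beta_w=\sigma^{2m-i_1}\beta_{i_1}\dots\sigma^{2i_{k-1}-i_k}\beta_{i_k}$ and checking that $\{\widetilde\beta_w\}$ is a $\mathbb Z/2$-basis with $|\widetilde\beta_w|=2m+i_1+\dots+i_k$ then yields the isomorphism in even degree. The decisive $p=2$ degeneration is that, unlike the odd-prime situation of Theorems~\ref{lieofz} and~\ref{slieofz}, where the two distinct families $\lambda_i,\mu_i$ force the super-Lie side onto the proper subset $\widetilde{\mathcal W}_{2n,k}^{(p)}\subsetneq\mathcal W_{2n,k}^{(p)}$, at $p=2$ there is a single squaring operation, so the super-Lie basis is indexed by the \emph{full} allowable set $\mathcal W_{2m,k}$ — precisely the Lie index set.

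Passing to odd internal degree is where the real difficulty, and the surprise, lies. The suspension machinery (Bousfield's Theorem~\ref{bousfield} with its corollary, and Kan's Theorem~\ref{kan1}) gives, for $2r=2^k$, the Lie splitting $L_{*+1}\EuScript L^{2^k}(\mathbb Z,2n+1)\simeq L_*\EuScript L^{2^k}(\mathbb Z,2n)\oplus L_{*+1}\EuScript L^{2^{k-1}}(\mathbb Z,4n+2)$, together with a companion decomposition on the super-Lie side. These decompositions cannot be matched summand by summand: for $k=2$ the second Lie summand is $L_{*+1}\Lambda^2(\mathbb Z,4n+2)$ while its super counterpart involves $\Gamma_2$, and $L_*\Lambda^2(\mathbb Z,m)$ and $L_*\Gamma_2(\mathbb Z,m)$ differ by the very d\'ecalage shift that makes the case $k=1$ fail. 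The main obstacle is therefore to show that these d\'ecalage discrepancies, present in the weight-$2$ building blocks, cancel in the total once $k\geq 2$. I would overcome this by comparing full graded dimensions rather than individual summands — reducing the odd-degree isomorphism to the combinatorial identity that the Tangora-type generating functions of the two sides coincide — or, equivalently, by an induction on $k$ in which the weight-$2^{k-1}$ contributions are reassembled using the even-degree result together with the d\'ecalage, so that the apparent shift telescopes away. This telescoping is exactly the phenomenon forcing the hypothesis $k\geq 2$, and it is the technical heart of the argument; a conceptual alternative, which I would keep in reserve, is to realize both functors through the mod-$2$ reduction of the universal differential graded Lie algebra with squares, where the two structures are expected to coincide outright.
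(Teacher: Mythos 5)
The paper states Theorem \ref{strange2} without proof (only the remark that it ``uses the specific structure'' of Lie and super-Lie functors at the prime $2$), so your proposal must stand on its own; as it stands, it has genuine gaps in both halves.

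In the even-degree step you assume precisely what has to be proved. The paper establishes the $\mathbb Z/2$-vector-space structure and the $\mathcal W_{2n,k}$-indexed basis only on the Lie side (subsection \ref{derivedkan}); Theorem \ref{slieofz} covers odd primes only, and there is no ``analogous $2$-torsion statement'' for $\EuScript L_s^{2^k}$ to cite. Your justification --- at $p=2$ there is a single squaring operation, hence the super-Lie basis runs over the \emph{full} allowable set $\mathcal W_{2n,k}$ --- is not an argument: it nowhere uses $k\geq 2$, so it would equally ``prove'' the false case $k=1$, where $L_*\Gamma_2(\mathbb Z,2n)$ is, by d\'ecalage and Theorem \ref{kan1}, indexed by a \emph{shifted} set and even contains a free summand $\mathbb Z$ in degree $4n$. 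Worse, the slogan contradicts the paper's own framework: at $p=2$ the super side enters the $\widetilde{\EuScript E}$-construction through the restricted sets with $i_k\geq 2^{k-1}$ (the sets $\widetilde{\mathcal W}_{2n,k}^{(2)}$, via $u(2,k)=2^{k-1}$) attached to different suspension shifts, so agreement of total graded dimensions with the Lie side is a nontrivial recombination of restricted index sets, not the tautology ``same index set.'' Finally, identifying the super square with $\beta_i$ ``mod $2$ via d\'ecalage'' moves the degree by one --- exactly the discrepancy responsible for the failure at $k=1$ --- and you never show why composing at least two operations absorbs this shift; the axiom $\{y,x^{[2]}\}=\{y,x,x\}$ of (\ref{yxx}) constrains the algebra but does not by itself yield linear independence or spanning of your classes $\widetilde\beta_w$ in $\pi_*(\EuScript L_s^{2^k}K(\mathbb Z,2n)\otimes\mathbb Z/2)$, nor the passage from mod-$2$ homotopy back to the integral derived functors.

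The odd-degree step fails for a concrete reason: the ``companion decomposition on the super-Lie side'' you invoke does not exist in the paper. The super-analog of Kan's theorem is stated only for $\EuScript L_s^{2r}$ with $r$ \emph{odd}, hence never applies to $2^k$ with $k\geq 2$, and you neither prove nor cite a substitute splitting. Your fallback --- matching Tangora generating functions --- is circular, since $H(q)$ counts the Lie-side allowable sequences, and no generating function for the super side is available until its basis (the very object in question) has been computed; the proposed induction in which d\'ecalage shifts ``telescope away'' is stated as a hope, not carried out. Both halves of the plan thus bottom out in the same missing computation: a Leibowitz/Curtis-type determination of the super-Lie homotopy $\pi_*(\EuScript L_s^{2^k}K(\mathbb Z,n)\otimes \mathbb Z/2)$, with identification of where the hypothesis $k\geq 2$ actually intervenes. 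That computation is the content of the theorem, and it is absent from the proposal.
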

Theorem \ref{strange2} uses the specific structure of Lie and
super-Lie functors of degrees powers of 2 and can not be
generalized to odd prime powers.
\begin{example}
\end{example}
$$
L_i\EuScript L^9(\mathbb Z,2)=\begin{cases} \mathbb Z/3,\ i=8,9,12,13,17\\
0\ \text{otherwise}
\end{cases}\ \ \ \ \
L_i\EuScript L_s^9(\mathbb Z,2)=\begin{cases} \mathbb Z/3,\
i=4,5,9\\ 0\ \text{otherwise}
\end{cases}
$$
Generators of the 3-torsion of $L_*\EuScript L_s^9(\mathbb Z,2)$
in degrees $4,5,9$ correspond to the elements
$(\lambda_1,\lambda_2),$ $(\mu_1,\lambda_2),$ $(\mu_1,\lambda_3)$
from $\widetilde{\mathcal W}_{2,2}^{(3)}$ respectively.
\vspace{.5cm}
\section{Derived functors $L_*\EuScript L(\mathbb Z/p^k, m)$}
\vspace{.5cm}
A differential graded Lie ring with squares (shortly DGLS) is a
GLRS $B=\bigoplus_{i=0}^\infty B_i$ together with homomorphisms
$\partial: B_i\to B_{i-1},\ i=1,2,\dots$ such that
\begin{align}
& \partial\circ \partial=0\\
& \partial\{x,y\}=\{\partial(x),y\}+(-1)^i\{x,\partial(y)\},\ x\in
B_i\\
& \partial(x^{[2]})=\{\partial(x),x\},\ \text{for}\ i\ \text{odd}
\end{align}

The main example of DGLS is the following. Let $X$ be a simplicial
Lie ring. Define $\partial: X_n\to X_{n-1}$ by
$\partial=\sum_{i=0}^n(-1)^i\partial_i,$ the Lie bracket $$[[\ ,\
]]: X_i\otimes X_j\to X_{i+j}$$ define by
$$
[[x,y]]=\sum_{(a;b)\in
(i,j)-\text{shuffles}}sign(a,b)[s_a(x),s_b(y)], x\in X_j,\ y\in
X_i
$$
and
$$
x^{[2]}=\sum_{(a;b)\in
(n,n)-\text{shuffles}}sign(a;b)[s_a(x),s_b(x)],\ x\in X_n,\ n\
\text{is odd}
$$
Then $X$ is a DGLS.

For every free abelian chain complex $C=\{C_i,\ \partial:
C_{i+1}\to C_i\}$, there exists a DGLS $\mathbb L(C)$ and a
morphism of chain complexes \begin{equation}\label{universaldgls}
C\to \mathbb L(C)\end{equation} such that, for any DGLS $R$ and
chain map $f:C\to R$, there is a unique map of DGLS-s: $\bar f:
\mathbb L(C)\to R$ such that the diagram
$$
\xyma{C \ar@{->}[r] \ar@{->}[d]^f & \mathbb L(C)\ar@{-->}[ld]^{\bar f}\\
R}
$$
commutes (this is lemma 3.4 from \cite{Leibowitz}).

For $k\geq 0$, denote by $(n+1,n;k)$ the chain complex $(\mathbb
Z\buildrel{k}\over\to \mathbb Z)[n].$

\begin{theorem}\label{leib}{\bf (Leibowitz), \cite{Leibowitz}} Let $p$ be a prime, $k\geq 1,\ i\geq 1,\ r\geq 1$.
There is an isomorphism
$$
L_i\EuScript L^r(\mathbb Z/p^{k},n)\simeq\ _p\pi_i(L\EuScript
L^r(\mathbb Z[n]\oplus \mathbb Z[n+1]))\oplus\ _pH_i\mathbb
L^r(n+1,n;p^k)
$$
\end{theorem}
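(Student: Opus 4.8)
The plan is to compute the left-hand side directly from the standard two-term flat resolution $0\to \mathbb Z\buildrel{p^k}\over\to \mathbb Z\to \mathbb Z/p^k\to 0$, so that in the Dold--Puppe definition $L_i\EuScript L^r(\mathbb Z/p^k,n)=\pi_i(\EuScript L^r K(P_\ast[n]))$ the shifted resolution $P_\ast[n]$ is exactly the chain complex $C=(n+1,n;p^k)=(\mathbb Z\buildrel{p^k}\over\to \mathbb Z)[n]$. Before anything else I would record the observation that makes the two $p$-primary truncations on the right natural: since multiplication by $p^k$ is an isomorphism after localizing away from $p$, the complex $C$ becomes contractible $\ell$-locally for every prime $\ell\neq p$ and vanishes rationally, so $L_\ast\EuScript L^r(\mathbb Z/p^k,n)$ is $p$-primary torsion. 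Thus it suffices to compute its $p$-primary part, and I am free to pass to $p$-primary components of every auxiliary group that appears.

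The essential difficulty is that $\EuScript L^r$ is not additive, so one cannot feed the cofibre sequence $K(\mathbb Z,n)\to K(C)\to K(\mathbb Z,n+1)$ into a long exact sequence. Instead I would filter $C$ by the number of generators sitting in degree $n+1$ (equivalently, filter by the subcomplex $\mathbb Z[n]$ with quotient $\mathbb Z[n+1]$, then pass to the induced weight filtration on words). Applying $K$ and $\EuScript L^r$ levelwise, this produces a filtration of the simplicial Lie ring $\EuScript L^r K(C)$ whose associated graded splits, \emph{levelwise}, by Schlesinger's decomposition (Theorem \ref{schles}) and the cross-effect formula (\ref{crefe}), into the pieces $\EuScript L^d$ of tensor products of $K(\mathbb Z,n)$ and $K(\mathbb Z,n+1)$. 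On homotopy this identifies the $E_1$-page of the resulting spectral sequence with $L_\ast\EuScript L^r(\mathbb Z[n]\oplus \mathbb Z[n+1])$, which is precisely the first summand of the asserted answer.

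The step I expect to be the main obstacle is the identification of the differential. The attaching map of the cone is multiplication by $p^k$, and since $\partial$ decreases the number of degree-$(n+1)$ generators by exactly one, the only filtration differential is $d_1$. The claim is that on the subobject of $E_1$ spanned by the honest free Lie words in the two generators, $d_1$ agrees with the internal differential of the universal DGLS $\mathbb L^r(C)$ determined by $\partial x=p^k y$; this is where I would invoke the universal property of $\mathbb L$ (Lemma 3.4 of \cite{Leibowitz}) together with a careful comparison of the super-Lie and square structure carried by the Moore normalization of a simplicial Lie ring. On the complementary, genuinely derived part of $E_1$ — the classes that by Theorems \ref{lieofz} and \ref{slieofz} assemble into $\mathbb Z/\ell$-vector spaces — the differential is again multiplication by $p^k$, hence it is zero on the $p$-primary classes and all non-$p$ classes are discarded when we pass to $p$-primary components. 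Consequently these derived classes survive to $E_\infty$.

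Finally I would read off the abutment after passing to $p$-primary parts. The surviving derived classes contribute $_p\pi_i(L\EuScript L^r(\mathbb Z[n]\oplus \mathbb Z[n+1]))$, while the free-Lie-word part contributes the homology $_pH_i\mathbb L^r(n+1,n;p^k)$ of the universal DGLS, exactly the two summands in the statement. Two points then remain: to verify that no higher differential can act (so that $E_2=E_\infty$), which follows from the fact that the whole filtration differential is concentrated in $d_1$, and to split the two-step extension of the abutment. The splitting I would obtain abstractly, using that both contributions are finitely generated $p$-primary groups whose total additive structure is already pinned down by the $E_\infty$-terms; checking the $r=1$ case (where the statement reduces to $L_n\mathrm{Id}(\mathbb Z/p^k,n)=\mathbb Z/p^k={}_pH_n\mathbb L^1(n+1,n;p^k)$) confirms that the $p$-primary truncations, rather than the $p$-torsion subgroups, are the correct bookkeeping.
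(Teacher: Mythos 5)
Your setup is the same as the paper's: you take the standard model $A=N^{-1}((\mathbb Z\buildrel{p^k}\over\to\mathbb Z)[n])$, filter $\EuScript L^r(A)$ by the number of entries of the degree-$(n+1)$ generator in basic commutators, identify the $E^1$-page with $L_*\EuScript L^r(\mathbb Z[n]\oplus\mathbb Z[n+1])$ via Schlesinger's decomposition, and use the universal DGLS map (\ref{lmap}) to match the differential on the fundamental classes with that of $\mathbb L^r(n+1,n;p^k)$. The genuine gap is your central claim that ``the only filtration differential is $d_1$'', hence $E_2=E_\infty$. The chain-level justification is invalid: $\EuScript L^r$ is applied to a simplicial group, where face maps act diagonally on commutators and produce cross-effect terms, so the Moore differential of a weight-$j$ commutator has components in all weights $\leq j$; the dictionary with the internal differential of $\mathbb L^r(n+1,n;p^k)$ holds only on the bottom-cell row $j=rn$, which is precisely fact 1 recalled in the paper. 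Worse, the conclusion is false. The facts from \cite{Leibowitz} listed in the paper say that $E^2_{i,j}=E^1_{i,j}$ for $j\neq rn$ (so $d^1$ does nothing below the top row) while $E^\infty_{i,j}={}_{(p)}E^2_{i,j}$; since the rows $j<rn$ carry $\ell$-torsion for primes $\ell\neq p$, nonzero differentials $d^k$, $k\geq 2$, are forced by the very $p$-primarity of the abutment that you correctly established at the outset. Concretely, already for $r=2$ and $p$ odd: the $2$-torsion of $L_*\Lambda^2(\mathbb Z,n)$ and of $L_*\Lambda^2(\mathbb Z,n+1)$ sits in filtration weights $0$ and $2$, while the weight-$1$ quotient $K(\mathbb Z,n)\otimes K(\mathbb Z,n+1)\simeq K(\mathbb Z,2n+1)$ has homotopy only on the row $j=2n$; these classes can therefore only cancel through a nonzero $d^2$. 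Your degeneration claim fails here, and with it your identification of $E_\infty$.

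Two further steps are asserted rather than proved. First, describing the differential on the derived classes as ``again multiplication by $p^k$'' does not typecheck: it is a map between homotopy groups of different filtration quotients (derived functors of different Lie powers of $\mathbb Z$ in different dimensions), and the statement that all $p$-primary classes survive --- Leibowitz's fact $E^\infty={}_{(p)}E^2$ --- is exactly the substantive input your argument would have to supply; it is not a formal consequence of localizing at $p$. Second, the final splitting cannot be obtained ``abstractly from the $E_\infty$-terms'': the spectral sequence determines only the associated graded of the filtration on $L_i\EuScript L^r(\mathbb Z/p^k,n)$, and since ${}_pH_*\mathbb L^r(n+1,n;p^k)$ contains summands $\mathbb Z/p^{f}$ and $\mathbb Z/p^{f+1}$ (proposition \ref{dan2}) while the derived contribution is an elementary abelian $p$-group, nonsplit extensions (e.g.\ of $\mathbb Z/p$ by $\mathbb Z/p^f$ assembling into $\mathbb Z/p^{f+1}$) are not excluded on additive grounds alone; a separate argument, as in \cite{Leibowitz}, is required to get the direct sum in theorem \ref{leib}.
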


Recall the main steps of the proof of theorem \ref{leib} from
\cite{Leibowitz}. For a prime $p$ and $n\geq 1$, consider the
standard simplicial model of $K(\mathbb Z/p^k,n)$:
$$
A:=N^{-1}((\mathbb Z\buildrel{p^k}\over\to \mathbb Z)[n]).
$$
Define the filtration
$$
\EuScript L^r(A)\supset F^r\supset F^{r-1}\supset \dots \supset
F^1\supset F^0
$$
where $F^j$ is the simplicial subgroup of $\EuScript L^r(A)$
generated by all basic commutators with at most $j$ elements which
arise from the generator of degree $(n+1)$ of $A$. This filtration
defines a spectral sequence
\begin{equation}\label{lss}
E_{i,j}^1=\pi_{i+j}(F^i/ F^{i-1})\Rightarrow L_{i+j}\EuScript
L^r(\mathbb Z/p,n). \end{equation}
The following facts are proved in \cite{Leibowitz}:\\ \\
1) Consider the map of DGLS-s:
\begin{equation}\label{lmap}
\mathbb L^r(n+1,n; p^k)\to \EuScript L^r(A)
\end{equation}
induced by
\begin{align*}
& y_n\mapsto x_n\\
& y_{n+1}\mapsto x_{n+1}
\end{align*}
where $(y_n,y_{n+1})$ are generators of dimensions $n$ and $n+1$
of $(n+1,n;p^k)$ and $(x_n,x_{n+1})$ are generators of dimensions
$n$ and $n+1$ of $K(\mathbb Z/p^k,n)$.

 There is an isomorphism
$$
E_{i,rn}^2=H_{i+rn}\mathbb L^r(n+1,n; p^k);
$$
2) for $j>rn$, $E_{i,j}^1=0$;\\ \\
3) For $j\neq rn$, $E_{i,j}^2=E_{i,j}^1$;\\ \\ 4)
$E_{i,j}^\infty=\ _{(p)}E_{i,j}^2,$ where
$_{(p)}E_{i,j}^2$ is the $p$-primary component of $E_{i,j}^2$.\\ \\
4) $\sum_{j=0,\ j\neq i-rn}^r\ _{(p)}E_{j,i-j}^2=\
_p\pi_i(K(\mathbb Z,n)\oplus
K(\mathbb Z,n+1))$\\

Theorem \ref{leib} follows from these facts regarding the spectral
sequence (\ref{lss}).

Recall other results from \cite{Leibowitz}, which we will use
bellow.

\begin{prop}\label{dan2} (\cite{Leibowitz}, Prop. 4.5)
$$
H_k\mathbb L^r(n+1,n;p^f)=(\mathbb Z/p^{f+1})^{\oplus d_k}\oplus
(\mathbb Z/p^f)^{\oplus (M_k-d_k)}
$$
where \begin{align*} & M_k=\ _pRank\ B_k\mathbb L^r(n+1,n;1)\\
& d_k=\ _p Rank\ H_k\mathbb L^r(n+1,n;1)
\end{align*}
\end{prop}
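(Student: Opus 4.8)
The plan is to reduce the statement to elementary linear algebra over $\mathbb{Z}_{(p)}$, using that in $(n+1,n;\lambda)=(\Z\xrightarrow{\lambda}\Z)[n]$ only the differential depends on the scalar $\lambda$. First I would observe that, since $\mathbb{L}$ is the \emph{free} DGLS on a chain complex, the underlying bigraded abelian group of $\mathbb{L}^r(n+1,n;\lambda)$ is independent of $\lambda$: in weight $r$ it is the weight-$r$ part of the free graded Lie ring with squares on two generators $x_n$ (degree $n$) and $x_{n+1}$ (degree $n+1$), a finitely generated free abelian group. Grading a basic bracket or square by the number $a$ of occurrences of the top generator $x_{n+1}$, one has homological degree $k=rn+a$. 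The differential is the derivation determined by $x_{n+1}\mapsto\lambda x_n$, $x_n\mapsto 0$, with $\partial(x_{n+1}^{[2]})=\{\lambda x_n,x_{n+1}\}=\lambda\{x_n,x_{n+1}\}$ on squares; in every term it replaces exactly one $x_{n+1}$ by $x_n$ and so inserts exactly one factor $\lambda$ while lowering $a$ by one. Hence, on the fixed graded free abelian group $L_\bullet:=\mathbb{L}^r(n+1,n;1)$ one has the identity of differentials $\partial_\lambda=\lambda\,\partial_1$.

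Working $p$-locally, set $Z_k=\ker(\partial_1\colon L_k\to L_{k-1})$ and $B_k=\mathrm{im}(\partial_1\colon L_{k+1}\to L_k)$, free $\mathbb{Z}_{(p)}$-modules with $B_k\subseteq Z_k$. Because $L_{k-1}$ is torsion free, $\ker(p^f\partial_1)=Z_k$ and $\mathrm{im}(p^f\partial_1)=p^fB_k$, whence
$$
H_k\mathbb{L}^r(n+1,n;p^f)=Z_k/p^fB_k .
$$
Next I would record that $H_*\mathbb{L}^r(n+1,n;1)=Z_\bullet/B_\bullet$ is all torsion: rationally $(n+1,n;1)$ is acyclic, the squares become $x^{[2]}=\tfrac12\{x,x\}$, and the free Lie functor is a retract of the tensor functor via the Dynkin idempotent, so $\mathbb{L}^r(n+1,n;1)\otimes\mathbb{Q}$ is acyclic and $\mathrm{rank}\,Z_k=\mathrm{rank}\,B_k$. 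In particular $M_k={}_p\mathrm{Rank}\,B_k=\mathrm{rank}_{\mathbb{Z}_{(p)}}B_k$. Choosing a $\mathbb{Z}_{(p)}$-basis $w_1,\dots,w_{M_k}$ of $Z_k$ adapted to $B_k$, so that $p^{a_1}w_1,\dots,p^{a_{M_k}}w_{M_k}$ is a basis of $B_k$ with $0\le a_1\le\cdots\le a_{M_k}$, gives $Z_k/B_k=\bigoplus_i\mathbb{Z}/p^{a_i}$, so that $d_k={}_p\mathrm{Rank}\,H_k\mathbb{L}^r(n+1,n;1)=\#\{i:a_i\ge1\}$, while $Z_k/p^fB_k=\bigoplus_i\mathbb{Z}/p^{f+a_i}$.

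The heart of the matter, and the step I expect to be the main obstacle, is the claim that every $a_i\in\{0,1\}$, i.e. that the $p$-torsion of $H_*\mathbb{L}^r(n+1,n;1)$ has exponent $p$. Granting it, the $d_k$ divisors with $a_i=1$ contribute $\mathbb{Z}/p^{f+1}$ and the $M_k-d_k$ divisors with $a_i=0$ contribute $\mathbb{Z}/p^f$, yielding exactly
$$
H_k\mathbb{L}^r(n+1,n;p^f)=(\mathbb{Z}/p^{f+1})^{\oplus d_k}\oplus(\mathbb{Z}/p^f)^{\oplus(M_k-d_k)} .
$$
When $p$ is odd and $p\nmid r$ this is immediate: over $\mathbb{Z}_{(p)}$ the squares are redundant and $\mathbb{L}^r$ is a retract of the contractible complex $T^r(n+1,n;1)$, so there is no $p$-torsion and $d_k=0$. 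In the remaining cases I would prove $p\cdot H_*\mathbb{L}^r(n+1,n;1)=0$ by exhibiting a degree $+1$ map $h$ on $L_\bullet\otimes\mathbb{Z}_{(p)}$ with $\partial_1 h+h\partial_1=p\cdot\mathrm{id}$, a null-homotopy of multiplication by $p$, which forces $pZ_k\subseteq B_k$. Such an $h$ should be assembled from the canonical contraction $s$ of $(n+1,n;1)$ (with $s(x_n)=x_{n+1}$), which is a genuine contracting homotopy on the contractible tensor functor; the point — and the reason the \emph{with-squares} (restricted) structure is indispensable — is that after $p$-localization the non-additive corrections coming from the divided square $\{x,x\}=2x^{[2]}$ and from the denominators $\tfrac1r$ of the Dynkin projector collapse so as to contribute exactly one power of $p$, so that $p\cdot\mathrm{id}$ is null-homotopic although $\mathrm{id}$ is not. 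Equivalently, all higher Bocksteins vanish on $H_*(\mathbb{L}^r(n+1,n;1)\otimes\mathbb{F}_p)$. Verifying this collapse is the technical core, and it is precisely where the filtration and spectral-sequence analysis of \cite{Leibowitz} enters.

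Finally I would assemble the pieces to obtain the displayed formula, and cross-check the normalization against the smallest nontrivial case: for $r=2$ with $n$ odd, the relation $\{x_n,x_n\}=2x_n^{[2]}$ makes $L_\bullet$ the complex $(\Z\xrightarrow{2}\Z)$ in degrees $2n+1,2n$, so $H_{2n}\mathbb{L}^2(n+1,n;1)=\mathbb{Z}/2$ and $M_{2n}=d_{2n}=1$, giving $H_{2n}\mathbb{L}^2(n+1,n;2^f)=\mathbb{Z}/2^{f+1}$, in agreement. A route to the key lemma that bypasses the explicit homotopy is to feed in the structural computations of Theorems \ref{lieofz} and \ref{slieofz}, which present the relevant homotopy groups as $\mathbb{F}_p$-vector spaces; through Theorem \ref{leib} this forces the elementary-torsion conclusion, although it must be used with care to avoid circularity, since those computations themselves rest on the present proposition.
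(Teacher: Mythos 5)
Your reduction is sound and worth stating: since $\mathbb{L}$ is free on the underlying graded group, the identity $\partial_{p^f}=p^f\partial_1$ holds on a fixed finitely generated free graded group, so ($p$-locally) $H_k\mathbb{L}^r(n+1,n;p^f)=Z_k/p^fB_k$; combined with rational acyclicity (which gives $\mathrm{rank}\,Z_k=\mathrm{rank}\,B_k=M_k$) and the Smith normal form $Z_k/p^fB_k=\bigoplus_i\mathbb{Z}/p^{f+a_i}$, this shows the proposition is \emph{equivalent} to the single claim that every $a_i\le 1$, i.e.\ that $p$ annihilates the $p$-torsion of $H_*\mathbb{L}^r(n+1,n;1)$ (indeed, setting $f=0$ in the proposition recovers exactly this claim). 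But you do not prove that claim: you name it ``the main obstacle'' and defer it to ``the filtration and spectral-sequence analysis of \cite{Leibowitz}''. So what you have is a correct and clean reformulation of Leibowitz's Proposition 4.5, not a proof of it. For calibration, the paper offers no proof either --- the statement is imported verbatim from \cite{Leibowitz} --- so the entire mathematical content rests on the step you leave open.

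Concretely, the mechanism you sketch fails exactly where the content lies. The contraction $s$ of $(n+1,n;1)$ does extend to a contracting homotopy $S$ of the $r$-fold tensor functor, and compressing through the Dynkin--Specht--Wever maps $\iota\colon \EuScript{L}^r\to T^r$, $\theta\colon T^r\to\EuScript{L}^r$ with $\theta\iota=r\cdot\mathrm{id}$ (and the analogous check on squares, e.g.\ $x^{[2]}\mapsto x\otimes x\mapsto\{x,x\}=2x^{[2]}$ in weight $2$) produces $h=\theta S\iota$ with $\partial_1h+h\partial_1=r\cdot\mathrm{id}$. This settles exponent $p$ precisely when $v_p(r)\le 1$ --- covering your two checks, $p\nmid r$ and $r=2$ with $n$ odd --- but for $v_p(r)=k\ge 2$ (the cases $r=p^k$ that dominate this paper) it only bounds the torsion exponent by $p^k$, and no formal rearrangement of the $\tfrac12$ in the divided square or the $\tfrac1r$ in the Dynkin projector squeezes this down to a single power of $p$: the vanishing of higher Bocksteins on $H_*(\mathbb{L}^r(n+1,n;1)\otimes\mathbb{F}_p)$ is a genuine computation, carried out in \cite{Leibowitz} via the filtration by occurrences of the degree-$(n+1)$ generator and explicit bases of basic commutators. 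Your fallback through Theorems \ref{lieofz} and \ref{slieofz} is, as you concede, circular within this paper (those descriptions sit downstream of Theorem \ref{leib}, hence of this very proposition), and in any case they concern $L_*\EuScript{L}^{p^k}(\mathbb{Z},2n)$ rather than $H_*\mathbb{L}^r(n+1,n;1)$ for general $r$. The write-up is therefore a valuable reduction with its central step missing.
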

The numbers from proposition \ref{dan2} can be computed by the
following formulas:
\begin{align}
&  Rank\ B_k\mathbb L^r(n+1,n;1)=Rank\ \mathbb
L_k^r(n+1,n;1)-Rank\ B_{k-1}\mathbb L^r(n+1,n;1)\\
& Rank\ \mathbb L_k^r(1,0;1)=M(k,r-k)\ \text{if}\ r\ \text{is\
odd\ or\ if}\ r\ \text{is even and}\ k\equiv 0\mod 4\\
& Rank\ \mathbb L_k^r(1,0;1)=M(k,r-k)+Rank\ \mathbb
L_{k/2}^{r/2}(1,0;1)\
\text{if}\ r\ \text{is even and}\ k\equiv 2\mod 4\\
& Rank\ \mathbb L_{k+r}^r(2,1;1)=M(k,r-k)\ \text{if}\ r\ \text{is
odd or if}\ r\ \text{is even and}\ k+r\neq 2\mod 4\\
& Rank\ \mathbb L_{k+r}^r(2,1;1)=M(k,r-k)+Rank\ \mathbb
L_{\frac{k+r}{2}}^{r/2}(2,1;1)\ \text{if}\ r\ \text{is even and}\
k+r\equiv 2\mod 4
\end{align}

\begin{prop}\label{dan3} (\cite{Leibowitz}, Prop. 4.7) For $n\geq
0$, $f\geq 0$, there are isomorphisms
\begin{align*}
& H_*\mathbb L^r(1,0;p^f)\to H_{*+rn}\mathbb L^r(n+1,n;p^f)\
\text{if}\ n\ \text{is even}\\
& H_*\mathbb L^r(2,1;p^f)\to H_{*+r(n-1)}\mathbb L^r(n+1,n;p^f)\
\text{if}\ n\ \text{is odd}
\end{align*}
\end{prop}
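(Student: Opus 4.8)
The plan is to derive both isomorphisms from a single \emph{double-suspension} (decalage) isomorphism for the universal DGLS $\mathbb{L}$ and then to iterate it. The observation driving the argument is that
$$
(n+1,n;p^f)=(1,0;p^f)[n]\quad\text{and}\quad(n+1,n;p^f)=(2,1;p^f)[n-1],
$$
so for even $n$ the general complex is obtained from $(1,0;p^f)$ by applying the shift $[2]$ exactly $n/2$ times, and for odd $n$ it is obtained from $(2,1;p^f)$ by applying $[2]$ exactly $(n-1)/2$ times. Since shifting by $2$ is precisely the operation that leaves the parities of all generators unchanged, the two parities of $n$ must be handled by separate base complexes, which is exactly the dichotomy in the statement.

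First I would prove, for every $m\ge 0$, an isomorphism of differential graded Lie rings with squares
$$
\mathbb{L}^r(m+3,m+2;p^f)\;\cong\;\mathbb{L}^r(m+1,m;p^f)[2r]
$$
induced by the identity on the two free generators $y_m\mapsto y_{m+2}$, $y_{m+1}\mapsto y_{m+3}$. The key structural fact is that the free GLRS on a graded abelian group depends on the generator degrees only through their parities: the relation $\{x,x\}=0$ holds exactly for $x$ of even degree, and the square $x^{[2]}$ is defined exactly for $x$ of odd degree. A shift by $2$ preserves every parity, so the two free GLRS agree weight by weight as abelian groups. Moreover a weight-$r$ basis monomial is built from $r$ generators, each of which gains $2$ in homological degree under the substitution above, so its degree rises by exactly $2r$ — uniformly over the whole weight-$r$ component — which is what the shift $[2r]$ records. (The square of a weight-$w$ element of degree $d$ is a weight-$2w$ element of degree $2d$, whose degree rises by $4w=2\cdot(2w)$, consistently.)

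Next I would verify that this bijection is a chain map. The differential on $\mathbb{L}^r(m+1,m;p^f)$ is the DGLS extension of $\partial y_{m+1}=p^f y_m$, and on $\mathbb{L}^r(m+3,m+2;p^f)$ it is the extension of $\partial y_{m+3}=p^f y_{m+2}$ with the same coefficient $p^f$. The extension is dictated by $\partial\{x,y\}=\{\partial x,y\}+(-1)^{|x|}\{x,\partial y\}$ and by $\partial(x^{[2]})=\{\partial x,x\}$, and the only degree-dependent signs are the factors $(-1)^{|x|}$, which are invariant under the even shift $|x|\mapsto|x|+2$. Hence the differentials are intertwined, the displayed map is an isomorphism of complexes, and it induces $H_*\mathbb{L}^r(m+1,m;p^f)\cong H_{*+2r}\mathbb{L}^r(m+3,m+2;p^f)$. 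Composing this $n/2$ times from $m=0$ (even case) and $(n-1)/2$ times from $m=1$ (odd case) produces the cumulative shifts $rn$ and $r(n-1)$, which is the assertion.

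The main obstacle is the sign-and-basis bookkeeping compressed into the middle step: turning the parity and degree counts into an honest chain isomorphism requires fixing compatible additive bases of the two free GLRS — Hall-type bases closed under the squaring operation — and checking that $\partial$ carries corresponding basis elements to corresponding combinations. This is routine once one notes that every sign occurring in the Leibniz and square formulas is governed by a parity that the shift by $2$ fixes; the delicate point is the squaring differential $\partial(x^{[2]})=\{\partial x,x\}$, where one must confirm that the shift keeps the relevant $x$ in odd degree so that $x^{[2]}$ is defined on both sides (it does). As an independent sanity check, Proposition \ref{dan2} expresses the homology in each degree through the two integers $M_k$ and $d_k$; since the shift $[2]$ moves every weight-$r$ class rigidly by $2r$, the matching of these ranks and of the $p$-torsion on the two sides reconfirms the isomorphism.
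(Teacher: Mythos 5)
Your proposal is correct and takes essentially the same route as the paper: the paper deduces Proposition \ref{dan3} as an immediate consequence of Proposition \ref{ww1}, the chain-level isomorphism $\mathbb L^m((B\to A)[2n])\simeq \mathbb L^m(B\to A)[2nm]$, which holds for exactly the reason you give — the free DGLS construction depends on generator degrees only through their parities, so an even shift of the generators rigidly shifts the weight-$m$ component and intertwines the differentials. Your iterated $[2]$-shift isomorphism, applied $n/2$ times to $(1,0;p^f)$ or $(n-1)/2$ times to $(2,1;p^f)$, is just this statement, including your correct parity checks for the squaring operation.
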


\vspace{.5cm}\noindent{\bf Example.} We collect the
low-dimensional derived functors for $A=\mathbb Z/k$ in the
following table:
\begin{table}[H]
\begin{align}\label{table1}
& \begin{tabular}{cccccccccccccccccc}
 $i$ & \vline & $L_i\EuScript L^2(A)$ & \vline & $L_i\EuScript L^3(A)$ & \vline & $L_i\EuScript L^4(A)$ & \vline & $L_i\EuScript L^5(A)$ & \vline
 & $L_i\EuScript L^6(A)$ & \vline & $L_i\EuScript L^7(A)$ & \vline\\
 \hline
$6$ & \vline & 0 & \vline & 0 & \vline & 0 & \vline & 0 & \vline & 0 & \vline & 0 & \vline\\
$5$ & \vline & 0 & \vline & $0$ & \vline & $0$ & \vline & 0 & \vline & $ \mathbb Z/(3,k)$& \vline & $\mathbb Z/k$ & \vline\\
$4$ & \vline & 0 & \vline & 0 & \vline & $0$ & \vline & 0 & \vline & $\mathbb Z/(3k,k^2)$ & \vline & $(\mathbb Z/k)^{\oplus 2}$ & \vline\\
$3$ & \vline & 0 & \vline & $0$ & \vline & $\mathbb Z/(2,k)$ & \vline & $\mathbb Z/k$ & \vline & $\mathbb Z/(2,k)\oplus \mathbb Z/k$ & \vline & $(\mathbb Z/k)^{\oplus 3}$ & \vline\\
$2$ & \vline & $0$ & \vline & $0$ & \vline & $\mathbb Z/(2k,k^2)$ & \vline & $\mathbb Z/k$ & \vline & $\mathbb Z/(2k,k^2)\oplus\mathbb Z/k^{\oplus 2}$ & \vline & $(\mathbb Z/k)^{\oplus 2}$ & \vline\\
$1$ & \vline & $\mathbb Z/k$ & \vline & $\mathbb Z/k$ & \vline &
$\mathbb Z/k$ & \vline & $\mathbb Z/k$ & \vline & $\mathbb Z/k$ &
\vline & $\mathbb Z/k$ & \vline\\
\end{tabular}
\end{align}
\caption{Values of derived functors $L_i\EuScript L^m(\mathbb
Z/k)$}
\end{table}

\noindent Observe that, for a prime $p$, one has
$$
L_i\EuScript L^p(\mathbb Z/k)=\mathbb Z/k^{\oplus
\sum_{j=1}^i(-1)^{i-j}\frac{1}{p}\binom{p}{j}}
$$

\subsection{Functorial generalization}
For $k\geq 2,$ recall that we denote by $J_k$ the set of basic
tensor products of weight $k$ in $A$ and $B$. Set
$$
J_k=J_k^e\sqcup J_k^o=\bar J_k^e\sqcup \bar J_k^o
$$
where
\begin{align*}
& J_k^e\ \text{is the set of basic products from}\ J_k\ \text{with
even number of
entrances of}\ A\\
& J_k^o\ \text{is the set of basic products from}\ J_k\ \text{with
odd number of
entrances of}\ A\\
& \bar J_k^e\ \text{is the set of basic products from}\ J_k\
\text{with even number of
entrances of}\ B\\
& \bar J_k^o\ \text{is the set of basic products from}\ J_k\
\text{with odd number
of entrances of}\ B\\
\end{align*}
The structure of the graded components of DGLS $\mathbb
L(B\buildrel{f}\over\to A)$ is the following:
$$
\mathbb L^m(B\buildrel{f}\over\to A)_l=\bigoplus_{d|m,\
d|l}\left(\bigoplus_{D\in \bar J_{(m-l)/d,l/d}^e} \EuScript
L^d(D)\oplus \bigoplus_{D\in \bar J_{(m-l)/d,l/d}^o} \EuScript
L_s^d(D)\right)
$$
The structure of the DGLS $\mathbb L(C[1])$ for the shifted
complex $C=(B\buildrel{f}\over\to A)$ is the following
$$
\mathbb L^m((B\buildrel{f}\over\to A)[1])_{l+m}=\bigoplus_{d|m,\
d|l}\left(\bigoplus_{D\in J_{l/d,(m-l)/d}^e} \EuScript
L^d(D)\oplus \bigoplus_{D\in J_{l/d,(m-l)/d}^o} \EuScript
L_s^d(D)\right)
$$
The following proposition follows immediately from the
construction of DGLS $\EuScript L(C)$:
\begin{prop}\label{ww1} For $n\geq 1,\ m\geq 2,$ one has natural isomorphisms of chain
complexes \begin{align*} & \mathbb L^m((B\buildrel{f}\over\to
A)[2n]))\simeq \mathbb L^m(B\buildrel{f}\over\to A)[2nm]\\
& \mathbb L^m((B\buildrel{f}\over\to A)[2n+1]))\simeq \mathbb
L^m((B\buildrel{f}\over\to A)[1])[2nm]
\end{align*}
\end{prop}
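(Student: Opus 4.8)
The plan is to prove both isomorphisms by tracking how the universal DGLS construction $\mathbb{L}(-)$ interacts with suspension of the underlying chain complex. The key observation is that the functor $\mathbb{L}$ is built freely from the graded-Lie-with-squares structure, so the internal grading of $\mathbb{L}^m$ is completely determined by the grading of the generators, together with the sign rules (\ref{santi})--(\ref{yxx}) that decide whether a given basic bracket lands in an $\EuScript{L}^d$ or an $\EuScript{L}_s^d$ summand. First I would recall the explicit graded-component formulas displayed just above the statement, namely the decomposition of $\mathbb{L}^m(B\buildrel{f}\over\to A)_l$ and of $\mathbb{L}^m((B\buildrel{f}\over\to A)[1])_{l+m}$ in terms of the index sets $J^e_{\bullet,\bullet}$, $J^o_{\bullet,\bullet}$ and $\bar J^e_{\bullet,\bullet}$, $\bar J^o_{\bullet,\bullet}$. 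These formulas are the combinatorial heart of the matter: the whole statement reduces to a bookkeeping identity on these index sets under shifting.

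For the even case, $\mathbb{L}^m((B\buildrel{f}\over\to A)[2n])\simeq \mathbb{L}^m(B\buildrel{f}\over\to A)[2nm]$, I would argue by induction on $n$, reducing to the single suspension step $C\mapsto C[2]$. Shifting $C$ up by $2$ raises the degree of each of the two generators by $2$; since a basic product of weight $m$ uses generators whose degrees sum with multiplicity $m$ in the weight count, every homogeneous piece of $\mathbb{L}^m$ is shifted by exactly $2m$, giving the global shift $[2nm]$. The crucial point is that shifting by an \emph{even} amount preserves the parity of every generator, hence preserves which summands are $\EuScript{L}^d$ versus $\EuScript{L}_s^d$: a product that was super-Lie stays super-Lie and a product that was ordinary-Lie stays ordinary-Lie. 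Concretely, this is the statement that the index set $\bar J^e_{(m-l)/d,\,l/d}$ matched against degree $l$ corresponds bijectively to $\bar J^e$ matched against degree $l+2m$, and likewise for the odd parts, so the two sides agree summand-by-summand and the differentials match since $\partial$ is defined by the same shuffle formulas independently of the ambient shift.

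For the odd case, $\mathbb{L}^m((B\buildrel{f}\over\to A)[2n+1])\simeq \mathbb{L}^m((B\buildrel{f}\over\to A)[1])[2nm]$, I would peel off one odd suspension and then apply the even case: writing $(B\buildrel{f}\over\to A)[2n+1]=\big((B\buildrel{f}\over\to A)[1]\big)[2n]$ and invoking the already-established even isomorphism with $C[1]$ in place of $C$ yields the shift $[2nm]$ directly. The single odd suspension built into the base case is exactly what interchanges the two graded-component formulas displayed above the proposition, and this is why the target is $\mathbb{L}^m(C[1])$ rather than $\mathbb{L}^m(C)$: the passage from the $\bar J$-indexing to the $J$-indexing, with even/odd roles of $A$ and $B$ swapped, is the shadow of the single parity change. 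I expect the main obstacle to be verifying that the sign conventions (\ref{santi}), (\ref{sjacob}) and the square operation $^{[2]}$ transform correctly under the odd shift, so that the claimed natural identification is a genuine chain-map isomorphism and not merely an abstract degreewise isomorphism; checking compatibility of $\partial$ with the square operation on odd-degree elements is the delicate step, since an odd suspension alters which elements are eligible for $^{[2]}$. Once that naturality is confirmed, both isomorphisms follow formally from the universal property of $\mathbb{L}$.
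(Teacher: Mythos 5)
Your proposal is correct and takes essentially the same route as the paper, which gives no written proof beyond asserting that the proposition ``follows immediately from the construction of the DGLS'': your bookkeeping (an even shift raises every generator's degree by $2n$ while preserving its parity, so each weight-$m$ component, its Lie versus super-Lie type, and the differential carry over with the uniform shift $[2nm]$, and the odd case is handled by writing $(B\to A)[2n+1]=\bigl((B\to A)[1]\bigr)[2n]$ and applying the even case to the shifted complex) is precisely the implicit argument behind the displayed graded-component formulas. One small remark: the delicate sign verification you anticipate for the odd suspension never actually arises, since after peeling off the single $[1]$ the statement involves only even shifts, which leave the sign conventions and the square operation untouched.
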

Observe that proposition \ref{dan3} is a simple consequence of
proposition \ref{ww1}.
\begin{example}
\end{example}
Consider the simplest example of DGLS $\EuScript L(C)$. Let
$C=\{B\buildrel{f}\over\longrightarrow A\},$ i.e. we set the
abelian group $A$ in degree 0 and the abelian group $B$ in degree
1. The DGLS $\EuScript L(C)$ is a graded object, with the
following terms in low dimensions:\\
$$\EuScript L^2(C):\ \ \ \ \Gamma_2(B)\to A\otimes B\to \Lambda^2(A)$$
This is quadratic Koszul complex associated to $C$. The cubical term is\\
$$
\EuScript L^3(C):\ \ \ \ \EuScript L_s^3(B)\to B\otimes A\otimes B
\buildrel{\delta}\over\to B\otimes A\otimes A\to \EuScript L^3(A)
$$
where
\begin{equation}\label{deltamap}
\delta: b_1\otimes a\otimes b_2\mapsto b_1\otimes a\otimes
f(b_2)+b_2\otimes f(b_1)\otimes a-b_1\otimes f(b_2)\otimes a,\ \
b_1,b_2\in B, a\in A.
\end{equation}
The fourth degree component of $\EuScript L(C)$ has the following
structure:
\begin{multline}
\EuScript L^4(C):\ \ \ \ \EuScript L_s^4(B)\to B\otimes A\otimes
B\otimes B\buildrel{\delta_2}\over\to B\otimes A\otimes A\otimes
B\oplus \Gamma_2(B\otimes A)\buildrel{\delta_1}\over\to\\ B\otimes
A\otimes A\otimes A\to \EuScript L^4(A)
\end{multline}
where
\begin{align*}
& \delta_2:\ b_1\otimes a\otimes b_2\otimes b_3\mapsto b_2\otimes a\otimes f(b_1)\otimes b_3-b_2\otimes f(b_1)\otimes a\otimes b_3-b_1\otimes a\otimes f(b_2)\otimes b_3+ \\
& \ \ \ \ \ \ \ \ \ \ \ \ \ \ \ \ \ \ \ \ \ b_1\otimes a\otimes f(b_3)\otimes b_2+(b_2\otimes f(b_3))(b_1\otimes a)\\
& \delta_1:\ b_1\otimes a_1\otimes a_2\otimes b_2\mapsto b_2\otimes a_2\otimes f(b_1)\otimes a_1-b_2\otimes a_2\otimes a_1\otimes f(b_1)-b_2\otimes f(b_1)\otimes a_1\otimes a_2+\\
& \ \ \ \ \ \ \ \ \ \ \ \ \ \ \ \ \ \ \ \ \ +b_2\otimes a_1\otimes
f(b_1)\otimes a_2-b_1\otimes a_1\otimes a_2\otimes f(b_2)\\ &
\delta_1: \gamma_2(b\otimes a)\mapsto b\otimes a\otimes
f(b)\otimes a-b\otimes f(b)\otimes a\otimes a
\end{align*}
with $b,b_1,b_2,b_3\in B,\ a,a_1,a_2\in A$. For example, one has
\begin{align*}
& \EuScript L^2(\mathbb Z\buildrel{k}\over\to \mathbb Z)=(\mathbb
Z\buildrel{k}\over\to \mathbb Z)[1]\\
& \EuScript L^3(\mathbb Z\buildrel{k}\over\to \mathbb Z)=(\mathbb
Z\buildrel{k}\over\to \mathbb Z)[1]\\
& \EuScript L^4(\mathbb Z\buildrel{k}\over\to \mathbb Z)=(\mathbb
Z\buildrel{(0,2k)}\over\to \mathbb
Z\oplus \mathbb Z\buildrel{(k,0)}\over\to \mathbb Z)[1]\\
\end{align*}

\begin{example}
\end{example}
Now consider the low-dimensional terms of DGLS $\EuScript L(C[1])$
where $C=\{B\buildrel{f}\over\longrightarrow A\}.$ The quadratic
term is the following:
$$
\EuScript L^2(C[1]):\ \ \ \ \ (\Lambda^2(B)\to B\otimes A\to
\Gamma_2(A))[2]
$$
This is the shifted quadratic dual de Rham complex associated to
$C$. The cubical term is the following:
$$
\EuScript L^3(C[1]):\ \ \ \ \ (\EuScript L^3(B)\to B\otimes
A\otimes B\buildrel{\delta'}\over\to B\otimes A\otimes A\to
\EuScript L_s^3(A))[3]
$$
where
$$
\delta': b_1\otimes a\otimes b_2\mapsto b_2\otimes a\otimes
f(b_1)+b_2\otimes f(b_1)\otimes a+b_1\otimes a\otimes f(b_2),\
a\in A,\ b_1,b_2\in B
$$
(compare with map $\delta$ in (\ref{deltamap})). For example, one
has
\begin{align*}
& \mathbb L^2((\mathbb Z\buildrel{k}\over\to \mathbb
Z)[1])=(\mathbb Z\buildrel{2k}\over\to \mathbb Z)[2]\\
& \mathbb L^3((\mathbb Z\buildrel{k}\over\to \mathbb
Z)[1])=(\mathbb Z\buildrel{3k}\over\to \mathbb Z)[3]
\end{align*}

Observe that, given a two-step complex of free abelian groups
$C=(B\buildrel{f}\over\to A)$ and $n\geq 0,$ we have a natural (in
the category of flat resolutions of abelian groups) map of DGLS-s
$$
\mathbb L((C)[n])\to \EuScript L(N^{-1}(C[n]))
$$
which generalizes the map (\ref{lmap}). For a fixed $m\geq 2$, the
map
$$
\EuScript L^m(C)\to \EuScript L^m(N^{-1}(C))
$$
goes through the cross-effect sequence in the following way:
\begin{equation}\label{wdi}{\small \xyma{\EuScript L^m(C)_{l+1} \ar@{->}[r] \ar@{>->}[d]
& \EuScript L^m(C)_{l} \ar@{>->}[d]
\ar@{->}[r] &\dots \ar@{->}[r] & \EuScript L^m(C)_0 \ar@{=}[d]\\
\EuScript L^m(\underbrace{B|\dots|B}_{l\ \text{terms}})\oplus
\EuScript L^m(\underbrace{A|B|\dots|B}_{l+1\ \text{terms}})
\ar@{->}[r] \ar@{>->}[d] & \EuScript
L^m(\underbrace{B|\dots|B}_{l-1\ \text{terms}})\oplus \EuScript
L^m(\underbrace{A|B|\dots|B}_{l\
\text{terms}}) \ar@{->}[r] \ar@{>->}[d] & \dots \ar@{->}[r] & \EuScript L^m(A)\ar@{=}[d]\\
\EuScript L^m(N^{-1}(C))_{l+1}\ar@{->}[r] & \EuScript
L^m(N^{-1}(C))_{l} \ar@{->}[r] & \dots \ar@{->}[r] & \EuScript
L^m(N^{-1}(C))_0}}
\end{equation}
For $m=2$ this diagram has the following form:
\begin{equation}\label{gamma2d}
\xyma{\Gamma_2(B) \ar@{->}[r] \ar@{>->}[d] & B\otimes
A\ar@{>->}[d] \ar@{->}[r] & \Lambda^2(A)\ar@{=}[d]\\ B\otimes
B\ar@{->}[r] \ar@{>->}[d] &
\Lambda^2(B)\oplus B\otimes A \ar@{->}[r] \ar@{>->}[d] & \Lambda^2(A)\ar@{=}[d]\\
\Lambda^2(A\oplus B\oplus B) \ar@{->}[r] & \Lambda^2(A\oplus
B)\ar@{->}[r] & \Lambda^2(A)}
\end{equation}
Computing the cokernels in the upper part of diagram
(\ref{gamma2d}) we obtain the following diagram with exact rows
and columns:
\begin{equation}
\xyma{\Gamma_2(B) \ar@{->}[r] \ar@{>->}[d] & B\otimes
A\ar@{>->}[d] \ar@{->}[r] & \Lambda^2(A)\ar@{=}[d]\\ B\otimes
B\ar@{->}[r] \ar@{->>}[d] &
\Lambda^2(B)\oplus B\otimes A \ar@{->}[r] \ar@{->>}[d] & \Lambda^2(A)\\
\Lambda^2(B) \ar@{=}[r] & \Lambda^2(B)}
\end{equation}
which shows that the natural map
$$
\Lambda^2(B\to A)\to \Lambda^2(N^{-1}(B\to A))
$$
is a homotopy equivalence. The following proposition shows that
this happens for all prime degrees.

\begin{prop}\label{primepro}
For a prime $p$, the map
$$
\EuScript L^p(B\buildrel{f}\over\longrightarrow A)\to \EuScript
L^p(N^{-1}(B\buildrel{f}\over\longrightarrow A))
$$
is a homotopy equivalence.
\end{prop}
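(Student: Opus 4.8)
The plan is to realise the comparison map as a degreewise-split monomorphism with contractible cokernel. Concretely, diagram (\ref{wdi}) and Theorem \ref{schles} show that the map $\EuScript L^p(C)\to \EuScript L^p(N^{-1}(C))$ factors through the cross-effect decomposition with vertical arrows that are monomorphisms onto direct summands; here I identify the target with the normalised complex $N\EuScript L^p(N^{-1}(C))$ (the middle row in the $p=2$ picture (\ref{gamma2d})), which is homotopy equivalent to the unnormalised one by Dold--Kan. Since every group in sight is free abelian and the inclusion is split in each homological degree, it suffices to prove that the cokernel complex $Q^{\bullet}=\coker\bigl(\EuScript L^p(C)\to N\EuScript L^p(N^{-1}(C))\bigr)$ is acyclic: a degreewise-split short exact sequence of bounded complexes of free abelian groups with acyclic quotient has its injection a homotopy equivalence. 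So the whole problem reduces to analysing $Q^{\bullet}$.

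The key structural input is that $p$ is prime. Applying the cross-effect formula (\ref{crefe}) to $\EuScript L^p$, only the divisors $d=1$ and $d=p$ occur. The $d=p$ summand attached to the base variable $A$ is $\EuScript L^p(A)$, sitting in homological degree $0$, where the comparison is the identity isomorphism (both sides have $N^{-1}(C)_0=A$); hence $Q^{\bullet}$ vanishes in degree $0$. The other $d=p$ summand is $\EuScript L_s^p(B)$, which occupies the top degree $p$ of the source $\EuScript L^p(C)$ and maps into a sum of weight-$p$ tensor words built from copies of $B$. Every remaining summand on either side has $d=1$, and since $\EuScript L^1=\EuScript L_s^1=\mathrm{id}$ it is a plain tensor product of weight $p$ in copies of $A$ and $B$. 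In particular no intermediate Lie power $\EuScript L^d$ with $1<d<p$ appears anywhere — precisely the phenomenon special to prime degree and the reason the statement fails verbatim for composite $m$ — so $Q^{\bullet}$ is a bounded complex of free abelian groups whose terms are all tensor products, except that in top degree it is a tensor power of $B$ modulo the image of $\EuScript L_s^p(B)$.

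Next I would establish acyclicity of $Q^{\bullet}$, taking the case $p=2$ of (\ref{gamma2d}) as the template: there $Q^{\bullet}$ is the two-term complex $\Lambda^2(B)\buildrel{\sim}\over\longrightarrow\Lambda^2(B)$, manifestly contractible (in top degree $B\otimes B/\Gamma_2(B)=\Lambda^2(B)$, exactly the tensor-power-modulo-$\EuScript L^2_s(B)$ phenomenon above). In general the extra summands of $N\EuScript L^p(N^{-1}(C))$ are the basic tensor words involving at least one degenerate copy of $A$ or $B$ coming from the degeneracies $s_i$ in $N^{-1}(C)$, and the alternating-sum boundary of $N^{-1}(C)$ pairs these words across adjacent homological degrees. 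The plan is to organise $Q^{\bullet}$ as a direct sum, indexed by basic tensor words carrying a marked degenerate factor, of elementary pieces $T\buildrel{\mathrm{id}}\over\longrightarrow T$ (with $T$ the tensor product of the remaining factors), each contractible; equivalently, to write down the extra-degeneracy contracting homotopy supplied by the simplicial structure of $N^{-1}(C)$ and check it descends to $Q^{\bullet}$. Either route yields $H_*(Q^{\bullet})=0$.

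The main obstacle I anticipate lies entirely in this last step: matching each degenerate weight-$p$ basic word in a given simplicial degree with its boundary partner, and verifying that the induced splitting of $Q^{\bullet}$ into identity (Koszul-contractible) pieces is compatible with the signs and with the Schlesinger splitting of Theorem \ref{schles}. This is multilinear bookkeeping rather than genuinely Lie-theoretic work, because the genuine Lie cross-terms have been eliminated — and eliminating them is exactly what primality of $p$ guarantees. Thus the conceptual content is the reduction of the second paragraph, with the third paragraph providing the routine, if intricate, contraction; the super-Lie corner $\EuScript L_s^p(B)$ enters only through its inclusion into the top tensor power, where its cokernel merges into the acyclic tensorial complex just as $\Gamma_2(B)\hookrightarrow B\otimes B$ does when $p=2$.
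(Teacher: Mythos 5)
Your proposal is correct and follows essentially the paper's own route: the paper likewise factors the comparison map through the cross-effect diagram (\ref{wdi}), uses that for prime degree the decomposition (\ref{crefe}) admits only the divisors $d=1$ and $d=p$ (so no intermediate Lie powers appear), and establishes contractibility of the cokernel by the cokernel computation carried out explicitly in the $p=2$ template (\ref{gamma2d}), with $\EuScript L_s^p(B)$ (resp.\ $\Gamma_2(B)$) absorbed into the top tensor power exactly as you describe. The combinatorial contraction you flag as the remaining step --- pairing the degenerate weight-$p$ words into identity complexes $T\to T$ --- is precisely the detail the paper also leaves implicit after the $p=2$ case.
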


\begin{cor}
For a prime $p$, abelian group $A$ and a flat resolution $0\to
P\to Q\to A\to 0$, the truncated complex $\EuScript L^p(P\to Q)$
$$
Q\otimes P^{\oplus p-1}\to \dots \to
\bigoplus_{J_{l,p-l}}P^{\otimes p-l}\otimes Q^{\otimes l}\to
\dots\to P\otimes Q^{\otimes p-1}\to \EuScript L^p(Q)
$$
represents the object $L\EuScript L^p(A)$ in the derived category.
\end{cor}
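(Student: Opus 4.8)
The plan is to prove the statement by the same mechanism that is already visible in the quadratic case of diagram (\ref{gamma2d}): realize the natural map as a degreewise split monomorphism and then show that its cokernel is an acyclic complex. The decisive structural input is that $p$ is prime. Indeed, by Schlesinger's theorem (Theorem \ref{schles}) a non-diagonal cross-effect summand of $\EuScript L$ has the form $\EuScript L^d(A_{j_1}\otimes \dots \otimes A_{j_k})$ with $k\geq 2$, and its contribution to the degree-$p$ functor $\EuScript L^p$ requires $dk=p$. Since $p$ is prime this forces $d=1$ and $k=p$, so every cross-effect of $\EuScript L^p$ beyond the pure pieces $\EuScript L^p(A_i)$ is simply a $p$-fold tensor product $A_{j_1}\otimes \dots \otimes A_{j_p}$, i.e. a multilinear functor. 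No intermediate Lie functors $\EuScript L^d$ with $1<d<p$ occur; this is exactly what will make the cokernel collapse.

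First I would fix the explicit form of the source complex. By the description preceding the statement, $\EuScript L^p(C)$ has $\EuScript L^p(A)$ in degree $0$, the super-Lie term $\EuScript L_s^p(B)$ in degree $p$, and in each intermediate degree $l$ (with $0<l<p$) a direct sum of basic tensor products having $p-l$ entries from $A$ and $l$ entries from $B$ --- precisely the formula for $\mathbb L^p(B\buildrel{f}\over\to A)_l$ in which the only admissible divisor is $d=1$. On the target side, writing $N^{-1}(C)$ in each simplicial degree as the direct sum of its nondegenerate and degenerate copies of $A$ and $B$, Schlesinger's theorem decomposes $\EuScript L^p(N^{-1}(C))$ into the diagonal pieces together with exactly these $p$-fold tensor products. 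Comparing the two decompositions degreewise, the comparison map of (\ref{wdi}) is the inclusion of the sub-sum indexed by the non-degenerate basic products; since over free abelian groups each such inclusion splits (as in $B\otimes B=\Gamma_2(B)\oplus \Lambda^2(B)$, which is the splitting used in (\ref{gamma2d})), it is a degreewise split monomorphism onto a direct summand.

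It then remains to prove that the cokernel complex $Q_\bullet$ is contractible. Its terms are the tensor products indexed by the \emph{degenerate} basic products occurring in the simplicial expansion of $N^{-1}(C)$, those having no counterpart in the purely algebraic complex $\EuScript L^p(C)$; in the quadratic case $Q_\bullet$ reduced to $\Lambda^2(B)\to \Lambda^2(B)$ with identity differential, hence was acyclic. For general prime $p$ I would construct an explicit contracting homotopy on $Q_\bullet$ by pairing each degenerate basic product with its image under a fixed degeneracy operator $s_i$, so that $Q_\bullet$ splits as a direct sum of two-term complexes of tensor products with invertible differential. The multilinearity secured in the first step is what guarantees that this pairing is compatible with the differential of (\ref{wdi}), since the differential acts on each tensor factor separately via $f$ and the alternating sum of faces.

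The main obstacle is the combinatorial bookkeeping in this last step: one must organize the basis of $p$-fold tensor monomials (the basic products, of Lyndon type) appearing in $\EuScript L^p(N^{-1}(C))$, identify exactly which are degenerate, and verify that the chosen degeneracy pairing respects the full differential and not merely a single face map. The prime hypothesis is essential and cannot be removed: for composite $m$ the decomposable cross-effects $\EuScript L^d(\text{tensor})$ with $1<d<m$ survive in the cokernel and obstruct any such pairing. This is consistent with the nontrivial higher derived functors of $\EuScript L^m$ recorded elsewhere in the paper, and is precisely the phenomenon that the $\EuScript E$-functors are built to capture.
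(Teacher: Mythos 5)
Your strategy is the one the paper itself points to: the paper gives no separate argument for this corollary at all --- it is read off immediately from Proposition \ref{primepro} (the complex $\EuScript L^p(P\to Q)$ maps by a homotopy equivalence to the standard simplicial model $\EuScript L^p(N^{-1}(P\to Q))$, whose homotopy groups are $L_*\EuScript L^p(A)$ since $P\to Q$ is a flat resolution), and Proposition \ref{primepro} is in turn only verified in the paper at $p=2$, via the cross-effect diagrams (\ref{wdi}) and (\ref{gamma2d}). Your reconstruction (Theorem \ref{schles}, primality forcing $dk=p$ hence $d\in\{1,p\}$ so that every mixed cross-effect of $\EuScript L^p$ is a $p$-fold tensor product, degreewise monomorphism with acyclic cokernel) is exactly that template extended to all primes, so in approach you and the paper coincide.

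The genuine gap is in your identification of the cokernel, and hence in the proposed contracting homotopy. The cokernel of $\EuScript L^p(C)\to \EuScript L^p(N^{-1}(C))$ is \emph{not} spanned only by ``tensor products indexed by the degenerate basic products'': it also contains the diagonal summands $\EuScript L^p$ of the individual (degenerate and nondegenerate) copies of $A$ and $B$, together with the quotients measuring the discrepancy between the algebraic divided/super terms and the multilinear cross-effects. Your own quadratic anchor exhibits this: there the essential cokernel is $\coker(\Gamma_2(B)\to B\otimes B)\cong \Lambda^2(B)$ in degree $2$, matched against the diagonal summand $\Lambda^2(B)\subseteq \Lambda^2(B\oplus A)$ in degree $1$, and neither piece is a degenerate tensor monomial --- so the example you quote contradicts your general description. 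For general prime $p$ the analogous pieces compare $\EuScript L_s^p(B)$, mapped by shuffle brackets and shuffle squares, with the $p$-fold products of degenerate copies and with the diagonal $\EuScript L^p$'s; multilinearity says nothing about these summands, so your claim that ``the multilinearity secured in the first step is what guarantees that this pairing is compatible with the differential'' fails precisely on the part that carries the content of Proposition \ref{primepro} (it is the reason the top term of the complex is the super-Lie functor $\EuScript L_s^p$ rather than $\EuScript L^p$). What you defer as ``combinatorial bookkeeping'' is thus the actual mathematical substance, and the degeneracy-pairing mechanism you propose does not apply to it. One point in your favor worth recording: you work throughout with the full complex including $\EuScript L_s^p(P)$ in degree $p$, consistent with Proposition \ref{primepro} and with the exact sequence (\ref{psp}); the corollary's display drops that top term, and without it the homology in degree $p-1$ is already wrong for $p=2$, $A=\mathbb Z/k$, $P=Q=\mathbb Z$ (one gets $H_1=\mathbb Z$ instead of $L_1\EuScript L^2(\mathbb Z/k)=\mathbb Z/k$), so the top term must be retained and your reading is the correct one.
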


Clearly, proposition \ref{primepro} works only for prime degrees.
For example, for the fourth degree, the natural map
$$
\mathbb L^4(B\buildrel{f}\over\longrightarrow A)\to \EuScript
L^4(N^{-1}(B\buildrel{f}\over\longrightarrow A))
$$
is not a homotopy equivalence, in general.

Now consider the case of a shifted complex $C[1]$ with
$C=(B\buildrel{f}\over\to A)$. The super-analog of the diagram
(\ref{wdi}) is the following diagram
$${\small
\xyma{\EuScript L^m(C[1])_{l+m+1} \ar@{->}[r] \ar@{>->}[d] &
\EuScript L^m(C[1])_{l+m} \ar@{>->}[d]
\ar@{->}[r] &\dots \ar@{->}[r] & \EuScript L^m(C[1])_m \ar@{=}[d]\\
\EuScript L_s^m(\underbrace{B|\dots|B}_{l\ \text{terms}})\oplus
\EuScript L_s^m(\underbrace{A|B|\dots|B}_{l+1\ \text{terms}})
\ar@{->}[r] \ar@{>->}[d] & \EuScript
L_s^m(\underbrace{B|\dots|B}_{l-1\ \text{terms}})\oplus \EuScript
L_s^m(\underbrace{A|B|\dots|B}_{l\
\text{terms}}) \ar@{->}[r] \ar@{>->}[d] & \dots \ar@{->}[r] & \EuScript L_s^m(A)\ar@{=}[d]\\
\EuScript L_s^m(N^{-1}(C))_{l+1}\ar@{->}[r] & \EuScript
L_s^m(N^{-1}(C))_{l} \ar@{->}[r] & \dots \ar@{->}[r] & \EuScript
L_s^m(N^{-1}(C))_0}}
$$
For $m=2$ this diagram has the following form:
\begin{equation}\label{spr2}
\xyma{\Lambda^2(B) \ar@{->}[r] \ar@{>->}[d] & B\otimes
A\ar@{>->}[d] \ar@{->}[r] & \Gamma_2(A)\ar@{=}[d]\\ B\otimes
B\ar@{->}[r] \ar@{>->}[d] &
\Gamma_2(B)\oplus B\otimes A \ar@{->}[r] \ar@{>->}[d] & \Gamma_2(A)\ar@{=}[d]\\
\Gamma_2(A\oplus B\oplus B) \ar@{->}[r] & \Gamma_2(A\oplus
B)\ar@{->}[r] & \Gamma_2(A)}
\end{equation}
There is no an analog of proposition \ref{primepro} in the shifted
case. Observe that the upper map of complexes in (\ref{spr2}) is
not a homotopy equivalence.

\subsection{Spectral sequence} Now consider the functorial generalization of the spectral
sequence (\ref{lss}). Let $C=\{B\buildrel{f}\over\to A\}$ be a
complex of length two with free abelian groups $A$ and $B$. Let
$m\geq 2$, $n\geq 0$. Consider the standard model for $L\EuScript
L^m(C[n])$:
$$\EuScript L^m(N^{-1}(C[n])):\ \ \dots
\begin{matrix} \EuScript L^m(B^{\oplus n+2}\oplus A^{\oplus \binom{n+2}{2}})
\end{matrix}
\begin{matrix}\longrightarrow\\[-3.5mm]\dots\\[-2.5mm]\longrightarrow\\[-3.5mm]\longleftarrow\\[-3.5mm]\dots\\[-2.5mm]\longleftarrow
\end{matrix}\
\EuScript L^m(B\oplus A^{\oplus n+1})
\begin{matrix}\longrightarrow\\[-3.5mm]\dots\\[-2.5mm]\longrightarrow\\[-3.5mm]\longleftarrow\\[-3.5mm]\dots\\[-2.5mm]\longleftarrow
\end{matrix}\
 \EuScript L^m(A)
$$
Define the natural simplicial filtration
\begin{equation}\label{sfiltration}
\EuScript L^m(N^{-1}(C[n]))=I_{m}\supset I_{m-1}\supset \dots
\supset I_0\supset I_{-1}=\{1\}
\end{equation}
where $I_j$ is the simplicial subgroup of $\EuScript
L^m(N^{-1}(C[n]))$ generated at each dimension by basic
commutators with at most $j$ elements which arise from $B$. For
example, for $n=0$,
$$
I_0:\ \ \ \dots\ \begin{matrix} \EuScript L^m(A)
\end{matrix}
\begin{matrix}\longrightarrow\\[-3.5mm]\longrightarrow\\[-3.5mm]\longrightarrow\\[-3.5mm]\longleftarrow\\[-3.5mm]
\longleftarrow
\end{matrix}\
\EuScript L^m(A)
\begin{matrix}\longrightarrow\\[-3mm]\longrightarrow\\[-3mm]\longleftarrow\end{matrix}\
 \EuScript L^m(A)
$$
and
$$
I_1:\ \ \ \dots\ \begin{matrix} (B\oplus B)\otimes A^{\otimes
m-1}\oplus \EuScript L^m(A)
\end{matrix}
\begin{matrix}\longrightarrow\\[-3.5mm]\longrightarrow\\[-3.5mm]\longrightarrow\\[-3.5mm]\longleftarrow\\[-3.5mm]
\longleftarrow
\end{matrix}\
B\otimes A^{\otimes m-1}\oplus \EuScript L^m(A)
\begin{matrix}\longrightarrow\\[-3mm]\longrightarrow\\[-3mm]\longleftarrow\end{matrix}\
 \EuScript L^m(A)
$$
This filtration
gives rise to the spectral sequence
\begin{equation}\label{fl}
E_{i,j}^1(C[n])=\pi_{i+j}(I_{i}/I_{i-1})\Rightarrow
\pi_{i+j}L\EuScript L^m(C[n])
\end{equation}
with differentials
$$
d_{i,j}^k: E_{i,j}^k\to E_{i-k, j+k-1}^k
$$
Observe that, the general $E^1$-term can be described as follows:
\begin{equation}\label{gene}
E_{mn+m-l,j-mn-m+l}^1(C[n])=\bigoplus_{d|m,\ d|l}\bigoplus_{D\in
J_{\frac{l}{d},\frac{m-l}{d}}}L_j\EuScript L^d(D,\frac{mn+m-n}{d})
\end{equation} In particular,
$$
E_{mn+i,0}^1(C[n])=\EuScript L^m(C[n])_{i},\ i\geq 0
$$
As an example, consider initial terms of the spectral sequence for
the fourth degree Lie functor. The spectral sequence for
$L\EuScript L^4(C)$ has the following form:
$$
\xyma{\EuScript L^4(A) & A\otimes B^{\otimes 3}
\ar@{->}[l]_{d_{1,0}^1} & A\otimes B\otimes B\otimes A\oplus
\Gamma_2(A\otimes B) \ar@{->}[l]_{d_{2,0}^1\ \ \ \ \ \ \ \ \ \ \ }
& B\otimes A^{\otimes
3} \ar@{->}[l]_{\ \ \ \ \ \ \ \ \ \ \ \ \ \ d_{3,0}^1} & \EuScript L_s^4(B)\ar@{->}[l]_{\ \ \ \ \ d_{4,0}^1}\\
& & & & \Gamma_2(B)\otimes \mathbb Z/2\ar@{-->}[ull]^{d_{4,-1}^2}}
$$
where the dash arrow is defined on homology. The $E^1$-term of the
spectral sequence for $L\EuScript L^4(C[1])$ has the following
form:

\begin{table}[H]
\begin{tabular}{ccccccccccccccc}
 $q$ & \vline & $E_{4,q}^1$ & $E_{5,q}^1$ & $E_{6,q}^1$ & $E_{7,q}^1$ & $E_{8,q}^1$\\
 \hline
 0 & \vline & $\EuScript L_s^4(A)$ & $A^{\otimes 3}\otimes B$ & $A\otimes B^{\otimes 2}\otimes A\oplus \Gamma_2(A\otimes B)$ & $A\otimes B^{\otimes 3}$ & $\EuScript L^4(B)$\\
 -1 & \vline & $\Gamma_2(A)\otimes \mathbb Z/2$ & $0$ & $0$ & $0$ & $\Gamma_2(B)\otimes \mathbb Z/2$\\
-2 & \vline & 0 & $0$ & $A\otimes B\otimes \mathbb Z/2$ & $0$ & $0$\\
-3 & \vline & $0$ & $0$ & $0$ & $0$ & $\Gamma_2(B)\otimes \mathbb Z/2$\\
-4 & \vline & 0 & $0$ & $0$ & 0 & $B\otimes \mathbb Z/2$\\
\end{tabular}
\vspace{.5cm} \caption{The $E^1$-term of the spectral sequence
  for $\EuScript L^4((B\to A)[1])$}
\end{table}

\subsection{D\'ecalage} Proposition \ref{primepro} implies that, for an exact sequence of free abelian
groups $$ 0\to P\to Q\to A\to 0,
$$
there is a natural long exact sequence
\begin{multline}\label{psp}
0\to \EuScript L_s^p(P)\to Q\otimes P^{\oplus p-1}\to \dots \to
\bigoplus_{J_{l,p-l}}P^{\otimes p-l}\otimes Q^{\otimes l}\to\\
\dots\to P\otimes Q^{\otimes p-1}\to \EuScript L^p(Q)\to \EuScript
L^p(A)\to 0
\end{multline}
Given a complex $B$ of free abelian groups, we have a natural
short exact sequence
$$
0\to B\to C(B)\to B[1]\to 0
$$
where $C(B)$ is the cone of $B$, which is contractible. Applying
the sequence (\ref{psp}) to the cone sequence of an arbitrary
element of the derived category ${\sf DAb_{\leq 0}},$ we obtain
the following

\begin{theorem}
For a prime $p$ and $C\in {\sf DAb_{\leq 0}}$, there is a natural
isomorphism in the derived category
$$
L\EuScript L^p(C[1])\simeq L\EuScript L_s^p(C)[p].
$$
\end{theorem}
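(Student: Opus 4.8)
The plan is to realize $L\EuScript L^p(C[1])$ by feeding the \emph{cone sequence} into the décalage sequence (\ref{psp}). Choose a complex $B$ of free abelian groups representing $C$, so that $B[1]$ represents $C[1]$, and consider the degreewise split short exact sequence of free complexes
$$
0\to B\to C(B)\to B[1]\to 0,
$$
in which the cone $C(B)$ is contractible. This sequence plays exactly the role of the flat resolution $0\to P\to Q\to A\to 0$ of (\ref{psp}), with $P=B$, $Q=C(B)$ and $A=B[1]$. Since $B$ and $C(B)$ are free and the construction of (\ref{psp}), which rests on Proposition \ref{primepro}, is natural, it extends to complexes of free abelian groups and exhibits $L\EuScript L^p(B[1])$ as the total complex of
$$
\EuScript L_s^p(B)\to C(B)\otimes B^{\otimes p-1}\to \dots \to \bigoplus_{J_{l,p-l}}B^{\otimes p-l}\otimes C(B)^{\otimes l}\to \dots \to B\otimes C(B)^{\otimes p-1}\to \EuScript L^p(C(B)),
$$
where the super-Lie term $\EuScript L_s^p(B)$ occupies the top layer (homological degree $p$) of the filtration (\ref{sfiltration}) and $\EuScript L^p(C(B))$ the bottom layer.

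Next I would collapse this complex using the contractibility of $C(B)$. Each component functor that enters $\EuScript L^p$ — the cross-effect tensor powers as well as $\EuScript L^p$ and $\EuScript L_s^p$ themselves — is a summand of a polynomial functor and hence preserves chain homotopy equivalences. Consequently every layer involving at least one tensor factor $Q=C(B)$ is contractible: the mixed terms $B^{\otimes p-l}\otimes C(B)^{\otimes l}$ with $l\geq 1$ are acyclic because one of their tensor factors is, and $\EuScript L^p(C(B))\simeq \EuScript L^p(0)=0$. Only the $l=0$ summand $\EuScript L_s^p(B)$ survives, sitting in filtration degree $p$, so the total complex is homotopy equivalent to $\EuScript L_s^p(B)[p]$. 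It then remains to identify $\EuScript L_s^p(B)$ with $L\EuScript L_s^p(C)$: since $B$ is a complex of free abelian groups representing $C$, the value $\EuScript L_s^p(B)$ is exactly the left derived functor $L\EuScript L_s^p(C)$. This yields $L\EuScript L^p(C[1])\simeq L\EuScript L_s^p(C)[p]$, and naturality in $C$ is automatic because the cone sequence, the sequence (\ref{psp}), and the contracting homotopy of $C(B)$ are all natural in $B$. As a consistency check, for $p=2$ this reduces to the classical décalage $L\Lambda^2(C[1])\simeq L\Gamma_2(C)[2]$ in view of $\EuScript L^2=\Lambda^2$ and $\EuScript L_s^2=\Gamma_2$, matching the computation $\mathbb L^2((\Z\buildrel{k}\over\to\Z)[1])=(\Z\buildrel{2k}\over\to\Z)[2]$.

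The main obstacle is the collapse step, and more precisely the passage from \emph{``each layer except the top is contractible''} to \emph{``the whole filtered complex contracts onto its top layer''}. The differentials of $\EuScript L^p(B\to C(B))$ mix adjacent filtration layers, so acyclicity of the lower layers does not formally force the desired equivalence; one must verify that the contracting homotopy of $C(B)$ is compatible with the bracket and square operations of the DGLS $\mathbb L$, so that it extends to a contraction of the complex relative to $\EuScript L_s^p(B)$. Equivalently, one shows that the spectral sequence (\ref{fl}) of the filtration (\ref{sfiltration}) has $E^1$-term concentrated in the single filtration degree $p$, all lower columns being assembled from acyclic $C(B)$-terms, which forces immediate collapse onto $L\EuScript L_s^p(C)[p]$.
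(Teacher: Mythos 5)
Your overall route is the paper's: feed the cone sequence $0\to B\to C(B)\to B[1]\to 0$ into the sequence (\ref{psp}) and collapse onto the top layer. But your execution contains a false step, and it is not the one you flagged. You justify the collapse by asserting that each functor entering the construction ``preserves chain homotopy equivalences,'' concluding both that $\EuScript L^p(C(B))\simeq \EuScript L^p(0)=0$ and that the top layer $\EuScript L_s^p(B)$ computes $L\EuScript L_s^p(C)$. Non-additive functors applied to chain complexes (via the DGLS $\mathbb L$, which is the only chain-level meaning available here) do \emph{not} preserve chain homotopy equivalences --- this is precisely why Dold--Puppe derived functors exist --- and the paper itself records the failure in exactly the relevant cases. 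The cone of $\mathbb Z[1]$ is the contractible complex $(\mathbb Z\buildrel{1}\over\to\mathbb Z)[1]$, yet $\mathbb L^2((\mathbb Z\buildrel{1}\over\to \mathbb Z)[1])=(\mathbb Z\buildrel{2}\over\to\mathbb Z)[2]$ has homology $\mathbb Z/2$; more generally the groups $_pH_*\mathbb L^m(2,1;p^k)$ appearing in Theorem \ref{superleib} are nonzero, and the paper states explicitly that there is no analog of Proposition \ref{primepro} in the shifted case (the upper row of (\ref{spr2}) is not a homotopy equivalence). Your closing identification fails for the same reason whenever $B$ has length greater than one: for $B=(\mathbb Z\buildrel{k}\over\to\mathbb Z)$ and $p=2$, the chain-level top layer is $(\mathbb Z\buildrel{2k}\over\to\mathbb Z)$, with homology $\mathbb Z/2k$ and $0$, whereas $L_0\Gamma_2(\mathbb Z/k)=\mathbb Z/(2k,k^2)$ and $L_1\Gamma_2(\mathbb Z/k)=\mathbb Z/(2,k)\neq 0$. (Note that your ``consistency check'' $\mathbb L^2((\mathbb Z\buildrel{k}\over\to\mathbb Z)[1])=(\mathbb Z\buildrel{2k}\over\to\mathbb Z)[2]$ is in fact the counterexample: its homology is $\mathbb Z/2k$, not $L_*\Gamma_2(\mathbb Z/k)$.) So, read at the chain level, your argument proves a homotopy equivalence between two complexes neither of which represents the derived functor in question, except when $C$ can be represented by free groups in a single degree.

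The repair --- and the paper's actual argument --- is to run the construction simplicially rather than on chain complexes. The sequence (\ref{psp}) is an exact sequence of \emph{functors} on free abelian groups; apply it levelwise to the degreewise split simplicial cone sequence $0\to X\to CX\to \Sigma X\to 0$ with $X=N^{-1}(B)$, where $\Sigma X$ models $C[1]$. Every term is then an honest simplicial abelian group: the mixed tensors $X^{\otimes p-l}\otimes (CX)^{\otimes l}$ with $l\geq 1$ have trivial homotopy because $CX$ is simplicially contractible (tensoring, being additive in each variable, does preserve this); $\EuScript L^p(CX)$ has $\pi_*\EuScript L^p(CX)=L_*\EuScript L^p(0)=0$ by the genuine homotopy invariance of Dold--Puppe derived functors, not by any chain-level invariance of $\EuScript L^p$; and the top layer is literally $\EuScript L_s^p(X)=\EuScript L_s^p(N^{-1}(B))$, the standard model of $L\EuScript L_s^p(C)$. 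With this setup, the step you singled out as the main obstacle becomes the routine one: the resulting bicomplex (equivalently, the filtration (\ref{sfiltration}) and its spectral sequence (\ref{fl})) has all columns acyclic except the two ends, so the hyperhomology spectral sequence degenerates and yields $\pi_i L\EuScript L^p(C[1])\cong \pi_{i-p}L\EuScript L_s^p(C)$ naturally, which is the theorem.
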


\vspace{.5cm}
\section{The super-analog of Leibowitz spectral sequence}
\vspace{.5cm}

Now consider the super-analog of the spectral sequence (\ref{fl}).
Let $C=\{B\buildrel{f}\over\to A\}$ be a complex of length two
with free abelian groups $A$ and $B$. Let $m\geq 2$, $n\geq 0$.
Consider the standard model for $L\EuScript L_s^m(C[n])$:
$$\EuScript L_s^m(N^{-1}(C[n])):\ \ \dots
\begin{matrix} \EuScript L_s^m(B^{\oplus n+2}\oplus A^{\oplus \binom{n+2}{2}})
\end{matrix}
\begin{matrix}\longrightarrow\\[-3.5mm]\dots\\[-2.5mm]\longrightarrow\\[-3.5mm]\longleftarrow\\[-3.5mm]\dots\\[-2.5mm]\longleftarrow
\end{matrix}\
\EuScript L_s^m(B\oplus A^{\oplus n+1})
\begin{matrix}\longrightarrow\\[-3.5mm]\dots\\[-2.5mm]\longrightarrow\\[-3.5mm]\longleftarrow\\[-3.5mm]\dots\\[-2.5mm]\longleftarrow
\end{matrix}\
 \EuScript L_s^m(A)
$$
Define the natural simplicial filtration
$$
\EuScript L_s^m(N^{-1}(C[n]))=\bar I_{m}\supset \bar
I_{m-1}\supset \dots \supset \bar I_0\supset \bar I_{-1}=\{1\}
$$
where $\bar I_j$ is the simplicial subgroup of $\EuScript
L_s^m(N^{-1}(C[n]))$ generated at each dimension by basic
commutators with at most $j$ elements which arise from $B$
together with elements of the type $x^{[2]}$, if $m\equiv 2 \mod
4$ and $x$ is a basic commutator of length $m/2$ with at most
$j/2$ elements which arise from $B$. For example, for $n=0$,
$$
\bar I_0:\ \ \ \dots\ \begin{matrix} \EuScript L_s^m(A)
\end{matrix}
\begin{matrix}\longrightarrow\\[-3.5mm]\longrightarrow\\[-3.5mm]\longrightarrow\\[-3.5mm]\longleftarrow\\[-3.5mm]
\longleftarrow
\end{matrix}\
\EuScript L_s^m(A)
\begin{matrix}\longrightarrow\\[-3mm]\longrightarrow\\[-3mm]\longleftarrow\end{matrix}\
 \EuScript L_s^m(A)
$$
and
$$
\bar I_1:\ \ \ \dots\ \begin{matrix} (B\oplus B)\otimes A^{\otimes
m-1}\oplus \EuScript L_s^m(A)
\end{matrix}
\begin{matrix}\longrightarrow\\[-3.5mm]\longrightarrow\\[-3.5mm]\longrightarrow\\[-3.5mm]\longleftarrow\\[-3.5mm]
\longleftarrow
\end{matrix}\
B\otimes A^{\otimes m-1}\oplus \EuScript L_s^m(A)
\begin{matrix}\longrightarrow\\[-3mm]\longrightarrow\\[-3mm]\longleftarrow\end{matrix}\
 \EuScript L_s^m(A)
$$
This filtration gives rise to the spectral sequence
$$
\bar E_{i,j}^1(C[n])=\pi_{i+j}(\bar I_{i}/\bar I_{i-1})\Rightarrow
\pi_{i+j}L\EuScript L_s^m(C[n])
$$
with differentials
$$
d_{i,j}^k: \bar E_{i,j}^k\to \bar E_{i-k, j+k-1}^k
$$
A natural analog of the formula (\ref{gene}) for the $E^1$-term is
the following:
\begin{equation}\label{gene1}
\bar E_{mn+m-l,j-mn-m+l}^1(C[n])=\bigoplus_{d|m,\
d|l}\bigoplus_{D\in J_{\frac{l}{d},\frac{m-l}{d}}}L_j\EuScript
L_s^d(D,\frac{mn+m-n}{d})
\end{equation}

\vspace{.5cm}\noindent{\bf Example.} Consider, for example, the
spectral sequence for the third super-Lie functor $\EuScript
L_s^3$:
$$
\xyma{\EuScript L_s^3(A) & A\otimes B\otimes A
\ar@{->}[l]_{d_{1,0}^1}
& A\otimes B\otimes B \ar@{->}[l]_{d_{2,0}^1} & \EuScript L^3(B)\ar@{->}[l]_{d_{3,0}^1}\\
& & & B\otimes \mathbb Z/3\ar@{-->}[ull]^{d_{3,-1}^2}}
$$
One can compare the spectral sequence for $\EuScript L^3$ and
$\EuScript L_s^3$ applied for the complex $\mathbb
Z\buildrel{3}\over\to\mathbb Z$:
\begin{table}[H]
\begin{tabular}{ccccccccccccccccccc}
$j=0$ & \vline & 0 & & $\mathbb Z$ &
$\buildrel{3}\over\leftarrow$& $\mathbb Z$ & & 0\\ \hline & \vline
\\ $j=-1$ & \vline & 0 & & 0 & & 0 & & 0
\end{tabular}\ \ \ \ \ \ \ \ \ \
\begin{tabular}{ccccccccccccccccccc}
$j=0$ & \vline & 0 & & $\mathbb Z$ &
$\buildrel{9}\over\leftarrow$& $\mathbb Z$ & & 0\\ \hline & \vline
\\ $j=-1$ & \vline & 0 & & 0 & & 0 & & $\mathbb Z/3$
\end{tabular}
\vspace{.5cm} \caption{The $E^1$-term of spectral sequences for
$L\EuScript L^3(\mathbb Z/3)$ and $L\EuScript L_s^3(\mathbb Z/3)$}
\end{table}

In particular, one has the following values of the derived
functors:
$$
L_i\EuScript L^3(\mathbb Z/3)=\begin{cases} \mathbb Z/3,\ i=1\\
0,\ i\neq 1\end{cases}\ \ \
L_i\EuScript L_s^3(\mathbb Z/3)=\begin{cases} \mathbb Z/9,\ i=1\\ \mathbb Z/3,\ i=2\\
0,\ i\neq 1,2\end{cases}
$$
Here is an example of the initial terms of the spectral sequence
for the case $\EuScript L_s^3(C[1])$:
\begin{table}[H]
\begin{tabular}{ccccccccccccccc}
 $q$ & \vline & $E_{3,q}^1$ & $E_{4,q}^1$ & $E_{5,q}^1$ & $E_{6,q}^1$\\
 \hline
 0 & \vline & $\EuScript L^3(A)$ & $A\otimes A\otimes B$ & $A\otimes B\otimes B$ & $\EuScript L_s^3(B)$\\
 -1 & \vline & $A\otimes \mathbb Z/3$ & $0$ & $0$ & $0$\\
-2 & \vline & 0 & $0$ & $0$ & $0$\\
-3 & \vline & $0$ & $0$ & $0$ & $B\otimes \mathbb Z/3$\\
\end{tabular}
\vspace{.5cm} \caption{The $E^1$-term of the spectral sequence
  for $\EuScript L_s^3((B\to A)[1])$}
\end{table}

Now we can formulate the super-analog of theorem \ref{leib}.

\begin{theorem}\label{superleib}
Let $A= \mathbb Z/p^k$, then
$$
L_i\EuScript L_s^m(A,n)\simeq\ _p\pi_i(\EuScript L_s^m(\mathbb
Z[n]\oplus \mathbb Z[n+1]))\oplus\ _pH_{i+m}\mathbb
L^m(n+2,n+1;p^k)
$$
\end{theorem}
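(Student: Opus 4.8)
The plan is to mirror, step for step, the proof of Theorem \ref{leib}, replacing the ordinary Lie functor by $\EuScript L_s^m$ and the Leibowitz filtration by the super-analog constructed in the present section. First I would model $K(\mathbb Z/p^k,n)$ by the simplicial abelian group $N^{-1}(C[n])$ attached to the free resolution $C=(\mathbb Z\buildrel{p^k}\over\to\mathbb Z)$; since $C[n]\simeq \mathbb Z/p^k[n]$ in $\sf DAb$, the super-analog of the Leibowitz spectral sequence abuts to $L\EuScript L_s^m(\mathbb Z/p^k,n)$. Applying the filtration $\bar I_m\supset\dots\supset\bar I_{-1}$ by the number of $B$-generators (together with the relevant squares $x^{[2]}$ when $m\equiv 2\bmod 4$) yields $\bar E^1_{i,j}(C[n])\Rightarrow \pi_{i+j}L\EuScript L_s^m(\mathbb Z/p^k,n)$ with $E^1$-term given by (\ref{gene1}). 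The goal is then to establish the super-analogs of the four facts used for Theorem \ref{leib}: identification of one distinguished row with a DGLS homology, vanishing above it, collapse off it, and passage to the $p$-primary part.

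The splitting of the statement into its two summands comes from cutting the abutment along this distinguished row. For the off-row contribution I would read off (\ref{gene1}): away from the top layer the subquotients are assembled from derived functors $L_j\EuScript L_s^d(D,\cdot)$ of genuine cross-terms $D\in J_{l/d,\,(m-l)/d}$, and, exactly as in Leibowitz's fourth fact, their $p$-primary parts add up to ${}_p\pi_i(\EuScript L_s^m(\mathbb Z[n]\oplus\mathbb Z[n+1]))$, the super-Lie functor on the sum of the two Eilenberg--MacLane pieces of the resolution. For the distinguished row I would construct a map of DGLS-s
$$
\mathbb L^m(n+2,n+1;p^k)\to \EuScript L_s^m(N^{-1}(C[n]))
$$
(the super-analog of (\ref{lmap})) and show that it identifies that row with $H_{i+m}\mathbb L^m(n+2,n+1;p^k)$. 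The shift of the complex from $(n+1,n)$ to $(n+2,n+1)$ and of the homological index from $i$ to $i+m$ is the signature of the décalage: super-Lie at level $n$ behaves like ordinary Lie at level $n+1$, in accordance with the isomorphism $L\EuScript L^p(C[1])\simeq L\EuScript L_s^p(C)[p]$ and with the shift rules of Proposition \ref{ww1} for $\mathbb L^m$.

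Having isolated the two rows, I would close the argument as Leibowitz does: show that $\bar E^1$ vanishes above the distinguished row, that the sequence collapses off it (so $E^2=E^1$ there), that the only surviving differential is the Bockstein-type map landing in the DGLS row (the analog of the dashed $d^2$ visible in the $\EuScript L_s^3$ example), and that $E^\infty$ is the $p$-primary part of $E^2$. The resulting extension splits after passing to $p$-torsion, giving the asserted direct sum. As an independent check, for a prime $m=p$ the whole statement drops out by combining the décalage isomorphism $L\EuScript L_s^p(\mathbb Z/p^k,n)\simeq L\EuScript L^p(\mathbb Z/p^k,n+1)[-p]$ with Theorem \ref{leib} applied at level $n+1$, where both summands match term by term.

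The main obstacle is the identification of the distinguished row with $H_{i+m}\mathbb L^m(n+2,n+1;p^k)$: one must show that the abelian-level squaring built into $\EuScript L_s^m$ combines with the simplicial (shuffle) squaring so as to reproduce precisely the ordinary square-carrying DGLS $\mathbb L^m$, and that the accompanying shift is exactly by one in the complex and by $m$ in homology. Equally delicate is verifying that no higher differentials interfere — that the only nonzero differential is the Bockstein analog — so that the spectral sequence degenerates to the clean two-term answer.
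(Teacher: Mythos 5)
Your proposal coincides with the paper's (implicit) proof: the paper establishes Theorem \ref{superleib} precisely through the apparatus it builds in this section --- the filtration $\bar I_m\supset\dots\supset\bar I_{-1}$ by the number of $B$-generators together with the squares $x^{[2]}$ for $m\equiv 2\bmod 4$, the spectral sequence $\bar E^1_{i,j}(C[n])$ with $E^1$-term (\ref{gene1}), the super-analogs of Leibowitz's four facts, and the identification of the distinguished row with the homology of the shifted DGLS $\mathbb L^m((n+1,n;p^k)[1])=\mathbb L^m(n+2,n+1;p^k)$ via the structure formula for $\mathbb L^m(C[1])$ and Proposition \ref{ww1} --- which is exactly your plan, including your check via the d\'ecalage $L\EuScript L^p(C[1])\simeq L\EuScript L_s^p(C)[p]$ for prime degree. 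The one point to state carefully (which you already flag as the main obstacle) is that the comparison with the DGLS row is not a degree-preserving map from $\mathbb L^m(n+2,n+1;p^k)$, since that complex lives in degrees $m(n+1)$ through $m(n+2)$: the identification carries the built-in shift by $m$, matching the index $H_{i+m}$ in the statement, exactly as encoded in the formula $\mathbb L^m((B\to A)[1])_{l+m}$.
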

In particular, for $A=\mathbb Z/p^k$, one has an isomorphism
$$
L_i\EuScript L_s^m(A)\simeq\ _p\pi_i(\EuScript L_s^m(\mathbb
Z\oplus \mathbb Z[1]))\oplus\ _pH_{i+m}\mathbb L^m(2,1;p^k)
$$

\vspace{.5cm}\noindent{\bf Example.} Analogously to table
(\ref{table1}) we will collect the low-dimensional derived
functors for $A=\mathbb Z/k$ in the following table:
\begin{table}[H]
\begin{align*}
& \begin{tabular}{cccccccccccccccccc}
 $i$ & \vline & $L_i\EuScript L_s^2(A)$ & \vline & $L_i\EuScript L_s^3(A)$ & \vline & $L_i\EuScript L_s^4(A)$ & \vline & $L_i\EuScript L_s^5(A)$ & \vline
 & $L_i\EuScript L_s^6(A)$ & \vline & $L_i\EuScript L_s^7(A)$ & \vline\\
 \hline
$6$ & \vline & 0 & \vline & 0 & \vline & 0 & \vline & 0 & \vline & 0 & \vline & $\mathbb Z/(7,k)$ & \vline\\
$5$ & \vline & 0 & \vline & $0$ & \vline & $0$ & \vline & 0 & \vline & $ \mathbb Z/(3,k)$& \vline & $\mathbb Z/(7k,k^2)$ & \vline\\
$4$ & \vline & 0 & \vline & 0 & \vline & $0$ & \vline & $\mathbb Z/(5,k)$ & \vline & $\mathbb Z/k$ & \vline & $(\mathbb Z/k)^{\oplus 2}$ & \vline\\
$3$ & \vline & 0 & \vline & $0$ & \vline & $\mathbb Z/(2,k)$ & \vline & $\mathbb Z/(5k,k^2)$ & \vline & $\mathbb Z/k^{\oplus 2}$ & \vline & $(\mathbb Z/k)^{\oplus 3}$ & \vline\\
$2$ & \vline & $0$ & \vline & $\mathbb Z/(3,k)$ & \vline & $\mathbb Z/(2k,k^2)$ & \vline & $\mathbb Z/k$ & \vline & $\mathbb Z/(2,k)\oplus\mathbb Z/k^{\oplus 2}$ & \vline & $(\mathbb Z/k)^{\oplus 2}$ & \vline\\
$1$ & \vline & $\mathbb Z/(2,k)$ & \vline & $\mathbb Z/(3k,k^2)$ &
\vline & $\mathbb Z/k$ & \vline & $\mathbb Z/k$ & \vline &
$\mathbb Z/(2,k)\oplus \mathbb Z/k$ & \vline & $\mathbb Z/k$ & \vline\\
$0$ & \vline & $\mathbb Z/(2k,k^2)$ & \vline & $0$ & \vline & $0$
& \vline & $0$ & \vline & $0$ & \vline & $0$ & \vline
\end{tabular}
\end{align*}
\caption{Values of derived functors $L_i\EuScript L_s^m(\mathbb
Z/k)$}
\end{table}

\vspace{.5cm}
\section{New functors and Main Conjecture}
\vspace{.5cm} \subsection{Hierarchies of special functors.} For a
prime $p$ and $n\geq 2,$ define the functors
\begin{align} & \EuScript N_s^{n;p}(A):=\ker\{\mathbb
L^n(A\buildrel{\sim}\over\to A)_{n-1}\otimes \mathbb Z/p\to
\mathbb L^n(A\buildrel{\sim}\over\to A)_{n-2}\otimes \mathbb
Z/p\} \label{specfu}\\
& \EuScript N^{n;p}(A):=\ker\{\mathbb
L^n((A\buildrel{\sim}\over\to A)[1])_{2n-1}\otimes \mathbb Z/p\to
\mathbb L^n((A\buildrel{\sim}\over\to A)[1])_{2n-2}\otimes \mathbb
Z/p\}\label{specfu1}
 \end{align}
It follows from definition that, for every abelian group $A$,
there are natural monomorphisms
\begin{align*}
& \EuScript N^{n;p}(A) \hookrightarrow \otimes^n(A)\otimes \mathbb Z/p\\
& \EuScript N_s^{n;p}(A) \hookrightarrow \otimes^n(A)\otimes
\mathbb Z/p
\end{align*}
For a free abelian group $A$, the spectral sequence (\ref{fl}) for
$C=\{A\buildrel{\sim}\over\to A\}$ implies that there is a natural
isomorphism
$$
L_{n-1}\EuScript L^n(A,2)\simeq H_{n-2}\mathbb L^n(C)
$$
The K\"unneth formula implies that there is the following natural
exact sequence:
\begin{equation}\label{mzq1}
0\to \EuScript L^{n}(A)\otimes \mathbb Z/p \to \EuScript
N^{n;p}(A)\to Tor(L_{n-1}\EuScript L^n(A,2), \mathbb Z/p)\to 0
\end{equation}
Its super-analog has the following form:
$$
0\to \EuScript L_s^n(A)\otimes \mathbb Z/p\to \EuScript
N_s^{n;p}(A)\to Tor(L_{n-1}\EuScript L^n(A,1),\mathbb Z/p)\to 0
$$
The sequence (\ref{mzq1}) splits as a sequence of abelian groups,
however, for every $n$, such that the $Tor$-term in (\ref{mzq1})
is not zero, the sequence (\ref{mzq1}) does not split as a
sequence of functors. This follows from the fact that the functor
$$
A\mapsto L_{n-1}\EuScript L^n(A,2)
$$
is of degree less than $n$, however, for every functor $F$ of
degree less than $n$, any natural transformation $F(A)\to
\otimes^n(A)\otimes \mathbb Z/p$ is zero. For any prime $p$ and
$k\geq 1$, there are chains of natural epimorphisms
$$
\EuScript N^{kp^i;p}\twoheadrightarrow \dots\EuScript
N^{kp;p}\twoheadrightarrow\EuScript N^{k;p}
$$
which split abstractly, but not naturally, moreover, every natural
transformation of the type $N^{k;p}\to N^{kp;p}$ is the zero map
(the same is true for the functors $N_s^{kp^i;p}$). This is the
reason why we call such chains of functors by {\it hierarchies.}
For example, for $n=p,p^2,p^3,p^4$, the sequences (\ref{mzq1}) can
be found in the following diagram of exact sequences which split
abstractly but not naturally:
$$
\xyma{& \EuScript L^{p^4}(A)\otimes \mathbb Z/p \ar@{>->}[d]\\ &
\EuScript N^{p^4;p}(A) \ar@{->>}[d] & \EuScript L^{p^2}(A)\otimes
\mathbb Z/p\ar@{>->}[d]\\ \EuScript L^{p^3}(A)\otimes \mathbb Z/p
\ar@{>->}[r] & \EuScript N^{p^3;p}(A) \ar@{->>}[r] & \EuScript
N^{p^2;p}(A)\ar@{->>}[d]\\ & \EuScript L^p(A)\otimes \mathbb Z/p
\ar@{>->}[r] & \EuScript N^{p;p}(A) \ar@{->>}[r] & A\otimes
\mathbb Z/p}
$$

The spectral sequence (\ref{fl}) implies that, for a free abelian
group $A$, there are natural isomorphisms
\begin{align}
& \EuScript N^{n,p}(A)\simeq \pi_{2n}\left(L\EuScript
L^n(A,2)\lotimes \mathbb Z/p\right)\label{nco1}\\
& \EuScript N_s^{n,p}(A)\simeq \pi_{n}\left(L\EuScript
L^n(A,1)\lotimes \mathbb Z/p\right)\label{nco2}
\end{align}
For $p=2$ the
functor $\EuScript N^{2;2}(A)$ is exactly $\Gamma_2(A)\otimes
\mathbb Z/2$, there is the following sequence
$$
0\to \Lambda^2(A)\otimes \mathbb Z/2\to \Gamma_2(A)\otimes \mathbb
Z/2\to A\otimes \mathbb Z/2\to 0
$$
The description of cross-effects of the functors $\EuScript
N^{n;p},\ \EuScript N_s^{n;p}$ follows directly from the
description of cross-effects of the Lie and super-Lie functors
(see (\ref{crefe}) and (\ref{screfe})). For free abelian $A,B$ one
has the following
$$
\EuScript N^{m;p}(A\oplus B)=\bigoplus_{d|m,\ 1\leq d\leq
m}\bigoplus_{C\in J_{m/d}}\EuScript N^{d;p}(C)$$ and
$$\EuScript N_s^{m;p}(A\oplus
B)=\bigoplus_{\substack{d|m,\ 1\leq d\leq m\\ m/d\
\text{odd}}}\bigoplus_{C\in J_{m/d}}\EuScript N_s^{d;p}(C)\oplus
\bigoplus_{\substack{d|m,\ 1\leq d< m\\ m/d\
\text{even}}}\bigoplus_{C\in J_{m/d}}\EuScript N^{d;p}(C)
$$

\subsection{Construction of $\EuScript E$-complexes}\label{ecom}
For $n,k\geq 1,$ a prime $p$, we will use the following notation:
$$
\overline{\mathcal W}_{n,k}^{(p)}:=\begin{cases}\mathcal
W_{n,k}^{(p)}(1)\setminus \mathcal W_{n,k}^{(p)}(2),\ \text{if}\
n\ \text{is even}\\ \mathcal W_{n-1,k}^{(p)},\ \text{if}\ n\
\text{is odd}\end{cases}
$$
For $C\in \sf DAb$, define the following objects of $\sf DAb$
$(n\geq 0)$
\begin{align*}
& \EuScript E^m(C,2n):=L\EuScript
L^{m}(C)[2mn]\oplus\bigoplus_{\substack{p\ \text{prime}\\
p^k|m\\ p^{k+1}\nmid m}}\bigoplus_{\substack{i=1,\dots, k\\
w\in \overline{\mathcal W}_{\frac{2nm}{p^{i}},i}^{(p)}}}
L\EuScript N^{\frac{m}{p^{i}};p}(C)[\frac{2nm}{p^{i}}+d(w)]\\
& \EuScript E^m(C,2n+1):=L\EuScript
L_s^{m}(C)[(2n+1)m]\oplus\\ & \ \ \ \ \ \ \ \ \ \ \ \ \ \ \bigoplus_{\substack{p\ \text{prime}\\
p^k|m\\ p^{k+1}\nmid m}}\bigoplus_{\substack{i=1,\dots,k\\ w\in
\overline{\mathcal W}_{\frac{(2n+1)m}{p^{i}},i}^{(p)}}} L\EuScript
N_s^{\frac{m}{p^{i}};p}(C)[\frac{(2n+1)m}{p^{i}}+d(w)]
\end{align*}
and their super-analogs $(n\geq 1)$
\begin{align*}
& \widetilde{\EuScript E}^m(C,2n-1):=L\EuScript
L^{m}(C)[(2n-1)m]\oplus\bigoplus_{\substack{p\ \text{prime}\\
p^k|m\\ p^{k+1}\nmid m}}\bigoplus_{\substack{i=1,\dots, k\\
w=(w_1,\dots,w_i)\in\overline{\mathcal W}_{\frac{2nm}{p^{i}},i}^{(p)}\\
w_i>u(p;k)}}
L\EuScript N^{\frac{m}{p^{i}};p}(C)[\frac{2nm}{p^{i}}+d(w)-m]\\
& \widetilde{\EuScript E}^m(C,2n):=L\EuScript
L_s^{m}(C)[2nm]\oplus\\ & \ \ \ \ \ \ \ \ \ \ \ \ \ \ \bigoplus_{\substack{p\ \text{prime}\\
p^k|m\\ p^{k+1}\nmid m}}\bigoplus_{\substack{i=1,\dots,k\\
w=(w_1,\dots,w_i)\in \overline{\mathcal
W}_{\frac{(2n+1)m}{p^{i}},i}^{(p)}\\ w_i>u(p;k)}} L\EuScript
N_s^{\frac{m}{p^{i}};p}(C)[\frac{(2n+1)m}{p^{i}}+d(w)-m]
\end{align*}
where $$u(p,k):=\begin{cases} \frac{p^{k-1}-1}{2},\ \text{if}\ p\
\text{is odd}\\ 2^{k-1},\ \text{if}\ p=2\end{cases}$$

\vspace{.5cm}\noindent{\bf Main Conjecture.} For $n,m,i\geq 1,$
$C\in \sf DAb$, there are natural isomorphisms
\begin{align}
& \pi_i(L\EuScript L^m(C[n]))\simeq \pi_i(\EuScript E^m(C,n))\\
& \pi_i(L\EuScript L_s^m(C[n]))\simeq \pi_i(\widetilde{\EuScript
E}^m(C,n)).
\end{align}

\vspace{.5cm} Here are the simplest examples of $\EuScript
E$-complexes in low
degrees:\\

\noindent{\bf Examples.} For $m=2$, $n\geq 1$, one has
\begin{align*}
& \EuScript E^2(C,2n)=L\Lambda^2(C)[4n]\oplus
\bigoplus_{i=1}^{n}C\lotimes \mathbb Z/2[2n+2i-1]\\
& \EuScript E^2(C,
2n+1)=L\Gamma_2(C)[4n+2]\oplus\bigoplus_{i=1}^nC\lotimes \mathbb
Z/2[2n+2i]\\
& \widetilde{\EuScript E}^2(C,2n)=L\Gamma_2(C)[4n]\oplus\bigoplus_{i=1}^{n}C\lotimes \mathbb Z/2[2n+2i-2]\\
& \widetilde{\EuScript E}^2(C,2n+1)=L\Lambda^2(C)[4n+2]\oplus
\bigoplus_{i=1}^{n+1}C\lotimes \mathbb Z/2[2n+2i-1]
\end{align*}
For $m=3$, $n\geq 1$, one has
\begin{align*}
& \EuScript E^3(C,2n)=L\EuScript L^3(C)[6n]\oplus
\bigoplus_{i=1}^{n}C\lotimes \mathbb Z/3[2n+4i-1]\\
& \EuScript E^3(C, 2n+1)=L\EuScript
L_s^3(C)[6n+3]\oplus\bigoplus_{i=1}^nC\lotimes \mathbb
Z/3[2n+4i]\\
& \widetilde{\EuScript E}^3(C,2n)=L\EuScript L_s^3(C)[6n]\oplus\bigoplus_{i=1}^{n}C\lotimes \mathbb Z/3[2n+4i-3]\\
& \widetilde{\EuScript E}^3(C,2n+1)=L\EuScript L^3(C)[6n+3]\oplus
\bigoplus_{i=1}^{n+1}C\lotimes \mathbb Z/3[2n+4i-2]
\end{align*}

\vspace{.5cm}
\section{The abstract isomorphism}
\vspace{.5cm}

\begin{prop}\label{abstr11}
For all $n,l\geq 1,$ there is a homotopy equivalence
$$
L\EuScript L^m(\mathbb Z,l)\sim \EuScript E^n(\mathbb Z,l).
$$
\end{prop}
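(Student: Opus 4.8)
The plan is to prove the abstract (non-natural) case $C=\mathbb Z$ of the Main Conjecture by matching homotopy groups in every degree. Since $\mathbb Z$ is a principal ideal domain, the derived category $\sf DAb$ is hereditary, so every object is formal: it is determined up to homotopy equivalence by its homology. Hence it suffices to establish, for each $j$, an abstract isomorphism $L_j\EuScript L^m(\mathbb Z,l)\cong\pi_j\EuScript E^m(\mathbb Z,l)$; both sides are then known completely and can be compared directly. On the left, Theorem \ref{kan2} gives $L_*\EuScript L^m(\mathbb Z,2n)=0$ unless $m$ is a prime power, while for $m=p^k$ and even $l=2n$ Theorem \ref{lieofz} identifies $L_*\EuScript L^{p^k}(\mathbb Z,2n)$ as the $\mathbb Z/p$-vector space on the allowable set $\mathcal W^{(p)}_{2n,k}$, a sequence $\nu$ lying in degree $2n+d(\nu)$.

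I would first dispose of the non-prime-power case. If $m$ has two distinct prime divisors, then $L_*\EuScript L^m(\mathbb Z,2n)=0$, and on the right every summand $L\EuScript N^{m/p^i;p}(\mathbb Z)$ is contractible as well: because $\mathbb Z$ is free, (\ref{mzq1}) degenerates to $\EuScript N^{d;p}(\mathbb Z)\cong\Tor(L_{d-1}\EuScript L^d(\mathbb Z,2),\mathbb Z/p)$ for $d=m/p^i\geq2$, and by Theorem \ref{kan2} this vanishes unless $d$ is a power of $p$, which fails since $d=m/p^i$ retains the other prime factors of $m$. Both sides are then trivial. For $m=p^k$ I would next compute the right-hand side. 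As $\mathbb Z$ is free, every derived functor in the definition collapses to homotopy degree $0$; the leading term $L\EuScript L^{p^k}(\mathbb Z)$ vanishes (since $\EuScript L^{p^k}(\mathbb Z)=0$ for $k\geq1$), and each $L\EuScript N^{p^{k-r};p}(\mathbb Z)$ contributes the single group $\EuScript N^{p^{k-r};p}(\mathbb Z)$ in the prescribed degree. By (\ref{nco1}), the universal coefficient theorem, and the $n=1$ case of Theorem \ref{lieofz}, this group is exactly $\mathbb Z/p$, carried by the unique top allowable sequence of length $k-r$; explicitly one checks that its $d$-value equals $2p^{k-r}-3$, so the $\Tor$-summand is one-dimensional. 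Therefore
$$
\pi_j\,\EuScript E^{p^k}(\mathbb Z,2n)\ \cong\ \bigoplus_{\substack{1\leq r\leq k,\ w\in\overline{\mathcal W}^{(p)}_{2np^{k-r},r}\\ 2np^{k-r}+d(w)=j}}\mathbb Z/p.
$$

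The computation now reduces to a degree-preserving bijection between the index sets of the two sides. Partition $\mathcal W^{(p)}_{2n,k}$ according to the number of leading \emph{maximal} entries $\mu_n,\mu_{pn},\dots$, namely $\mathcal W^{(p)}_{2n,k}=\bigsqcup_{r=1}^{k}\bigl(\mathcal W^{(p)}_{2n,k}(k-r+1)\setminus\mathcal W^{(p)}_{2n,k}(k-r+2)\bigr)$, which is legitimate because the last entry of an allowable sequence is always a $\lambda$ and so can never be a maximal $\mu$. Stripping the $k-r$ maximal entries $\mu_n,\dots,\mu_{p^{k-r-1}n}$ from $\nu$ leaves an allowable sequence of length $r$ with respect to the base $2np^{k-r}$ whose first entry is non-maximal, i.e. an element $w\in\overline{\mathcal W}^{(p)}_{2np^{k-r},r}=\mathcal W^{(p)}_{2np^{k-r},r}(1)\setminus\mathcal W^{(p)}_{2np^{k-r},r}(2)$, and this assignment is a bijection. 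The degree statistic matches on the nose: the stripped maximal block contributes $(2p-2)n\tfrac{p^{k-r}-1}{p-1}=2n(p^{k-r}-1)$ to $d$ and no $\lambda$'s, so $d(\nu)=2n(p^{k-r}-1)+d(w)$, whence $2n+d(\nu)=2np^{k-r}+d(w)=j$. Comparing with the two displayed descriptions, each homotopy group agrees, which by formality gives the homotopy equivalence for even $l$.

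The main remaining obstacle is the odd-shift case and the prime $p=2$. For odd $l$ the complex $\EuScript E^m(\mathbb Z,l)$ is built from the super-functors $\EuScript L_s$ and $\EuScript N_s$ and from the shifted allowable sets, so the left-hand side must first be transported to the super-Lie setting; I would do this through the décalage isomorphism $L\EuScript L^p(C[1])\simeq L\EuScript L_s^p(C)[p]$ together with Kan's suspension isomorphism (Theorem \ref{kan1}), after which the super-analogues of (\ref{mzq1}), (\ref{nco1}) and Theorem \ref{slieofz} make the entire argument formally parallel, with $\widetilde{\mathcal W}$ and the threshold $u(p,k)$ replacing $\mathcal W$. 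The prime $p=2$ carries its own normalizations of the allowable sets and of $d$, and must be run separately throughout; here the coincidence $L_*\EuScript L^{2^k}=L_*\EuScript L_s^{2^k}$ of Theorem \ref{strange2} is a convenient device for keeping the two parities aligned. I expect the delicate points to be the exact degree bookkeeping in the super case and the verification that the one-dimensionality of $\EuScript N^{p^{k-r};p}(\mathbb Z)$ persists across the décalage.
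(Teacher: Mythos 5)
Your even-shift argument is essentially the paper's own proof: the same vanishing for non-prime-power $m$ via the values of $L\EuScript N^{d;p}(\mathbb Z)$ (the paper's (\ref{lc1})), the same identification $\EuScript N^{p^t;p}(\mathbb Z)\simeq\mathbb Z/p$, and the same stripping/reindexing bijection between $\bigsqcup_{r}\bigl(\mathcal W^{(p)}_{2n,k}(k-r+1)\setminus\mathcal W^{(p)}_{2n,k}(k-r+2)\bigr)$ and $\bigsqcup_{r}\overline{\mathcal W}^{(p)}_{2np^{k-r},r}$, with your degree check $2n+d(\nu)=2np^{k-r}+d(w)$ spelled out more explicitly than the paper's displayed reindexing. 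One small slip there: quoting (\ref{mzq1}) you write the Tor term as $\Tor(L_{d-1}\EuScript L^d(\mathbb Z,2),\mathbb Z/p)$, but your actual verification — via (\ref{nco1}) and the universal coefficient theorem, which put the Tor on $L_{2d-1}\EuScript L^d(\mathbb Z,2)$, the group carried by the top allowable sequence with $d$-statistic $2d-3$ — computes the right thing; taken literally, the $L_{d-1}$ indexing would give zero and contradict (\ref{lc1}).

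The odd-shift case is where your plan would fail as written, and the failure is concrete. First, you propose to run it "with $\widetilde{\mathcal W}$ and the threshold $u(p,k)$ replacing $\mathcal W$", citing Theorem \ref{slieofz}; but those data belong to $\widetilde{\EuScript E}^m$, i.e., to the super-Lie companion statement, not to this proposition. For odd $l=2n+1$ the complex $\EuScript E^m(\mathbb Z,2n+1)$ is indexed by $\overline{\mathcal W}^{(p)}_{2n+1,k}=\mathcal W^{(p)}_{2n,k}$ (the odd-base convention, with no threshold condition), and its building blocks are $\EuScript L_s^m(\mathbb Z)$ and the values $L\EuScript N_s^{d;p}(\mathbb Z)$, which you never compute: by (\ref{lc2}) these are $\mathbb Z/p$ precisely for $d=1$ and $d=2p^t$, and this is what forces the paper's case split $m=2$, $m=p^k$ ($p$ odd), $m=2p^k$, $m=2^k$. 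In particular the case $m=2p^k$, twice an odd prime power — where $L_*\EuScript L^m(\mathbb Z,2n+1)$ is nonzero even though $m$ is not a prime power — is entirely absent from your sketch. Second, your transport device, the d\'ecalage $L\EuScript L^p(C[1])\simeq L\EuScript L_s^p(C)[p]$, is established in the paper only for prime degree $p$, so it cannot move $L\EuScript L^{p^k}(\mathbb Z,2n+1)$ for $k\geq 2$ into the super setting. The paper needs no such transport: the left-hand side at odd shifts comes directly from Kan's Theorem \ref{kan1} — the suspension isomorphism for odd $r$, and the splitting $L_{*+1}\EuScript L^{2r}(\mathbb Z,2n+1)\simeq L_*\EuScript L^{2r}(\mathbb Z,2n)\oplus L_{*+1}\EuScript L^r(\mathbb Z,4n+2)$ for even degrees — and is then matched against (\ref{lc2}) case by case; Theorem \ref{strange2} plays no role in this proof. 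Your even-shift half stands; the odd-shift half needs to be redone along these lines.
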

\begin{proof}
The description of derived functors from \ref{derivedkan} and
(\ref{nco1}), (\ref{nco2}) imply that
\begin{equation}\label{lc1}
L\EuScript N^{m;p}(\mathbb Z)\simeq \begin{cases} \mathbb Z/p,\
\text{if}\
m=p^t,\ t\geq 0\\
0\ \text{otherwise}
\end{cases}
\end{equation}
and
\begin{equation}\label{lc2}
L\EuScript N_s^{m;p}(\mathbb Z)\simeq \begin{cases} \mathbb Z/p,\
\text{if}\
m=2p^t,\ t\geq 0\ \text{or}\ m=1\\
0\ \text{otherwise}
\end{cases}
\end{equation} The description (\ref{lc1}) implies that, if $m$ is not a power of prime, then
$\EuScript E^m(\mathbb Z,2n)\simeq 0,\ n\geq 1$. If $m$ is a power
of prime
$m=p^k$, then \begin{multline}\bigoplus_{\substack{i=1,\dots, k\\
w\in \overline{\mathcal W}_{\frac{2nm}{p^{i}},i}}}L\EuScript
N^{\frac{m}{p^{i}}}(\mathbb Z)[\frac{2nm}{p^{i}}+d(w)]\simeq
\bigoplus_{\substack{i=1,\dots,k\\
w\in \mathcal W_{\frac{2nm}{p^k},k}^{(p)}(i)\setminus \mathcal
W_{\frac{2nm}{p^k},k}^{(p)}(i+1)}} L\EuScript
N^{\frac{m}{p^{k-i+1}};p}(\mathbb Z)[2n+d(w)]\\ \simeq
\bigoplus_{w\in \mathcal W_{2n,k}^{(p)}}\mathbb Z/p\
[2n+d(w)]\end{multline} and we have the needed statement for all
even $l$. For $l=2n+1,\ n\geq 0$, we have
$$
\EuScript E^2(\mathbb Z,2n+1)\simeq \mathbb
Z[4n+2]\oplus\bigoplus_{i=1}^n\mathbb Z/2\ [2n+2i]\simeq
L\EuScript L^2(\mathbb Z,2n+1)
$$
The description (\ref{lc2}) implies that $\EuScript E^m(\mathbb Z,
2n+1)\simeq 0$ if $m$ is neither a power of prime nor a double
power of prime. Now assume that $m=p^k$ for an odd prime. In this
case, (\ref{lc2}) implies that
$$
\EuScript E^m(\mathbb Z,2n+1)\simeq \bigoplus_{w\in
\overline{\mathcal W}_{2n+1,k}^{(p)}}\mathbb Z/p[2n+1+d(w)]\simeq
\bigoplus_{w\in \mathcal W_{2n,k}^{(p)}}\mathbb
Z/p[2n+d(w)+1]\simeq L\EuScript L^m(\mathbb Z,2n+1)
$$
Now consider the case $m=2p^k$ for an odd $p$. In this case
\begin{multline*}
\EuScript E^{2p^k}(\mathbb Z,2n+1)\simeq \bigoplus_{\substack{i=1,\dots, k\\
w\in \overline{\mathcal W}_{2(2n+1)p^{k-i},i}}}\mathbb
Z/p[(2n+2)p^{k-i}+d(w)]\simeq\\ \bigoplus_{w\in \mathcal
W_{4n+2,k}^{(p)}}\mathbb Z/p[4n+2+d(w)]\simeq L\EuScript
L^{2p^k}(\mathbb Z,2n+1)
\end{multline*}
It remains to consider the case $m=2^k,\ k>1$. In this case,
\begin{multline*}
\EuScript E^{2^k}(\mathbb Z,2n+1)\simeq \bigoplus_{w\in
\overline{\mathcal W}_{2n+1,k}^{(2)}}\mathbb Z/2[2n+1+d(w)]\oplus
\bigoplus_ {\substack{i=1,\dots, k-1\\
w\in \overline{\mathcal W}_{2(2n+1)2^{k-i},i}}} \mathbb
Z/2[4n+2+d(w)]\simeq\\ L\EuScript L^{2^k}(\mathbb Z,2n)[1]\oplus
L\EuScript L^{2^{k-1}}(\mathbb Z,4n+2)\simeq L\EuScript
L^{2^k}(\mathbb Z,2n+1)
\end{multline*}
The statement is proved.
\end{proof}
\begin{prop}\label{tfree}
Let $C\in \sf DAb$ and homology of $C$ are torsion-free. For all
$n,m\geq 1$, there is a homotopy equivalence
$$
L\EuScript L^n(C[m])\sim \EuScript B^n(C,m)
$$
\end{prop}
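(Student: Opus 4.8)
Throughout I identify the target $\EuScript B^n(C,m)$ with the $\EuScript E$-complex $\EuScript E^n(C,m)$ of \ref{ecom}; the plan is to reduce the assertion to the case $C=\mathbb Z$ settled in Proposition \ref{abstr11}, exploiting that both sides are finitary functors of $C$ compatible with Schlesinger's cross-effect decomposition and that torsion-freeness keeps every tensor factor flat.

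First I would use formality of $\sf DAb$: because $\mathbb Z$ is hereditary, every object splits as $C\simeq\bigoplus_j H_j(C)[j]$, and by hypothesis each $H_j(C)$ is torsion-free, hence flat and a directed union of its finitely generated (therefore free) subgroups. Since $L\EuScript L^n$, $L\EuScript N^{d;p}$, $L\EuScript N_s^{d;p}$ are computed from tensor powers and kernels of finitary maps, they commute with filtered colimits, so both $L\EuScript L^n(-[m])$ and $\EuScript E^n(-,m)$ commute with filtered colimits in $C$; this lets me assume $C=\bigoplus_\alpha\mathbb Z[j_\alpha]$ is a finite sum of shifted copies of $\mathbb Z$.

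Next I would apply the derived form of Theorem \ref{schles} to both sides. The cross-effects of the polynomial functor $\EuScript L^n$ are exactly those recorded in (\ref{crefe}), so the Dold--Puppe derived functor of a direct sum splits along basic tensor products:
$$
L\EuScript L^n\Bigl(\bigoplus_\alpha\mathbb Z[j_\alpha+m]\Bigr)\simeq\bigoplus_{d\mid n}\;\bigoplus_{P}L\EuScript L^d(P),
$$
with $P$ ranging over the basic products of weight $n/d$ in the generators $\mathbb Z[j_\alpha+m]$. Here torsion-freeness is essential: each such $P$ is a derived tensor product of flat modules concentrated in a single degree, so $P\simeq\mathbb Z[J]$ with no higher $\Tor$ and $\lotimes$ agreeing with $\otimes$. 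The cross-effect formulas for $\EuScript N^{d;p}$ and $\EuScript N_s^{d;p}$ recorded after (\ref{nco2}) give the parallel splitting of $\EuScript E^n(C,m)$ into $\EuScript E$-data attached to the single generators $\mathbb Z[J]$.

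Each resulting summand is then of the form $L\EuScript L^d(\mathbb Z[J])=L\EuScript L^d(\mathbb Z,J)$, on which the claimed equivalence is precisely Proposition \ref{abstr11}; when $J$ is odd the identification passes through the super-Lie functor via the d\'ecalage isomorphism $L\EuScript L^p(C[1])\simeq L\EuScript L_s^p(C)[p]$, which is already built into $\EuScript E^d(\mathbb Z,J)$ in the odd case. Assembling these term-by-term equivalences gives the result, once the two decompositions are seen to carry the same index set with the same shifts. The hard part will be exactly this bookkeeping: I must match the basic-product index of the derived Schlesinger decomposition --- together with the suspension degrees and the parity of each $J$, which toggles $\EuScript L\leftrightarrow\EuScript L_s$ and $\EuScript N\leftrightarrow\EuScript N_s$ --- against the allowable-word sets $\overline{\mathcal W}^{(p)}_{\bullet,i}$, the degree shifts $d(w)$, and the thresholds $u(p,k)$ entering the definition of $\EuScript E^n(C,m)$, and verify that the summands pair off with coinciding shifts. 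The homotopy-theoretic input (formality, the cross-effect splitting, and the $\mathbb Z$-case) is comparatively formal by contrast.
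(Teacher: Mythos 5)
Your plan coincides with the paper's own proof in all essentials: the paper likewise reads $\EuScript B^n(C,m)$ as the $\EuScript E$-complex $\EuScript E^n(C,m)$, reduces to shifted copies of $\mathbb Z$ via the splitting of objects of $\sf DAb$, splits both sides with the derived form of Theorem \ref{schles} together with the cross-effect formulas for $\EuScript N^{d;p}$ and $\EuScript N_s^{d;p}$, and closes with Proposition \ref{abstr11}. The only organizational difference is that the paper runs an induction on the Lie degree $m$, treating one sum $A\oplus B$ at a time and applying the inductive hypothesis for $d<m$ to the basic products $C\in J_{m/d}$ in the shifted dimension $\frac{2nm}{d}$, whereas you decompose a finite sum in one step; these are interchangeable. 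Your filtered-colimit reduction is actually more careful than the paper's closing sentence, which asserts that every object is equivalent to a sum of $\mathbb Z[l]$'s (false for flat non-free homology such as $\mathbb Q$); but be aware that the colimit step is legitimate only because the stage-wise equivalences can be assembled from the \emph{natural} Schlesinger and cross-effect decompositions, hence commute with transition maps --- abstract, unnatural equivalences at each stage would not pass to the colimit.

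Two caveats. First, what you explicitly defer (``the hard part will be exactly this bookkeeping'') is precisely the body of the paper's proof, so as it stands your attempt is incomplete at its central step. The paper's verification runs as follows: after unfolding, both sides are direct sums of shifted copies of $R_{s,t}(C)=L\EuScript N^{s;p}(C)$ (resp.\ $R'_{s,t}(C)=L\EuScript N_s^{s;p}(C)$) with $C\in J_t$, taken over pairs $s,t$ with $s,t\mid m$ and $m/st$ a power of $p$, and for each such pair the total contribution on either side is $\bigoplus_{w\in\overline{\mathcal W}^{(p)}_{2nst,\,\log_p\frac{m}{st}}}R_{s,t}(C)[2nst+d(w)]$ in even suspension, with $R'_{s,t}$ replacing $R_{s,t}$ for odd-weight products $t$ in odd suspension; the thresholds $u(p,k)$ you worry about never enter, since they occur only in the super-complexes $\widetilde{\EuScript E}$. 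Second, a genuine slip: you propose to handle odd $J$ ``via the d\'ecalage isomorphism $L\EuScript L^p(C[1])\simeq L\EuScript L_s^p(C)[p]$,'' but that d\'ecalage is established only for prime degrees, so it is unavailable for composite $d$; fortunately it is also unnecessary, since Proposition \ref{abstr11} already gives $\EuScript E^d(\mathbb Z,J)\sim L\EuScript L^d(\mathbb Z,J)$ for every degree $d$ and every parity of $J$ (its proof uses Theorems \ref{kan1} and \ref{kan2} rather than d\'ecalage), which is all your term-by-term assembly requires.
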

\begin{proof}
Let $A,B\in \sf DAb$. Suppose that we are given the homotopy
equivalences
\begin{equation}\label{ca1}
L\EuScript L^d(C[2n])\sim \EuScript E^d(C,2n)
\end{equation}
for all $d<m$ and all $C\in \sf DAb$ with torsion-free homology
and
\begin{align} & L\EuScript L^m(A[2n])\sim \EuScript E^m(A,2n)\label{ca2}\\
& L\EuScript L^m(B[2n])\sim \EuScript
E^m(B,2n)\label{ca3}\end{align} It follows that
\begin{align} L\EuScript L^m(A\oplus B[2n])=& \bigoplus_{\substack{d|m,\ 1\leq d\leq
m\\ C\in J_{m/d}}}L\EuScript L^{d}(C[\frac{2nm}{d}])=\\
& L\EuScript L^m(A[2n])\oplus L\EuScript
L^m(B[2n])\oplus\bigoplus_{\substack{d|m,\ 1\leq d< m\\
C\in J_{m/d}}}L\EuScript L^{d}(C[\frac{2nm}{d}])\sim\\
& \EuScript E^m(A,2n)\oplus \EuScript
E^m(B,2n)\oplus\bigoplus_{\substack{d|m,\ 1\leq d< m\\
C\in J_{m/d}}}\EuScript E^{d}(C,\frac{2nm}{d})\sim\\
& \EuScript E^m(A,2n)\oplus \EuScript
E^m(B,2n)\oplus\bigoplus_{d|m,\ d<m,\ C\in J_{m/d}}L\EuScript L^d(C)[2nm]\oplus\\ & \bigoplus_{\substack{d|m,\ d< m,\ p^k|d\\
C\in J_{m/d},\ p^{k+1}\nmid d}}\bigoplus_{\substack{i=1,\dots,k\\
w\in \overline{\mathcal W}_{\frac{2nm}{p^{i}},i}^{(p)}}}
L\EuScript
N^{\frac{d}{p^{i}};p}(C)[\frac{2nm}{p^k}+d(w)]\label{compa1}
\end{align}
where each $C\in J_{m/d}$ is a basic tensor product
$A_{i_1}\lotimes \dots \lotimes A_{i_{m/d}}$, with $A_{i_j}\in
\{A,B\}.$ From the other hand,
\begin{align}
\EuScript E^m(A\oplus B,2n): & =\EuScript
E^{m}(A,2n)\oplus \EuScript E^m(B,2n)\oplus\bigoplus_{d|m,\ d<m,\ C\in J_{m/d}}L\EuScript L^d(C)[2nm]\oplus\\ & \bigoplus_{\substack{p\ \text{prime}\\
p^k|m\\ p^{k+1}\nmid m}}\bigoplus_{\substack{i=1,\dots,k\\ w\in
\overline{\mathcal W}_{\frac{2nm}{p^{i}},i}^{(p)}}}\bigoplus_{\substack{s|\frac{m}{p^i}\\
C\in J_{\frac{m}{sp^i}}}} L\EuScript
N^{s;p}(C)[\frac{2nm}{p^k}+d(w)]\label{compa2}
\end{align}
We will show that the terms (\ref{compa1}) and (\ref{compa2}) are
the same. Observe that (\ref{compa1}) and (\ref{compa2}) are
direct sums of the shifted terms like
$$
R_{s,t}(C):=L\EuScript N^{s;p}(C), C\in J_t
$$
for $s,t$ such that $s,t|m,\ m/st\ \text{is a power of p}.$ The
contributions of the term $R_{s,t}(C),\ C\in J_t$ in
(\ref{compa1}) and (\ref{compa2}) are the same, namely
$$
\bigoplus_{w\in\overline{\mathcal W}_{2nst,
log_p\frac{m}{st}}^{(p)}}R_{s,t}(C)[2nst+d(w)].
$$
Hence, the conditions (\ref{ca1}),(\ref{ca2}) and (\ref{ca3})
imply that
$$
L\EuScript L^m(A\oplus B[2n])\sim \EuScript E^m(A\oplus B,2m).
$$
Now consider the odd dimensions. Suppose that we have the
conditions
\begin{equation}\label{coa1}
L\EuScript L^d(C[2n+1])\sim \EuScript E^d(C,2n+1)
\end{equation}
for all $d<m$ and arbitrary complexes $C$ and
\begin{align} & L\EuScript L^m(A[2n+1])\sim \EuScript E^m(A,2n+1)\label{coa2}\\
& L\EuScript L^m(B[2n+1])\sim \EuScript
E^m(B,2n+1)\label{coa3}\end{align} We have
\begin{align} & L\EuScript L^m(A\oplus B[2n+1])=
L\EuScript L^m(A[2n+1])\oplus L\EuScript L^m(B[2n+1])\oplus\\
& \ \ \ \ \ \ \ \ \ \ \ \ \ \ \ \bigoplus_{\substack{d|m,\ 1\leq d<
m\\ C\in J_{m/d}}}L\EuScript L^{d}(C[\frac{(2n+1)m}{d}])\sim\\
& \EuScript E^m(A,2n+1)\oplus \EuScript
E^m(B,2n+1)\oplus\bigoplus_{\substack{d|m,\ 1\leq
d< m\\ C\in J_{m/d}}}\EuScript E^{d}(C,\frac{(2n+1)m}{d})\sim\\
& \EuScript E^m(A,2n+1)\oplus \EuScript
E^m(B,2n+1)\oplus\bigoplus_{\substack{d|m,\ 1\leq d< m\\ C\in
J_{m/d}}}\EuScript L^{d}(C)[(2n+1)m]\oplus\\ &
\bigoplus_{\substack{d|m,\ m/d\ \text{even},\ p^k|d\\
C\in J_{m/d},\ p^{k+1}\nmid d}}\bigoplus_{\substack{i=1,\dots,k\\
w\in \overline{\mathcal W}_{\frac{(2n+1)m}{p^i},i}^{(p)}}}
L\EuScript
N^{\frac{d}{p^i};p}(C)[\frac{(2n+1)m}{p^i}+d(w)]\oplus\\
& \bigoplus_{\substack{d|m,\ m/d\ \text{odd},\ p^k|d\\
C\in J_{m/d},\ p^{k+1}\nmid d}}\bigoplus_{\substack{i=1,\dots,k\\
w\in \overline{\mathcal W}_{\frac{(2n+1)m}{p^i},i}^{(p)}}}
L\EuScript
N_s^{\frac{d}{p^i};p}(C)[\frac{(2n+1)m}{p^i}+d(w)]\label{contr6}
\end{align}
where each $C\in J_{m/d}$ is a basic tensor product
$A_{i_1}\lotimes \dots \lotimes A_{i_{m/d}}$, with $A_{i_j}\in
\{A,B\}.$ From the other hand,
\begin{align}
\EuScript E^m(A\oplus B,2n+1): & =\EuScript
E^{m}(A,2n+1)\oplus \EuScript E^m(B,2n+1)\oplus\bigoplus_{d|m,\ d<m,\ C\in J_{m/d}}L\EuScript L^d(C)[(2n+1)m]\oplus\\
& \bigoplus_{\substack{p\ \text{prime}\\
p^k|m\\ p^{k+1}\nmid m}}\bigoplus_{\substack{i=1,\dots,k\\ w\in
\overline{\mathcal W}_{\frac{(2n+1)m}{p^i},i}^{(p)}}}(\bigoplus_{\substack{s|\frac{m}{p^i},\ \frac{m}{sp^i}\ \text{even}\\
C\in J_{\frac{m}{sp^i}}}} L\EuScript
N^{s;p}(C)[\frac{(2n+1)m}{p^i}+d(w)]\oplus\label{contr5}\\
& \bigoplus_{\substack{s|\frac{m}{p^i},\ \frac{m}{sp^i}\ \text{odd}\\
C\in J_{\frac{m}{sp^i}}}} L\EuScript
N_s^{s;p}(C)[\frac{(2n+1)m}{p^i}+d(w)])\label{contr4}
\end{align}
Consider the contribution of the terms like
$$
R_{s,t}(C):=L\EuScript N^{s;p}(C),\ \ \ R_{s,t}'(C):=L\EuScript
N_s^{s;2}(C),\  C\in J_t
$$
in (\ref{contr6}), (\ref{contr5}), (\ref{contr4}), for $s,t$ such
that $s,t|m,\ m/st\ \text{is a power of p}.$ For an even $t$, the
contributions of the term $R_{s,t}(C),\ C\in J_t$ in
(\ref{contr6}) and (\ref{contr5}) are the same, namely
$$
\bigoplus_{w\in\overline{\mathcal W}_{(2n+1)st,
log_p\frac{m}{st}}^{(p)}}R_{s,t}(C)[(2n+1)st+d(w)]
$$
For an odd $t$, the contributions of the term $R_{s,t}'(C),\ C\in
J_t$ in (\ref{contr6}) and (\ref{contr5}) are the same as well,
namely
$$
\bigoplus_{w\in\overline{\mathcal W}_{(2n+1)st,
log_p\frac{m}{st}}^{(p)}}R_{s,t}'(C)[(2n+1)st+d(w)]
$$
This comparison shows that there is a homotopy equivalence
$$
\EuScript L^m(A\oplus B[2n+1])\sim \EuScript E^m(A\oplus B, 2n+1).
$$
Since any element of $\sf DAb$ is homotopy equivalent to a direct
sum of elements like $\mathbb Z[l]$ for different $l$, the
statement follows from proposition \ref{abstr11}.
\end{proof}
\begin{prop}\label{wert1}
Let $m,n,l\geq 1,$ $p$ a prime. There is a homotopy equivalence
$$
\EuScript E^m(\mathbb Z/p^l,n)\sim L\EuScript L^m(\mathbb
Z/p^l,n).
$$
\end{prop}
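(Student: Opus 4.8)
The plan is to represent $\mathbb Z/p^l$ by the two--term flat complex $C=(\mathbb Z\buildrel{p^l}\over\to\mathbb Z)$, so that $L\EuScript L^m(\mathbb Z/p^l,n)=L\EuScript L^m(C[n])$, and to match its homotopy with $\pi_*\EuScript E^m(\mathbb Z/p^l,n)$ read off from the definition in \ref{ecom}. The principal tool is Leibowitz's theorem \ref{leib}, which already splits
\[
L_i\EuScript L^m(\mathbb Z/p^l,n)\simeq\ {}_p\pi_i\!\left(L\EuScript L^m(\mathbb Z[n]\oplus\mathbb Z[n+1])\right)\oplus\ {}_pH_i\mathbb L^m(n+1,n;p^l)
\]
into a ``torsion--free'' summand and a ``Bockstein'' summand; the goal is to recognise each as a block of $\pi_*\EuScript E^m(\mathbb Z/p^l,n)$. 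Since $\EuScript N^{s;q}$ and $\EuScript N_s^{s;q}$ take values in $q$--torsion groups and $\mathbb Z/p^l$ carries no $q$--torsion, one has $L\EuScript N^{s;q}(\mathbb Z/p^l)=0$ for every prime $q\neq p$, so each summand of $\EuScript E^m(\mathbb Z/p^l,n)$ attached to $q\neq p$ drops out and throughout only the single prime $p$ contributes. I would treat the two parities of $n$ separately, exactly as in Proposition \ref{abstr11}, using that $\mathbb L^m(C[2n])$ and $\mathbb L^m(C[2n+1])$ reduce to $\mathbb L^m(C)[2nm]$ and $\mathbb L^m(C[1])[2nm]$ by Proposition \ref{ww1}.

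First I would dispose of the torsion--free summand. Because $\mathbb Z[n]\oplus\mathbb Z[n+1]=(\mathbb Z\oplus\mathbb Z[1])[n]$ and $\mathbb Z\oplus\mathbb Z[1]$ has torsion--free homology, Proposition \ref{tfree} gives $L\EuScript L^m((\mathbb Z\oplus\mathbb Z[1])[n])\sim\EuScript E^m(\mathbb Z\oplus\mathbb Z[1],n)$. Applying ${}_p(-)$ and expanding $\EuScript E^m(\mathbb Z\oplus\mathbb Z[1],n)$ by the cross--effect formulas for $\EuScript N^{s;p}$ and $\EuScript N_s^{s;p}$ recorded after (\ref{nco2}), together with the values (\ref{lc1}), (\ref{lc2}) of $L\EuScript N^{s;p}(\mathbb Z)$ and $L\EuScript N_s^{s;p}(\mathbb Z)$, should isolate precisely those $\mathbb Z/p$--summands of $\EuScript E^m(\mathbb Z/p^l,n)$ whose building block $L\EuScript N^{s;p}(\mathbb Z/p^l)$ retains a torsion--free contribution. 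The building blocks are themselves computable: $L\EuScript L^m(\mathbb Z/p^l)$ is Theorem \ref{leib} at $n=0$, while $L\EuScript N^{s;p}(\mathbb Z/p^l)$ is obtained by applying the spectral sequence (\ref{fl}) to the defining complexes (\ref{specfu}), (\ref{specfu1}) of $\EuScript N^{s;p}$.

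Second I would identify the Bockstein summand ${}_pH_i\mathbb L^m(n+1,n;p^l)$. By Propositions \ref{dan2} and \ref{dan3} these groups are governed by the ranks $M_k$ and $d_k$ of the integral DGLS $\mathbb L^m(n+1,n;1)$, and it is exactly here that the distinction between $\mathbb Z/p^l$ and $\mathbb Z/p^{l+1}$ enters. On the $\EuScript E$--side this block is carried by the leading term $L\EuScript L^m(\mathbb Z/p^l)[mn]$ for even $n$, respectively $L\EuScript L_s^m(\mathbb Z/p^l)[(2n+1)m]$ for odd $n$, whose homotopy is again supplied by Theorem \ref{leib}, while Proposition \ref{ww1} furnishes the degree shift needed to align the two. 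The matching of degrees is where the allowable--set data enter: each generator indexed by $w\in\overline{\mathcal W}^{(p)}_{\cdot,i}$ of weight $d(w)$ must be paired with a generator of the corresponding ${}_pH_*\mathbb L$ in the same total degree $2n+d(w)$ (respectively its odd analogue).

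The main obstacle is this final reassembly: one must show that the two a~priori different enumeration schemes --- the Bousfield/Leibowitz filtration indexing of $L_*\EuScript L^m(\mathbb Z/p^l,n)$ and the sum over $i=1,\dots,k$ and over $\overline{\mathcal W}^{(p)}$ defining $\EuScript E^m$ --- produce the same multiset of copies of $\mathbb Z/p$, $\mathbb Z/p^l$ and $\mathbb Z/p^{l+1}$ in each total degree. Having collapsed to the single prime $p$, I expect this to follow from the same telescoping of the filtration $\mathcal W^{(p)}_{\cdot,k}(1)\supset\dots\supset\mathcal W^{(p)}_{\cdot,k}(k)$ used in Proposition \ref{abstr11}, now carrying the extra $\mathbb Z/p^l$--versus--$\mathbb Z/p^{l+1}$ refinement dictated by \ref{dan2}; the argument is by induction on $m$, the lower--degree blocks $L\EuScript N^{s;p}(\mathbb Z/p^l)$ with $s<m$ being known by the inductive hypothesis. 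Granting this combinatorial identity, the spectral sequence (\ref{fl}) for $C[n]$ degenerates at the page predicted by Theorem \ref{leib}, and its abutment agrees with $\pi_*\EuScript E^m(\mathbb Z/p^l,n)$, which is the asserted homotopy equivalence.
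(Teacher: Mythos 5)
Your scaffolding agrees with the paper's: split $L_i\EuScript L^m(\mathbb Z/p^l,n)$ by Theorem \ref{leib} (resp.\ Theorem \ref{superleib} for odd $n$), identify the torsion-free summand via Proposition \ref{tfree} applied to $\mathbb Z\oplus\mathbb Z[1]$ (noting $\mathbb Z[n]\oplus\mathbb Z[n+1]=(\mathbb Z\oplus\mathbb Z[1])[n]$), and move the Bockstein summand ${}_pH_*\mathbb L^m$ into position by Proposition \ref{dan3} (equivalently \ref{ww1}). Your reduction to the single prime $p$ is also needed, though your justification is loose: ``$\EuScript N^{s;q}$ takes values in $q$-torsion groups'' does not by itself kill a derived functor evaluated on a $p$-group; the clean argument chooses $N\equiv 1\bmod p^l$, $N\equiv 0\bmod q$, so that multiplication by $N$ is homotopic to the identity on the resolution of $\mathbb Z/p^l$ while, via the natural embedding $\EuScript N^{s;q}\hookrightarrow\otimes^s(-)\otimes\mathbb Z/q$, it induces multiplication by $N^s=0$ on $L\EuScript N^{s;q}(\mathbb Z/p^l)$.

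The genuine gap is the step you yourself flag as the ``main obstacle'' and then only ``expect'' to hold. The paper closes it with one observation you never state: for all $s\geq 1$ there are homotopy equivalences $L\EuScript N^{s;p}(\mathbb Z/p^l)\sim L\EuScript N^{s;p}(\mathbb Z\oplus\mathbb Z[1])$ and $L\EuScript N_s^{s;p}(\mathbb Z/p^l)\sim L\EuScript N_s^{s;p}(\mathbb Z\oplus\mathbb Z[1])$, because the differential $p^l$ of the flat resolution dies under the mod-$p$ reduction built into the definitions (\ref{specfu}), (\ref{specfu1}). With this, the $\EuScript N$-blocks of $\EuScript E^m(\mathbb Z/p^l,n)$ and of $\EuScript E^m(\mathbb Z\oplus\mathbb Z[1],n)$ are term-by-term identical --- same index sets $\overline{\mathcal W}$, same shifts --- so no matching of enumeration schemes arises: one gets directly
$\pi_i\EuScript E^m(\mathbb Z/p^l,n)\simeq{}_p\pi_i\bigl(L\EuScript L^m((\mathbb Z\oplus\mathbb Z[1])[n])\bigr)\oplus{}_pH_i\mathbb L^m(n+1,n;p^l)$,
and Theorem \ref{leib}/\ref{superleib} finishes. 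Your substitute plan --- computing $L\EuScript N^{s;p}(\mathbb Z/p^l)$ by the spectral sequence (\ref{fl}) and proving by induction on $m$ a combinatorial identity carrying a ``$\mathbb Z/p^l$ versus $\mathbb Z/p^{l+1}$ refinement dictated by \ref{dan2}'' --- misdiagnoses where the $l$-dependence lives: the functors $\EuScript N^{s;p}$, $\EuScript N_s^{s;p}$ take values in $\mathbb Z/p$-vector spaces, so the $\EuScript N$-blocks contain no copies of $\mathbb Z/p^l$ or $\mathbb Z/p^{l+1}$ whatsoever; all dependence on $l$ sits in the single Bockstein block ${}_pH_*\mathbb L^m(n+1,n;p^l)$ inside the leading term $L\EuScript L^m(\mathbb Z/p^l)[nm]$ (resp.\ $L\EuScript L_s^m(\mathbb Z/p^l)[nm]$), which Theorem \ref{leib} at level zero plus Proposition \ref{dan3} match on the nose, with no use of \ref{dan2} and no degeneration claim for (\ref{fl}). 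As written, the heart of the argument is deferred to an unproved and mis-stated combinatorial claim, so the proposal does not yet constitute a proof.
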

\begin{proof}
Observe that, for all $s\geq 1,$ there are homotopy equivalences
$$
L\EuScript N^{s;p}(\mathbb Z/p^l)\sim L\EuScript N^{s;p}(\mathbb
Z\oplus \mathbb Z[1]).
$$
$$
L\EuScript N_s^{s;p}(\mathbb Z/p^l)\sim L\EuScript
N_s^{s;p}(\mathbb Z\oplus \mathbb Z[1]).
$$
We have
\begin{align*}
\EuScript E^m(\mathbb Z/p^l,2n)=& L\EuScript L^m(\mathbb Z/p^l)[2nm]\bigoplus_{\substack{p\ \text{prime}\\
p^k|m\\ p^{k+1}\nmid m}}\bigoplus_{\substack{i=1,\dots, k\\
\overline{\mathcal W}_{\frac{2nm}{p^{i}},i}^{(p)}}} L\EuScript
N^{\frac{m}{p^{i}};p}(\mathbb Z/p^l)[\frac{2nm}{p^{i}}+d(w)]\sim\\
& L\EuScript L^m(\mathbb Z/p^l)[2nm]\bigoplus_{\substack{p\ \text{prime}\\
p^k|m\\ p^{k+1}\nmid m}}\bigoplus_{\substack{i=1,\dots, k\\
\overline{\mathcal W}_{\frac{2nm}{p^{i}},i}^{(p)}}} L\EuScript
N^{\frac{m}{p^{i}};p}(\mathbb Z\oplus\mathbb
Z[1])[\frac{2nm}{p^{i}}+d(w)]
\end{align*}
We have
\begin{multline}
\pi_i(L\EuScript L^m(\mathbb Z/p^l)[2mn])=\ _p\pi_i(L\EuScript
L^m(\mathbb Z\oplus \mathbb Z[1])[2mn])\oplus\ _pH_{i-2mn}\mathbb L^m(1,0;p^l)=\\
_p\pi_i(L\EuScript L^m(\mathbb Z\oplus\mathbb Z[1])[2mn])\oplus\
_pH_{i}\mathbb L^m(2n+1,2n;p^l)
\end{multline}
by proposition \ref{dan3}. Since
$$
\EuScript E^m(\mathbb Z\oplus \mathbb Z[1],2n)\sim L\EuScript
L^m((\mathbb Z\oplus \mathbb Z[1])[2n])
$$
by proposition \ref{tfree}, we obtain the following
$$
\pi_i\EuScript E^m(\mathbb Z/p^l,2n)\simeq\ _p\pi_i(L\EuScript
L^m((\mathbb Z\oplus \mathbb Z[1])[2n]))\oplus\ _pH_i\mathbb
L^m(2n+1,2n;p^l)=L_i\EuScript L^m(\mathbb Z/p^l,2n)
$$
by theorem \ref{leib}.

We have \begin{align*} \EuScript E^m(\mathbb Z/p^l,2n+1)=&
L\EuScript
L_s^{m}(\mathbb Z/p^l)[(2n+1)m]\oplus\\ & \ \ \ \ \ \ \ \ \ \ \ \ \ \ \bigoplus_{\substack{p\ \text{prime}\\
p^k|m\\ p^{k+1}\nmid m}}\bigoplus_{\substack{i=1,\dots,k\\ w\in
\overline{\mathcal W}_{\frac{(2n+1)m}{p^{i}},i}^{(p)}}} L\EuScript
N_s^{\frac{m}{p^{i}};p}(\mathbb
Z/p^l)[\frac{(2n+1)m}{p^{i}}+d(w)]\\& L\EuScript
L_s^{m}(\mathbb Z/p^l)[(2n+1)m]\oplus\\ & \ \ \ \ \ \ \ \ \ \ \ \ \ \ \bigoplus_{\substack{p\ \text{prime}\\
p^k|m\\ p^{k+1}\nmid m}}\bigoplus_{\substack{i=1,\dots,k\\ w\in
\overline{\mathcal W}_{\frac{(2n+1)m}{p^{i}},i}^{(p)}}} L\EuScript
N_s^{\frac{m}{p^{i}};p}(\mathbb Z\oplus\mathbb Z[1])[\frac{(2n+1)m}{p^{i}}+d(w)]\\
\end{align*}
Theorem \ref{superleib} implies that
\begin{multline}
\pi_i(L\EuScript L_s^m(\mathbb Z/p^l)[(2n+1)m])\simeq\
_p\pi_i(L\EuScript
L_s^m(\mathbb Z\oplus\mathbb Z[1])[(2n+1)m])\oplus\ _pH_{i-2nm}\mathbb L^m(2,1;p^l)\simeq\\
_p\pi_i(L\EuScript L_s^m(\mathbb Z\oplus\mathbb
Z[1])[(2n+1)m])\oplus\ _pH_{i}\mathbb L^m(2n+2,2n+1;p^l)
\end{multline}
Since
$$
\EuScript E^m(\mathbb Z\oplus\mathbb Z[1],2n+1)\sim L\EuScript
L^m((\mathbb Z\oplus \mathbb Z[1])[2n+1])
$$
by proposition \ref{tfree}, we obtain the following
$$
\pi_i\EuScript E^m(\mathbb Z/p^l,2n+1)\simeq\ _p\pi_i(L\EuScript
L^m((\mathbb Z\oplus\mathbb Z[1])[2n+1]))\oplus\ _pH_i\mathbb
L^m(2n+2,2n+1;p^l)\simeq L_i\EuScript L^m(\mathbb Z/p^l,2n+1)
$$
and the needed statement follows.
\end{proof}
Proposition \ref{wert1} gives a possibility to follow the proof of
the proposition \ref{tfree},not only for elements of $\sf DAb$
with torsion-free homology but for all elements of $\sf DAb$ and
prove the following
\begin{theorem}\label{abstractiso}
For every $n,m\geq 1,\ C\in \sf DAb,$ there is an (unnatural)
homotopy equivalence
$$
\EuScript E^m(C,n)\sim L\EuScript L^m(C[n]).
$$
\end{theorem}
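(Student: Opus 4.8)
The plan is to reduce the statement for an arbitrary $C\in\sf DAb$ to the two cases already settled, namely $C=\mathbb Z$ (Proposition \ref{abstr11}) and $C=\mathbb Z/p^l$ (Proposition \ref{wert1}), and then to rerun the cross-effect induction of Proposition \ref{tfree} with this enlarged collection of base cases. First I would invoke the structure of the derived category of abelian groups: since $\mathbb Z$ is hereditary, every $C\in\sf DAb$ is homotopy equivalent to the direct sum of its shifted homology groups, and each homology group decomposes as a direct sum of cyclic groups. Thus $C\sim\bigoplus_\alpha M_\alpha[l_\alpha]$ with each $M_\alpha\in\{\mathbb Z,\ \mathbb Z/p^{k}\}$, and it suffices to prove the equivalence for such direct sums, exactly as in the last step of Proposition \ref{tfree}, where the reduction to atoms $\mathbb Z[l]$ was used.

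Next I would carry out the induction on the degree $m$, in parallel for even and odd $n$, following the proof of Proposition \ref{tfree} line by line. Applying the cross-effect decompositions (\ref{crefe}) and (\ref{screfe}) to $L\EuScript L^m(A\oplus B[n])$ expresses it as $L\EuScript L^m(A[n])\oplus L\EuScript L^m(B[n])$ together with a sum of lower-degree contributions $L\EuScript L^d(D[\cdot])$ indexed by the basic tensor products $D\in J_{m/d}$ built from $A$ and $B$. On the other side, the definition of $\EuScript E^m$ together with the cross-effect formulas for $\EuScript N^{d;p}$ and $\EuScript N_s^{d;p}$ decomposes $\EuScript E^m(A\oplus B,n)$ into the same shape. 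The bookkeeping of Proposition \ref{tfree}, which matches the contribution of each atomic term $R_{s,t}(D)=L\EuScript N^{s;p}(D)$ (and its super-analog $R_{s,t}'(D)=L\EuScript N_s^{s;p}(D)$) over the allowable sets $\overline{\mathcal W}$, is purely formal in the atoms and copies over verbatim; its only inputs are the cross-effect identities and the homotopy equivalences at the atomic level. Feeding in Proposition \ref{abstr11} for the atoms $\mathbb Z[l]$ and Proposition \ref{wert1} for the atoms $\mathbb Z/p^{k}[l]$ then closes the induction.

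The main obstacle is that, once $C$ carries torsion, the basic tensor products $D\in J_{m/d}$ must be read as derived tensor products $M_{\alpha_1}\lotimes\cdots\lotimes M_{\alpha_{m/d}}$, so that $\mathrm{Tor}$-terms appear and the cross-effect formulas (\ref{crefe}), (\ref{screfe}) must be known to hold in $\sf DAb$ rather than only for free groups. The saving point is that a derived tensor product of the atoms $\mathbb Z$ and $\mathbb Z/p^{k}$ is again homotopy equivalent to a direct sum of such atoms in shifted degrees, so the recursion never leaves the class of objects for which the base cases apply; moreover $L\EuScript N^{s;p}$ and $L\EuScript N_s^{s;p}$ do not distinguish $\mathbb Z/p^l$ from $\mathbb Z\oplus\mathbb Z[1]$, which is precisely the homotopy equivalence exploited in Proposition \ref{wert1}. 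Verifying that the term-by-term matching of the $R_{s,t}$- and $R_{s,t}'$-contributions is undisturbed by these derived corrections is the one place requiring genuine care; everything else is the formal transcription of the argument of Proposition \ref{tfree} with the broader base provided by Proposition \ref{wert1}.
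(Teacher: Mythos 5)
Your proposal is correct and matches the paper's own argument, which consists precisely of the remark that Proposition \ref{wert1} makes it possible to rerun the cross-effect induction of Proposition \ref{tfree} with the decomposition of an arbitrary $C\in \sf DAb$ into shifted atoms $\mathbb Z[l]$ and $\mathbb Z/p^k[l]$, using Propositions \ref{abstr11} and \ref{wert1} as base cases. Your additional observation that derived tensor products of such atoms remain in the same class (so the $R_{s,t}$- and $R_{s,t}'$-bookkeeping goes through with $\lotimes$ in place of $\otimes$) spells out a point the paper leaves implicit, but it is the same proof.
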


\vspace{.5cm}
\section{Semi-d\'ecalage} \vspace{.5cm}
\subsection{Semi-d\'ecalage}For any  pair of free abelian groups
$A$ and $B$, and $m\geq 2,$ we  define a pair of  morphisms (see
\cite{BreenMikhailov}, 7.6)
\begin{align}
& \chi_m: \EuScript L_s^m(A)\otimes \Lambda^m(B)\to \EuScript
L^m(A\otimes B)\label{defchin}\\
& \bar \chi_m: \EuScript L^m(A)\otimes \Lambda^m(B)\to \EuScript
L_s^m(A\otimes B)\label{defchin8}
\end{align}
for  $a_1,\dots, a_m\in A$ and  $b_1,\dots, b_m\in B$, by
\begin{align*}
\chi_m: \{a_1,\dots, a_m\}\otimes b_1\wedge \dots \wedge b_m
&\mapsto \sum_{\sigma\in \Sigma_m}\text{sign}(\sigma)[a_1\otimes
b_{\sigma_1},\dots, a_m\otimes b_{\sigma_m}]
\\
\bar\chi_m: [a_1,\dots, a_m]\otimes b_1\wedge \dots \wedge
b_m&\mapsto \sum_{\sigma\in
\Sigma_m}\text{sign}(\sigma)\{a_1\otimes b_{\sigma_1},\dots,
a_m\otimes b_{\sigma_m}\} \,.
\end{align*}
For $m=2k$ with $k$ odd, we set
$$
\chi_m: \{a_1,\dots, a_k\}^{[2]}\otimes b_1\wedge\dots \wedge
b_n\mapsto \sum_{\sigma\in A_m}[[a_1\otimes b_{\sigma_1},\dots,
a_k\otimes b_{\sigma_k}],[a_1\otimes b_{\sigma_{k+1}},\dots,
a_k\otimes b_{\sigma_{2k}}]],
$$
Taking $X\in \sf DAb$, $B=K(\mathbb Z,1)$ we obtain the natural
maps
\begin{align}
& L\EuScript L^m(X)\lotimes L\Lambda^m(\mathbb Z,1)=L\EuScript
L^m(X)[m]\to L\EuScript L_s^m(X\lotimes K(\mathbb Z,1))=L\EuScript
L_s^m(X[1]) \label{zm1}\\
& L\EuScript L_s^m(X)\lotimes L\Lambda^m(\mathbb Z,1)=L\EuScript
L_s^m(X)[m]\to L\EuScript L^m(X\lotimes K(\mathbb Z,1))=L\EuScript
L^m(X[1]) \label{zm2} \end{align} Iterating these constructions,
we obtain the natural maps \begin{align*} & L\EuScript
L^m(X)[2nm]\to L\EuScript L^m(X[2n])\\ & L\EuScript
L^m(X)[(2n+1)m]\to L\EuScript
L_s^m(X[2n+1])\\
& L\EuScript L_s^m(X)[2nm]\to L\EuScript L_s^m(X[2n])\\
& L\EuScript L_s^m(X)[(2n+1)m]\to L\EuScript L^m(X[2n+1])
\end{align*}

\subsection{Bousfield's pension maps} For a functor $T: \sf Ab\to
Ab$, such that $T(0)=0$, there is a natural map
$$
E: \mathbb Z[M]\otimes T(N)\to T(M\otimes N),\ M,N\in \sf Ab
$$
which induces a paring
$$
E_*: \tilde H_*K(\mathbb Z,n)\otimes \pi_*(LT(X))\to
\pi_*(LT(X[2])),\ n\geq 1,\ X\in \sf DAb
$$
These constructions are from \cite{Bou}. For $n=2$, taking the
generator $\epsilon_r\in \tilde H_{2r}K(\mathbb Z,2)=\mathbb Z$,
we obtain so-called pension maps
$$
\epsilon_r: \pi_i(LT(X))\to \pi_{i+2r}(LT(X[2])),\ i\geq 0.
$$
Observe that, the composition of the maps (\ref{zm1}) and
(\ref{zm2}) induces the composition map
$$
\pi_i(L\EuScript L^m(X))\to \pi_{i+m}(L\EuScript L_s^m(X[1]))\to
\pi_{i+2m}(L\EuScript L^m(X[2]))
$$
which is exactly the pension map $\epsilon_r$ for the functor
$\EuScript L^m$.

The following statement is proved in (\cite{Bou}, Theorem 3.1):
\begin{theorem}
Let $T: \sf Ab\to Ab$ a polynomial functor of degree $\leq r\
(r\geq 1)$ and let $X\in DAb,$ such that $H_i(X)=0,\ i>n$ for some
$n\geq 0$. Then
$$
\epsilon_r: \pi_i(LT(X))\to \pi_{i+2r}(LT(X[2]))
$$
is an isomorphism for $i>n(r-1)+1$ and a monomorphism for
$i=n(r-1)+1$.
\end{theorem}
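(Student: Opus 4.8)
The plan is to identify the pension map $\epsilon_r$ with multiplication by the top divided power class of $\tilde H_*K(\mathbb Z,2)$ and then to bound the cofiber of this map by filtering $K(\mathbb Z,2)$ by its (even, torsion-free) homology and exploiting the degree-$r$ polynomial structure of $T$ together with the connectivity hypothesis on $X$. First I would rewrite $X[2]\simeq X\lotimes K(\mathbb Z,2)$ in $\sf DAb$, so that $LT(X[2])=LT(X\lotimes K(\mathbb Z,2))$, and read off the external pairing $E_*$ concretely. Since $\tilde H_{2j}K(\mathbb Z,2)=\mathbb Z\cdot\gamma_j$ is the divided power algebra on a class of degree $2$, the generator $\epsilon_r$ is the $r$-th divided power $\gamma_r$, and $E_*$ presents $\epsilon_r$ as the operation of pairing with $\gamma_r$. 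For the free tensor power $T=(-)^{\otimes r}$ the map $E$ is the diagonal $[m]\otimes(n_1\otimes\dots\otimes n_r)\mapsto(m\otimes n_1)\otimes\dots\otimes(m\otimes n_r)$; here $\gamma_r$ hits the top cell and
$$
\epsilon_r:\pi_i(X^{\lotimes r})\xrightarrow{\ \sim\ }\pi_{i+2r}\bigl((X[2])^{\lotimes r}\bigr)=\pi_{i+2r}\bigl(X^{\lotimes r}[2r]\bigr)
$$
is an isomorphism in every degree. This is the base case of the induction.

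Next I would filter $LT(X\lotimes K(\mathbb Z,2))$ by the homology filtration of $K(\mathbb Z,2)$. Because $\tilde H_*K(\mathbb Z,2)$ is free and, for a functor of degree $\le r$, only the classes in the even degrees $2,4,\dots,2r$ contribute (the higher divided powers $\gamma_j$ with $j>r$ act trivially, since the $j$-fold external product factors through the vanishing cross-effect $\mathrm{cr}_jT$), the associated graded splits into contributions indexed by $j=0,1,\dots,r$: the $j$-th contribution is built from the cross-effect $\mathrm{cr}_jT(X,\dots,X)$ (with its $\Sigma_j$-action) tensored with the class $\gamma_j$. The top contribution, $j=r$, is the $r$-linear cross-effect and, by the base case applied to $\mathrm{cr}_rT$, is exactly the image of $\epsilon_r$; the contributions $j<r$ measure the failure of $\epsilon_r$ to be an isomorphism. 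An induction on the polynomial degree $r$, using the naturality of $E$ with respect to the cross-effect exact sequences and the five lemma, lets me reduce the two-sided estimate to controlling the connectivity of these lower contributions.

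The connectivity bookkeeping is the heart of the argument and the step I expect to be the main obstacle. Writing $C$ for the homotopy cofiber of $\epsilon_r$, I must show that $C$ is concentrated in degrees $\le n(r-1)+1$. Since $H_i(X)=0$ for $i>n$, each lower contribution $\mathrm{cr}_jT(X,\dots,X)\lotimes\gamma_j$ with $j<r$ is built from $j$ derived tensor factors of $X$ and receives a shift controlled by $\gamma_j$ rather than by $\gamma_r$; a degree count shows that the most dangerous term, $j=r-1$, lives in dimensions $\le n(r-1)+1$, and all terms vanish strictly above this line. The genuine difficulty lies in controlling the symmetric-group homotopy orbits that distinguish a general degree-$r$ functor $T$ from the free tensor power $(-)^{\otimes r}$: these orbits can raise the connectivity of $LT$ and must be shown not to push any contribution past the threshold. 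I expect this to require a simultaneous induction on $r$ and on the connectivity $n$, with the sharp bound $n(r-1)+1$ emerging from the $j=r-1$ term in the worst case; the isomorphism for $i>n(r-1)+1$ and the monomorphism at $i=n(r-1)+1$ then follow from the long exact sequence of the cofiber $C$.
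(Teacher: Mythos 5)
First, note that the paper contains no proof of this statement to compare against: it is imported verbatim from Bousfield's preprint (\cite{Bou}, Theorem 3.1), so your attempt can only be judged on its own terms. Your base case is sound: realizing $\epsilon_r$ as a map $LT(X)[2r]\to LT(X[2])$ by choosing a representative $\mathbb Z[2r]\to \tilde{\mathbb Z}[K(\mathbb Z,2)]$ of $\gamma_r$, and observing that for $T=(-)^{\otimes r}$ the reduced iterated coproduct of $\gamma_r$ has $\gamma_1^{\otimes r}$ as its only surviving component, does give the shift isomorphism in all degrees. But the filtration step conflates two different objects. The identification $X[2]\simeq X\lotimes K(\mathbb Z,2)$ uses the simplicial abelian group $K(\mathbb Z,2)=N^{-1}(\mathbb Z[2])$, whose reduced homotopy is a single $\mathbb Z$ in degree $2$; the divided power classes $\gamma_j$ live in $\tilde H_*K(\mathbb Z,2)=\pi_*\tilde{\mathbb Z}[K(\mathbb Z,2)]$, i.e.\ in the chains on the Eilenberg--MacLane \emph{space}, which sits in the source of the evaluation map $E$, not inside $X[2]$. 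Consequently ``filtering $LT(X\lotimes K(\mathbb Z,2))$ by the homology filtration of $K(\mathbb Z,2)$'' does not produce an associated graded with pieces $\mathrm{cr}_jT(X,\dots,X)\lotimes\gamma_j$: the filtration of the target that could play this role is the Dold--Puppe suspension (cross-effect) filtration of $LT(C[1])$, iterated twice, and constructing it with the required compatibility with $E$ is precisely the content your sketch omits.

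The decisive failure, however, is the connectivity bookkeeping, which you correctly identify as the heart of the matter but then assert rather than prove, and the assertion is false as stated. The cross-effects $\mathrm{cr}_jT$ with $j<r$ need not be multilinear, and even in the multilinear case they are not concentrated in degrees $\le n(r-1)+1$: for a tensor-type piece, $\pi_*\bigl(X^{\lotimes(r-1)}\bigr)$ reaches degree $n(r-1)+(r-2)$, which exceeds $n(r-1)+1$ as soon as $r\ge 4$ (take $X=\mathbb Z/2$, $n=0$, $r=4$: $\pi_2\bigl((\mathbb Z/2)^{\lotimes 3}\bigr)=\mathbb Z/2\neq 0$). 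So the cofiber of $\epsilon_r$ cannot be killed by connectivity of the lower contributions alone, and the long-exact-sequence argument does not close. In the stated range one must instead prove that $\epsilon_r$ is already an isomorphism \emph{on} the lower graded pieces --- an inductive statement your proposal never formulates, and a genuinely delicate one, since on a $j$-homogeneous piece with $j<r$ the pairing with $\gamma_r$ acts through coproduct components $\gamma_a\otimes\cdots$ with some $a\ge 2$, and the evaluation $\mathbb Z[2a]\to K(\mathbb Z,2)$ is null in $\sf DAb$ for $a\ge 2$; this is exactly where the homogeneity structure of $T$ and the $\Sigma_j$-orbit control you explicitly defer must enter. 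As it stands, the proof has a gap at its announced main step.
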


\begin{cor}\label{corla}
Let $p$ be an odd prime. For $i\geq 1$, let
$\beta_i,\mu_i,\lambda_i$ be nontrivial elements
\begin{align*}
& \beta_i\in \pi_{2i}(\Lambda^2(K(\mathbb Z,i))\otimes \mathbb
Z/2)=\mathbb Z/2\\
& \mu_i\in \pi_{2pi}(\EuScript L^p(K(\mathbb Z,2i))\otimes \mathbb
Z/p)=\mathbb Z/p\\
& \lambda_i\in \pi_{2pi-1}(\EuScript L^p(K(\mathbb Z,2i))\otimes
\mathbb Z/p)=\mathbb Z/p.
\end{align*}
Then $\epsilon_2(\beta_i)\neq 0$, $\epsilon_p(\mu_i)\neq 0$,
$\epsilon_p(\lambda_i)\neq 0$.
\end{cor}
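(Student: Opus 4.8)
The plan is to deduce the nonvanishing directly from the preceding theorem of Bousfield (\cite{Bou}, Theorem 3.1), which guarantees that for a polynomial functor $T$ of degree $\leq r$ and an object $X \in \sf DAb$ with $H_i(X) = 0$ for $i > n$, the pension map $\epsilon_r: \pi_i(LT(X)) \to \pi_{i+2r}(LT(X[2]))$ is an isomorphism for $i > n(r-1)+1$ and a monomorphism for $i = n(r-1)+1$. Since each of $\beta_i, \mu_i, \lambda_i$ lives in a mod-$p$ homotopy group, I would first replace $T$ by the functor $A \mapsto T(A) \otimes \mathbb Z/p$. Tensoring with a fixed abelian group does not raise the polynomial degree (the higher cross-effects are simply tensored with $\mathbb Z/p$), so this functor again has degree $\leq r$, and its left derived functor applied to $X$ is exactly $LT(X) \lotimes \mathbb Z/p$; its homotopy therefore recovers the groups in which our classes sit, and Bousfield's bound applies with the same $r$.

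The remaining work is a degree count: in each case I would check that the homotopy degree of the generator is $\geq n(r-1)+1$, so that $\epsilon_r$ is injective there and cannot annihilate a nonzero class. For $\beta_i$ the functor is $\Lambda^2$, so $r = 2$, $X = K(\mathbb Z, i)$ and $n = i$; the source degree $2i$ meets the threshold $i+1$ for every $i \geq 1$, with equality (hence only a monomorphism) exactly at $i = 1$. For $\mu_i$ and $\lambda_i$ the functor is $\EuScript L^p$, so $r = p$, $X = K(\mathbb Z, 2i)$ and $n = 2i$, giving threshold $2i(p-1)+1 = 2ip - 2i + 1$. Here $\mu_i$ lies in degree $2pi$ and $2pi - (2ip - 2i + 1) = 2i - 1 \geq 1$, so $\epsilon_p$ is an isomorphism; while $\lambda_i$ lies in degree $2pi - 1$ and $(2pi - 1) - (2ip - 2i + 1) = 2(i-1) \geq 0$, so $\epsilon_p$ is an isomorphism for $i \geq 2$ and a monomorphism for $i = 1$. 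In all three cases the pension map is injective on the relevant group, so the chosen nonzero generators have nonzero image.

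The one point I expect to require care is matching the pension map named in the corollary with the one governed by Bousfield's theorem: I must verify that the mod-$p$ reduction of $\epsilon_r$ coincides with the pension map of the functor $A \mapsto T(A) \otimes \mathbb Z/p$ built from the same generator of $\tilde H_{2r}K(\mathbb Z, 2)$. This should follow from naturality of the pairing $E_*$ in the functor variable, applied to the natural transformation $T \to T \otimes \mathbb Z/p$ induced by $\mathbb Z \to \mathbb Z/p$. Once this compatibility is recorded, the degree inequalities above complete the argument with no further calculation.
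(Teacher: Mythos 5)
Your proof is correct and is essentially the argument the paper intends: the corollary is stated with no separate proof as an immediate consequence of Bousfield's Theorem 3.1, applied to the polynomial functors $A\mapsto\Lambda^2(A)\otimes\mathbb Z/2$ (degree $2$, $X=\mathbb Z[i]$, threshold $i+1\leq 2i$) and $A\mapsto\EuScript L^p(A)\otimes\mathbb Z/p$ (degree $p$, $X=\mathbb Z[2i]$, threshold $2i(p-1)+1\leq 2pi-1<2pi$), and your degree counts match exactly, including the boundary monomorphism cases $i=1$. Your final compatibility worry is harmless but dispensable: since $\beta_i,\mu_i,\lambda_i$ already live in the homotopy of the mod-$p$ functor, one applies the theorem directly to $T\otimes\mathbb Z/p$, which is polynomial of the same degree.
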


There is a map
$$
\beta_d: \EuScript L^d(B)\otimes SP^d(A)\to \EuScript L^d(B\otimes
A)
$$
given by
$$
\beta_d: [b_1,\dots, b_d]\otimes a_1\dots a_d\mapsto
\sum_{\sigma\in \Sigma_d}[b_1\otimes a_{\sigma_1},\dots,
b_d\otimes a_{\sigma_d}]
$$
For $d|n$, there is a natural map
$$
SP^n(A) \to \otimes^{n/d}(SP^d(A))
$$
Construct the map
$$
\beta_{d,n}: \EuScript L^d(B)\otimes SP^n(A)\to \EuScript
L^d(B\otimes (\otimes^{n/d}(A)))
$$
as a composition
$$
\EuScript L^d(B)\otimes SP^n(A)\to \EuScript L^d(B)\otimes
(\otimes^{n/d}(SP^d(A)))\to \EuScript L^d(B\otimes
(\otimes^{n/d}(A)))
$$
Here the last map is the $n/d$-th iteration of the map $\beta_d$.
The following two lemmas follow straightforwardly from definition
of pension maps and composition maps in derived functors of Lie
functors (\ref{cmaps}).

\begin{lemma} For $d|n$ and free abelian groups $A,X,Y$, $X_{i_j}\in\{X,Y\}$, the
following diagram
$$
\xyma{ \EuScript L^d(X_{i_1}\otimes \dots\otimes
X_{i_{n/d}})\otimes SP^n(A) \ar@{->}[r]^{\beta_{d,n}}
\ar@{->}[d]  & \EuScript L^d((X_{i_1}\otimes A)\otimes \dots \otimes (X_{i_{n/d}}\otimes A))\ar@{>->}[d] \\
\EuScript L^n(X\oplus Y)\otimes SP^n(A) \ar@{->}[r]^{\beta_n} &
\EuScript L^n((X\oplus Y)\otimes A)}
$$
is commutative.
\end{lemma}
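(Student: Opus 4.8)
The plan is to check the square on additive generators and then reduce the whole assertion to one bookkeeping identity for the coproduct of the symmetric--power Hopf algebra. Write $q=n/d$ and $B=X_{i_1}\otimes\dots\otimes X_{i_q}$. The group $\EuScript L^d(B)$ is spanned by $d$-fold brackets $\ell=[\xi^{(1)},\dots,\xi^{(d)}]$ of pure tensors $\xi^{(j)}=\xi^{(j)}_1\otimes\dots\otimes\xi^{(j)}_q\in B$, and $SP^n(A)$ is spanned by monomials $s=a_1\cdots a_n$. Every arrow in the diagram is natural and polynomial in $A$, so it suffices to test the identity on such generators, treating the $a_i$ as distinct formal letters; the case of repeated letters follows by specialization. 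By Theorem \ref{schles} the left vertical map $\iota$ sends $\ell$ to the nested bracket $[\,[\xi^{(1)}_1,\dots,\xi^{(1)}_q],\dots,[\xi^{(d)}_1,\dots,\xi^{(d)}_q]\,]$ in $\EuScript L^n(X\oplus Y)$, and the right vertical map $\iota'$ is the analogous Schlesinger embedding for the splitting $(X\oplus Y)\otimes A=(X\otimes A)\oplus(Y\otimes A)$.

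To evaluate $\beta_n$ on the nested bracket $\iota(\ell)$ I would first record an intrinsic form of the maps $\beta_m$. For any abelian group $V$ set $\Phi_m\colon\otimes^m(V)\otimes SP^m(A)\to\otimes^m(V\otimes A)$, $(v_1\otimes\dots\otimes v_m)\otimes a_1\cdots a_m\mapsto\sum_{\sigma\in\Sigma_m}(v_1\otimes a_{\sigma_1})\otimes\dots\otimes(v_m\otimes a_{\sigma_m})$. A direct check on a straight bracket shows that $\Phi_m$ restricts to $\beta_m$ on $\EuScript L^m(V)\otimes SP^m(A)$, and in particular that it lands in $\EuScript L^m(V\otimes A)$. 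The key structural property is the compatibility of $\Phi$ with concatenation and with the coproduct $\Delta(s)=\sum_{(s)}s_{(1)}\otimes s_{(2)}$ of $SP^\ast(A)$: for $w'\in\otimes^p(V)$ and $w''\in\otimes^r(V)$ one has $\Phi_{p+r}((w'\otimes w'')\otimes s)=\sum_{(s)}\Phi_p(w'\otimes s_{(1)})\otimes\Phi_r(w''\otimes s_{(2)})$, since summing over $\Sigma_{p+r}$ is the same as choosing which letters of $s$ go to the first $p$ slots and then symmetrising inside each block. Restricting to Lie elements and using $[w',w'']=w'w''-w''w'$ at the tensor level yields the bracket identity $\beta_{p+r}([w',w'']\otimes s)=\sum_{(s)}[\beta_p(w'\otimes s_{(1)}),\beta_r(w''\otimes s_{(2)})]$, which I would iterate.

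With this in hand the two composites become explicit sums. Going down then right, iterating the bracket identity over the $d$-fold coproduct of $SP^n(A)$ into the multidegree $(q,\dots,q)$ expresses $\beta_n(\iota(\ell)\otimes s)$ as $\sum[\beta_q(u^{(1)}\otimes s^{(1)}),\dots,\beta_q(u^{(d)}\otimes s^{(d)})]$ with $u^{(j)}=[\xi^{(j)}_1,\dots,\xi^{(j)}_q]$, a sum indexed by an ordered partition of $a_1,\dots,a_n$ into $d$ rows of size $q$ together with a column assignment inside each row. Going right then down, the natural map $\nu\colon SP^n(A)\to\otimes^q(SP^d(A))$ is the coproduct into multidegree $(d,\dots,d)$, so after the $q$-fold iteration of $\beta_d$ and the re-bracketing by $\iota'$ the composite is a sum indexed by an ordered partition of $a_1,\dots,a_n$ into $q$ columns of size $d$ together with a row assignment inside each column. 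Using the tensor reorganization $B\otimes A^{\otimes q}\cong(X_{i_1}\otimes A)\otimes\dots\otimes(X_{i_q}\otimes A)$ that pairs the $l$-th copy of $A$ with $X_{i_l}$, both composites reduce to the single sum $\sum_{\phi}[\,[\xi^{(1)}_1\otimes a_{\phi^{-1}(1,1)},\dots],\dots\,]$ over all bijections $\phi$ of the $n$ letters onto the $n$ cells $(j,l)$, with $\xi^{(j)}_l\otimes a_{\phi^{-1}(j,l)}$ in cell $(j,l)$.

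Hence the two composites coincide, the point being that the row-first and the column-first indexings are merely the two groupings of one and the same total symmetrisation, equal by coassociativity and cocommutativity of $SP^\ast(A)$. I expect the only genuine work --- the main obstacle --- to lie in bookkeeping: pinning down the tensor-factor identifications in $\beta_{d,n}$ and $\iota'$, and checking that the two groupings yield identical bracket monomials rather than monomials differing by a reshuffle. Once the description by bijections $\phi$ is set up, equality is forced, and this identity is exactly the algebraic shadow of the compatibility of the composition maps \eqref{cmaps} with Schlesinger's splitting.
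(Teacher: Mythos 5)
Your argument is correct, but note that the paper offers no proof to compare against: immediately before the statement it simply declares that this lemma and the next ``follow straightforwardly from definition of pension maps and composition maps'' \eqref{cmaps}. Your write-up supplies exactly the verification the author leaves implicit, and organizes it well. In particular, extending $\beta_m$ to the symmetrization map $\Phi_m$ on full tensor powers is not mere packaging: the paper defines $\beta_d$ only on left-normed brackets $[b_1,\dots,b_d]$, while the Schlesinger embedding of Theorem \ref{schles} produces nested brackets, so one genuinely needs to know that $\beta_m$ is well defined on all Lie elements and is computed on a bracket $[w',w'']$ through the $(p,r)$-component of the coproduct of $SP^{*}(A)$ --- your $\Phi_m$ settles both points at once, with cocommutativity accounting for the compatibility of the signless symmetrization with the antisymmetry of the bracket. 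The reduction to distinct letters is likewise legitimate: every monomial with repeated entries is the image of $e_1\cdots e_n\in SP^n(\mathbb{Z}^n)$ under a specialization $\mathbb{Z}^n\to A$, and all four maps in the square are natural in $A$ with the Lie factor held fixed, which is why testing on the square-free monomial suffices. The final comparison --- both composites equal the single sum over bijections of the $n$ letters onto the cells of the $d\times(n/d)$ grid, the two composites corresponding to row-first and column-first groupings of the same total symmetrization --- is the correct bookkeeping, and it is evidently the computation the paper's one-line assertion has in mind. So: correct, and strictly more detailed than the source; what your route buys beyond the paper is the explicit treatment of well-definedness on nested brackets and of repeated letters, both genuine (if routine) points that the phrase ``follows straightforwardly'' sweeps under the rug.
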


\begin{cor}\label{creffl}
For $d|n$, $C_1,C_2\in \sf DAb$, $D_{i_j}\in \{C_1,C_2\},$ the
following diagram
$$
\xyma{ \pi_i\left(L\EuScript L^d(D_{i_1}\lotimes \dots \lotimes
D_{i_{n/d}})\right) \ar@{->}[r]^{\epsilon_d^{n/d}} \ar@{>->}[d] &
\pi_{i+2n}\left(L\EuScript
L^d(D_{i_1}\lotimes \dots\lotimes D_{i_{n/d}}[\frac{2n}{d}])\right)\ar@{>->}[d]\\
\pi_i(L\EuScript L^n(C_1\oplus C_2))\ar@{->}[r]^{\epsilon_n} &
\pi_{i+2n}(L\EuScript L^n((C_1\oplus C_2)[2])) }
$$
is commutative for all $i\geq 0$.
\end{cor}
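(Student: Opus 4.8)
The plan is to obtain the corollary as the image, under the derived-functor construction $\pi_*(L(-))$, of the commutative square of the preceding lemma, with $X,Y$ specialized to free simplicial models of $C_1,C_2$ and $A$ taken to be the coefficient object computing the reduced homology of $K(\mathbb Z,2)=\mathbb Z[2]$. The only real content is to identify the two horizontal pension maps of the corollary with $\pi_*(L\beta_n)$ and $\pi_*(L\beta_{d,n})$; once this is done, commutativity is inherited from the lemma together with the naturality of d\'ecalage and of Schlesinger's cross-effect decomposition.

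First I would recall how the pension map is realized at the level of functors. By the definition of the pairing $E_*$ of \cite{Bou}, for $T=\EuScript L^n$ the map $\epsilon_n$ is induced, after passing to $\lotimes$-derived functors and homotopy groups, by the natural transformation $\beta_n\colon \EuScript L^n(B)\otimes SP^n(A)\to \EuScript L^n(B\otimes A)$ paired with the generator $\epsilon_n\in\tilde H_{2n}K(\mathbb Z,2)=\mathbb Z$; here $B$ is a free simplicial model of $C_1\oplus C_2$ and the factor $SP^n$ carries the top reduced homology of $K(\mathbb Z,2)$. Thus the lower horizontal arrow of the corollary is $\pi_*(L\beta_n)$ evaluated at the chosen generator, and the single semi-d\'ecalage step of (\ref{zm1})--(\ref{zm2}) identifies one application of $\epsilon_d$ with $\pi_*(L\beta_d)$.

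Next I would identify the top arrow. The iterated map $\epsilon_d^{n/d}$ on the summand $L\EuScript L^d(D_{i_1}\lotimes\dots\lotimes D_{i_{n/d}})$ is realized by the composite $\beta_{d,n}$: each of the $n/d$ applications of $\epsilon_d$ contributes one factor $\beta_d$, and the assembly of these factors is exactly the comultiplication $SP^n(A)\to \otimes^{n/d}(SP^d(A))$ used in the definition of $\beta_{d,n}$. This comultiplication is the coproduct of the (divided) power coalgebra structure on $\tilde H_*K(\mathbb Z,2)$ that records the $n/d$-fold repetition of the shift $X\mapsto X[2]$. Hence the upper horizontal arrow is $\pi_*(L\beta_{d,n})$ evaluated at the corresponding generator.

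The two vertical arrows are the summand inclusions furnished by Theorem \ref{schles}: the cross-effect decomposition (\ref{crefe}) exhibits $L\EuScript L^d(D_{i_1}\lotimes\dots\lotimes D_{i_{n/d}})$ as a direct summand of $L\EuScript L^n(C_1\oplus C_2)$, and its $[\frac{2n}{d}]$-shift as a direct summand of the d\'ecalage of $L\EuScript L^n((C_1\oplus C_2)[2])$, each a monomorphism onto a direct summand by Theorem \ref{bousfield}. These are precisely the derived analogues of the vertical monomorphisms in the lemma. Applying $\pi_*(L(-))$ to the lemma's square with the above specializations, and using that $L(-)$, $\pi_*$, the maps $\beta$, and the cross-effect inclusions are all natural, forces the corollary diagram to commute. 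I expect the main obstacle to be the coherence asserted in the previous paragraph: one must check that the $n/d$-fold iterate of the single pension map $\epsilon_d$ reassembles, through $SP^n\to\otimes^{n/d}SP^d$, into precisely $\beta_{d,n}$ and not some other grouping of the $\beta_d$ factors---equivalently, that iterated pension-shifting by $K(\mathbb Z,2)$ is compatible with the coalgebra structure on its homology. Everything else is formal naturality.
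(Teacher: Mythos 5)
Your proposal is correct and takes essentially the same route as the paper: Corollary \ref{creffl} is obtained there as a direct consequence of the preceding lemma on $\beta_{d,n}$ and $\beta_n$, with the pension maps identified (as you argue) with the maps induced by these natural transformations paired against the generator of $\tilde H_{2n}K(\mathbb Z,2)$, so that commutativity passes to derived functors by naturality of the cross-effect inclusions. The coherence point you flag---that the $n/d$-fold iterate of $\epsilon_d$ reassembles through $SP^n(A)\to \otimes^{n/d}(SP^d(A))$ into precisely $\beta_{d,n}$---is exactly what the paper builds into the definition of $\beta_{d,n}$ and dismisses as following ``straightforwardly from definition of pension maps and composition maps.''
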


\begin{lemma}\label{mloa} For $i,k,l,q\geq 0$ and $X\in \sf DAb$, the following diagram
$$
\xyma{L_i\EuScript L^k(\mathbb Z,q)\otimes \pi_q(L\EuScript
L^l(X))\ar@{->}[r] \ar@{->}[d]^{\epsilon_{k}^l\otimes \epsilon_l}
& \pi_i(L\EuScript L^{kl}(X)))\ar@{->}[d]^{\epsilon_{kl}}\\
L_{i+2kl}\EuScript L^k(\mathbb Z,q+2l)\otimes
\pi_{q+2l}(L\EuScript L^l(X[2]))\ar@{->}[r] &
\pi_{i+2kl}(L\EuScript L^{kl}(X[2]))}
$$
is commutative.
\end{lemma}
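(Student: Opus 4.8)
The plan is to reduce the claimed commutativity to the naturality of the two elementary constructions out of which both the composition map (\ref{cmaps}) and the pension maps are built. The composition map is assembled from the operad map $\mu\colon \EuScript L^k\circ\EuScript L^l\to\EuScript L^{kl}$ together with a representing map, while every pension map is an instance of the natural transformation
$$
E_T\colon \mathbb Z[M]\ot T(N)\to T(M\ot N),\qquad [m]\ot t\mapsto T(\phi_m)(t),\quad \phi_m\colon n\mapsto m\ot n,
$$
paired with a homology class of $K(\mathbb Z,2)$. The point I would exploit repeatedly is that $E_T$ is natural in the functor $T$: for any natural transformation $\nu\colon T\to T'$ one has $\nu_{M\ot N}\circ E_T=E_{T'}\circ(1\ot\nu_N)$. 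First I would fix a simplicial representative $f\colon K(\mathbb Z,q)\to\EuScript L^l(X)$ of $\alpha\in\pi_q(L\EuScript L^l(X))$, so that, by the construction in (\ref{cmaps}), the top composition map is the effect on $\pi_i$ of
$$
L\EuScript L^k(\mathbb Z,q)\xrightarrow{\ L\EuScript L^k(f)\ }L\bigl(\EuScript L^k\circ\EuScript L^l\bigr)(X)\xrightarrow{\ \mu\ }L\EuScript L^{kl}(X),
$$
and I would record that $\epsilon_l(\alpha)\in\pi_{q+2l}(L\EuScript L^l(X[2]))$ is represented by the map $f'\colon K(\mathbb Z,q+2l)\to\EuScript L^l(X[2])$ produced by applying the same pension construction to $f$.

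With these representatives fixed, the diagram factors into two squares. The first expresses that $\epsilon_{kl}$ slides past the operad map $\mu$: specializing the naturality of $E$ to $\nu=\mu$ and $M=K(\mathbb Z,2)$ and pairing with $\epsilon_{kl}\in\tilde H_{2kl}K(\mathbb Z,2)$ gives $\epsilon_{kl}\circ\mu_{*}=\mu_{*}\circ\epsilon^{\mathrm{c}}_{kl}$, where $\epsilon^{\mathrm{c}}_{kl}$ denotes the pension of the composite functor $\EuScript L^k\circ\EuScript L^l$ for the same class. The second, and essential, square is the distributivity of this composite pension over the functor composition,
$$
\epsilon^{\mathrm{c}}_{kl}\circ \bigl(L\EuScript L^k(f)\bigr)_{*}\;=\;\bigl(L\EuScript L^k(f')\bigr)_{*}\circ\epsilon_k^{\,l},
$$
which says that tensoring a single copy of $K(\mathbb Z,2)$ into the argument $X$ of $\EuScript L^k(\EuScript L^l(X))$ amounts to pensioning the inner functor once, replacing $f$ by $f'$ and thereby producing the shift $q\rightsquigarrow q+2l$, and pensioning the outer functor $\EuScript L^k$ by the $l$-fold iterate $\epsilon_k^{\,l}$, which accounts simultaneously for the shift of the base $K(\mathbb Z,q)\rightsquigarrow K(\mathbb Z,q+2l)$ and for the raise of the homotopy degree by $2kl$. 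Concatenating the two squares and evaluating on $\gamma$ yields precisely $\epsilon_{kl}\circ\mathrm{comp}=\mathrm{comp}'\circ(\epsilon_k^{\,l}\ot\epsilon_l)$, which is the assertion.

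The hard part will be the second square, since it is the only place where a genuine identity of classes, rather than pure naturality, intervenes. Its content is that $E$ is compatible with composition of functors: unwinding $(\EuScript L^k\circ\EuScript L^l)(\phi_m)=\EuScript L^k\bigl(\EuScript L^l(\phi_m)\bigr)$ and observing that $\EuScript L^l(\phi_m)$ is exactly the inner $E$-action $E_{\EuScript L^l}([m]\ot-)$, I would check at the chain level that tensoring $M$ in can be carried out inner-first and then propagated through $\EuScript L^k$. Passing to homotopy, the factorization of the degree-$2kl$ class $\epsilon_{kl}$ into the inner class $\epsilon_l$ and the $l$-fold outer class $\epsilon_k^{\,l}$ is then forced by the divided-power coalgebra structure of $\tilde H_*K(\mathbb Z,2)=\Gamma[\epsilon_1]$ together with the iteration defining $\epsilon_k^{\,l}$; this is the exact analogue, for composition maps, of the cross-effect compatibility recorded in Corollary \ref{creffl}, and I would verify it the same way. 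Once this chain-level identity is in place the remaining bookkeeping, that $f'$ represents $\epsilon_l(\alpha)$ and that all indices match, is routine and I would only indicate it.
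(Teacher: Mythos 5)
Your proposal is correct and is essentially the paper's own argument: the paper offers no details, stating only that the lemma ``follows straightforwardly from definition of pension maps and composition maps in derived functors of Lie functors (\ref{cmaps})'', and your two-square decomposition---naturality of $E$ in the functor $T$ applied to $\mu\colon \EuScript L^k\circ\EuScript L^l\to\EuScript L^{kl}$, plus compatibility of the pension with composition of functors via the divided-power structure of $\tilde H_*K(\mathbb Z,2)$, in analogy with Corollary \ref{creffl}---is exactly the intended unwinding of those definitions. The only caveat is that your ``second square'' is where all the real content sits and you rightly flag it as such; the paper implicitly buries the same computation in the word ``straightforwardly''.
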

Now we are ready to prove the following
\begin{theorem}
For $n\geq 1$, $C\in \sf DAb$, the map
$$
\epsilon_n: \pi_i(L\EuScript L^n(C))\to \pi_{i+2n}(L\EuScript
L^n(C[2])))
$$
is injective for all $i\geq 0$.
\end{theorem}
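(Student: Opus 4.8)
The plan is to induct on the degree $n$, reducing the general statement first to indecomposable objects of $\sf DAb$ and then to the explicit computations of Sections \ref{section4}--\ref{derivedkan}. For the base case $n=1$ the functor $\EuScript L^1$ is the identity, and $\epsilon_1\colon \pi_i(C)\to \pi_{i+2}(C[2])$ is the canonical suspension identification, hence injective. For the inductive step I assume that $\epsilon_d$ is injective on $\pi_j(L\EuScript L^d(C'))$ for every $C'\in\sf DAb$, every $j$, and every $d<n$.

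The first step is to reduce to the case of an indecomposable $C$. Writing $C$ as a direct sum of shifts of cyclic groups and applying the cross-effect decomposition (\ref{crefe}), the group $\pi_i(L\EuScript L^n(C))$ splits as a direct sum of terms $\pi_i(L\EuScript L^d(D))$, where $d\mid n$ and $D$ runs over the basic tensor products of weight $n/d$ formed from the summands of $C$. By Corollary \ref{creffl} the pension map $\epsilon_n$ is compatible with this splitting and acts on the summand indexed by $D$ as the iterate $\epsilon_d^{\,n/d}$. On every cross-effect term, i.e. one of weight $n/d>1$ and hence with $d<n$, the inductive hypothesis makes each factor $\epsilon_d$ injective, so the composite $\epsilon_d^{\,n/d}$ is injective. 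There remain only the diagonal summands $L\EuScript L^n(C_\alpha)$ attached to a single indecomposable $C_\alpha$, so it suffices to establish injectivity for $C=\mathbb Z[q]$ and for $C=\mathbb Z/p^k[q]$.

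For $C=\mathbb Z[q]$ I would split the range of $i$. Since $\EuScript L^n$ is polynomial of degree $n$ and $\mathbb Z[q]$ has homology concentrated in degree $q$, the theorem of Bousfield on pension maps stated above shows that $\epsilon_n$ is a monomorphism for $i\ge q(n-1)+1$. In the complementary low-degree range I would argue on the explicit bases: by Theorems \ref{kan1} and \ref{lieofz} (and their analogues) the groups $L_*\EuScript L^n(\mathbb Z,q)$ vanish unless $n=p^k$ is a prime power, and then they are $\mathbb Z/p$-vector spaces whose basis elements are iterated composites of the classes $\mu_i,\lambda_i$ (and $\beta_i$ when $p=2$). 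Lemma \ref{mloa} says that the composition maps producing these generators commute with the pension maps, so that $\epsilon_n$ is computed factorwise as a product of maps $\epsilon_p$, while Corollary \ref{corla} guarantees that $\epsilon_2(\beta_i)$, $\epsilon_p(\mu_i)$ and $\epsilon_p(\lambda_i)$ are all nonzero. Matching the indexing sets of source and target under the shift $q\mapsto q+2$, this sends each basis class to a distinct basis class of the target and so yields injectivity.

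For $C=\mathbb Z/p^k[q]$ I would pass to the free case through the Leibowitz description: Theorem \ref{leib} together with Proposition \ref{dan3} expresses $L_*\EuScript L^n(\mathbb Z/p^k,q)$ in terms of $L_*\EuScript L^n(\mathbb Z[q]\oplus\mathbb Z[q+1])$ and of the homology of the universal $\mathbb L^n$, all of which are controlled by the free case through naturality of $\epsilon_n$ along the resolution $\mathbb Z[q+1]\to\mathbb Z[q]$. The main obstacle is precisely the low-degree range in the $\mathbb Z[q]$ case: outside the Bousfield range there is no formal isomorphism available, and one must track $\epsilon_n$ concretely on the combinatorial basis of allowable sequences, using Lemma \ref{mloa} and Corollary \ref{corla} to verify that no nontrivial $\mathbb Z/p$-linear combination of basis classes is annihilated.
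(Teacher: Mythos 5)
Your proposal is correct and follows the paper's proof essentially step for step: the same induction on $n$ with base case $n=1$, the explicit basis argument for $C=\mathbb Z[l]$ via Theorems \ref{kan1}, \ref{kan2} and \ref{lieofz} combined with Lemma \ref{mloa} and Corollary \ref{corla}, the reduction through cross-effects and Corollary \ref{creffl} to complexes with torsion-free homology, and the passage to $\mathbb Z/p^k[q]$ via Theorem \ref{leib} and Proposition \ref{dan3} (where the paper's diagram chase hinges on the observation, which you leave implicit, that the isomorphisms of Proposition \ref{dan3} on $_pH_*\mathbb L^n$ are themselves induced by pension maps). The one ingredient the paper makes explicit where you write ``matching the indexing sets under the shift'' is that pension maps commute with suspensions (cited there to \cite{Bou}, 2.3) --- needed because the basis classes $\beta_w$ interleave suspension operators with the composition operations covered by Lemma \ref{mloa} --- and with that fact added, the low-degree bookkeeping you flag as the main obstacle is exactly what the paper disposes of in one line.
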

\begin{proof}
First observe that the statement follows for $n=1$, in this case,
the considered map is an isomorphism. We will proceed by induction
on $n$. Assume that the statement follows for all Lie powers less
than $n$.

Theorems \ref{kan1}, \ref{kan2}, Lemma \ref{mloa} and Corollary
\ref{corla} together with the fact that pension maps commute with
suspensions (see 2.3 \cite{Bou}) imply that the map $\epsilon_n$
is injective for $C=\mathbb Z[l]$ for every $l\geq 0$. Corollary
\ref{creffl} imply that $\epsilon_n$ is injective for all $C$ with
torsion-free homology. The spectral sequence (\ref{lss}) and
Theorem \ref{leib} imply that there is the following diagram
$$
\xyma{_p\pi_i(L\EuScript L^n(\mathbb Z[m]\oplus \mathbb Z[m+1])
\ar@{>->}[r] \ar@{->}[d]^{\epsilon_n} & L_i\EuScript L^n(\mathbb
Z/p^l, m) \ar@{->}[d]^{\epsilon_n}
\ar@{->>}[r] & _pH_i\EuScript L^n(n+1,n;p^l)\ar@{->}[d]^\simeq \\
_p\pi_{i+2n}(L\EuScript L^n(\mathbb Z[m+2]\oplus \mathbb Z[m+3])
\ar@{>->}[r] & L_{i+2n}\EuScript L^n(\mathbb Z/p^l, m+2)
\ar@{->}[r] & _pH_{i+2n}\EuScript L^n(n+1,n;p^l) }
$$
for a prime $p$ and $l\geq 1$. The righthand map is an isomorphism
by Proposition \ref{dan3} (it can be shown straightforwardly that
the maps in Proposition \ref{dan3} are induced by the pension
maps). Now we see that the middle map $\epsilon_n$ is injective
and the needed inductive step follows from Corollary \ref{corla}.
\end{proof}
\section{Prime Lie powers}

\subsection{}
Recall the definition of the graded functor
$$\Gamma_*=\bigoplus_{n\geq 0}\Gamma_n: \sf Ab\to \sf Ab.$$ The
graded abelian group  $\Gamma_\ast(A)$ is generated by symbols
$\gamma_i(x)$ of degree $i\geq 0$ satisfying the following
relations for all $x,y \in A$: \begin{align*} & 1)\ \gamma_0(x) =
1 \\ & 2)\ \gamma_1(x)=x\\ & 3)\
\gamma_s(x)\gamma_t(x)=\binom{s+t}{s}\gamma_{s+t}(x) \\
& 4)\ \gamma_n(x+y)=\sum_{s+t=n}\gamma_s(x)\gamma_t(y),\ n\geq 1\\
& 5)\ \gamma_n(-x)=(-1)^n\gamma_n(x),\ n\geq 1.
\end{align*}

For $n\geq 2$, define the functor $$\widetilde\Gamma_n(A): \sf
Ab\to Ab
$$
by setting
$$
\widetilde\Gamma_n(A):=im\{A\otimes
\Gamma_{n-1}(A)\buildrel{l_n}\over\to \Gamma_n(A)\}
$$
where $l_n$ is the natural map. For example, one has $$\widetilde
\Gamma_2(A)=SP^2(A)$$ and a natural exact sequence
$$
0\to \EuScript L^3(A)\to \Gamma_2(A)\otimes A\to \widetilde
\Gamma_3(A)\to 0
$$
For a prime $p$ and $k\geq 1$, one has the natural exact sequence
$$
0\to \widetilde \Gamma_{p^k}(A)\to \Gamma_{p^k}(A)\to A\otimes
\mathbb Z/p\to 0
$$
This implies that, for a complex $C\in \sf D\sf Ab_{\leq 0}$, one
has a triangle
\begin{equation}\label{tri}
L\widetilde \Gamma_{p^k}(C)\to L\Gamma_{p^k}(C)\to C\lotimes
\mathbb Z/p\to L\widetilde \Gamma_{p^k}(C)[1] \end{equation}

Let $A$ be an abelian group. For $n\geq 1$, let $C_*^n(A)$ be the
complex of abelian groups defined by
$$
C_i^n(A)=\Lambda^i(A)\otimes \Gamma_{n-i}(A),\ 0\leq i\leq n,
$$
where the differentials $d_i: C_i^n(A)\to C_{i-1}^n(A)$ are:
$$
d_i(b_1\wedge \dots \wedge b_i\otimes X)=\sum_{k=1}^i(-1)^k
b_1\wedge \dots \wedge\hat b_k\wedge \dots \wedge b_i\otimes b_kX
$$
for any $X\in \Gamma_{n-i}(A)$.  The complexes  $C^n(A)$ are
called dual de Rham complexes, they were considered in
\cite{Jean}.

\begin{theorem}\label{the1}
For a prime $p$ and $C\in \sf D\sf Ab_{\leq 0},$ there are natural
isomorphisms \begin{equation}\label{isogamma1}
\pi_i(L\Gamma_{p}(C[1]))\simeq \pi_i(L\widetilde
\Gamma_{p}(C[1]))\oplus \pi_i\left(C\lotimes \mathbb Z/p[1]\right)
\end{equation}
for all $i\geq 0$.
\end{theorem}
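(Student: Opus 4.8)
The plan is to apply the triangle (\ref{tri}) with $k=1$ to the shifted complex $C[1]$. Since $(C[1])\lotimes\mathbb Z/p=(C\lotimes\mathbb Z/p)[1]$, this produces a natural triangle
$$
L\widetilde\Gamma_p(C[1])\to L\Gamma_p(C[1])\buildrel{q}\over\to (C\lotimes\mathbb Z/p)[1]\buildrel{\partial}\over\to L\widetilde\Gamma_p(C[1])[1],
$$
so the asserted decomposition (\ref{isogamma1}) is equivalent to producing a natural section $s$ of $q$ on homotopy groups: a section forces $\partial=0$, splits the resulting short exact sequences, and yields the direct sum. Thus the whole problem reduces to constructing one natural map $s$ and checking $q\circ s=\mathrm{id}$.

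The candidate for $s$ is the divided-power Frobenius. The projection $\Gamma_p\to(-)\otimes\mathbb Z/p$ of the defining sequence $0\to\widetilde\Gamma_p\to\Gamma_p\to(-)\otimes\mathbb Z/p\to0$ is exactly $\gamma_p(x)\mapsto x$, so on a class $x\in\pi_i((C\lotimes\mathbb Z/p)[1])$ the element $\gamma_p(x)\in\pi_i(L\Gamma_p(C[1]))$ lifts $x$ under $q$. I would first confirm this on the free case: for $C$ with torsion-free homology there is no $\mathrm{Tor}$-contribution, $\widetilde\Gamma_p$ is homogeneous of degree $p\ge2$ with vanishing linearization, and a degreewise inspection (made transparent by the dual de Rham complex $C^p_\ast$, whose bottom differential $A\otimes\Gamma_{p-1}(A)\to\Gamma_p(A)$ has image $\widetilde\Gamma_p(A)$ and cokernel $A\otimes\mathbb Z/p$, cf. \cite{Jean}) shows that the two summands occupy disjoint degrees, so the additivity defect lands in zero groups and $s=\gamma_p$ is an additive natural section there.

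The crux is that $x\mapsto\gamma_p(x)$ is \emph{not} additive on the nose: $\gamma_p(x+y)-\gamma_p(x)-\gamma_p(y)$ is the decomposable cross-term $\sum_{0<j<p}\gamma_j(x)\gamma_{p-j}(y)$, which lies in $\widetilde\Gamma_p$. So the additivity defect of $s$ is a natural transformation $\delta$ with values in the homotopy of $L\widetilde\Gamma_p(C[1])$, and the section exists precisely when $\delta=0$ and, symmetrically, $\partial=0$. Both reduce to the same vanishing statement: every natural transformation from a functor of polynomial degree $<p$ into $L\widetilde\Gamma_p((-)[1])$ is zero. Here the decisive feature is that $\widetilde\Gamma_p$ is the \emph{decomposable} part of $\Gamma_p$ and carries no indecomposable (``Frobenius'') class --- in contrast with $\Gamma_p$ or $SP^p$, into which the Frobenius $(-)\otimes\mathbb Z/p\to SP^p(-)\otimes\mathbb Z/p$, $x\mapsto x^{p}$, is a genuinely nonzero natural transformation of degree $1\to p$. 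This is the same vanishing principle invoked repeatedly in the paper (a natural transformation from a functor of degree $<n$ into $\otimes^n(-)\otimes\mathbb Z/p$ is zero, and $\widetilde\Gamma_p$ embeds naturally in the decomposable tensors). Having $\delta=\partial=0$, $s=\gamma_p$ is a genuine natural section, and I would finally propagate the identification to all of ${\sf DAb}_{\leq0}$ over a flat resolution of $C$, using the spectral sequence (\ref{fl}) specialized to $m=p$ together with the fact that the Frobenius class is a permanent cycle.

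The hard part is exactly this vanishing of the defect $\delta$ (equivalently of $\partial$) in the presence of torsion, where the degree separation of the free case is lost and where a priori a Frobenius-type obstruction could appear. The work is to show that, because it factors through the decomposable subfunctor $\widetilde\Gamma_p$, the obstruction carries no $\gamma_p$ and therefore dies, and that this is compatible with totalizing over a flat resolution. The supporting technical lemma I would isolate --- that the dual de Rham complex computes $L\Gamma_p$ and $L\widetilde\Gamma_p$ on one-fold shifts (a degeneration of the de Rham/Koszul filtration) --- is what simultaneously pins down the free case and controls the location of the Frobenius class.
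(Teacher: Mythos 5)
Your reduction of (\ref{isogamma1}) to producing a natural section of $q_*:\pi_i(L\Gamma_p(C[1]))\to\pi_i(C\lotimes\mathbb Z/p\,[1])$ is fine, but the construction of the section is where the argument breaks, in two places. First, $x\mapsto\gamma_p(x)$ is not well defined on the relevant homotopy groups. The theorem only has content on the $\mathrm{Tor}$-part of $\pi_i(C\lotimes\mathbb Z/p\,[1])$ (as you note, the torsion-free case is handled by degree separation), and a $\mathrm{Tor}$-class is represented by a chain $z$ with $\partial z=pw\neq 0$, not by an integral cycle; in any chain-level model with divided powers one has $\partial\gamma_p(z)=\gamma_{p-1}(z)\,\partial z=p\,w\gamma_{p-1}(z)$, which is nonzero, so $\gamma_p$ does not even produce candidate classes in the \emph{integral} homotopy $\pi_i(L\Gamma_p(C[1]))$ precisely where a section is needed; on top of this, non-additivity means that even on cycles $\gamma_p$ does not descend to homotopy classes without an argument. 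Second, the vanishing principle you invoke --- every natural transformation from a functor of degree $<p$ into $\pi_*(L\widetilde\Gamma_p((-)[1]))$ is zero --- is not the principle the paper uses and is not available: the paper's principle concerns the very specific target $\otimes^n(A)\otimes\mathbb Z/p$ as a functor on abelian groups, and $\pi_*(L\widetilde\Gamma_p((-)[1]))$ does not embed in such a functor (for non-free $A$ even $\Gamma_p(A)\to\otimes^p(A)$ fails to be injective). Indeed, the theorem itself asserts a \emph{nonzero} natural transformation from the additive functor $\pi_i(C\lotimes\mathbb Z/p\,[1])$ into the homotopy of the degree-$p$ derived functor $L\Gamma_p((-)[1])$, so ``low degree into high degree vanishes'' is simply false at the level of homotopy of derived functors; what distinguishes $\widetilde\Gamma_p$ from $\Gamma_p$ must be computed, not deduced formally.

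The paper's proof supplies exactly the two computations your sketch is missing. Lemma \ref{lem1} gives connectivity bounds for $L\widetilde\Gamma_p$ via exactness of the truncated dual de Rham complex (\ref{exx}) --- this is the one ingredient your proposal correctly anticipates --- and Lemma \ref{lem2} proves the single vanishing that is actually needed, namely that the suspension $\pi_1(L\widetilde\Gamma_p(C))\to\pi_2(L\widetilde\Gamma_p(C[1]))$ is zero, by separate bespoke arguments: for $p=2$ via Dold--Puppe's cross-effect spectral sequence and injectivity of $\Lambda^2(H_0(C))\to H_0(C)\otimes H_0(C)$; for $p=3$ via $\widetilde\Gamma_3=\EuScript L^3$ and $\pi_2(L\EuScript L^3(C[1]))=0$; and for $p>3$ via connectivity. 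The natural section is then built degreewise by induction, not by a Frobenius operation: for each $j$ one replaces $C$ by the functorial truncation $Z_j(C)$, for which $\pi_{j+1}(L\widetilde\Gamma_p(Z_j(C)[1]))=0$ while $\pi_{j+1}(Z_j(C)\lotimes\mathbb Z/p\,[1])\simeq\pi_{j+1}(C\lotimes\mathbb Z/p\,[1])$, so that $q_*$ becomes invertible in degree $j+1$ over $Z_j(C)$, and the section is transported along the natural map $Z_j(C)\to C$ (diagram (\ref{dia2})). This truncation device, which is what makes the splitting natural in the presence of torsion, has no counterpart in your proposal; your closing appeal to the spectral sequence (\ref{fl}) (which is set up for Lie functors, not $\Gamma_p$) and to ``the Frobenius class is a permanent cycle'' does not substitute for it.
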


The $p$-torsion terms $C\lotimes \mathbb Z/p$ in theorem are basic
tools for the general $p$-torsion terms in the derived functors of
Lie and super-Lie functors from the functorial point of view.

\begin{lemma}\label{lem1}
For $C\in \sf DAb_{\leq 0},$ such that $H_0(C)=0,$ one has
$\pi_1(L\widetilde \Gamma_p(C))=0.$  If $H_i(C)=0$ for $i\leq m\
(m\geq 1)$, then
\begin{equation}\label{woc}
\pi_i(L\widetilde \Gamma_p(C))=0,\ \ i\leq m+2
\end{equation}
\end{lemma}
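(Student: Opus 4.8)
The plan is to deduce both assertions from a single, slightly stronger connectivity estimate, which I will call the Main Claim: if $C\in \sf DAb_{\leq 0}$ satisfies $H_i(C)=0$ for $i<s$ (with $s\geq 1$), then $L\widetilde\Gamma_p(C)$ is $(2s-1)$-connected, i.e. $\pi_i(L\widetilde\Gamma_p(C))=0$ for $i\leq 2s-1$. First I would check that this yields the lemma. Taking $s=1$ (the hypothesis $H_0(C)=0$) gives $\pi_i=0$ for $i\leq 1$, hence $\pi_1(L\widetilde\Gamma_p(C))=0$; taking $s=m+1\geq 2$ gives $\pi_i=0$ for $i\leq 2m+1$, and since $m+2\leq 2m+1$ when $m\geq 1$, this is exactly (\ref{woc}). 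The asymmetry between the two statements (why the first controls only $\pi_1$) reflects precisely the jump from $2s-1=1$ at $s=1$ to $2s-1\geq s+1$ at $s\geq 2$.

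The mechanism behind the Main Claim is that $\widetilde\Gamma_p$ consists of \emph{decomposables}, i.e. products of at least two factors, so its derived functor should gain a factor of $2$ in connectivity. To make this precise I would use the multiplication surjection of functors (exact on free abelian groups)
$$0\to R\to (-)\otimes\Gamma_{p-1}\buildrel{l_p}\over\longrightarrow \widetilde\Gamma_p\to 0,$$
where $R=\ker l_p$, together with the resulting triangle $LR(C)\to C\lotimes L\Gamma_{p-1}(C)\to L\widetilde\Gamma_p(C)\to LR(C)[1]$. The basic connectivity bound for reduced functors gives that $L\Gamma_{p-1}(C)$ is $(s-1)$-connected, and the Eilenberg--Zilber isomorphism $\pi_*(X\otimes Y)\simeq \pi_*(X)\lotimes \pi_*(Y)$ for simplicial abelian groups then shows that $C\lotimes L\Gamma_{p-1}(C)$ is $(2s-1)$-connected. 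From the long exact sequence one reads $\pi_i(L\widetilde\Gamma_p(C))\hookrightarrow \pi_{i-1}(LR(C))$ for $i\leq 2s-1$, so it remains to prove that $LR(C)$ is $(2s-2)$-connected.

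For that last point I would identify $R$ using the dual de Rham complex $C^p$: since $\widetilde\Gamma_p=\mathrm{im}(l_p)$ we have $R=\ker(d_1)$, an extension of $H_1(C^p)=L_1\Lambda^p(-,1)$ by the image of the next term $\Lambda^2\otimes\Gamma_{p-2}$, and I would run an induction along the de Rham filtration in which every associated graded piece is a derived tensor product $L\Lambda^i(C)\lotimes L\Gamma_{p-i}(C)$ with $i\geq 2$. The crucial input is that $\Lambda^i$ for $i\geq 2$ has vanishing linearization, so by the Hurewicz/linearization principle $L\Lambda^i(C)$ is already $(2s-1)$-connected; combined with the $(s-1)$-connectivity of the divided-power factors and Künneth, each such piece is at least $(3s-1)$-connected, which gives the required bound on $LR(C)$. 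In the case $H_0(C)=0$ I would alternatively invoke Theorem \ref{the1}, which splits the defining triangle and reduces the vanishing of $\pi_i(L\widetilde\Gamma_p(C))$ to the statement that the Frobenius map $\phi_i\colon \pi_i(L\Gamma_p(C))\to \pi_i\bigl(C\lotimes \mathbb Z/p\bigr)$ is an isomorphism in the relevant range.

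The main obstacle is precisely this linearization input: proving that the low-dimensional homotopy of $L\Gamma_p(C)$ below degree $2s$ is carried entirely by the linear (Frobenius) part $A\otimes \mathbb Z/p$, equivalently that $L\Lambda^p(C)$ is $(2s-1)$-connected and that $L_1\Lambda^p(-,1)$ has no linear part. Everything else is formal bookkeeping with Künneth isomorphisms and long exact sequences. A convenient way to isolate the difficulty is to reduce, by a Postnikov/cross-effect argument and a free resolution, to the case $C=A[s]$ with $A$ free, where the assertion becomes the computation (accessible through décalage and the results of Section \ref{section4}) that $\Gamma_p(K(A,s))$ carries no homotopy strictly between degrees $s$ and $2s$ apart from the Frobenius class in degree $s$; one checks the vanishing of the linear part already on $A=\mathbb Z$, where $H_1(C^p(\mathbb Z))=\ker(\,p\colon\mathbb Z\to\mathbb Z\,)=0$.
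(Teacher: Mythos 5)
Your reduction of the lemma to the Main Claim is arithmetically fine, and the K\"unneth estimate showing that $C\lotimes L\Gamma_{p-1}(C)$ is $(2s-1)$-connected is correct, but the Main Claim itself --- and the ``vanishing linearization implies doubled connectivity'' principle you invoke to prove it --- is false over $\mathbb Z$. Connectivity of derived polynomial functors grows \emph{additively}, not multiplicatively: Dold--Puppe (quoted in (\ref{dpuppe})) gives $\pi_i(LSP^2(C))=0$ only for $i<k+2$ when $H_i(C)=0$ for $i<k$, and this bound is sharp. Concretely, the Remark in Section \ref{section4} records $L_3\Lambda^2(\mathbb Z,2)=\mathbb Z/2$, so for $C=\mathbb Z[2]$ (where $s=2$) the object $L\Lambda^2(C)$ is not $(2s-1)=3$-connected; this already kills your key input that $L\Lambda^i(C)$ is $(2s-1)$-connected for $i\geq 2$. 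The Main Claim fails with it: by d\'ecalage $LSP^2(\mathbb Z[3])\simeq L\Gamma_2(\mathbb Z[1])[4]$ and $\pi_1(L\Gamma_2(\mathbb Z,1))=\pi_3(L\Lambda^2(\mathbb Z,2))=\mathbb Z/2$, so $\pi_5(L\widetilde\Gamma_2(\mathbb Z[3]))=\mathbb Z/2\neq 0$ although $2s-1=5$; and for odd $p$, $\widetilde\Gamma_3=\EuScript L^3$ while Theorem \ref{lieofz} gives $L_{2n+3}\EuScript L^3(\mathbb Z,2n)=\mathbb Z/3$ (the class $\lambda_1$), which for $s=2n\geq 4$ lies inside your claimed vanishing range $i\leq 4n-1$. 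For the same reason your closing assertion --- that $\Gamma_p(K(A,s))$ carries no homotopy strictly between $s$ and $2s$ beyond the Frobenius class --- is false: $L_5\Gamma_2(\mathbb Z,3)=\mathbb Z/2$. These classes are precisely the Frobenius-twist phenomena ($A\otimes\mathbb Z/p$ in degrees $s+2,s+4,\dots$) that this paper is devoted to; they are linear-functor-valued yet produced by homogeneous degree-$p$ functors, so no linearization or cross-effect argument can exclude them, and checking $A=\mathbb Z$ does not help since they are already nonzero there.

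The lemma's bound $i\leq m+2$ is the sharp additive one, and the paper's proof reaches it with no doubled connectivity: for $p=2$ it quotes Dold--Puppe Satz 12.1 directly, and for odd $p$ it uses the acyclicity in positive degrees of the dual de Rham complex (Franjou, Jean) to obtain the truncated exact resolution (\ref{exx}) of $\widetilde\Gamma_p$ by the functors $\Lambda^n\otimes\Gamma_{p-n}$. In the resulting hyperhomology spectral sequence the term $L\Lambda^n(C)\lotimes L\Gamma_{p-n}(C)$ enters with homological shift $n-1$, so plain $m$-connectivity of each factor together with K\"unneth already forces $\pi_i(L\widetilde\Gamma_p(C))=0$ for $i\leq m+2$: the shift supplied by the resolution does the work your doubled connectivity was meant to do. Your decomposition via $R=\ker l_p$ and the de Rham filtration is essentially the same device, and if you replace the false $(2s-1)$-bounds on the exterior-power factors by $m$-connectivity plus these shifts, your argument repairs to a proof of the lemma --- but of nothing stronger, since the Main Claim is irreparably false from $s=3$ on.
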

\begin{proof}
For $p=2$ this follows from (Satz 12.1 \cite{DoldPuppe}). Since,
for a free abelian group $A$, $H_iC^p(A)=0,\ i>0$ (see
\cite{Franjou}, \cite{Jean}), there is a natural exact sequence (a
truncated part of the dual de Rham complex)
\begin{equation}\label{exx}
0\to \Lambda^p(A)\to \Lambda^{p-1}(A)\otimes A\to\dots\to
\Lambda^2(A)\otimes \Gamma_{p-1}(A)\to \widetilde\Gamma_p(A)\to 0
\end{equation} Observe that $\pi_1(L\Lambda^n(C))=0,\ n\geq 2,\
\pi_1\left(C\lotimes L\Gamma_{p-1}(C)\right)=0$ for $C\in \sf
DAb_{\leq 0},$ such that $H_0(C)=0$, and the result follows from
the exactness of the sequence (\ref{exx}). In the same way one can
get (\ref{woc}) starting with a complex $C$ with $H_i(C)=0,\ i\leq
m$, just observing that, for $n\geq 2,$ $\pi_i(L\Lambda^n(C))=0,\
i<m+1$.
\end{proof}

\begin{lemma}\label{lem2}
For every prime $p$ and $C\in \sf DAb_{\leq 0},$ the suspension
homomorphism
$$
\pi_1(L\widetilde \Gamma_{p}(C))\to \pi_2(L\widetilde
\Gamma_{p}(C[1]))
$$
is the zero map.
\end{lemma}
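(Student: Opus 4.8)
The plan is to compare the suspension for $\widetilde\Gamma_p$ with the suspensions for $\Gamma_p$ and for the linear functor $-\otimes\mathbb Z/p$ through the natural triangle (\ref{tri}), and then to exploit that $\widetilde\Gamma_p=\ker(\Gamma_p\to(-)\otimes\mathbb Z/p)$ has trivial linear part.

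First I would apply suspension to the triangle (\ref{tri}). Since (\ref{tri}) is natural in $C$ and the suspension $\sigma$ is induced by the functorial passage $C\mapsto C[1]$, this yields a morphism from the long exact homotopy sequence of (\ref{tri}) for $C$ to that for $C[1]$, with homotopy degrees raised by one. For the additive functor $-\otimes\mathbb Z/p$ the suspension $\pi_k(C\lotimes\mathbb Z/p)\to\pi_{k+1}(C[1]\lotimes\mathbb Z/p)$ is the tautological isomorphism. Now $C\in{\sf DAb}_{\leq 0}$ forces $H_1(C)=H_2(C)=0$, so by the universal coefficient formula $\pi_2(C\lotimes\mathbb Z/p)\cong H_2(C)\otimes\mathbb Z/p\oplus\mathrm{Tor}(H_1(C),\mathbb Z/p)=0$. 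Hence the connecting map $\partial\colon\pi_2(C\lotimes\mathbb Z/p)\to\pi_1(L\widetilde\Gamma_p(C))$ is zero, so $\alpha_1\colon\pi_1(L\widetilde\Gamma_p(C))\to\pi_1(L\Gamma_p(C))$ is injective with image $\ker(\beta_1)$, where $\beta_1$ denotes the map to $\pi_1(C\lotimes\mathbb Z/p)$. Because $\partial$ commutes with suspension and the linear suspension is an isomorphism, the image of the next connecting map $\partial'\colon\pi_3(C[1]\lotimes\mathbb Z/p)\to\pi_2(L\widetilde\Gamma_p(C[1]))$ equals $\sigma(\mathrm{im}\,\partial)=0$; thus $\alpha_2'\colon\pi_2(L\widetilde\Gamma_p(C[1]))\to\pi_2(L\Gamma_p(C[1]))$ is injective as well. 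By commutativity of the square $\alpha_2'\circ\sigma=\sigma_\Gamma\circ\alpha_1$, the vanishing of the suspension for $\widetilde\Gamma_p$ is therefore \emph{equivalent} to the statement that the suspension $\sigma_\Gamma$ of the full divided power functor annihilates $\ker(\beta_1)=\mathrm{im}(\alpha_1)$.

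This reduces the problem to showing that $\sigma_\Gamma$ is detected on its linear quotient, i.e. that it factors through $\beta_1$. Here I would use the dual de Rham resolution (\ref{exx}) of $\widetilde\Gamma_p$ together with the connectivity estimate of Lemma \ref{lem1}: since $\widetilde\Gamma_p$ raises connectivity by $2$ (and not merely by $1$, as a linear functor would), a class in $\pi_1(L\widetilde\Gamma_p(C))$ sits below the range in which the single suspension can be nonzero, so that after suspension it should be carried entirely by the linear summand $\Gamma_p\to(-)\otimes\mathbb Z/p$, on which classes originating from $\widetilde\Gamma_p$ vanish. Concretely, I would realise $\sigma$ on the low-dimensional simplicial models of the type (\ref{lowdeg0}) for $L\Gamma_p(C)$ and $L\Gamma_p(C[1])$ and check there that the contribution of the truncated complex $\Lambda^2\otimes\Gamma_{p-1}\to\cdots\to\Lambda^p$ suspends trivially in the range $1\to 2$.

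The main obstacle is precisely this last step: controlling the single suspension of the non-additive functor $\widetilde\Gamma_p$ in the unstable range. Unlike the pension (double) suspension maps $\epsilon_r$, for which Bousfield's theorem gives an isomorphism in a stable range, and unlike the James-type splitting available for functors of spaces, the single suspension here has no formal reason to vanish and does not respect the homogeneous decomposition of $\Gamma_p$ — a degree-$p$ functor shifts its cross-effects by more than one homotopy degree, so na\"ive degree-counting does not apply. The argument must genuinely use the specific structure encoded in Lemma \ref{lem1}, namely that the bottom of the de Rham resolution is quadratic and produces the $+2$ connectivity gain, in order to trap the suspended class in the vanishing linear part.
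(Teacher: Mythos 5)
There is a genuine gap, in two places. First, your key vanishing claim is false: in this paper $\sf DAb_{\leq 0}$ denotes complexes whose homology sits in arbitrary non-negative homological degrees, not complexes with homology concentrated in degree $0$. Compare the hypothesis ``$H_i(C)=0$ for $i\leq m$'' $(m\geq 1)$ in Lemma \ref{lem1} and the truncations $Z_j(C)$ in the proof of Theorem \ref{the1}, both of which would be vacuous under your reading. So $H_1(C)$ and $H_2(C)$ need not vanish (take $C=\mathbb Z[2]$), hence $\pi_2(C\lotimes \mathbb Z/p)\neq 0$ in general, the connecting maps $\partial$ and $\partial'$ need not vanish, and both of your injectivity claims ($\alpha_1$ and $\alpha_2'$) collapse. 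Worse, since $\partial'=\sigma_{\widetilde\Gamma}\circ\partial\circ\sigma_{\mathrm{lin}}^{-1}$, the vanishing of $\partial'$ that you need is itself a consequence of the lemma being proved, so this route is circular once the spurious connectivity input is removed. Second, and decisively: even granting your reduction, the essential step is left unproved. Your reduction ``$\sigma$ vanishes iff $\sigma_\Gamma$ annihilates $\mathrm{im}(\alpha_1)$'' is essentially a restatement of the lemma, and your final paragraph is a statement of intent (``trap the suspended class in the vanishing linear part'') rather than an argument — you say so yourself. Nothing has actually been established.

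For comparison, the paper's proof avoids analyzing the suspension map at all when $p\geq 3$, by showing the \emph{target} group is zero: for $p=3$ one has $\widetilde\Gamma_3=\EuScript L^3$ and $\pi_2(L\EuScript L^3(C[1]))=0$; for $p>3$, the vanishing $\pi_2\left(C[1]\lotimes L\Gamma_{p-1}(C[1])\right)=0$ yields $\pi_2(L\widetilde\Gamma_p(C[1]))\simeq \pi_1(LE^p(C[1]))$, which vanishes by exactness of the dual de Rham complex together with $L_i\Lambda^n(C[1])=0$ for $i\leq n$. Only $p=2$ requires looking at the map itself: there it is identified, via the diagram (\ref{diagm}), with the natural transformation $L_1SP^2(H_0(C))\to \Lambda^2(H_0(C))$, which is zero because in the Dold--Puppe cross-effect exact sequence it is immediately followed by the injection $\Lambda^2(H_0(C))\to H_0(C)\otimes H_0(C)$ (alternatively, Corollary 6.6 of \cite{DoldPuppe}). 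Your instinct that the resolution (\ref{exx}) and the connectivity gain of Lemma \ref{lem1} are the right tools is sound for $p>3$, but they must be used to kill $\pi_2$ of the target, not to factor the suspension through the linear quotient; and the case $p=2$, which is the only case where the map is nontrivially analyzed, is entirely missing from your proposal.
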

\begin{proof}
First consider the case $p=2$. We have the following natural
diagram
\begin{equation}\label{diagm}
\xyma{\pi_1(LSP^2(C))\ar@{->}[r]^\simeq \ar@{->}[d]^{\text{susp}}
& L_1SP^2(H_0(C))\ar@{->}[d] \\ \pi_2(LSP^2(C[1]))
\ar@{->}[r]^\simeq & \Lambda^2(H_0(C)) } \end{equation} The
right-hand vertical map is zero by (Corollary 6.6,
\cite{DoldPuppe}). Another way to see why this map is trivial is
to write the cross-effect spectral sequence for
$\pi_*(LSP^2(C[1])$ from \cite{DoldPuppe}. The first page of this
spectral sequence implies that there is an exact sequence
\begin{multline*}
0\to L_1\Lambda^2(H_0(C))\to Tor(H_0(C),H_0(C))\to L_1SP^2(H_0(C))\to\\
\Lambda^2(H_0(C))\to H_0(C)\otimes H_0(C)\to SP^2(H_0(C))\to 0
\end{multline*} where the middle map is exactly the map from
(\ref{diagm}) and it is zero map since the natural transformation
$\Lambda^2(H_0(C))\to H_0(C)\otimes H_0(C)$ is injective.

For $p=3$, $\widetilde \Gamma_3=\EuScript L^3$ (see
\cite{BreenMikhailov}) and one has
$\pi_2(L\widetilde\Gamma_3(C[1]))=0$. Now consider the case $p>3$.
In this case,
$$
\pi_2\left(C[1]\lotimes L\Gamma_{p-1}(C[1])\right)=0,
$$
hence
$$
\pi_2(L\widetilde\Gamma_p(C[1]))\simeq \pi_1(LE^p(C[1])).
$$
Since $H_i(C^p(A))=0,\ i>0$ for every free abelian group $A$, and
$L_i\Lambda^n(C[1])=0,\ i\leq n,$ we conclude that
$$
\pi_1(LE^p(C[1]))=0
$$
and hence the result.
\end{proof}

\vspace{.5cm}\noindent{\it Proof of theorem \ref{the1}} The proof
is by induction on $i$. Lemma \ref{lem1} implies that there is a
natural isomorphism
$$
\pi_1(L\Gamma_{p}(C[1]))\simeq \pi_1(C\lotimes \mathbb Z/p[1])
$$
which is induced by the map $L\Gamma_{p}(C[1])\to C\lotimes
\mathbb Z/p[1]$ from (\ref{tri}).

Consider separately the case $i=2$. The needed statement follows
from the suspension diagram
$$
\xyma{\pi_3\left(C\lotimes\mathbb Z/p[1]\right)\ar@{=}[d]
\ar@{->}[r] & \pi_2(L\widetilde \Gamma_{p^k}(C[1]))\ar@{->}[r] &
\pi_2(L\Gamma_{p^k}(C[1]))
\ar@{->>}[r] & \pi_2\left(C\lotimes \mathbb Z/p[1]\right) \ar@{=}[d]\\
\pi_2\left(C\lotimes \mathbb Z/p\right) \ar@{->}[r] &
\pi_1(L\widetilde \Gamma_{p^k}(C))\ar@{->}[u]^0 \ar@{->}[r] &
\pi_1(L\Gamma_{p}(C)) \ar@{->}[u] \ar@{->>}[r] &
\pi_1\left(C\lotimes \mathbb Z/p\right)\ar@{->}[lu]}
$$
where the left hand vertical homomorphism is zero by lemma
\ref{lem2}.

Now assume by induction, that for all $i\leq j\ (\text{for some}\
j\geq 2)$, there is a natural isomorphism (\ref{isogamma1}), which
is induced by (\ref{tri}). Presenting the complex $C$ as $\dots
\to C_i\buildrel{\partial_i}\over\to C_{i+1}\to \dots$, consider
the subcomplex $Z_j(C)$ of $C$ defined as
\begin{align*}
& (Z_j(C))_i=C_i,\ i\geq j-1,\\ & (Z_j(C))_{j-2}=im(\partial_{i-1}),\\
& (Z_j(C))_i=0,\ i<j-2
\end{align*}
The complex $Z_j(C)$ has the following properties:\\ \\
1) the natural map $Z_j(C)\to C$ induces isomorphisms
$$
\pi_i\left(Z_j(C)\lotimes \mathbb Z/p\right)\simeq
\pi_i\left(C\lotimes \mathbb Z/p\right),\ i\geq j;
$$
2) $H_i(Z_j(C))=0,\ i\leq j-2.$

\vspace{.25cm}

Consider the natural diagram
\begin{equation}\label{dia2}{\footnotesize
\xyma{\pi_{j+2}\left(C\lotimes \mathbb
Z/p[1]\right)\ar@{->}[r]\ar@{=}[d] & \pi_{j+1}(L\widetilde
\Gamma_{p}(C[1]))\ar@{->}[r] & \pi_{j+1}(L\Gamma_{p}(C[1]))
\ar@{->>}[r] & \pi_{j+1}\left(C\lotimes \mathbb Z/p[1]\right)\\
\pi_{j+2}\left(Z_j(C)\lotimes \mathbb Z/p[1]\right) \ar@{->}[r] &
\pi_{j+1}(L\widetilde \Gamma_{p}(Z_j(C)[1]))\ar@{->}[r]
\ar@{->}[u]& \pi_{j+1}(L\Gamma_{p}(Z_j(C)[1]))
\ar@{->>}[r]\ar@{->}[u] & \pi_{j+1}\left(Z_j(C)\lotimes \mathbb
Z/p[1]\right)\ar@{=}[u]}}
\end{equation}
Lemma \ref{lem1} implies that
$\pi_{j+1}(L\widetilde\Gamma_{p}(Z_j(C)[1]))=0$. The needed
splitting now follows from diagram (\ref{dia2}). The inductive
step is complete and the splitting (\ref{isogamma1}) proved for
all $i$.\ $\Box$

\begin{prop}
The sequence
$$
LSP^2(C[1])\to L\Gamma_2(C[1])\to C\lotimes \mathbb Z/2[1]
$$
does not split in the category $\sf DAb_{\leq 0}$.
\end{prop}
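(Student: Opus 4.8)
The first step is to fix the meaning of non-splitting. Since $\sf DAb_{\leq 0}$ is the derived category of the hereditary ring $\mathbb Z$, every object is formal, and a distinguished triangle $X\to Y\to Z\to X[1]$ with connecting morphism $\delta$ is split exactly when $\delta=0$; moreover $\hom_{\sf DAb}(Z,X[1])$ is computed degreewise and decomposes into $\hom$-terms $\hom(\pi_iZ,\pi_{i-1}X)$ and $\mathrm{Ext}^1$-terms $\mathrm{Ext}^1(\pi_iZ,\pi_iX)$. The $\hom$-terms are precisely the connecting homomorphisms of the long exact homotopy sequence, and Theorem \ref{the1} forces them to vanish: it gives $\pi_iL\Gamma_2(C[1])\simeq\pi_iLSP^2(C[1])\oplus\pi_i(C\lotimes\mathbb Z/2[1])$, so the long exact sequence breaks into short exact sequences $0\to\pi_iLSP^2(C[1])\to\pi_iL\Gamma_2(C[1])\to\pi_i(C\lotimes\mathbb Z/2[1])\to 0$. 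Thus the whole obstruction is carried by the $\mathrm{Ext}^1$-components, i.e. by these hidden extensions. For each individual finitely generated $C$ these extensions vanish (Theorem \ref{the1} fixes the isomorphism type of the middle term, and a short exact sequence of finitely generated abelian groups whose middle is the direct sum of its ends is pure, hence splits). Consequently the content of the Proposition is that these splittings cannot be chosen compatibly in $C$: there is no natural section of $L\Gamma_2(-[1])\to(-\lotimes\mathbb Z/2)[1]$ as functors on $\sf DAb_{\leq 0}$.

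The plan is then to argue by contradiction from a natural section $s$. I would compare the shifted sequence with the un-shifted triangle (\ref{tri}) for $p=2,\ k=1$, namely $L\widetilde\Gamma_2(C)\to L\Gamma_2(C)\to C\lotimes\mathbb Z/2$, whose underlying functorial extension $0\to SP^2(A)\to\Gamma_2(A)\to A\otimes\mathbb Z/2\to 0$ is genuinely non-split; indeed already at $A=\mathbb Z/2$ it is the non-trivial extension $0\to\mathbb Z/2\to\mathbb Z/4\to\mathbb Z/2\to 0$ realizing the generator of $\mathrm{Ext}^1(\mathbb Z/2,\mathbb Z/2)$, since $\Gamma_2(\mathbb Z/2)=\mathbb Z/4$. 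The natural pension/suspension maps $\epsilon$ of the Semi-d\'ecalage section, together with the d\'ecalage $L\Lambda^2(C[1])\simeq L\Gamma_2(C)[2]$, relate the two levels; feeding them into the naturality square for $s$ should force a natural splitting of a suspension of the $\mathbb Z/4$-extension, which is impossible. The degree argument already used for the sequence (\ref{mzq1}) — that a functor of weight $<2$ admits no non-zero natural transformation onto the top weight-$2$ summand — is the template I would use to convert this into a contradiction.

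The main obstacle is exactly that Theorem \ref{the1} renders the obstruction invisible on homotopy groups: no single homotopy computation can detect the failure of splitting, so the entire argument must be carried out at the level of natural transformations and secondary operations. The sharp point is a suspension (non-)vanishing statement dual to Lemma \ref{lem2}: that lemma shows the relevant suspension is zero on $L\widetilde\Gamma_2$, and it is the discrepancy between this vanishing and the non-vanishing of the integral class governing $\Gamma_2(\mathbb Z/2)=\mathbb Z/4$ that should prevent a coherent natural choice of splittings. Establishing that this discrepancy genuinely obstructs naturality — rather than merely reflecting a change of basis in each degree — is the crux, and it requires analysing the connecting map of (\ref{tri}) as a natural transformation, not merely object by object.
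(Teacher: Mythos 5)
Your diagnosis is correct as far as it goes: by Theorem \ref{the1} the homotopy groups split naturally, objects of $\sf DAb_{\leq 0}$ are formal, so the splitting exists object by object and the Proposition is genuinely a statement about natural splittings; and the ``template'' you cite at the end --- no nonzero natural transformation from a linear functor into $\otimes^2(-)\otimes \mathbb Z/2$, as used for (\ref{mzq1}) --- is exactly the weapon the paper uses. But your proposed route never reaches a contradiction, and you concede this yourself: the step ``feeding them into the naturality square for $s$ should force a natural splitting of a suspension of the $\mathbb Z/4$-extension'' is precisely what is not established, and it is doubtful it can be, because suspension destroys the very extension you want to exploit. The nonsplit sequence $0\to SP^2(A)\to \Gamma_2(A)\to A\otimes\mathbb Z/2\to 0$ with its $\mathbb Z/4$ at $A=\mathbb Z/2$ lives at the unshifted level, whereas after the shift the homotopy-level extension vanishes (Theorem \ref{the1}, and Lemma \ref{lem2} is the mechanism); a natural section of the shifted sequence therefore imposes no constraint that the comparison maps of the Semi-d\'ecalage section could transport back down. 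As written, this is a plan with an acknowledged hole at its crux, not a proof.

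The missing idea --- and the place where your guiding claim is simply wrong --- is that a single homotopy computation \emph{does} detect the failure, provided you first change coefficients. Since $-\lotimes\mathbb Z/2$ is a functor on $\sf DAb_{\leq 0}$, a natural splitting $L\Gamma_2(C[1])\simeq LSP^2(C[1])\oplus C\lotimes\mathbb Z/2[1]$ would yield, naturally in a free abelian group $C$,
$$
\pi_2\left(L\Gamma_2(C[1])\lotimes\mathbb Z/2\right)\simeq \Lambda^2(C)\otimes\mathbb Z/2\ \oplus\ C\otimes\mathbb Z/2,
$$
with $C\otimes\mathbb Z/2$ a natural direct summand. On the other hand, $L\Gamma_2(C[1])\lotimes\mathbb Z/2$ is represented by the explicit complex $(C\otimes C\otimes\mathbb Z/2\to\Gamma_2(C)\otimes\mathbb Z/2)[1]$, so this $\pi_2$ is the kernel of the right-hand map, hence naturally a subfunctor of $\otimes^2(C)\otimes\mathbb Z/2$; the degree argument you already quoted then kills any natural map $C\otimes\mathbb Z/2\to \otimes^2(C)\otimes\mathbb Z/2$, a contradiction. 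That two-step computation (already for $C$ free) is the paper's entire proof: derived mod-$2$ homotopy makes the ``hidden extension'' visible as an ordinary natural-subfunctor problem, with no need for pension maps, the triangle (\ref{tri}), or secondary operations.
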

\begin{proof}
We will prove the statement for the simplest case, when $C$ is a
free abelian group. Suppose that $L\Gamma_2(C[1])\simeq
LSP^2(C[1])\oplus C\lotimes \mathbb Z/2[1]$. Then
\begin{multline*}
\pi_2\left(L\Gamma_2(C[1])\lotimes \mathbb Z/2\right)\simeq
\pi_2\left(LSP^2(C[1])\lotimes \mathbb Z/2\oplus C\lotimes\mathbb
Z/2\lotimes \mathbb Z/2[1]\right)\simeq\\ \Lambda^2(C)\otimes
\mathbb Z/2\oplus C\otimes \mathbb Z/2
\end{multline*}
However, $L\Gamma_2(C[1])$ can be presented as complex
$$
(C\otimes C\otimes \mathbb Z/2\to \Gamma_2(C)\otimes\mathbb
Z/2)[1]
$$
in the category $\sf DAb_{\leq 0}$ and
$$
\pi_2\left(L\Gamma_2(C[1])\lotimes \mathbb
Z/2\right)=\ker\{C\otimes C\otimes \mathbb Z/2\to
\Gamma_2(C)\otimes\mathbb Z/2\}.
$$
Any natural transformation $C\otimes \mathbb Z/2\to C\otimes
C\otimes \mathbb Z/2$ is zero. Hence, the assumed splitting  not
possible.
\end{proof}

For every $m\geq 2,$ there is the following d\'ecalage isomorphism
in the derived category
$$
L\Gamma_m(C)[2m]\simeq L\Lambda^m(C[1])[m]\simeq LSP^m(C[2])
$$
which implies that the isomorphisms (\ref{isogamma1}) can be
written in the following form. For a prime $p$ and $C\in \sf D\sf
Ab_{\leq 0},$ there are natural isomorphisms
\begin{equation}\label{isosp1} \pi_i(L\Lambda^{p}(C[2]))\simeq
\pi_i(L\widetilde \Gamma_{p}(C[1])[p])\oplus \pi_i\left(C\lotimes
\mathbb Z/p[p+1]\right)
\end{equation}
for all $i\geq 0$.

Recall that, for a free abelian group $A$, there is the following
long exact sequence which is called Koszul complex:
\begin{equation}\label{kosz1}
0\to \Lambda^p(A)\to \Lambda^{p-1}(A)\otimes A\to \dots \to
A\otimes SP^{p-1}(A)\to SP^p(A)\to 0
\end{equation}
For $k=1,\dots, p-1$, define the functor
$$
V_{p,k}: \sf Ab\to Ab
$$
as a kernel of a map in the Koszul complex
$$
V_{p,k}(A)=ker\{\Lambda^{p-k}(A)\otimes SP^{k}(A)\to
\Lambda^{p-k+1}(A)\otimes SP^{k+1}\}.
$$
Clearly, $V_{p,p-1}(A)=J_p(A),\ V_{p,1}(A)=\Lambda^p(A)$.
\begin{lemma}\label{ele}
For $k=1,\dots, p-1$, and $C\in \sf DAb_{\leq 0},$ there are
natural splitting monomorphisms
\begin{equation}\label{nmono}\pi_i\left(C\lotimes \mathbb Z/p\
[p+k]\right)\hookrightarrow \pi_i(LV_{p,k}(C[2])))
\end{equation}
for all $i\geq 0$.
\end{lemma}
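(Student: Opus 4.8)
The plan is to induct on $k$, climbing the Koszul complex (\ref{kosz1}) from $V_{p,1}=\Lambda^p$ up to $V_{p,p-1}=J_p$. The base case $k=1$ is already in hand: since $V_{p,1}=\Lambda^p$, the asserted splitting monomorphism is exactly the inclusion of the second summand in the natural splitting (\ref{isosp1}) furnished by Theorem \ref{the1}. For the inductive step, exactness of (\ref{kosz1}) on free abelian groups gives, for each $k$, a short exact sequence of functors $0\to V_{p,k}\to \Lambda^{p-k}\otimes SP^{k}\to V_{p,k+1}\to 0$ with all terms free abelian on free $A$; applying $L(-)(C[2])$ produces a triangle $LV_{p,k}(C[2])\to L(\Lambda^{p-k}\otimes SP^{k})(C[2])\to LV_{p,k+1}(C[2])\to LV_{p,k}(C[2])[1]$ in $\sf DAb$, hence a long exact sequence on homotopy.

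Next I would analyze the middle term. By Eilenberg--Zilber one has $L(\Lambda^{p-k}\otimes SP^{k})(C[2])\simeq L\Lambda^{p-k}(C[2])\lotimes LSP^{k}(C[2])$. Both factors are derived functors of homogeneous functors of degree strictly below $p$ (namely $p-k$ and $k$, both in $\{1,\dots,p-1\}$), so their homotopy carries no $p$-torsion when $C$ has torsion-free homology; by the K\"unneth formula the derived tensor product is then $p$-torsion-free as well. Consequently the natural map $\pi_\ast LV_{p,k}(C[2])\to \pi_\ast L(\Lambda^{p-k}\otimes SP^{k})(C[2])$ annihilates the $p$-torsion summand $\pi_\ast(C\lotimes \mathbb Z/p[p+k])$ provided by the inductive hypothesis, simply because any homomorphism from a $\mathbb Z/p$-module into a $p$-torsion-free group is zero. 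Exactness then places this summand in the image of the connecting map $\partial$, so it lifts to $\pi_\ast LV_{p,k+1}(C[2])$; the degree shift moves it into $[p+k+1]$, matching the claimed term $\pi_i(C\lotimes \mathbb Z/p[p+k+1])$, and $\partial$ restricts to an isomorphism back onto the $[p+k]$-summand.

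The remaining point, which I expect to be the main obstacle, is to upgrade this lift to a \emph{split} monomorphism and to verify naturality. Here I would invoke the polynomial-degree principle used throughout the paper (as in the vanishing of natural transformations into $\otimes^{n}(A)\otimes\mathbb Z/p$ from functors of degree $<n$): the summand $C\otimes\mathbb Z/p$ is a linear, degree-one functor, whereas the complementary part of $LV_{p,k+1}(C[2])$ in this homotopy degree is built from homogeneous pieces of degree $p\ge 2$, so no nonzero natural transformation can mix the two. This simultaneously forces $\partial$ to be a natural isomorphism on the $\mathbb Z/p$-summand and produces a natural retraction, giving the natural splitting monomorphism $\pi_i(C\lotimes \mathbb Z/p[p+k+1])\hookrightarrow \pi_i(LV_{p,k+1}(C[2]))$.

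Finally I would drop the torsion-free hypothesis on $C$. Since every object of $\sf DAb_{\leq 0}$ is a sum of shifts $\mathbb Z[l]$ and all the constructions above are natural, the splitting for arbitrary $C$ follows by the reduction already exploited in Propositions \ref{tfree} and \ref{wert1}: confirm the statement on the $\mathbb Z[l]$, where it rests on the base case together with the explicit computation of $L_\ast\EuScript L^{p}(\mathbb Z,-)$, and then propagate it through the cross-effect decomposition. Throughout, it is the splitting rather than the bare monomorphism that is delicate, and it is precisely the degree-one-versus-degree-$p$ separation that secures it.
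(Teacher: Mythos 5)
Your skeleton --- induction on $k$ along the Koszul sequence (\ref{kosz1}), base case $k=1$ read off from (\ref{isosp1}), and the triangle coming from $0\to V_{p,k}\to \Lambda^{p-k}\otimes SP^{k}\to V_{p,k+1}\to 0$ --- is exactly the paper's, but the engine of your inductive step is different and has a genuine gap. The paper never invokes $p$-torsion-freeness of the middle term; it uses a \emph{connectivity} estimate: if $H_i(C)=0$ for $i\leq m$, then $\pi_i\bigl(L\Lambda^{p-k}(C[2])\lotimes LSP^{k}(C[2])\bigr)=0$ for $i<2m+2p-k+2$, so in a range depending on the connectivity of $C$ the connecting map $\pi_{i+1}(LV_{p,k+1}(C[2]))\to \pi_i(LV_{p,k}(C[2]))$ is a natural \emph{isomorphism}, for arbitrary $C\in {\sf DAb_{\leq 0}}$, with no torsion-freeness hypothesis. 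Your torsion-freeness claim (itself plausible but asserted without proof) only puts the $\mathbb Z/p$-summand in the image of $\partial$: it controls neither the kernel of $\partial$ nor the ambiguity of the lift, which is well-defined only up to the image of $\pi_{i+1}$ of the middle term and need not itself be $p$-torsion. So at this point you have surjectivity of $\partial$ onto the summand, not a monomorphism into $\pi_{i+1}(LV_{p,k+1}(C[2]))$, let alone a natural split one.

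The step you yourself flag as the main obstacle then fails as you propose to resolve it, in two ways. First, the degree-separation principle in this paper runs in the \emph{opposite} direction: it shows that natural transformations from functors of degree $<n$ into subfunctors of $\otimes^n(-)\otimes\mathbb Z/p$ vanish, and the hierarchies $\EuScript N^{k;p}$ (e.g. the sequence $0\to \EuScript L^p(A)\otimes\mathbb Z/p\to \EuScript N^{p;p}(A)\to A\otimes\mathbb Z/p\to 0$) are precisely extensions with a linear quotient of a degree-$p$ functor that split abstractly but \emph{not} naturally --- the absence of ``mixing'' transformations obstructs a natural retraction rather than producing one. Second, your closing reduction to direct sums of $\mathbb Z[l]$ propagated through cross-effects is the mechanism of Propositions \ref{tfree}, \ref{wert1} and Theorem \ref{abstractiso}, which is explicitly \emph{unnatural}, so it cannot establish the naturality asserted in Lemma \ref{ele}. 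The missing idea is the paper's truncation device from the proof of Theorem \ref{the1}: for each fixed $j$ one forms the subcomplex $Z_j(C)\subseteq C$ with $H_i(Z_j(C))=0$ for $i\leq j-2$, for which the natural map $Z_j(C)\to C$ induces an isomorphism on $\pi_j(-\lotimes\mathbb Z/p)$; the range isomorphism applied to the highly connected $Z_j(C)$ transports the inductive splitting monomorphism, together with its natural retraction, along $Z_j(C)\to C$ to give the natural splitting monomorphism for $C$ in degree $j+1$, degree by degree and for all of ${\sf DAb_{\leq 0}}$.
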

\begin{proof}
For $k=1$ the needed splitting monomorphisms are given by
(\ref{isosp1}). Assume that we have splitting monomorphisms
(\ref{isosp1}) for a fixed $k<p-1$. Since the sequence
(\ref{kosz1}) is exact, for $k=1,\dots, p-1$ and a free abelian
$A$, there is a natural short exact sequence
$$
0\to V_{p,k}(A)\to \Lambda^{p-k}(A)\otimes SP^k(A)\to
V_{p,k+1}(A)\to 0
$$
Observe that, for $C\in \sf DAb_{\leq 0},$ such that $H_i(C)=0,\
i\leq m$,
$$
\pi_i\left(L\Lambda^k(C[2])\lotimes LSP^{p-k}(C[2])\right)=0,\
i<2m+2p-k+2.
$$
In particular, there are natural isomorphsism
$$
\pi_{i+1}(LV_{p,k+1}(C[2]))\simeq\pi_i(LV_{p,k}(C[2])
$$
for $i<2p-k+2$. This shows that the needed splitting monomorphisms
$$
\pi_{i+1}(C\lotimes \mathbb Z/p\ [p+k+1])\simeq\pi_i\left
(C\lotimes \mathbb Z/p\ [p+k]\right)\hookrightarrow
\pi_i(LV_{p,k}(C[2]))\simeq \pi_{i+1}(LV_{p,k+1}(C[2]))
$$
exist for $i<2p-k+2$. For a fixed $j\geq 2,$ consider the complex
$Z_j(C)$ from the proof of theorem \ref{the1} with a natural map
$Z_j(C)\to C$. One has the following natural diagram
$$
\xyma{\pi_{j+1}(LV_{p,k+1}(Z_j(C)[2]))\ar@{->}[dd]\ar@{->}[r]^\simeq
& \pi_j(LV_{p,k}(Z_j(C)[2]))\ar@{->}[dd]\ar@{->>}[rd]\\ & & \pi_j\left(Z_j(C)\lotimes \mathbb Z/p[p+k]\right)\ar@{=}[dd]\ar@/_15pt/[lu]\\
\pi_{j+1}(LV_{p,k+1}(C[2]))\ar@{->}[r] &
\pi_j(LV_{p,k}(C[2]))\ar@{->>}[rd]\\ & & \pi_j\left(C\lotimes
\mathbb Z/p[p+k]\right)\ar@/_15pt/[lu]}
$$
Here the diagonal maps are natural monomorphisms and epimorphisms
(\ref{nmono}). The needed splitting monomorphism $$
\pi_{j+1}(C\lotimes \mathbb Z/p\
[p+k+1])\hookrightarrow\pi_{j+1}(LV_{p,k+1}(C[2]))
$$
now follows from the above diagram.
\end{proof}

\begin{theorem}\label{primelieth}
For a prime $p$ and $C\in \sf DAb_{\leq 0},$ there are natural
splitting monomorphisms
\begin{equation}\label{monomo1}
\pi_i\left(C\lotimes \mathbb Z/p\ [2p-1]\right)\hookrightarrow
\pi_i(L\EuScript L^p(C[2]))
\end{equation}
for all $i\geq 0$.
\end{theorem}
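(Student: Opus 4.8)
The plan is to deduce the statement from Lemma \ref{ele} together with the Curtis decomposition (\ref{cx1}). Recall that $V_{p,p-1}=J^p$, so the case $k=p-1$ of Lemma \ref{ele} already provides a natural splitting monomorphism
$$
\pi_i\left(C\lotimes \mathbb Z/p\ [2p-1]\right)\hookrightarrow \pi_i(LJ^p(C[2]))
$$
for all $i\geq 0$. For $p=2$ and $p=3$ one has $\widetilde J^p=0$, hence $J^p=\EuScript L^p$ and there is nothing more to prove. For a general prime $p$ the remaining task is to transport this monomorphism along the natural projection $p_p\colon \EuScript L^p\to J^p$ coming from (\ref{cx1}).

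First I would record the triangle in the derived category induced by the short exact sequence (\ref{cx1}):
$$
L\widetilde J^p(C[2])\to L\EuScript L^p(C[2])\buildrel{(p_p)_*}\over\to LJ^p(C[2])\to L\widetilde J^p(C[2])[1].
$$
The key observation is that once the split summand $C\lotimes \mathbb Z/p[2p-1]$ of $LJ^p(C[2])$ lifts through $(p_p)_*$, the lift is automatically split: if $r$ splits the inclusion $\iota$ into $\pi_i(LJ^p(C[2]))$ and $\tilde\iota$ is a lift with $(p_p)_*\tilde\iota=\iota$, then $r\circ (p_p)_*$ splits $\tilde\iota$. Thus everything reduces to showing that the composite of $\iota$ with the connecting map $\partial\colon \pi_i(LJ^p(C[2]))\to \pi_{i-1}(L\widetilde J^p(C[2]))$ vanishes, and, to make the resulting lift on homotopy into an honest monomorphism, that $(p_p)_*$ is injective in the relevant degrees.

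Both of these follow from a connectivity estimate for $\widetilde J^p$, which I expect to be the main obstacle. The functor $\widetilde J^p$, being the kernel of $p_p$, decomposes (Curtis) into composites and products of lower Lie functors, each outer factor of degree $\geq 2$; consequently $L\widetilde J^p(C[2])$ is strictly more highly connected than $LJ^p(C[2])$. Concretely, if $H_i(C)=0$ for $i\leq m$, I would show that $\pi_\ast(L\widetilde J^p(C[2]))=0$ throughout a range of degrees containing both the support of $C\lotimes \mathbb Z/p[2p-1]$ and the degree immediately below it, so that $\partial$ vanishes there and $(p_p)_*$ is an isomorphism in that range; the monomorphism then transfers verbatim.

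To pass from the highly connected case to an arbitrary $C\in \sf DAb_{\leq 0}$ and to all degrees $i$, I would use the truncation $Z_j(C)$ introduced in the proof of Theorem \ref{the1}: for a fixed $j$ the map $Z_j(C)\to C$ is a $\lotimes\mathbb Z/p$-equivalence in degrees $\geq j$ while $Z_j(C)$ is $(j-2)$-connected, so the monomorphism proved for $Z_j(C)$ descends by naturality to the claimed monomorphism for $C$ in degree $i=j$. The delicate point is the precise connectivity bound for $L\widetilde J^p(C[2])$ relative to the placement of the $C\lotimes \mathbb Z/p$ classes; establishing it in general — not merely in the explicit low-degree cases $\widetilde J^4=\Lambda^2\Lambda^2$, $\widetilde J^5=\Lambda^2\otimes J^3$ — is the heart of the argument.
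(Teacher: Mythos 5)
Your proposal follows the paper's own route essentially verbatim: the paper likewise takes the case $k=p-1$ of Lemma \ref{ele} (using $V_{p,p-1}=J^p$) as the source of the monomorphism, lifts it through the Curtis sequence (\ref{cx1}) by showing $\pi_i(L\widetilde J^p(C))=0$ for $i<2p+2k-2$ when $H_i(C)=0$ for $i<k$ --- exactly the connectivity bound you flag as the heart, which the paper derives from Curtis's decomposition of $\widetilde J^p$ into tensor products $F_1\otimes\dots\otimes F_k$ ($k\geq 2$) of compositions of symmetric powers and functors $J^l$, $l<p$, combined with the Dold--Puppe estimates (\ref{dpuppe}), (\ref{dpuppe1}) --- and then runs the same induction on $i$ via the truncations $Z_i(C)\to C$ from the proof of Theorem \ref{the1}. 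Your argument is correct and coincides with the paper's in every essential step.
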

\begin{proof}
Recall that, for a free abelian group $A$, there is a natural
short exact sequence (see \ref{cx1}) \begin{equation}\label{xc1}
0\to \widetilde J^p(A)\to \EuScript L^p(A)\to J^p(A)\to 0
\end{equation}
It follows from \cite{Curtis:65} that the functor $\widetilde J^p$
can be decomposed as a sequence of functors of the type
$F_1\otimes\dots \otimes F_k$ for some $k\geq 2,$ such that each
$F_j,\ i=1,\dots, k$ is a composition of symmetric powers and
functors $J^l,\ l<p$.

Recall that, for $C\in \sf DAb_{\leq 0}$, such that $H_i(C)=0,\
i<k$, by \cite{DoldPuppe}, Satz 12.1
\begin{equation}\label{dpuppe}
\pi_i(LSP^n(C))=0,\begin{cases} \text{for}\ i<n,\ \text{when}\
k=1,\\
\text{for}\ i<k+2n-2,\ \text{provided}\ k>1.
\end{cases}
\end{equation}
and
\begin{equation}\label{dpuppe1}
\pi_i(LJ^n(C))=0,\begin{cases} \text{for}\ i<n+1,\ \text{when}\
k=1,\\
\text{for}\ i<k+2n-1,\ \text{provided}\ k>1.
\end{cases}
\end{equation}
Curtis decomposition of the functor $\widetilde J^p$ together with
(\ref{dpuppe}) and (\ref{dpuppe1}) imply that, for $C\in \sf
DAb_{\leq 0},$ such that $H_i(C)=0,\ i<k, (k>1)$
$$
\pi_i(L\widetilde J^p(C))=0,\ i<2p+2k-2.
$$
Now we construct the needed splitting monomorphism (\ref{monomo1})
by induction on $i$. The argument is the same as in the proof of
theorem \ref{the1} and lemma (\ref{ele}). For a fixed $i$, we
consider the natural map $Z_i(C)\to C$ and compare the sequences
of functors (\ref{xc1}) for the complexes $Z_i(C)[2]$ and $C[2]$.
\end{proof}

For a prime $p$, $i,m\geq 1$, and $C\in \sf DAb_{\leq 0}$ consider
the following map
\begin{multline*}
\pi_i\left(L\EuScript L^m(C)\lotimes \mathbb Z/p\
[2p+2m-3]\right)\to \pi_i\left(L\EuScript
L^m(C[2])[2p-3]\lotimes \mathbb Z/p\right)\to\\
\pi_i\left(L\EuScript L^p\circ \EuScript L^m(C[2])\right)\to
\pi_i\left(L\EuScript L^{pm}(C[2])\right)
\end{multline*}
where the last map is induced by the natural transformation
$\EuScript L^p\circ \EuScript L^m\to \EuScript L^{pm}$. Denote
this map by $w_{i,p,m}$.

\begin{lemma} If $(m,p)=1$ and homology of $C$ are torsion-free,
then $w_{i,p,m}$ is a monomorphism for all $i\geq 1$.
\end{lemma}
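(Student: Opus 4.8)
The plan is to show that $w_{i,p,m}$ is injective by proving that each of the three maps composing it is injective, since a composite of monomorphisms is a monomorphism. I label the constituents as follows: (a) the reduction $\epsilon_m\lotimes\mathbb Z/p$ of the pension map on $L\EuScript L^m$, shifted by $[2p-3]$; (b) the map $\pi_i(L\EuScript L^m(C[2])\lotimes\mathbb Z/p\,[2p-3])\to\pi_i(L(\EuScript L^p\circ\EuScript L^m)(C[2]))$; and (c) the map induced by the natural transformation $\EuScript L^p\circ\EuScript L^m\to\EuScript L^{pm}$.

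First I would dispatch (b), which is immediate. Apply Theorem \ref{primelieth} to the object $D:=L\EuScript L^m(C[2])[-2]\in\sf DAb_{\leq 0}$. Then $D[2]=L\EuScript L^m(C[2])$, so $L\EuScript L^p(D[2])=L(\EuScript L^p\circ\EuScript L^m)(C[2])$, while $D\lotimes\mathbb Z/p\,[2p-1]=L\EuScript L^m(C[2])\lotimes\mathbb Z/p\,[2p-3]$. Thus (b) is precisely the natural splitting monomorphism supplied by Theorem \ref{primelieth}, and in particular injective. For (a), the preceding theorem gives that the pension $\epsilon_m\colon\pi_\ast(L\EuScript L^m(C))\to\pi_{\ast+2m}(L\EuScript L^m(C[2]))$ is injective. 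To retain injectivity after $\lotimes\mathbb Z/p$ I would use the hypothesis that the homology of $C$ is torsion-free: via Proposition \ref{tfree} together with the direct-summand form of Bousfield's Theorem \ref{bousfield}, the pension map is here a split monomorphism, so that its cokernel has no $p$-torsion and $\lotimes\mathbb Z/p$ preserves injectivity. This settles (a).

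The genuine difficulty is (c), and it is the only place where the coprimality $(m,p)=1$ is used. By Corollary \ref{creffl} and Lemma \ref{mloa} the pension and composition operations are compatible with the cross-effect decomposition of Theorem \ref{schles}, so by naturality I would reduce the injectivity of (c) to the diagonal summands, where $C$ is a tensor monomial and the relevant class is the image under (b) of a single $\lambda$-type Bousfield class. Under the composition product (\ref{cmaps}), the transformation $\EuScript L^p\circ\EuScript L^m\to\EuScript L^{pm}$ acts on this class by composing it with the fundamental class of $L\EuScript L^m$, and Corollary \ref{corla} (the non-vanishing $\epsilon_p(\lambda_i)\neq0$) shows the composite survives provided the $p$-th power (restriction) operation of the free Lie ring does not annihilate the corresponding degree-$m$ generator.

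This last point is exactly what $(m,p)=1$ guarantees: the passage $\EuScript L^p(\EuScript L^m(A))\to\EuScript L^{pm}(A)$ carries the generator with a coefficient that is a unit modulo $p$ precisely when $p\nmid m$, whereas for $p\mid m$ the image becomes decomposable and the class is killed. Establishing this non-annihilation---that the restriction operation is injective mod $p$ on the relevant generators when $p\nmid m$---is the main obstacle, and I expect it to require an explicit analysis of the degree-$pm$ component of the free Lie ring (or equivalently of the functor $\EuScript N^{m;p}$) rather than a formal argument. Once it is in place, the composite $w_{i,p,m}$ is injective on each cross-effect summand and hence injective, as claimed.
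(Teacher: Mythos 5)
Your overall strategy---proving the composite $w_{i,p,m}$ injective by proving each of its three constituent maps injective---breaks down at map (c), and this is a genuine gap, not a technicality. The map $\pi_i(L(\EuScript L^p\circ\EuScript L^m)(C[2]))\to \pi_i(L\EuScript L^{pm}(C[2]))$ induced by $\EuScript L^p\circ\EuScript L^m\to\EuScript L^{pm}$ is simply not a monomorphism in general: take $m=2$, $p=3$, $C=\mathbb Z$. Then the target $L_*\EuScript L^6(\mathbb Z,2)$ vanishes by Kan's Theorem \ref{kan2} ($6$ is not a prime power), while the source $\pi_*(L\EuScript L^3(L\Lambda^2(\mathbb Z,2)))$ is nonzero (e.g.\ $L_3\Lambda^2(\mathbb Z,2)=\mathbb Z/2$ already feeds nontrivial classes into $L\EuScript L^3$). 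You sense this, because your treatment of (c) silently abandons the ``composite of monomorphisms'' frame and retreats to tracking the image of specific $\lambda$-type classes; but then the crucial assertion---that the composition operation carries the relevant generators with unit coefficient mod $p$ when $p\nmid m$---is exactly the content of the lemma, and you explicitly leave it unproven (``I expect it to require an explicit analysis\dots''). A proof that ends by naming its main step as an expected obstacle is not a proof. There is also a secondary soft spot in (a): the paper only proves integral injectivity of the pension map $\epsilon_m$, and your claim that it is a \emph{split} monomorphism with $p$-torsion-free cokernel for torsion-free-homology $C$ (so that $\lotimes\,\mathbb Z/p$ preserves injectivity) is asserted via Theorem \ref{bousfield} without an argument; Bousfield's theorem is a suspension statement for $K(\mathbb Z,2n)$, not a splitting statement for pension maps on general $C$.

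For comparison, the paper's proof never isolates map (c) at all. It inducts on $k(m)$, the number of prime factors of $m$ counted with multiplicity, with base case $m=1$ given by Theorem \ref{primelieth} (your step (b) correctly identifies this ingredient). The torsion-free hypothesis is used to split $C$ unnaturally into a direct sum of $\mathbb Z[l]$'s, and cross-effect diagrams (as in Corollary \ref{creffl}) reduce the inductive step to the scalar case $C=\mathbb Z[l]$. There the point you anticipated as requiring a free-Lie-ring coefficient computation evaporates: for every $m\neq 2$ coprime to $p$, the domain $\pi_i(L\EuScript L^m(\mathbb Z,l)\lotimes\mathbb Z/p)$ is already zero, because $L_*\EuScript L^m(\mathbb Z,l)$ vanishes unless $m$ is a prime power (Theorem \ref{kan2}) and is $q$-torsion for the prime $q\mid m$, with $q\neq p$. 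The only surviving verification is $m=2$ with $l$ odd, where Kan's Theorem \ref{kan1} shows the composite $L_i\EuScript L^p(\mathbb Z,2l+4)\to L_i\EuScript L^{2p}(\mathbb Z,l+2)$ is an isomorphism onto a summand. So the coprimality $(m,p)=1$ enters through vanishing of sources plus Kan's computation, not through a unit-coefficient analysis of the restriction operation; filling your gap along the lines you propose would amount to redoing, by hand, what these structural results already give.
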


\begin{proof} Consider a prime decomposition of $m$: $m=p_1^{k_1}\dots
p_s^{k_s}$, where $p_j$ are primes and $k_j\geq 0.$ We will prove
the statement by induction on $k(m)=k_1+\dots+k_s$. If $k(m)=0$,
then the statement follows from theorem \ref{primelieth}.

\vspace{.5cm}\noindent{\it Step 1.} First we show that $w_{p,m}$
is injective if $C=\mathbb Z[l]$ for some $l$. The description of
the derived functors $L_*\EuScript L^m(\mathbb Z,l)$ given in
section \ref{section4} implies that
$$
\pi_i\left(L\EuScript L^m(\mathbb Z,l)\lotimes \mathbb Z/p
\right)=0,\ m\neq 2
$$
Only the case which we have to consider here is $m=2$ and an odd
$l$. In this case
$$
L\EuScript L^m(\mathbb Z,l)\lotimes \mathbb Z/p\simeq \mathbb Z/p\
[2l]
$$
The map
$$
\EuScript L^m(\mathbb Z,l)[2m]\lotimes \mathbb Z/p\to \EuScript
L^m(\mathbb Z,l+2)\lotimes \mathbb Z/p
$$
is an equivalence in $\sf DAb_{\leq 0}$. Now observe that, by
theorem \ref{kan1}, the natural map
$$
\pi_i\left(L\EuScript L^p\circ \Lambda^2(\mathbb
Z,l+2\right)\simeq L_i\EuScript L^p(\mathbb Z,2l+4)\to
L_i\EuScript L^{2p}(\mathbb Z,l+2)
$$
is an isomorphism. Therefore, $w_{i,p,m}$ is a monomorphism for
$C=\mathbb Z[l]$.

\vspace{.5cm}\noindent{\it Step 2.} Now we assume that, for
complexes $C_1$ and $C_2$, the maps $w_{p,m}$ are injective.
 Consider the cross-effects of the functors which
appear in the definition of $w_{p,m}$. We have
$$
\pi_i\left(L\EuScript L^m(C_1[2]\ |\ C_2[2])\lotimes\mathbb
Z/p\right)\simeq \bigoplus_{d|m,\ 1\leq d< m,\ D\in
J_{m/d}}\pi_i\left(L\EuScript L^d(D_{i_1}\lotimes \dots
D_{i_{m/d}}[\frac{2m}{d}])\lotimes\mathbb Z/p\right)
$$
where $\{D_1,\dots,D_{i_{m/d}}\}=\{C_1,C_2\}.$ For every $d$, and
$D\in J_{m/d}$,  we have the following commutative diagram

$$
\xyma{ \pi_i\left(L\EuScript L^d(D_1\lotimes\dots\lotimes
D_{i_{m/d}}[\frac{2m}{d}])\lotimes \mathbb Z/p\
[2p-3]\right)\ar@{->}[d]^{w_{i,p,d}} \ar@{->}[r] &
\pi_i(L\EuScript L^p\circ
L^m(C_1[2]\ |\ C_2[2])) \ar@{->}[d] \\
\pi_i\left(L\EuScript L^{pd}(D_1\lotimes \dots\lotimes
D_{i_{m/d}}[\frac{2m}{d}])\right)\ar@{>->}[r] & \pi_i(L\EuScript
L^{pm}(C_1[2]\ |\ C_2[2]))}
$$
The map $w_{i,p,d}$ is a monomorphism by induction, therefore, the
map $w_{i,p,m}$ induce the monomorphism
$$ \pi_i\left(L\EuScript L^m(C_1[2]\ |\ C_2[2])\lotimes \mathbb
Z/p\ [2p-3]\right)\hookrightarrow \pi_i\left(L\EuScript L^{pm}
(C_1[2]\ |\ C_2[2]))\right)
$$

Now the statement follows by induction from Step 1 and Step 2,
since $C$ is unnaturally equivalent to a direct sum of its
homology considered in the corresponding dimensions.
\end{proof}

\begin{prop}
Let $X$ be a free abelian simplicial group, then, for a prime $p$
and $n\geq 1$, there are natural isomorphisms
\begin{equation}\label{prekappa}
\pi_i(\EuScript L^p(\Sigma^{2n}(X))\otimes \mathbb Z/p)\simeq
\bigoplus_{w\in \overline{\mathcal V}_{2n,1}^{(p)}}\pi_i(X\otimes
\mathbb Z/p\ [2n+d(w)])
\end{equation}
for $i<2np$.
\end{prop}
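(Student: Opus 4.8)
The plan is to prove (\ref{prekappa}) by induction on $n$, climbing one suspension at a time with the pension map $\epsilon_p$ and extracting, at each stage, the single new summand sitting at the bottom of the stable range. Since $X$ is free I identify $\Sigma^{2n}X$ with $X[2n]$ and read the left-hand side as $\pi_i(L\EuScript L^p(X[2n])\lotimes\mathbb Z/p)$. Enumerate the index set $\overline{\mathcal W}_{2n,1}^{(p)}$ of (\ref{prekappa}) as the $n$ length-one allowable elements $w_1,\dots,w_n$ with $d(w_j)=(2p-2)j-1$, and set $s_{n,j}:=2n+d(w_j)=2n+(2p-2)j-1$, so that $s_{n,n}=2np-1<2np$; thus every summand on the right of (\ref{prekappa}) lives strictly below degree $2np$. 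The key arithmetic identity I will exploit is $s_{n-1,j}+2p=s_{n,j+1}$, which says that $\epsilon_p$ reindexes the allowable elements by $j\mapsto j+1$.

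For the base case $n=1$ I would specialize Theorem \ref{primelieth} with $C=X$, obtaining a natural split monomorphism $\pi_i(X\lotimes\mathbb Z/p[2p-1])\hookrightarrow\pi_i(L\EuScript L^p(X[2]))$ whose complementary summand $R$ is, via the Curtis sequence $0\to\widetilde J^p\to\EuScript L^p\to J^p\to0$, assembled from $L\widetilde J^p(X[2])$ and the complement of the split class in $LJ^p(X[2])$. The connectivity estimates recalled in the proof of Theorem \ref{primelieth} (from \cite{DoldPuppe}, together with Lemma \ref{ele}, which pins down the bottom class of $LJ^p(X[2])$) show that $R$ carries no mod-$p$ homotopy below degree $2p$. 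Tensoring the splitting with $\mathbb Z/p$ produces, besides $X\lotimes\mathbb Z/p[2p-1]$, a universal-coefficient term $X\lotimes\mathbb Z/p[2p]$ which lies outside the range $i<2p$; hence the monomorphism becomes the asserted natural isomorphism for $i<2p$.

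For the inductive step I would apply $\epsilon_p:\pi_i(L\EuScript L^p(X[2n-2])\lotimes\mathbb Z/p)\to\pi_{i+2p}(L\EuScript L^p(X[2n])\lotimes\mathbb Z/p)$. By the injectivity of pension maps for $\EuScript L^p$ established above, $\epsilon_p$ is a monomorphism; by Lemma \ref{mloa} and the non-vanishing statements of Corollary \ref{corla} the transported generators are non-zero; and by $s_{n-1,j}+2p=s_{n,j+1}$ the induction hypothesis identifies the image of $\epsilon_p$ with the summands $j=2,\dots,n$ (degrees $\ge s_{n,2}$). Theorem \ref{primelieth} applied to $C=X[2n-2]$ then supplies the one remaining summand $X\lotimes\mathbb Z/p[s_{n,1}]$, which sits strictly below $s_{n,2}$ (as $s_{n,1}<s_{n,2}$ for $p>1$), so the two contributions meet in complementary degrees and the sum is direct. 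Note the ranges align perfectly: a target degree $i<2np$ corresponds to source degree $i-2p<2(n-1)p$, exactly the range in which the induction hypothesis is available.

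The crux, and the step I expect to be the main obstacle, is exhaustion: showing that for all $i<2np$ there is nothing beyond the image of $\epsilon_p$ and the new bottom summand, i.e. computing $\operatorname{coker}\epsilon_p$ in this range. Here I would invoke Bousfield's pension theorem (\cite{Bou}, Theorem 3.1): as $\EuScript L^p$ is polynomial of degree $p$, $\epsilon_p$ is an isomorphism above an explicit numerical range governed by the top homology degree of the source. Because $X$ need not have a top degree, I would first reduce a fixed $i<2np$ to a truncation $X_{\le N}$ — legitimate since $\EuScript L^p$ is polynomial, so $\pi_i$ in a bounded degree depends only on a truncation of $X$ — and then choose $N$ so that Bousfield's isomorphism range, shifted by $\epsilon_p$, covers the degrees $\ge s_{n,2}$, while the high connectivity of the Curtis kernel $\widetilde J^p$ (from the proof of Theorem \ref{primelieth}) rules out any extra class below $s_{n,2}$ other than the Theorem \ref{primelieth} summand. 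Every ingredient — the splitting of Theorem \ref{primelieth}, the maps $\epsilon_p$, and Bousfield's comparison — is natural in $X$, so the resulting decomposition is natural, and the induction is complete.
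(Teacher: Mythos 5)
Two genuine gaps, one of which changes the statement you are proving. First, the index set: you silently replace $\overline{\mathcal V}_{2n,1}^{(p)}$ by the $\lambda$-only set $\{(\lambda_j): j\leq n\}$ with $d(\lambda_j)=(2p-2)j-1$, i.e.\ by $\overline{\mathcal W}_{2n,1}^{(p)}$. But the proposition concerns \emph{mod-$p$} homotopy, and the $\mathcal V$-sets (this is exactly how the proposition is used in the construction of the maps $\kappa_{d,w}$, where $d(\nu)$ is defined to be $(2p-2)j-1$ for $\lambda_j$ and $(2p-2)j$ for $\mu_j$) also admit terminal entries $\mu_j$, corresponding to the classes $\mu_j\in\pi_{2pj}(\EuScript L^p(\mathbb Z,2j)\otimes\mathbb Z/p)$, the Tor-partners of $\lambda_j$ under universal coefficients. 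In the range $i<2np$ the correct right-hand side has $2n-1$ summands, in degrees $2n+(2p-2)j-1$ $(j\leq n)$ and $2n+(2p-2)j$ $(j\leq n-1)$, not your $n$ summands. Your version is already false for $X=\mathbb Z$, $n=2$: at $i=2n+2p-2=2p+2<4p$ the left side is $\mathbb Z/p$ (the Tor of the Schlesinger class in degree $2p+1$), while your right side vanishes. Ironically, your own base-case observation --- that tensoring the splitting of Theorem \ref{primelieth} with $\mathbb Z/p$ produces an extra term $X\lotimes \mathbb Z/p[2p]$, via $\mathbb Z/p\lotimes\mathbb Z/p\simeq \mathbb Z/p\oplus\mathbb Z/p[1]$ --- exhibits precisely the missing $\mu$-summands; in the inductive step you must retain that term (it is the new $\mu_1$-class, inside the range once $n\geq 2$) rather than discard it.

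Second, the exhaustion step via Bousfield's stability theorem cannot work in this range. The theorem gives an isomorphism only for $i>n'(p-1)+1$, where $n'$ is the top homology degree of the source; with source $X[2n-2]$ and $X$ concentrated in degree $0$ one gets $n'=2n-2$, hence surjectivity of $\epsilon_p$ only onto $\pi_j$ with $j>(2n-2)(p-1)+2p+1$. For $p=3$, $n=2$ this means $j\geq 12=2np$: the stable range begins exactly where the proposition stops, and the cokernel at the degrees you need (down to $s_{n,2}=2n+4p-5$, e.g.\ $j=11$ here, where Bousfield gives only a monomorphism) is not controlled at all; for larger $n,p$ a whole middle interval is uncovered. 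Your proposed fix has the monotonicity backwards: enlarging the truncation level $N$ \emph{worsens} the range $i>(N+2n-2)(p-1)+1$, yet homology of $X$ up to degree roughly $(2n-2)(p-1)$ genuinely contributes to $\pi_i$ for $i$ near $2np$, so no choice of $N$ closes the gap. (Two smaller points: the paper's injectivity theorem for $\epsilon_n$ is integral, while you transport mod-$p$ classes; and for a general free abelian simplicial group the summands do \emph{not} sit in complementary degrees, since $\pi_*(X)$ spreads, so ``the sum is direct'' needs the natural splittings, not a degree count.) For what it is worth, the paper states this proposition without proof; the machinery it assembles --- composition operations (\ref{cmaps}) with the classes $\lambda_j,\mu_j$, verification on $X=\mathbb Z[l]$ via Theorem \ref{lieofz}, cross-effects through Theorem \ref{schles}, and the abstract isomorphism of Theorem \ref{abstractiso} combined with finite generation and direct limits --- indicates the intended route, and suspension-plus-pension bookkeeping alone cannot reach the unstable range $i<2np$.
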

\vspace{.5cm}

\section{Map $\kappa$}
For an abelian $A$, $m,n\geq 1$, denote the graded abelian groups
\begin{align*}
& \Theta_{m}(A,2n):=\bigoplus_{\substack{p\ \text{prime}\\
p^k|m\\ p^{k+1}\nmid m}}\bigoplus_{\substack{i=1,\dots, k\\
w\in \overline{\mathcal W}_{\frac{2nm}{p^{i}},i}^{(p)}}} \EuScript
N^{\frac{m}{p^{i}};p}(A)[\frac{2nm}{p^{i}}+d(w)]\\
& \Theta_m(A,2n+1):=\bigoplus_{\substack{p\ \text{prime}\\
p^k|m\\ p^{k+1}\nmid m}}\bigoplus_{\substack{i=1,\dots,k\\ w\in
\overline{\mathcal W}_{\frac{(2n+1)m}{p^{i}},i}^{(p)}}} \EuScript
N_s^{\frac{m}{p^{i}};p}(A)[\frac{(2n+1)m}{p^{i}}+d(w)]
\end{align*}
and their analogs where the indexes are from $\overline{\mathcal
V}$-sets:
\begin{align*}
& \widetilde\Theta_m(A,2n):=\bigoplus_{\substack{p\ \text{prime}\\
p^k|m\\ p^{k+1}\nmid m}}\bigoplus_{\substack{i=1,\dots, k\\
w\in \overline{\mathcal V}_{\frac{2nm}{p^{i}},i}^{(p)}}} \EuScript
N^{\frac{m}{p^{i}};p}(A)[\frac{2nm}{p^{i}}+d(w)]\\
& \widetilde\Theta_m(A,2n+1):=\bigoplus_{\substack{p\ \text{prime}\\
p^k|m\\ p^{k+1}\nmid m}}\bigoplus_{\substack{i=1,\dots,k\\ w\in
\overline{\mathcal V}_{\frac{(2n+1)m}{p^{i}},i}^{(p)}}} \EuScript
N_s^{\frac{m}{p^{i}};p}(A)[\frac{(2n+1)m}{p^{i}}+d(w)]
\end{align*}
\begin{theorem}
For a free abelian group $A$ and $m,n\geq 1$, there is a natural
isomorphism of graded abelian groups
$$
\bigoplus_{i=1}^{nm-1}L_i\EuScript L^m(A,n)[i]\simeq
\Theta_{m}(A,n)
$$
\end{theorem}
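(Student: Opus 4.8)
The plan is to compute $H_*(\EuScript E^m(A,n))$ directly for a free abelian group $A$, identify it with $\bigoplus_{i<nm}L_i\EuScript L^m(A,n)[i]$ via the abstract isomorphism, and only then upgrade this to a natural isomorphism. Since $A$ is projective, for each of the functors $F\in\{\EuScript L^m,\EuScript L_s^m,\EuScript N^{s;p},\EuScript N_s^{s;p}\}$ the object $LF(A)$ is concentrated in degree $0$ and equals $F(A)$, so every summand of $\EuScript E^m(A,n)$ is a single shifted free abelian group. Comparing the definition of $\EuScript E^m(C,n)$ with that of $\Theta_m(A,n)$ term by term, the two expressions agree summand for summand (with $L\EuScript N^{s;p}(A)=\EuScript N^{s;p}(A)$, and the super-analogs $\EuScript N_s^{s;p}$, $\EuScript L_s^m$ replacing the non-super functors when $n$ is odd), the single discrepancy being the top term $L\EuScript L^m(A)[nm]=\EuScript L^m(A)[nm]$ of $\EuScript E^m$, which is absent from $\Theta$. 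Hence for free $A$ one has $\EuScript E^m(A,n)\simeq \EuScript L^m(A)[nm]\oplus\Theta_m(A,n)$, and Theorem \ref{abstractiso} together with the vanishing $L_iF(A)=0$ for $i>0$ gives $L_i\EuScript L^m(A,n)\cong H_i(\EuScript E^m(A,n))$ in every degree.

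Next I would check that every shift occurring in $\Theta_m(A,n)$ lies strictly below the top degree $nm$, so that the splitting above separates cleanly into the top class and the $\Theta$-part. Each $\EuScript N$-summand sits in degree $\tfrac{nm}{p^j}+d(w)$ with $w$ an allowable sequence of length $j\le k$ (where $p^k\,\|\,m$). Because every allowable sequence terminates in a $\lambda$, we have $o(w)\ge1$; combined with the growth bound $i_{l+1}\le p\,i_l$ and the cap on $i_1$ coming from the subscript $\tfrac{nm}{p^j}$, a direct estimate gives $d(w)\le nm\cdot\tfrac{p^j-1}{p^j}-1$, whence the total degree $\tfrac{nm}{p^j}+d(w)\le nm-1<nm$. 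Thus $\Theta_m(A,n)$ is concentrated in degrees $1\le i\le nm-1$ while $\EuScript L^m(A)[nm]$ contributes only in degree $nm$, and the abstract version of the theorem, $\bigoplus_{i=1}^{nm-1}L_i\EuScript L^m(A,n)[i]\cong\Theta_m(A,n)$, follows at the level of abelian groups.

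To make this isomorphism natural I would construct the map explicitly from the natural operations available. The isomorphisms (\ref{nco1}), (\ref{nco2}) identify $\EuScript N^{s;p}(A)$ (resp. $\EuScript N_s^{s;p}(A)$) naturally with a homotopy group of $L\EuScript L^s(A,\cdot)\lotimes\mathbb Z/p$; composing such a class with the homotopy operation indexed by the allowable sequence $w$, assembled from the composition maps (\ref{cmaps}) and the iterated pension maps $\epsilon_p$, produces a natural homomorphism $\EuScript N^{s;p}(A)\to L_{\,nm/p^j+d(w)}\EuScript L^m(A,n)$ with $s=m/p^j$. Summing these over all $p,j,w$ yields a natural map $\kappa\colon\Theta_m(A,n)\to\bigoplus_{i=1}^{nm-1}L_i\EuScript L^m(A,n)[i]$. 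Injectivity of the basic building blocks is supplied by Corollary \ref{corla} (the nonvanishing of $\epsilon_p(\mu_i)$ and $\epsilon_p(\lambda_i)$) and, in the prime case, by Theorem \ref{primelieth}; compatibility of these operations with the Schlesinger cross-effect decomposition, through Corollary \ref{creffl} and Lemma \ref{mloa}, then lets me induct on the number of prime factors of $m$ to conclude that $\kappa$ is injective. The rank equality established in the first two paragraphs forces $\kappa$ to be an isomorphism in each degree $i<nm$.

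The hard part is precisely this naturality step: verifying that the pension/composition operations land in the correct $\EuScript N$-summands and remain \emph{jointly} injective even when several summands occupy the same homological degree $i$. Distinct primes are separated by the primary decomposition of the torsion groups, but for a fixed prime $p$ the inductive cross-effect bookkeeping must be organized so that the images of the various sequences $w$ are linearly independent; this is exactly where the filtration defining $\overline{\mathcal W}^{(p)}$ and the injectivity of $\epsilon_p$ have to be combined carefully, and I expect it to be the main obstacle. Once joint injectivity is in hand, surjectivity onto the range $i\le nm-1$ is automatic from the dimension count, which completes the proof.
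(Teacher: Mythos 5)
Your first two paragraphs are sound and agree with the paper's own reductions: for free $A$ every summand $LF(A)$ of $\EuScript E^m(A,n)$ is concentrated in degree $0$, the degree estimate $\frac{nm}{p^j}+d(w)\leq nm-1$ holds (every allowable word ends in a $\lambda$, giving $o(w)\geq 1$), and Theorem \ref{abstractiso} then yields the abstract isomorphism $\bigoplus_{i=1}^{nm-1}L_i\EuScript L^m(A,n)[i]\simeq \Theta_m(A,n)$. The gap is in your naturality step, and it is precisely the one the paper's closing Remark warns against. You propose to define $\kappa$ by composing the class coming from (\ref{nco1}) with homotopy operations assembled from the composition pairing (\ref{cmaps}) and pension maps. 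But (\ref{cmaps}) is defined by \emph{choosing} a representative $f: K(\mathbb Z,q)\to \EuScript L^l(X)$ of a homotopy class and applying $\EuScript L^k$ to it; since $\EuScript L^k$ is not additive, the resulting assignment is not evidently additive in that class, so your $\kappa$ is not obviously even a homomorphism of abelian groups in $A$, let alone a natural transformation --- the paper says exactly this: ``it is not straightforward from the definition that this map is a homomorphism on $A$.'' The paper circumvents the problem by an inductive definition of $\kappa_{d,w}$ that never composes along a chosen representative: it uses the natural splitting (\ref{prekappa}), $\pi_i(\EuScript L^p(\Sigma^{2n}X)\otimes\mathbb Z/p)\simeq \bigoplus_{w\in\overline{\mathcal V}_{2n,1}^{(p)}}\pi_i(X\otimes\mathbb Z/p\,[2n+d(w)])$ valid for $i<2np$, applied to $X=\EuScript L^{dp^{i-1}}N^{-1}(A[2n])\otimes\mathbb Z/p$, followed by the natural transformation $\EuScript L^p\circ\EuScript L^{dp^{i-1}}\to \EuScript L^{dp^{i}}$; naturality and additivity are then automatic.

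A second structural defect: your $\kappa$ is aimed directly at the integral groups $L_i\EuScript L^m(A,n)$, but the intermediate classes $\mu_j$ exist only in mod-$p$ homotopy, so the tower of operations cannot be run integrally. The paper first proves the natural isomorphism at the mod-$p$ level, $\bigoplus_i \pi_i(\EuScript L^m N^{-1}(A[n])\otimes\mathbb Z/p)[i]\simeq \widetilde\Theta_m(A,n)$, indexed by the larger $\overline{\mathcal V}$-sets, and only afterwards descends to the theorem via the universal coefficient sequence (\ref{silence}), using that $L_i\EuScript L^m(A,n)$ is a direct sum of $\mathbb Z/p$-vector spaces for $i<nm$ and that the image of the integral part corresponds to the sub-index-set $\overline{\mathcal W}\subset\overline{\mathcal V}$ (Theorem \ref{kan2} together with diagram (\ref{nicediagram})). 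Your proposal never performs this descent. Finally, the ``joint injectivity'' you single out as the main obstacle is never needed in the paper's route: the paper checks the assembled map is an isomorphism for $A=\mathbb Z$, verifies compatibility with the Schlesinger cross-effect decomposition, concludes that the map is a natural surjection between graded functors with finitely generated components commuting with direct limits, and then gets the isomorphism for free from the abstract one. Your injectivity-plus-rank-count scheme could work in principle, but as written it rests on unproven additivity and unproven linear independence, so the proposal does not close.
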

\begin{proof}
{\it Case I: $n$ even.} In this case we can write the dimension
$2n$ instead of $n$. First we prove that there is a natural
isomorphism of graded abelian groups
\begin{equation}
\bigoplus_{p|m\ p\ prime} \bigoplus_{i=1}^{2mn-1} \pi_i(\EuScript
L^{m}N^{-1}(A[2n])\otimes \mathbb Z/p)[i]\simeq
\widetilde\Theta_{m}(A,2n)
\end{equation}

For a prime $p$, define $\mathcal V_{*,0}^{(p)}=\overline{\mathcal
V}_{*,0}^{(p)}=\{0\}$ and $d(0)=0$. For $w\in \overline{\mathcal
V}_{2nd,i}^{(p)}\ (i\geq 0),$ define the map
$$
\kappa_{d,w}: \EuScript N^{d;p}(A)\to \pi_{2nd+d(w)}(\EuScript
L^{dp^i}N^{-1}(A[2n])\otimes \mathbb Z/p)
$$
as follows. For $i=0$, this map is a natural isomorphism
$$
\kappa_{d,0}: \EuScript
N^{d;p}(A)\buildrel{\simeq}\over\longrightarrow
\pi_{2nd}(\EuScript L^{d}N^{-1}(A[2n])\otimes \mathbb Z/p).
$$
Suppose the map $\kappa_{d,w}$ is defined for all $w\in
\overline{\mathcal V}_{2nd,l}^{(p)}$ for $l<i$. Let
$w=(\nu_{j_1},\dots,\nu_{j_i})$ and $v:=(\lambda_{j_1},\dots,
\lambda_{j_{i-1}})\in \overline{\mathcal V}_{2nd,i-1}^{(p)}$. Let
$$
d(\nu_{j_i}):=\begin{cases} (2p-2)j_i-1,\ \text{if}\
\nu_{j_i}=\lambda_{j_i}\\ (2p-2)j_i,\ \text{if}\
\nu_{j_i}=\mu_{j_i}\end{cases}
$$
Clearly, $d(w)=d(w')+d(\nu_{j_i})$. Let $C:=\EuScript
L^{dp^{i-1}}N^{-1}(A[2n])\otimes \mathbb Z/p\langle
2nd+d(w')\rangle.$ Now we define the map $\kappa_{d,w}$ as the
following composition map
$$
\xyma{\EuScript N^{d;p}(A)\ar@{->}[d]^{\kappa_{d,w'}}\\
\pi_{2nd+d(w')}({\EuScript L}^{dp^{i-1}}N^{-1}(A[2n])\otimes
\mathbb Z/p) \ar@{->}[r]^{\ \ \ \ \ \ \ \ \ \ \ \simeq} &
\pi_{2nd+d(w')}(C)\ar@{>->}[d]\\ &
\pi_{2nd+d(w')+d(\nu_{j_i})}(\bigoplus_{w\in \overline{\mathcal
V}_{2nd+d(w'),1}^{(p)}}C[2nd+d(w)] ) \ar@{->}[d]^{\simeq}\\
\pi_{2nd+d(w)}(\EuScript L^p\circ \EuScript
L^{dp^{i-1}}N^{-1}(A[2n])\otimes \mathbb Z/p) \ar@{->}[d]&
\pi_{2nd+d(w)}(\EuScript L^p(C))\ar@{->}[l]\\
\pi_{2nd+d(w)}(\EuScript L^{dp^i}N^{-1}(A[2n])\otimes \mathbb Z/p)
}
$$

Consider the case $A=\mathbb Z$. It follows from theorem
\ref{kan2}, that it is enough to consider the case $m=p^k$ for a
prime $p$ and $k\geq 1$. In this case, we have a map of graded
abelian groups
\begin{multline}\label{kappapk}
\bigoplus_{\substack{j=0,\dots,k-1\\ w\in \overline{\mathcal
V}_{2np^j,k-j}^{(p)}}}\EuScript N^{p^j;p}(\mathbb
Z)[2np^j+d(w)]\simeq \bigoplus_{\substack{j=0,\dots,k-1\\ w\in
\overline{\mathcal V}_{2np^j,k-j}^{(p)}}}\mathbb Z/p\ [2np^j+d(w)]\to\\
\pi_*(\EuScript L^{p^k}N^{-1}(\mathbb Z[2n])\otimes \mathbb
Z/p)\simeq \bigoplus_{v\in \overline{\mathcal
V}_{2n,k}^{(p)}}\mathbb Z/p\ [2n+d(w)]
\end{multline}
defined as a sum of $\kappa_{p^j;p}$-maps for $j=1,\dots,k$. It
follows immediately from the construction of the map
$\kappa_{p^j;p}$ that, for $j\geq 1$, the summand $\mathbb Z/p$
which corresponds to  $w=(w_1,\dots,w_{k-j})\in \overline{\mathcal
V}_{2np^j,k-j}^{(p)},$ goes to the term $\mathbb Z/p$ which
corresponds to
$$
(\mu_n,\dots, \mu_{p^{j-1}n}, w_1,\dots, w_{k-j})\in \mathcal
V_{2n,k}
$$
We have
$$
2n+d(\mu_n,\dots, \mu_{p^{j-1}n}, w_1,\dots,
w_{k-j})=2np^j+d(w_1,\dots,w_{k-j})
$$
Therefore, the map (\ref{kappapk}) is an isomorphism.

Now we compare cross-effects. Observe that, for $l|d$, $w\in
\overline{\mathcal V}_{2nd,i}^{(p)}$, and $C_{i_1}\otimes
\dots\otimes C_{i_{d/l}}\in J_{d/l},\ C_i\in \{A,B\}$ we have the
following natural diagram
\begin{equation}\label{nicediagram}
\xyma{\EuScript N^{l;p}(C_{i_1}\otimes \dots\otimes
C_{i_{d/l}})\ar@{>->}[d] \ar@{->}[r]^{\kappa_{l;w}\ \ \ \ \ \ \ \
\ \ \ \ \ \ \ \ \ \ \  } &
\pi_{2nd+d(w)}(\EuScript L^{lp^i}N^{-1}(C_{i_1}\otimes \dots\otimes C_{i_{d/l}}[\frac{2nd}{l}])\otimes \mathbb Z/p)\ar@{>->}[d]\\
\EuScript N^{d;p}(A\oplus B) \ar@{->}[r]^{\kappa_{d;w}\ \ \ \ \ \
\ \ \ \ \ \ } & \pi_{2nd+d(w)}(\EuScript L^{dp^i}N^{-1}(A\oplus
B[2n])\otimes \mathbb Z/p)}
\end{equation}
By theorem \ref{abstractiso}, the graded abelian groups
\begin{equation}\label{compa11}
\bigoplus_{p|m\ p\ prime} \bigoplus_{i=1}^{2mn-1} \pi_i(\EuScript
L^{m}N^{-1}(A[2n])\otimes \mathbb Z/p)[i]\ \ \text{and}\ \
\widetilde\Theta_{m}(A,2n)
\end{equation}
are abstractly isomorphic. The above observations show that the
direct sum of maps $\kappa_{d,w}$ defines a natural surjective map
from the right hand side of (\ref{compa11}) to the left hand side.
Components in both graded functors in (\ref{compa11}) are
finitely-generated for finitely-generated $A$ and these functors
commute with direct limits. Therefore, there is a natural
isomorphism of graded abelian groups (\ref{compa11}).

Observe that, for $i\geq 1$, there is a natural exact sequence
\begin{equation}\label{silence}
0\to L_i\EuScript L^m(A,2n)\otimes \mathbb Z/p\to \pi_i(\EuScript
L^{m}N^{-1}(A[2n])\otimes \mathbb Z/p)\to Tor(L_{i-1}\EuScript
L^m(A,2n),\mathbb Z/p)\to 0
\end{equation}
For $i<2nm$, $L_i\EuScript L^m(A,2n)\otimes \mathbb Z/p$ is
naturally isomorphic to the $p$-torsion component of $L_i\EuScript
L^m(A,2n)$. This follows from the abstract description of the
derived functors $L_i\EuScript L^m(A,2n)$ $(i\leq 2mn)$: for a
free $A$ these are direct sums of $\mathbb Z/p$-vector spaces for
different primes $p$.

Now consider the obvious natural embedding
$$
p_{m,n}:\Theta_{m}(A,2n)\subset
\widetilde\Theta_{m}(A,2n)\simeq\bigoplus_{p|m\ p\ prime}
\bigoplus_{i=1}^{2mn-1} \pi_i(\EuScript L^{m}N^{-1}(A[2n])\otimes
\mathbb Z/p)[i]
$$
induced by the obvious embeddings of the indexed sets in the
definition of $\Theta_{m}(A,2n)$ and $\widetilde
\Theta_{m}(A,2n)$. Now observe that the image of the left hand map
in (\ref{silence}) lies in the image of $p_{m,2n}$. This follows
from theorem \ref{kan2} and the diagram (\ref{nicediagram}). Since
the graded abelian groups
$$
\bigoplus_{p|m\ p\ prime}\bigoplus_{i=1}^{2mn-1}L_i\EuScript
L^m(A,2n)\otimes \mathbb Z/p[i]\ \ \text{and}\ \ \Theta_{m}(A,2n)
$$
are abstractly isomorphic, they are naturally isomorphic as well,
since the left hand map in (\ref{silence}) defines a natural
monomorphism between them and they are isomorphic direct sums of
$\mathbb Z/p$-vector spaces.

\vspace{.5cm}\noindent{\it Case II: $n$ odd.} This case is
analogous to the Case I. First we prove that the graded abelian
groups
\begin{equation}\label{graded22}
\bigoplus_{p|m\ p\ prime} \bigoplus_{i=1}^{2mn-1} \pi_i(\EuScript
L^{m}N^{-1}(A[2n+1])\otimes \mathbb Z/p)[i]\ \ \text{and}\ \
\widetilde\Theta_{m}(A,2n+1)
\end{equation}
are naturally isomorphic. These graded abelian groups are
abstractly isomorphic by theorem \ref{abstractiso}, hence, as
above, it is enough to construct a natural surjective map between
them. For a prime $p$, $w\in \overline{\mathcal
V}_{(2n+1)d,i}^{(p)},$ the construction of the map
$$
\kappa_{d,w}': \EuScript N_s^{d;p}(A)\to
\pi_{(2n+1)d+d(w)}(\EuScript L^{dp^i}N^{-1}(A[2n+1])\otimes
\mathbb Z/p)
$$
is analogous to the construction of the map $\kappa_{d,w}$.
Repeating the proof from the Case I, we get a natural isomorphism
of the graded abelian groups (\ref{graded22}). A transition from
the homotopy groups of  $\EuScript L^mN^{-1}(A[2n+1])\otimes
\mathbb Z/p$ to $\EuScript L^{m}(A,2n+1)$ is analogous to the Case
I.
\end{proof}

\noindent{\bf Remark.} Another way, how to define a natural map is
to consider a composition map
\begin{multline}
\EuScript N^{d;p}(A)\hookrightarrow \bigoplus_{\mathcal
V_{2nd,i}^{(p)}}\EuScript N^{d;p}(A)=\\ \pi_{2nd}(\EuScript
L^{d}N^{-1}(A[2n])\otimes \mathbb Z/p)\otimes
\pi_{2nd+d(w)}(\EuScript L^{p^i}K(\mathbb
Z,2nd)\otimes \mathbb Z/p)\to \\
\pi_{2nd+d(w)}(\EuScript L^{dp^i}N^{-1}(A[2n])\otimes \mathbb Z/p)
\end{multline}
Here the first inclusion is induced by the inclusion
$\overline{\mathcal V}_{2nd,i}^{(p)}\subset \mathcal
V_{2nd,i}^{(p)}$. However, it is not straightforward from the
definition that this map is a homomorphism on $A$.

\section{Homotopy groups}
\vspace{.5cm}
Given a simplicial set $K$ with simply connected
geometric realization $|K|$, the Kan loop group construction $GX$
has the following property: there is an equivalence of fiber
sequences:
$$
\xyma{|[GK,GK]| \ar@{->}[r] \ar@{->}[d]^{\simeq} & |GK|
\ar@{->}[r] \ar@{->}[d]^{\simeq} &
|(GK)_{ab}|\ar@{->}[d]^{\simeq}\\
\Omega \Gamma |K| \ar@{->}[r] & \Omega |K| \ar@{->}[r] & \Omega
SP^\infty |K|}
$$
In particular, the Hurewicz homomorphism
$$
\pi_n(|K|)\to H_n(|K|),\ n\geq 2
$$
is given as $\pi_{n-1}(GK)\to \pi_{n-1}((GK)_{ab}).$

The lower central series filtration of $GK$ gives rise to the long
exact sequence
\begin{equation*} \dots\to \pi_{i+1}(GK/\gamma_r(GK))\to
\pi_i(\gamma_r(GK)/\gamma_{r+1}(GK))\to
\pi_i(GK/\gamma_{r+1}(GK))\to \pi_i(GK/\gamma_r(GK))\to \dots
\end{equation*}
This exact sequence defines a graded exact couple, which gives
rise to a natural spectral sequence $E(K)$ with the initial terms
\begin{align*}
& E_{r,q}^1(K)=\pi_q(\gamma_{r}(GK)/\gamma_{r+1}(GK))
\end{align*}
and  differentials 
\begin{align}
\label{dinm} & d^i_{p,q}: E_{r,q}^i(K)\to E_{r+i,\, q-1}^i(K).
\end{align}

This spectral sequence $E^i(K)$ converges to $E^\infty(K)$ and
$\oplus_rE_{p,q}^\infty$ is the graded group associated to the
filtration on $\pi_q(GK)=\pi_{q+1}(|K|)$. The groups $E^1(K)$ are
homology invariants of $K$. By the Magnus-Witt isomorphism, the
spectral sequence  can be rewritten as \bee \label{curtisgen1}
 E_{r,q}^1(K) = \pi_q(\Le^r(\widetilde{\Z}K,-1)) \Longrightarrow \pi_{q+1}(|K|).
\ee
 since  the abelianization $  GK_{\mathrm{ab}}:= GK/\gamma_2(GK)$ of
 $GK$ corresponds to
    the reduced chains
 $\widetilde{\Z}K$  on $K$, with degree shifted by 1. When $K = M(A,n)$,   $\widetilde{\Z}K$ corresponds to an
 Eilenberg-Mac Lane space $K(A,n)$ so that the spectral sequence is
 simply  of
 the form
\bee \label{curtisgen2} E^1_{r,q} = L_q \Le^r(A, n-1)
\Longrightarrow \pi_{q+1}(M(A,n))\,. \ee In particular,
\[E^1_{1,q} = \pi_q(K(A,n-1)) = \begin{cases} A, \ q= n-1\\0, \ q \neq
  n-1
\end{cases}
\]

\begin{table}[ht]
\begin{tabular}{ccccccccccccccc}
 $q$ & \vline & $E_{1,q}^1$ & $E_{2,q}^1$ & $E_{3,q}^1$ & $E_{4,q}^1$ & $E_{5,q}^1$ & $E_{6,q}^1$ & $E_{7,q}^1$ & $E_{8,q}^1$ & $E_{12,q}^1$ & $E_{16,q}^1$ & $E_{32,q}^1$\\
 \hline

7 & \vline & 0 & 0 & 0 & $\mathbb Z/2$ & $\mathbb Z/2^2$ & $\mathbb Z/2$ & $\mathbb Z/2$ & $\mathbb Z/2^4$ & $\mathbb Z/2$ & $\mathbb Z/2^7$ & $\mathbb Z/2^5$\\
6 & \vline & 0 & 0 & 0 & $\mathbb Z/4$ & $\mathbb Z/2$ & $\mathbb Z/2$ & 0 & $\mathbb Z/2^4$ & 0 & $\mathbb Z/2^4$ & $\mathbb Z/2$\\
5 & \vline & 0 & 0 & 0 & $\mathbb Z/2^2$ & 0 & $\mathbb Z/2$ & 0 & $\mathbb Z/2^3$ & 0 & $\mathbb Z/2$ & 0\\
4 & \vline & 0 & 0 & $\mathbb Z/2$ & $\mathbb Z/2^2$ & 0 & 0 & 0 & $\mathbb Z/2$ & 0 & 0 & 0\\
3 & \vline & 0 & $\mathbb Z/2$ & 0 & $\mathbb Z/2$ & 0 & 0 & 0 & 0 & 0 & 0 & 0\\
2 & \vline & 0 & $\mathbb Z/4$ & 0 & 0 & 0 & 0 & 0 & 0 & 0 & 0 & 0\\
1 & \vline & $\mathbb Z/2$ & 0 & 0 & 0 & 0 & 0 & 0 & 0 & 0 & 0 & 0\\
\end{tabular}
\vspace{.5cm} \caption{The spectral sequence for $\Sigma \mathbb
RP^2=M(\mathbb Z/2,2)$}
\end{table}

\begin{table}[ht]
\begin{tabular}{ccccccccccccccc}
 $q$ & \vline & $E_{1,q}^1$ & $E_{2,q}^1$ & $E_{3,q}^1$ & $E_{4,q}^1$ & $E_{5,q}^1$ & $E_{6,q}^1$ & $E_{7,q}^1$\\
 \hline

10 & \vline & 0 & 0 & 0 & 0 & $\EuScript L^5(A)$ & 0 & 0\\
9 & \vline & 0 & 0 & 0 & 0 & $A\otimes \mathbb Z/5$ & $\EuScript L^3(A)\otimes \mathbb Z/2$ & 0\\
8 & \vline & 0 & 0 & 0 & $\EuScript L^4(A)$ & 0 & 0 & 0\\
7 & \vline & 0 & 0 & 0 & $\Gamma_2(A)\otimes \mathbb Z/2$ & 0 & $\Lambda^2(A)\otimes \mathbb Z/3\oplus \EuScript L^3(A)\otimes \mathbb Z/2$ & 0\\
6 & \vline & 0 & 0 & $\EuScript L^3(A)$ & 0 & 0 & 0 & 0\\
5 & \vline & 0 & 0 & $A\otimes \mathbb Z/3$ & $\Gamma_2(A)\otimes \mathbb Z/2$ & 0 & 0 & 0\\
4 & \vline & 0 & $\Lambda^2(A)$ & 0 & $A\otimes\mathbb Z/2$ & 0 & 0 & 0\\
3 & \vline & 0 & $A\otimes \mathbb Z/2$ & 0 & 0 & 0 & 0 & 0\\
2 & \vline & $A$ & 0 & 0 & 0 & 0 & 0 & 0\\
\end{tabular}
\vspace{.5cm} \caption{The spectral sequence for $M(A,3)\ (A\
\text{free})$}
\end{table}
{\small
\begin{table}[ht]
\begin{tabular}{ccccccccccccccc}
 $q$ & \vline & $E_{8,q}^1$ & $E_{9,q}^1$ & $E_{12,q}^1$ & $E_{16,q}^1$ & $E_{24,q}^1$\\
 \hline

10 & \vline & $\Gamma_2(A)\otimes \mathbb Z/2$ & 0 & $\EuScript L^3(A)\otimes \mathbb Z/2$ & $(\Gamma_2(A)^{\oplus 3}\oplus A^{\oplus 2})\otimes \mathbb Z/2$ & $(\EuScript L^3(A)\otimes \mathbb Z/2)^{\oplus 2}$\\
& \vline & & & & $\oplus \EuScript N^{4;2}(A)$ & & \\
9 & \vline & $\Gamma_2(A)\otimes \mathbb Z/2\oplus \EuScript N^{4;2}(A)$ & $\EuScript N^{3;3}(A)$ & $\EuScript L^3(A)\otimes \mathbb Z/2$ & $(\Gamma_2(A)^{\oplus 2}\oplus A^{\oplus 2})\otimes \mathbb Z/2$ & $\EuScript L^3(A)\otimes \mathbb Z/2$\\
8 & \vline & $(\Gamma_2(A)\oplus A)\otimes \mathbb Z/2$ & $A\otimes \mathbb Z/3$ & $\EuScript L^3(A)\otimes \mathbb Z/2$ & $(\Gamma_2(A)^{\oplus 2}\oplus A)\otimes \mathbb Z/2$ & 0\\
7 & \vline & $\Gamma_2(A)\otimes \mathbb Z/2$ & 0 & 0 & $(\Gamma_2(A)\oplus A^{\oplus 2})\otimes \mathbb Z/2$ & 0\\
6 & \vline & $(\Gamma_2(A)\oplus A)\otimes \mathbb Z/2$ & 0 & 0 & $A\otimes \mathbb Z/2$ & 0\\
5 & \vline & $A\otimes \mathbb Z/2$ & 0 & 0 & 0 & 0\\
4 & \vline & 0 & 0 & 0 & 0 & 0\\
\end{tabular}
\end{table}
\begin{table}[ht]
\begin{tabular}{ccccccccccccccc}
$q$ & \vline & $E_{32,q}^1$ & $E_{48,q}^1$ & $E_{64,q}^1$ & $E_{128,q}^1$ & $E_{256,q}^1$\\
 \hline

10 & \vline & $(\Gamma_2(A)^{\oplus 4}\oplus A^{\oplus 4})\otimes \mathbb Z/2$ & $\EuScript L^3(A)\otimes \mathbb Z/2$ & $(\Gamma_2(A)^{\oplus 4}\oplus A^{\oplus 6})\otimes \mathbb Z/2$ & $(\Gamma_2(A)\oplus A^{\oplus 5})\otimes \mathbb Z/2$ & $A\otimes \mathbb Z/2$\\
9 & \vline & $(\Gamma_2(A)^{\oplus 3}\oplus A^{\oplus 3})\otimes \mathbb Z/2$ & 0  & $(\Gamma_2(A)\oplus A^{\oplus 4})\otimes \mathbb Z/2$ & $A\otimes \mathbb Z/2$ & 0\\
8 & \vline & $(\Gamma_2(A)\oplus A^{\oplus 3})\otimes \mathbb Z/2$ & 0 & $A\otimes \mathbb Z/2$ & 0 & 0\\
7 & \vline & $A\otimes \mathbb Z/2$ & 0 & 0 & 0 & 0\\
6 & \vline & 0 & 0 & 0 & 0 & 0\\

\end{tabular}
\end{table}
}

\subsection{Generalized spectral sequence and bifunctors}
Let $X$ be a simply connected simplicial set and $Y$ a finite
dimensional simplicial set. There is a spectral sequence with
initial term
$$
E_{r,q}^1=\bigoplus_i H^i(\Sigma^qY,
\pi_i(\gamma_r(GX)/\gamma_{r+1}(GX)))
$$
which depends only on homology of $X,Y$ and converges to the
graded group associated with filtration of the group
$[\Sigma^{q+1}Y,X]$. Here $GX$ is the Kan loop group of X. The
case when $Y$ is a sphere is the classical Curtis spectral
sequence, which converges to homotopy groups of $X$.

Consider a simple space $Y$ from the point of view of homology,
say $Y=M(A,1)$. Then
$$
H^i(\Sigma^qM(A,1),
\pi_i(\gamma_r(GX)/\gamma_{r+1}(GX)))=\begin{cases} Hom(A,
\pi_q(\gamma_r(GX),\gamma_{r+1}(GX))),\ i=q-1\\ Ext(A,
\pi_q(\gamma_r(GX),\gamma_{r+1}(GX))),\ i=q
\end{cases}
$$
Take $X=M(B,2)$ for an abelian group $B$. Then
$$
H^i(\Sigma^qM(A,1),
\pi_i(\gamma_r(GX)/\gamma_{r+1}(GX)))=\begin{cases}
Hom(A, L_q\EuScript L^r(B,1))\ i=q-1\\
Ext(A, L_q\EuScript L^r(B,1))\ i=q
\end{cases}
$$

The initial terms are the following:

\begin{tabular}{ccccccccccc}
 $q$ & \vline & $E_{1,q}^1$ & \vline & $E_{2,q}^1$ & \vline & $E_{3,q}^1$ & \vline & $E_{4,q}^1$\\
 \hline
$4$ & \vline & 0 & \vline & 0 & \vline & $Hom(A,L_1\EuScript L_s^3(A))$ & \vline & $Hom(A,L_4\EuScript L^4(B,1))$ \\
$3$ & \vline & 0 & \vline & $Hom(A,L_1\Gamma_2(B))$ & \vline & $Hom(A,\EuScript L_s^3(B))$ & \vline & $Ext(A,L_4\EuScript L^4(B,1)$ \\
$2$ & \vline & 0 & \vline & $Hom(A,\Gamma_2(B))$ & \vline & $Ext(A,\EuScript L_s^3(B))$ & \vline & 0 \\
$1$ & \vline & $Hom(A,B)$ & \vline & $Ext(A, \Gamma_2(B))$ &
\vline & 0 & \vline & 0
\end{tabular}

\vspace{.5cm}

We have immediately the sequence of Barratt:
$$
0\to Ext(A,\Gamma_2(B))\to [M(A,2),M(B,2)]\to Hom(A,B)\to 0
$$
which is not split. At the next step, we have an exact sequence
\begin{multline*}
[M(A,4),M(B,2)]\to Hom(A,L_1\Gamma_2(B))\to Ext(A,\EuScript
L_s^3(B))\to\\ [M(A,3),M(B,2)]\to Hom(A,\Gamma_2(B))\to 0
\end{multline*}

\begin{table}
{\small
\begin{tabular}{cccccccccccccccccccccc}
$k$ & \vline & $_3\pi_{k+3}S^3$ & \vline & $_3\pi_{k+4}S^4$ & \vline & $_3\pi_{k+5}S^5$ & \vline\\
 \hline
12 & \vline & 0 & \vline & $0$ & \vline & 0 & \vline\\
11 & \vline & $\mathbb Z/3$ & \vline & $\mathbb Z/3$ & \vline & $\mathbb Z/9$ & \vline\\
10 & \vline & $\mathbb Z/3$ & \vline & $\mathbb Z/3\oplus \mathbb Z/3$ & \vline & $\mathbb Z/9$ & \vline \\
9 & \vline & $0$ &
\vline & $0$ & \vline & 0 & \vline\\
8 & \vline & 0 & \vline & 0 & \vline & 0 & \vline\\
7 & \vline & $\mathbb Z/3$ & \vline & $\mathbb Z/3$ & \vline & $\mathbb Z/3$ & \vline\\
6 & \vline & $\mathbb Z/3$ & \vline & $\mathbb Z/3\oplus \mathbb Z/3$ & \vline & 0 & \vline\\
5 & \vline & 0 & \vline & 0 & \vline & 0 & \vline\\
4 & \vline & 0 & \vline & 0 & \vline & 0 & \vline
\\ 3 & \vline & $\mathbb Z/3$ & \vline &
$\mathbb Z/3$ & \vline & $\mathbb Z/3$ & \vline
\\ 2 & \vline & 0 & \vline & 0 & \vline & 0 & \vline\\
1 & \vline & 0 & \vline & 0 & \vline & 0 & \vline
\end{tabular}
} \vspace{.5cm} \caption{3-torsion in $\pi_{n+k}S^n$, $n=3,4,5$}
\end{table}

\begin{table}
{\small
\begin{tabular}{cccccccccccccccccccccc}
$k$ & \vline & $_3\pi_kM(A,3)$ & \vline & $_3\pi_kM(A,4)$ & \vline & $_3\pi_kM(A,5)$ & \vline\\
 \hline
12 & \vline & 0 & \vline & $\EuScript L^4(A)\otimes \mathbb Z/3$ & \vline & 0 & \vline\\
11 & \vline & $\EuScript N^{3;3}(A)\oplus \EuScript L^5(A)\otimes \mathbb Z/3$ & \vline & $A\otimes \mathbb Z/3$ & \vline & $\Lambda^2(A)\otimes\mathbb Z/3\oplus A\otimes \mathbb Z/9$ & \vline\\
10 & \vline & $A\otimes \mathbb Z/3$ & \vline & $(\Gamma_2(A)\oplus A)\otimes\mathbb Z/3$ & \vline & $A\otimes \mathbb Z/9$ & \vline \\
9 & \vline & $(\EuScript L^4(A)\oplus \Lambda^2(A))\otimes \mathbb
Z/3$ &
\vline & $\EuScript L_s^3(A)\otimes \mathbb Z/3$ & \vline & 0 & \vline\\
8 & \vline & 0 & \vline & 0 & \vline & 0 & \vline\\
7 & \vline & $\EuScript N^{3;3}(A)$ & \vline & $A\otimes \mathbb Z/3$ & \vline & $(\Lambda^2(A)\oplus A)\otimes \mathbb Z/3$ & \vline\\
6 & \vline & $A\otimes \mathbb Z/3$ & \vline & $(\Gamma_2(A)\oplus A)\otimes \mathbb Z/3$ & \vline & 0 & \vline\\
5 & \vline & $\Lambda^2(A)\otimes \mathbb Z/3$ & \vline & 0 & \vline & 0 & \vline\\
4 & \vline & 0 & \vline & 0 & \vline & 0 & \vline
\\ 3 & \vline & $A\otimes \mathbb Z/3$ & \vline &
$A\otimes \mathbb Z/3$ & \vline & $A\otimes \mathbb Z/3$ & \vline
\\ 2 & \vline & 0 & \vline & 0 & \vline & 0 & \vline\\
1 & \vline & 0 & \vline & 0 & \vline & 0 & \vline
\end{tabular}
} \vspace{.5cm} \caption{3-torsion in $\pi_{n+k}M(A,n)$ for $A$
free}
\end{table}

\section{Appendix A: Tables of derived functors}
\vspace{.5cm} 1. Derived functors $L_i\EuScript L^4(A,2),
L_i\EuScript L^8(A,2)$ for $A$ free, together with sets of
allowable words: \vspace{.4cm}
\begin{center}
 \renewcommand{\arraystretch}{1.2}
\begin{tabular}{|r||c|c|c|c|c|c|c|c}
\hline  & $L_i\EuScript L^4(A,2)$&$\mathcal W_{2,2}$&$L_i\EuScript
L^8(A,2)$&$\mathcal W_{2,3}$
\\ \hline 16 & 0 & 0 & $\EuScript L^8$ & 0
\\ \hline 15 & 0 & 0 & $\EuScript N^{4;2}$ & $(2,4,7)$
\\ \hline 14 & 0 & 0 & 0 & 0
\\ \hline 13 & 0 & 0 & $\EuScript N^{4;2}$ & $(2,4,5)$
\\ \hline 12 & 0 & 0 & $\Gamma_2\otimes \mathbb Z/2$ & $(2,3,5)$
\\ \hline 11 & 0 & 0 & $\EuScript N^{4;2}$ & $(2,4,3)$
\\ \hline 10 & 0 & 0 & $\Gamma_2\otimes \mathbb Z/2$ & $(2,3,3)$
\\ \hline 9 & 0 & 0 & $\Gamma_2\otimes \mathbb Z/2\oplus \EuScript N^{4;2}$ & $(2,2,3),(2,4,1)$
\\ \hline 8 & $\EuScript L^4$ & 0 & $\Gamma_2\otimes \mathbb Z/2\oplus A\otimes
\mathbb Z/2$ & $(2,3,1), (1,2,3)$
\\ \hline 7 & $\Gamma_2\otimes \mathbb Z/2$ & $(2,3)$ & $\Gamma_2\otimes \mathbb Z/2$ & $(2,2,1)$
\\ \hline 6 & 0 & 0 & $\Gamma_2\otimes \mathbb Z/2\oplus A\otimes \mathbb Z/2$ & $(2,1,1),(1,2,1)$
\\ \hline 5 & $\Gamma_2\otimes \mathbb Z/2$ & $(2,1)$ & $A\otimes \mathbb Z/2$ & $(1,1,1)$
\\ \hline i=4 & $A\otimes \mathbb Z/2$ & $(1,1)$ & 0 & 0
\\ \hline
\end{tabular}
\end{center}

\vspace{.4cm} 2. 3-torsion in derived functors $L_i\EuScript
L^n(A,2),$ for $i\leq 21, n\leq 27$ for $A$ free: \vspace{.4cm}

  {\tiny
 \renewcommand{\arraystretch}{1.8}
\begin{tabular}{|r||c|c|c|c|c|c|c|c|c}
\hline  & $n=6$&9&12&15&18&21&24&27
\\\hline \hline 21 & 0 & 0& 0 & $\EuScript L^5\otimes \mathbb Z/3$ & 0
& $\EuScript L^7\otimes\mathbb Z/3$ & 0 & $\EuScript N^{9;3}\oplus
(A/3A)^{\oplus 2}$
\\ \hline 20 & 0 & 0 & 0 & 0 & 0 & 0 & 0 & ${\EuScript N^{3;3}}^{\oplus 3}\oplus A/3A$
\\ \hline 19 & 0 & 0 & $\EuScript L^4\otimes \mathbb Z/3$ & 0 & $\EuScript N^{6;3}\oplus \Lambda^2\otimes \mathbb Z/3$ & 0& $\EuScript L^8\otimes \mathbb Z/3$ & $A/3A$
\\ \hline 18 & 0 & 0 & 0 & 0 & $\Lambda^2\otimes \mathbb Z/3$ & 0 &
0& 0
\\ \hline 17 & 0 & $\EuScript N^{3;3}$ & 0 & $\EuScript L^5\otimes \mathbb Z/3$ & 0 & $\EuScript L^7\otimes \mathbb Z/3$ & 0 & ${\EuScript N^{3;3}}^{\oplus 2}$
\\ \hline 16 & 0 & 0 & 0 & 0 & 0 & 0 & 0 & ${\EuScript N^{3;3}}^{\oplus 2}\oplus (A/3A)^{\oplus 2}$
\\ \hline 15 & 0 & 0 & $\EuScript L^4\otimes \mathbb Z/3$ & 0 & $\EuScript N^{6;3}\oplus \Lambda^2\otimes \mathbb Z/3$ & 0 & 0 & $(A/3A)^{\oplus 2}$
\\ \hline 14 & 0 & 0 & 0 & 0 & $(\Lambda^2\otimes \mathbb Z/3)^{\oplus 2}$ &
0& 0 & 0
\\ \hline 13 & 0 & $\EuScript N^{3;3}$ & 0 & $\EuScript L^5\otimes \mathbb Z/3$ & 0  & 0 & 0 & $\EuScript N^{3;3}$
\\ \hline 12 & 0 & $A/3A$ & 0 & 0 & 0 & 0 & 0 & $\EuScript N^{3;3}\oplus A/3A$
\\ \hline 11 & $\Lambda^2\otimes \mathbb Z/3$ & 0 &  $\EuScript L^4\otimes \mathbb Z/3$ & 0 & $\Lambda^2\otimes \mathbb Z/3$ & 0 & 0 & $A/3A$
\\ \hline 10 & 0 & 0 & 0 & 0 & $\Lambda^2\otimes \mathbb Z/3$ & 0 & 0
& 0
\\ \hline 9 & 0 & $\EuScript N^{3;3}$ & 0 & 0 & 0 & 0 & 0 & 0
\\ \hline 8 & 0 & $A/3A$ & 0 & 0 & 0 & 0 & 0 & 0
\\ \hline 7 & $\Lambda^2\otimes \mathbb Z/3$ & 0 & 0 & 0 & 0 & 0 & 0 & 0
\\ \hline
\end{tabular}
}

\vspace{.5cm} 3. 3-torsion in derived functors $L_i\EuScript
L^n(A,2),$ for $i\leq 21, n\leq 135$:

\vspace{.4cm} {\tiny
 \renewcommand{\arraystretch}{1.8}
\begin{tabular}{|r||c|c|c|c|c|c|c|c}
\hline  & $n=36$&45&54&63&81&108&135
\\\hline \hline 21 & 0 & $(\EuScript L^5\otimes \mathbb Z/3)^{\oplus 2}$& $(\Lambda^2\otimes \mathbb Z/3)^{\oplus 3}$ & $\EuScript L^7\otimes \mathbb Z/3$ &
${\EuScript N^{3;3}}^{\oplus 3}$ & $(\EuScript L^4\otimes \mathbb
Z/3)^{\oplus 3}$ & $\EuScript L^5\otimes \mathbb Z/3$
\\ \hline 20 & 0 & $(\EuScript L^5\otimes \mathbb Z/3)^{\oplus 2}$ & 0 & $\EuScript L^7\otimes \mathbb Z/3$ &
${\EuScript N^{3;3}}^{\oplus 6}\oplus (A/3A)^{\oplus 3}$ & 0 &
$(\EuScript L^5\otimes \mathbb Z/3)^{\oplus 2}$
\\ \hline 19 & $(\EuScript L^4\otimes \mathbb Z/3)^{\oplus 2}$ & 0 & $\EuScript N^{6;3}\oplus (\Lambda^2\otimes \mathbb Z/3)^{\oplus 2}$ & 0 & ${\EuScript N^{3;3}}^{\oplus 4}\oplus (A/3A)^{\oplus 5}$ &
$\EuScript L^4\otimes \mathbb Z/3$ & $\EuScript L^5\otimes \mathbb
Z/3$
\\ \hline 18 & $(\EuScript L^4\otimes \mathbb Z/3)^{\oplus 2}$ & 0 & $\EuScript N^{6;3}\oplus (\Lambda^2\otimes \mathbb Z/3)^{\oplus 5}$ &
0 & $(A/3A)^{\oplus 3}$ & $(\EuScript L^4\otimes \mathbb
Z/3)^{\oplus 2}$ & 0
\\ \hline 17 & 0 & $\EuScript L^5\otimes \mathbb Z/3$ & $(\Lambda^2\otimes\mathbb Z/3)^{\oplus 3}$ & 0 & $\EuScript N^{3;3}$ &
$\EuScript L^4\otimes \mathbb Z/3$ & 0
\\ \hline 16 & 0 & $\EuScript L^5\otimes \mathbb Z/3$ & 0 & 0 & ${\EuScript N^{3;3}}{\oplus 2}\oplus A/3A$ & 0 & 0
\\ \hline 15 & $\EuScript L^4\otimes \mathbb Z/3$ & 0 & $\Lambda^2\otimes \mathbb Z/3$ & 0 & $\EuScript N^{3;3}\oplus (A/3A)^{\oplus 2}$ & 0 & 0
\\ \hline 14 & $\EuScript L^4\otimes \mathbb Z/3$ & 0 & $(\Lambda^2\otimes \mathbb Z/3)^{\oplus 2}$ & 0 & $A/3A$ & 0 & 0
\\ \hline 13 & 0 & 0 & $\Lambda^2\otimes \mathbb Z/3$ & 0 & 0 & 0 & 0
\\ \hline
\end{tabular}
}

\vspace{.5cm} 4. 3-torsion in derived functors $L_i\EuScript
L^n(A,2),$ for $i\leq 21, n> 135$:

\vspace{.3cm}

{\tiny
 \renewcommand{\arraystretch}{1.8}
\begin{tabular}{|r||c|c|c|c|c|c|c|c}
\hline  & $n=162$&243&324&486&729
\\\hline \hline 21 & $\EuScript N^{6;3}\oplus (\Lambda^2\otimes \mathbb Z/3)^{\oplus 11}$ & $\EuScript N^{3;3}\oplus (A/3A)^{\oplus 4}$ & $(\EuScript L^4\otimes \mathbb Z/3)^{\oplus 3}$ &
$(\Lambda^2\otimes \mathbb Z/3)^{\oplus 6}$ & $\EuScript
N^{3;3}\oplus (A/3A)^{\oplus 4}$
\\ \hline 20 & $(\Lambda^2\otimes \mathbb Z/3)^{\oplus 4}$ & ${\EuScript N^{3;3}}^{\oplus 3}\oplus A/3A$ & $\EuScript L^4\otimes \mathbb Z/3$ &
$(\Lambda^2\otimes \mathbb Z/3)^{\oplus 4}$& $A/3A$
\\ \hline 19 & $\Lambda^2\otimes \mathbb Z/3$ & ${\EuScript N^{3;3}}^{\oplus 3}\oplus (A/3A)^{\oplus 3}$ & 0 & $\Lambda^2\otimes \mathbb Z/3$ & 0
\\ \hline 18 & $(\Lambda^2\otimes \mathbb Z/3)^{\oplus 3}$ & $\EuScript N^{3;3}\oplus (A/3A)^{\oplus 3}$ & 0 & 0 & 0
\\ \hline 17 & $(\Lambda^2\otimes \mathbb Z/3)^{\oplus 3}$ & $A/3A$ & 0 & 0 & 0
\\ \hline 16 & $\Lambda^2\otimes \mathbb Z/3$ & 0 & 0 & 0 & 0
\\ \hline
\end{tabular}
}
\bigskip

\end{document}